\newtheorem{thm}{Theorem}
\newtheorem{proposition}{Proposition}[section]
\newtheorem{definition}[proposition]{Definition}
\newtheorem{lemma}[proposition]{Lemma}
\newtheorem{corollary}[proposition]{Corollary}
\newtheorem{rk}[proposition]{Remark}
\numberwithin{equation}{section}
\newcommand*\samethanks[1][\value{footnote}]{\footnotemark[#1]}
\title{Unexpected quadratic behaviors for the small-time local null controllability of scalar-input parabolic equations}
\author{Karine Beauchard\texorpdfstring{\thanks{Univ Rennes, CNRS, IRMAR - UMR 6625, F-35000 Rennes, France}}{}, Fr\'ed\'eric Marbach\texorpdfstring{\samethanks}{}}
\newcommand{\C}{\mathbb{C}}
\newcommand{\N}{\mathbb{N}}
\newcommand{\R}{\mathbb{R}}
\newcommand{\dd}{\,\mathrm{d}}
\newcommand{\sinc}{\mathrm{sinc}}
\newcommand{\supp}{\mathrm{supp~}}
\newcommand{\gmu}{\mu}
\newcommand{\glambda}{\Gamma'[0]}
\newcommand{\omk}{\overline{\omega}_k}
\begin{document}

\maketitle

\begin{abstract}
 We consider scalar-input control systems in the vicinity of an equilibrium, at which the linearized systems are not controllable. For finite dimensional control systems, the authors recently classified the possible quadratic behaviors. Quadratic terms introduce coercive drifts in the dynamics, quantified by integer negative Sobolev norms, which are linked to Lie brackets and which prevent smooth small-time local controllability for the full nonlinear system.
 
 In the context of nonlinear parabolic equations, we prove that the same obstructions persist. More importantly, we prove that two new behaviors occur, which are impossible in finite dimension. First, there exists a continuous family of quadratic obstructions quantified by fractional negative Sobolev norms or by weighted variations of them. Second, and more strikingly, small-time local null controllability can sometimes be recovered from the quadratic expansion. We also construct a system for which an infinite number of directions are recovered using a quadratic expansion.
 
 As in the finite dimensional case, the relation between the regularity of the controls and the strength of the possible quadratic obstructions plays a key role in our analysis.
\end{abstract}

\bigskip
\bigskip
\setcounter{tocdepth}{1}
\tableofcontents

\newpage

\section{Introduction and main results}

The goal of this work is to illustrate possible behaviors for parabolic scalar-input control systems, stemming from the analysis of their second-order expansions. Some of these quadratic behaviors are already present in finite dimension (see \cite{BMJDE}, where the authors classified the possible quadratic behaviors for scalar-input control systems in finite dimension, or \cite{FM_CMLS} for a short survey in French by the second author). Others are new and specific to control systems in infinite dimension.

\subsection{Description of the control system}

We present our results in the simple setting of a scalar-input control system governed by a nonlinear heat equation set on the line segment $x \in (0,\pi)$. We consider the following nonlinear control system:
\begin{equation} \label{eq:z}
\left\{
\begin{aligned}
& \partial_t z(t,x) - \partial_{xx} z(t,x) = u(t) \Gamma[z(t)] (x),  \\
& \partial_x z (t,0) =  \partial_x z(t,\pi) = 0, \\
& z(0,x) = z_0(x),
\end{aligned}
\right.
\end{equation}
where $z$ is the state, $u$ is a scalar control, and $\Gamma$ is an appropriate nonlinearity.
We will be interested in the notion of small-time local null controllability: given a small time $T > 0$ and an initial data~$z_0$ sufficiently small, does there exist a small control $u$ such that $z(T) = 0$?

\begin{rk}
 The abstract system \eqref{eq:z} is not intended to model the behavior of a real-world physical system. Even so, we expect that most of the techniques and methods we introduce in the sequel could be applied or extended to other more complex or realistic classes of systems. We chose this abstract system as it lightens the computations and reduces the amount of technicalities in the proof (see also \cref{sec:wsio} for an example of difficulties to be expected for other systems).
\end{rk}

\subsection{Notations and functional settings}

\subsubsection{Functional setting in space}

We consider the Lebesgue space $L^2(0,\pi)$, equipped with its usual scalar product $\langle \cdot, \cdot \rangle$. Let $\Delta_N$ be the Neumann-Laplacian operator on $(0,\pi)$
\begin{equation} \label{eq:neumann-laplacian}
 D(\Delta_N):=\{ f \in H^2(0,\pi) ; \enskip f'(0)=f'(\pi)=0 \}, \quad \Delta_N f = - \partial_{xx} f
\end{equation}
and $(\varphi_k)_{k\in\N}$, the orthonormal basis of $L^2(0,\pi)$ of its eigenfunctions
\begin{equation}
 \varphi_k(x) := 
 \frac{1}{\sqrt{\pi}}
 \left\lbrace \begin{aligned}
  & 1 && \text{ if } k=0, \\
  & \sqrt{2} \cos (k x) && \text{ if } k \in \N^*.
  \end{aligned} \right.
\end{equation}
We introduce the Sobolev space $H^1_N(0,\pi)$ 
\begin{equation}
 H^1_N(0,\pi) := \left\{  f \in L^2(0,\pi) ; \enskip 
\|f\|_{H^{1}_N(0,\pi)}^2 = \sum_{k=0}^\infty | (1+k) \langle f , \varphi_k \rangle |^2 < \infty \right\}
\end{equation}
and its dual space $H^{-1}_N(0,\pi)$, which is equipped with the norm
\begin{equation}
 \|f\|_{H^{-1}_N(0,\pi)}^2 = \left( \sum_{k=0}^\infty \left| (1+k)^{-1} ( f , \varphi_k )_{H^{-1}_N,H^1_N}\right|^2 \right)^{1/2}.
\end{equation}

\subsubsection{Assumptions on the nonlinearity and regularity of solutions} 

Throughout this work, we will assume that there exists $C_\Gamma > 0$ such that the nonlinearity $\Gamma$ appearing in \eqref{eq:z} satisfies
\begin{equation} \label{def:Gamma}
 \begin{gathered}
  \Gamma \in C^2 \left( H^1_N(0,\pi); H^{-1}_N(0,\pi) \right)
  \quad \text{and} \quad
  \\
  \forall z \in H^1_N(0,\pi), 
  \quad \| \Gamma'[z] \|_{\mathcal{L}(H^1_N; H^{-1}_N)}
  + \| \Gamma''[z] \|_{\mathcal{L}(H^1_N \times H^1_N; H^{-1}_N)} \leq C_\Gamma,
 \end{gathered}
\end{equation}
so that, for every control $u \in L^\infty(0,T)$, system \eqref{eq:z} is locally well-posed (see \cref{Prop:WP+source} below) in the space
\begin{equation} \label{Z}
 Z := C^0\left([0,T]; L^2(0,\pi)\right) \cap L^2\left((0,T);H^1_N(0,\pi)\right),
\end{equation}
which we endow with the norm
\begin{equation}
 \|z\|_{Z}^2 := \|z\|_{C^0([0,T]; L^2(0,\pi))}^2 + \|z\|_{L^2((0,T);H^1_N(0,\pi))}^2
\end{equation}
and its solution $z$ has a $C^2$-dependence with respect to the control $u$.
Eventually, we will use as a shorthand notation
\begin{equation}
 \mu := \Gamma[0] \in H^{-1}_N(0,\pi).
\end{equation}

\begin{rk}
 The regularity assumption \eqref{def:Gamma} includes bilinear systems as a particular case. Indeed, for any $\lambda, \mu \in H^{-1}_N(0,\pi)$, the map $\Gamma : z \mapsto \mu + \lambda z$ satisfies \eqref{def:Gamma}. Moreover, even when $\Gamma$ is such an affine map, the control system is already fully nonlinear since the  source is $u \Gamma[z]$. 
\end{rk}



\subsubsection{Functional setting in time}

For $T>0$ and $f \in L^\infty(0,T)$, we consider the iterated primitives of $f$ defined by induction as
\begin{equation} \label{primitive_n}
 f_0 := f, \quad f_{n+1}(t):=\int_0^t f_n(\tau) \dd\tau.
\end{equation}
%
Implicitly, when required, we identify a function $f \in L^\infty(0,T)$ with its extension by zero to the real line, which allows us to consider
\begin{itemize}
\item its non-unitary Fourier transform $\widehat{f}$, defined for $\xi \in \R$ as
\begin{equation} \label{eq:fourier}
 \widehat{f}(\xi) := \int_\R f(t) e^{-i \xi t} \dd t = \int_0^T f(t) e^{-i\xi t} \dd t,
\end{equation}
\item its (negative)  fractional Sobolev norm in $H^{-s}(\R)$, 
for any $s \in (0,1]$,
\begin{equation} \label{def:normeH(-s)}
\|f\|_{H^{-s}(\R)} := \left( \frac1{2\pi} \int_{\R} \frac{|\widehat{f}(\xi)|^2}{(1+\xi^2)^s} \dd\xi\right)^{\frac12}.
\end{equation}
\end{itemize}
%
We will also consider, for any integer $m \in \N$ the usual integer-order Sobolev spaces 
\begin{equation}
 H^{m}(0,T):=\left\{ f \in L^2(0,T) ; \enskip f^{(k)} \in L^2(0,T) \text{ for } k=0,...,m \right\},
\end{equation}
equipped with the norm
\begin{equation}
 \|f\|_{H^m(0,T)} := \left(  \sum_{k=0}^m \int_0^T |f^{(k)}(t)|^2 \dd t \right)^{\frac12},
\end{equation}
and $H^m_0(0,T)$, which is the adherence of $C^\infty_c(0,T)$ for the topology of $\|.\|_{H^m(0,T)}$.
For $a \in [0,\infty)$ the (positive) fractional Sobolev space $H^{a}(0,T)$ defined by interpolation and equipped with the norm $\|.\|_{H^a(0,T)}$.

\subsection{Controllability stemming from the linear order}

We start by studying the linearized system of \eqref{eq:z} around the null equilibrium $(z,u) = (0,0)$:
\begin{equation} \label{eq:z-linear}
\left\{
\begin{aligned}
& \partial_t z(t,x) - \partial_{xx} z(t,x) = u(t) \gmu(x), \\
& \partial_x z (t,0) =  \partial_x z(t,\pi) = 0, \\
& z(0,x) = z_0(x).
\end{aligned}
\right.
\end{equation}
The controllability of systems such as \eqref{eq:z-linear} has been extensively studied. We refer in particular to \cite{MR0335014} and \cite{MR0510972} for the introduction of the \emph{moment method}. The assumption that $\langle \gmu, \varphi_k \rangle \neq 0$ for all $k\in\N$ is obviously necessary for the linearized system to be null controllable (otherwise the component $\langle z(t), \varphi_k\rangle$ of the state would evolve freely). Moreover, in order for the linearized system to be small-time null controllable, one must add the assumption that the sequence $\langle \gmu, \varphi_k\rangle$ does not decay too fast (see \cref{sec:linear-cost}).

\begin{thm} \label{thm:linear}
 Let $\gmu \in H^{-1}_N(0,\pi)$ such that $\langle \gmu, \varphi_k \rangle \neq 0$ for all $k\in \N$ and satisfying the decay assumption \eqref{eq:mu-b}. The linear system \eqref{eq:z-linear} is small-time null controllable with $L^2$ controls.
\end{thm}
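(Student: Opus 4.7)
The proof is by the classical moment method of Fattorini and Russell, and I would organize it in four steps.

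\textbf{Step 1: spectral reduction.} I project \eqref{eq:z-linear} onto the Neumann basis. Setting $z_k(t) := \langle z(t), \varphi_k\rangle$ and $\mu_k := \langle \mu, \varphi_k\rangle$, Duhamel's formula yields
\begin{equation*}
z_k(T) = e^{-k^2 T} \langle z_0, \varphi_k\rangle + \mu_k \int_0^T e^{-k^2(T-t)} u(t)\dd t.
\end{equation*}
Hence $z(T)=0$ is equivalent to the sequence of moment conditions
\begin{equation*}
\int_0^T e^{-k^2(T-t)} u(t)\dd t = - \frac{e^{-k^2 T}}{\mu_k}\,\langle z_0, \varphi_k\rangle, \qquad \forall k \in \N.
\end{equation*}
The assumption $\mu_k \neq 0$ is what makes this well defined.

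\textbf{Step 2: biorthogonal family.} The core analytic ingredient is the existence, for every $T > 0$, of a family $(q_k)_{k\in\N} \subset L^2(0,T)$ biorthogonal to the exponentials $(e^{-k^2 t})_{k\in\N}$, i.e. $\int_0^T q_j(t) e^{-k^2 t}\dd t = \delta_{jk}$, with the classical Fattorini--Russell growth estimate
\begin{equation*}
\|q_k\|_{L^2(0,T)} \leq C(T)\, e^{C(T) k},
\end{equation*}
which ultimately follows from Paley--Wiener / entire-function arguments on the generating function of the eigenvalues $\lambda_k = k^2$ (whose gaps grow linearly in $k$, yielding the required summability $\sum 1/\lambda_k < \infty$). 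I would simply quote this from \cite{MR0335014,MR0510972}.

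\textbf{Step 3: construction of the control.} I set, for $t \in (0,T)$,
\begin{equation*}
u(t) := -\sum_{k=0}^\infty \frac{e^{-k^2 T}}{\mu_k}\,\langle z_0, \varphi_k\rangle\, q_k(T-t).
\end{equation*}
Using the bound on $\|q_k\|_{L^2}$ and the hypothesis \eqref{eq:mu-b} (which must be a lower bound of the form $|\mu_k| \geq c\, e^{-\beta k}$ or similar moderate decay), together with the super-exponential factor $e^{-k^2 T}$, one sees that the series converges absolutely in $L^2(0,T)$, with norm controlled by $\|z_0\|_{L^2}$. Triangle inequality and the Cauchy--Schwarz bound on $\langle z_0,\varphi_k\rangle$ give $\|u\|_{L^2(0,T)} \leq C(T)\|z_0\|_{L^2}$. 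By biorthogonality, $u$ satisfies every moment condition of Step 1, so $z(T)=0$.

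\textbf{Step 4: main obstacle.} The delicate point is reconciling the exponential growth of $\|q_k\|_{L^2}$ with the possible decay of $|\mu_k|$: this is precisely where the decay hypothesis \eqref{eq:mu-b} intervenes, and it is the sole reason a mere non-vanishing assumption on $\mu_k$ is not enough. The super-exponential Gaussian factor $e^{-k^2 T}$ saves convergence as soon as $\log(1/|\mu_k|) = o(k^2)$, which \eqref{eq:mu-b} ensures. Once this summability is in hand, the remainder of the argument is routine, and the linear dependence of $u$ on $z_0$ yields the continuous dependence implicit in small-time null controllability.
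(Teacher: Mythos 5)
Your proposal is correct and follows essentially the same route as the paper: the paper proves \cref{thm:linear} (in the sharper form of \cref{thm:linear-cost}) by reducing to the moment problem $\int_0^T u(t)e^{-k^2(T-t)}\dd t = -\langle z_0,\varphi_k\rangle e^{-k^2 T}/\langle\gmu,\varphi_k\rangle$ and solving it with a biorthogonal family satisfying the bound $\|\psi_k\|_{L^2}\leq C e^{Ck+C/T}$, using \eqref{eq:mu-b} together with the Gaussian factor $e^{-k^2T}$ exactly as in your Steps 3--4. The only difference is that the paper works with biorthogonal families to $t^j e^{-(1+k^2)t}$ so as to produce $H^m_0$ controls for the later nonlinear argument; your construction is the $L^2$ specialization of the same argument.
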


When dealing with nonlinear behaviors, especially in infinite dimension, the regularity of the controls plays a crucial role. In fact, and quite surprisingly, the regularity of the controls already plays an important role for control systems in finite dimension (see \cite{BMJDE}). We define the following more precise notions, stressing the regularity imposed on the controls.

\begin{definition}[Small-time local null controllability]
 Let $\Gamma$ be such that \eqref{def:Gamma} holds. For $m \in \N^*$, we say that system \eqref{eq:z} is $H^m$-STLNC (respectively $H^m_0$-STLNC) when, for every $T,\eta > 0$, there exists $\delta > 0$ such that, for every $z_0 \in L^2(0,\pi)$ with $\|z_0\|_{L^2} \leq \delta$, there exists $u \in H^m(0,T)$ (resp. $u \in H^m_0(0,T)$) with $\|u\|_{H^m} \leq \eta$ such that the solution $z \in Z$ to \eqref{eq:z} satisfies $z(T) = 0$.
\end{definition}

\begin{definition}[Smooth small-time local null controllability]
 Let $\Gamma$ be such that \eqref{def:Gamma} holds. We say that system \eqref{eq:z} is Smoothly-STLNC, when it is $H^m_0$-STLNC for every $m \in \N^*$.
\end{definition}

Under appropriate assumptions, the smooth small-time null controllability of the linearized system around the null equilibrium implies that the nonlinear system is Smoothly-STLNC. This was also the case in finite dimension (see \cite[Theorem 1]{BMJDE}). Although the following theorem is quite classical, we include a proof in \cref{sec:sstlc-linear} to highlight that we can indeed construct regular controls.

\begin{thm} \label{thm:sstlc-linear}
 Let $\Gamma$ satisfying \eqref{def:Gamma}, $\langle \gmu, \varphi_k \rangle \neq 0$ for all $k\in \N$ and the decay assumption \eqref{eq:mu-b}. Then, the nonlinear system \eqref{eq:z} is Smoothly-STLNC with a linear cost. 
 
 \medskip \noindent
 More precisely, for every $m \in \N^*$, there exist constants $C_\mathfrak{L}, \delta_\mathfrak{L} > 0$ and a continuous map $\mathfrak{L} : \left\{ z_0 \in L^2(0,\pi); \enskip \|z_0\|_{L^2} \leq \delta_\mathfrak{L} \right\} \to H^m_0(0,T)$ such that, for every $z_0 \in L^2(0,\pi)$ with $\|z_0\|_{L^2} \leq \delta_\mathfrak{L}$, the solution $z \in Z$ to \eqref{eq:z} with a control $u := \mathfrak{L}(z_0)$ satisfies $z(T) = 0$. Moreover, the control and the trajectory satisfy the estimate
 \begin{equation} \label{eq:zu-stlc-cost}
   \|z\|_{Z} + \|u\|_{H^m(0,T)} \leq C_\mathfrak{L} \|z_0\|_{L^2(0,\pi)}.
 \end{equation}
\end{thm}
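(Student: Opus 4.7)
The plan is to prove the result in two stages: first, for each $m \in \N^*$, construct a continuous linear control operator $\mathfrak{L}$ for the linearized system \eqref{eq:z-linear}, producing $H^m_0(0,T)$-valued controls with linear cost in $\|z_0\|_{L^2}$; second, transfer this to the full nonlinear system by a fixed-point argument.

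For the first stage, \cref{thm:linear} only supplies $L^2$-controls, so the strategy is to upgrade them using a three-interval time splitting $[0,T] = [0,T/3]\cup[T/3,2T/3]\cup[2T/3,T]$. On $[0,T/3]$ apply $u\equiv 0$, so that parabolic smoothing yields $z(T/3)\in\bigcap_{s\geq 0}H^s$ with norm controlled by $\|z_0\|_{L^2}$. On $[T/3,2T/3]$ use a HUM-type null control bringing $z(T/3)$ to $0$ at time $2T/3$: since the HUM control has the form $u(t)=(\mu,\phi(t))_{H^{-1}_N,H^1_N}$ where $\phi$ solves the backward adjoint heat equation, its smoothness is inherited from its optimal terminal datum (itself controlled, via observability, by the now-smooth target $z(T/3)$), giving $u \in H^m([T/3,2T/3])$ with $\|u\|_{H^m}\lesssim \|z_0\|_{L^2}$. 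A $C^\infty_c$ cutoff supported in the interior of $[T/3,2T/3]$, followed by a small corrective null control on a further subinterval acting on the (smooth) residual at time $2T/3$, then restores the endpoint vanishing required by $H^m_0$. Every step being linear and continuous in $z_0$, this produces the desired $\mathfrak{L}: L^2(0,\pi) \to H^m_0(0,T)$ with $\|\mathfrak{L}(z_0)\|_{H^m_0}\lesssim \|z_0\|_{L^2}$.

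For the second stage, I seek the nonlinear control in the form $u = \mathfrak{L}(z_0+\tilde y)$ for some $\tilde y \in L^2$ to be determined. Decomposing the resulting nonlinear trajectory as $z = z^L+w$, where $z^L$ is the linearized solution with the same data, one has $z^L(T) = -S(T)\tilde y$ by construction of $\mathfrak{L}$ (with $S$ the Neumann heat semigroup), while $w$ solves $\partial_t w - \partial_{xx}w = u(\Gamma[z]-\mu)$ with $w(0)=0$. The condition $z(T)=0$ thus becomes $\tilde y = S(T)^{-1}w(T)$. Because $u\in H^m_0$ vanishes to high order at both endpoints while $\Gamma[z]-\mu$ stays uniformly bounded in $H^{-1}_N$ by \eqref{def:Gamma}, an iterated integration by parts (or a mode-by-mode Fourier estimate) in the Duhamel formula for $w(T)$ yields $w(T)\in S(T)L^2$ with $\|S(T)^{-1}w(T)\|_{L^2}\lesssim(\|z_0\|_{L^2}+\|\tilde y\|_{L^2})^2$. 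A Banach fixed point then furnishes $\tilde y$ with $\|\tilde y\|_{L^2}\lesssim\|z_0\|_{L^2}^2$, so the control $u = \mathfrak{L}(z_0+\tilde y)$ satisfies $\|u\|_{H^m_0}\lesssim\|z_0\|_{L^2}$, and continuity of the map $z_0 \mapsto u$ follows from that of $\mathfrak{L}$ combined with the stability of the fixed-point scheme.

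The main obstacle I anticipate is controlling regularity and cost simultaneously. The first stage requires careful bookkeeping to marry the $H^m_0$ endpoint vanishing, the linear cost in $\|z_0\|_{L^2}$, and linearity in $z_0$; the second stage hinges on verifying that the nonlinear residual $w(T)$ falls in the strict subspace $S(T)L^2 \subsetneq L^2$, a property that depends quantitatively on the endpoint vanishing of $u$ to extract enough smoothing from the Duhamel integral. Once these two technical points are pinned down, the rest is a routine application of the contraction mapping theorem.
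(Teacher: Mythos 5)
Your second stage contains the decisive gap. It rests on the claim that the nonlinear residual $w(T)=\int_0^T S(T-t)\,u(t)\bigl(\Gamma[z(t)]-\mu\bigr)\dd t$ lies in $S(T)L^2$ with $\|S(T)^{-1}w(T)\|_{L^2}$ bounded quadratically. This is false: membership in the range of the heat semigroup at time $T$ requires the Fourier coefficients of $w(T)$ to decay like $e^{-k^2T}$, whereas the portion of the Duhamel integral with $t$ close to $T$ undergoes essentially no smoothing. Concretely, if $\Gamma$ is affine then $\langle w(T),\varphi_k\rangle$ contains a term proportional to $\int_0^T e^{-k^2(T-t)}u(t)\dd t$, and even for $u\in C^\infty_c(0,T)$ supported in $[0,T-\epsilon]$ this is only $O(e^{-k^2\epsilon})$, so $S(T)^{-1}w(T)$ diverges. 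Integration by parts using the endpoint vanishing of $u\in H^m_0$ gains at best powers of $k^{-2}$ (and in practice not even that, since $t\mapsto\Gamma[z(t)]$ is merely square-integrable in time), never the required Gaussian decay. This is exactly the obstruction the paper's proof is built around: rather than correcting the initial datum and inverting $S(T)$, it treats $u(\Gamma[z]-\Gamma[0])$ as a source term and invokes the Liu--Takahashi--Tucsnak scheme (\cref{thm:source-control}), re-controlling the state to zero on a sequence of intervals $[T_k,T_{k+1}]$ accumulating at $T$, with the weights $\rho_0,\rho_{\mathcal{F}}$ calibrated against the small-time control cost $e^{C/\tau}$; the Banach fixed point is then run on the source term in the weighted space $\mathcal{F}$, not on the initial datum.

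Your first stage also has a gap, though the target statement there (\cref{thm:linear-cost}) is classical and reachable by other means. The HUM control $u(t)=(\mu,\phi(t))_{H^{-1}_N,H^1_N}$ is smooth in the interior of the control interval by parabolic smoothing of the adjoint, but not up to its final time: the minimal-norm terminal datum $\phi_T$ is obtained by inverting the Gramian and lives only in the completion of $L^2$ for the very weak observability seminorm, no matter how smooth the state being steered is; observability controls $\phi_T$ in that weak norm only, not in any Sobolev norm. Hence the time derivatives of $u$ blow up at the right endpoint, and your ``cutoff, then correct on a further subinterval'' step reproduces the identical defect at the new final time, so the recursion never closes. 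The paper sidesteps this by the moment method (\cref{thm:moments}): the $m$-th derivative of the control is expanded on a biorthogonal family, the endpoint vanishing conditions \eqref{eq:v-tauj} are imposed as additional moment equations, and the cost estimate \eqref{eq:cost} comes out directly from the bounds \eqref{eq:psi-size} on the biorthogonal family.
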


\begin{rk} \label{rk:sstlc-linear-equiv}
 In fact, the small-time null-controllability of the linearized system is a necessary condition for the Smooth-STLNC of the full nonlinear system \emph{with linear cost} (see \cref{sec:sstlc-linear-equiv}).
\end{rk}

Generally speaking, one can only deduce local controllability results for the nonlinear system from the controllability of the linearized system. However, for some particular nonlinearities, it is sometimes possible to obtain global control results (see e.g.\ \cite{MR1791879} where the authors prove a global control result for a nonlinear parabolic equation despite the presence of a strong nonlinearity which would make the solution blow up in the absence of control).

\subsection{Obstructions caused by quadratic integer drifts}

When the linearized system misses some directions, a natural question is whether a quadratic expansion can help to recover controllability along the lost directions. As a representative situation, we consider the case when the linearized system misses one direction: the first one. This choice also avoids technicalities explained in Section \ref{sec:wsio}. This corresponds to the assumption that 
\begin{equation} \label{lost_direction}
 \langle \mu, \varphi_0 \rangle = 0.
\end{equation}
To study the quadratic behavior of the system along this lost direction, we introduce, for $j \in \N^*$, the sequence $(c_j)_{j\in\N^*} \in \R^{\N^*}$ defined by
\begin{equation} \label{Def:cj}
 c_j := \langle \gmu , \varphi_j \rangle \langle \glambda \varphi_j, \varphi_0 \rangle.
\end{equation}
Similarly as in finite dimension (see \cite[Theorems 2 and 3]{BMJDE}), Lie bracket considerations lead to obstructions related to quadratic coercive drifts, quantified by integer-order negative Sobolev norms. 
This means that the component $\langle z(t),\varphi_0 \rangle$ ineluctably moves in one direction, for instance it increases and therefore cannot reach values $\leqslant \langle z(0),\varphi_0 \rangle$, which prevents controllability.
Moreover, $\langle z(t) - z(0),\varphi_0\rangle $ behaves like the square of an appropriate norm of the control, see Remark \ref{Rk1.6} below.

The particular case of the first obstruction was encountered by Coron in \cite{MR2193655} and Morancey and the first author in \cite{MR3167929}, in the context of a bilinear Schr\"odinger equation. The following theorem, proved in \cref{sec:obs-integer}, shows that any integer-order quadratic obstruction is possible and has consequences for the controllability of the full nonlinear system.

\begin{thm} \label{thm:obs-integer}
 Let $n \in \N^*$, $\Gamma$ satisfying \eqref{def:Gamma} and \eqref{lost_direction}. Assume that there exists $n$ different directions $\varphi_j$ such that $\langle \mu,\varphi_j\rangle \neq 0$ and that the sequence $c_j$ defined in~\eqref{Def:cj} satisfies the assumptions
\begin{align} 
 \label{K_C2n}
 & \sum_{j=1}^\infty j^{4n}|c_j| < + \infty,
 \\
 \label{Ad_2l-1}
 & \sum_{j=1}^\infty j^{2(2\ell-1)} c_j = 0, \quad \text{ for every } \ell \in \{1,...,n-1\},
 \\
 \label{Ad_2n-1}
 a := & \sum_{j=1}^\infty j^{2(2n-1)} c_j \neq 0.
\end{align}
Then the system (\ref{eq:z}) is not $H^{2n+2}$-STLNC.

\bigskip \noindent
More precisely, for every $\varepsilon > 0$, there exists $T^*>0$ such that, for every $T \in (0,T^*)$, there exists $\eta>0$ such that, for every $\delta \in [-1,1]$, for every $u \in H^{2n+2}(0,T)$ with $\|u\|_{H^{2n+2}(0,T)} \leq \eta$, if the solution of \eqref{eq:z} with initial data $z_0= \delta \varphi_0$ satisfies
\begin{equation} \label{eq:return} 
 \forall j \in \N^*, \quad \langle z(T),\varphi_j\rangle = 0,
\end{equation} 
then 
\begin{equation} \label{eq:DRIFT_HN}
 \left| \langle z(T),\varphi_0\rangle - \delta + a (-1)^n \|u_n\|_{L^2(0,T)}^2 \right| 
 \leq \varepsilon \left( |\delta| + \|u_n\|_{L^2(0,T)}^2 \right).
\end{equation}
\end{thm}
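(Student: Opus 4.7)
The plan is to perform a second-order expansion of $z$ in $u$ and exploit the moment hypotheses \eqref{K_C2n}--\eqref{Ad_2n-1} to extract from the quadratic component a coercive drift equal to $(-1)^{n-1} a\|u_n\|_{L^2}^2$ on the $\varphi_0$-direction. First, I write $z = z_L + z_Q + R$, where $z_L$ solves $(\partial_t - \partial_{xx})z_L = u\mu$ with $z_L(0) = \delta\varphi_0$; $z_Q$ solves $(\partial_t - \partial_{xx})z_Q = u\,\Gamma'[0]\,z_L$ with $z_Q(0) = 0$; and the cubic remainder $R$ is controlled via Duhamel and the $C^2$-bound \eqref{def:Gamma}. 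Hypothesis \eqref{lost_direction} immediately gives $\langle z_L(T),\varphi_0\rangle = \delta$, and spectral decomposition of $z_L$ yields
\[
 \langle z_Q(T),\varphi_0\rangle = \delta\,\langle\Gamma'[0]\varphi_0,\varphi_0\rangle\,u_1(T) + \int_0^T\!\!\int_0^t u(t)u(s)\,G(t-s)\,ds\,dt,
\]
with $G(\tau) := \sum_{j\geq 1} c_j e^{-j^2\tau}$.

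Next, \eqref{K_C2n} ensures $G \in C^{2n}([0,T])$ with $G^{(k)}(0) = (-1)^k\sum_j c_j j^{2k}$; \eqref{Ad_2l-1}--\eqref{Ad_2n-1} give $G^{(2\ell-1)}(0) = 0$ for $\ell \in \{1,\ldots,n-1\}$ and $G^{(2n-1)}(0) = -a$. Taylor-expanding $G$ to order $2n-1$ and using the Cauchy identity $\int_0^t (t-s)^m u(s)\,ds = m!\,u_{m+1}(t)$ rewrites the double integral as a linear combination of $\int_0^T u\,u_{2k+1}\,dt$ ($k = 0,\ldots,n-1$), a dominant $-a \int_0^T u\,u_{2n}\,dt$, and a Taylor remainder bounded by $\int\!\!\int|u(t)u(s)|(t-s)^{2n}\,ds\,dt$. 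Iterated integration by parts yields the key identity
\[
 \int_0^T u(t)\,u_{2n}(t)\,dt = \sum_{k=1}^n (-1)^{k-1} u_k(T) u_{2n+1-k}(T) + (-1)^n \|u_n\|_{L^2}^2,
\]
exhibiting the coercive drift $(-1)^{n-1} a\|u_n\|_{L^2}^2$ as expected; the remaining contributions are quadratic forms in the boundary values $u_k(T)$ or the Taylor remainder.

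To close the estimate, I project the return condition \eqref{eq:return} onto each of the $n$ modes $\varphi_{j_l}$ for which $\langle\mu,\varphi_{j_l}\rangle \neq 0$. The linear part reads
\[
 \langle\mu,\varphi_{j_l}\rangle\int_0^T e^{-j_l^2(T-s)} u(s)\,ds = -\langle z_Q(T) + R(T),\varphi_{j_l}\rangle,
\]
whose right-hand side is a quadratic functional of $u$. Iterated integration by parts of the exponential produces a Vandermonde-type $n \times n$ linear system whose inversion bounds $u_k(T)$ quadratically for $k \in \{1,\ldots,n\}$. Together with the elementary bound $|u_{n+k}(T)| \leq C T^{k-1/2}\|u_n\|_{L^2}$ ($k \geq 1$) coming from iterated Cauchy--Schwarz, the cross-products $u_k(T) u_{2n+1-k}(T)$ fit into $\varepsilon(|\delta| + \|u_n\|_{L^2}^2)$ upon choosing $T^*$ then $\eta$ small enough; the same smallness absorbs the $\delta\,\langle\Gamma'[0]\varphi_0,\varphi_0\rangle u_1(T)$ correction, the even Taylor contributions, the Taylor remainder, and $\langle R(T),\varphi_0\rangle$, yielding \eqref{eq:DRIFT_HN}.

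The most delicate point is this last step: getting the boundary quadratic forms and the Taylor remainder inside $\varepsilon(|\delta| + \|u_n\|_{L^2}^2)$, since $\|u_n\|_{L^2}$ can be arbitrarily small compared to $\|u\|_{L^2}$ for highly oscillatory controls. The argument must balance the Vandermonde-based quadratic bound on low-index $u_k(T)$ (which crucially uses the hypothesis of $n$ recoverable linear directions, matched to the order of the drift) against the $\|u_n\|$-based bound on high-index $u_k(T)$, and must exploit the freedom to first take $T^*$ and then $\eta$ small to make absolute errors negligible.
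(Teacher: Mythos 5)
Your overall architecture — second-order expansion with the kernel $G(\tau)=\sum_j c_j e^{-j^2\tau}$, extraction of the drift via iterated integration by parts of $\int_0^T u\,u_{2n}$, control of the boundary values $u_k(T)$ through a Vandermonde system built from the $n$ return conditions, and the order of quantifiers ($T^*$ first, then $\eta$) — is the same as the paper's, and your sign bookkeeping and the identity for $\int_0^T u\,u_{2n}$ are correct. However, your treatment of the Taylor remainder is a genuine gap. Bounding the remainder by $\int\!\!\int|u(t)u(s)|(t-s)^{2n}\,ds\,dt$ gives a quantity of order $T^{2n}\|u\|_{L^1(0,T)}^2$, and this is \emph{not} dominated by $\varepsilon\|u_n\|_{L^2(0,T)}^2$: for $u(t)=\epsilon_0\sin(\omega t)$ one has $\|u\|_{L^1}\sim\epsilon_0 T$ while $\|u_n\|_{L^2}^2\lesssim\epsilon_0^2\omega^{-2}T^{2n-1}$, so the ratio blows up as $\omega\to\infty$, and taking $\epsilon_0$ small keeps $\|u\|_{H^{2n+2}}\leq\eta$ without changing that ratio. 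Putting the absolute value inside destroys exactly the oscillatory cancellation you need, and you give no mechanism (e.g.\ via \eqref{eq:return}) that would rescue the bound. The correct route — and the one the paper takes in \cref{Prop:fq_syst} — is not to Taylor-expand $G$ but to integrate by parts $2n$ times directly in the double integral $\int\!\!\int u(t)u(s)G(t-s)$, producing $(-1)^n\int\!\!\int u_n(t)u_n(s)G^{(2n)}(t-s)$; since $G^{(2n)}\in L^\infty$ by \eqref{K_C2n}, this residual is $\leq C\|u_n\|_{L^1}^2\leq CT\|u_n\|_{L^2}^2$, which carries the small factor $T$.

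The second issue is that you correctly identify the delicate point — $\|u_n\|_{L^2}$ can be arbitrarily small compared with $\|u\|_{L^\infty}$ — but you do not supply the tool that resolves it. Taking $T^*$ and then $\eta$ small is not sufficient on its own: the cubic Duhamel remainder is $O(\|u\|_{L^\infty}^3)$ and the Vandermonde inversion leaves $O(\|u\|_{L^\infty}^2)$ terms in the bounds for $u_k(T)$, and shrinking $\eta$ shrinks these at the same rate as it may shrink $\|u_n\|_{L^2}^2$, so "absolute errors negligible" does not follow. The missing ingredient is the Gagliardo--Nirenberg interpolation of \cref{thm:gn}, $\|u\|_{L^\infty}^3\leq C\|u_n\|_{L^2}^2\bigl(\|u^{(2n+2)}\|_{L^1}+T^{-3n-\frac32}\|u_n\|_{L^2}\bigr)$, which is precisely where the $H^{2n+2}$ smallness hypothesis enters and which converts the cubic terms into $\varepsilon\|u_n\|_{L^2}^2$ after the choices of $T^*$ and $\eta$. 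The rest of your outline (the decomposition with $z_L(0)=\delta\varphi_0$ and the harmless cross term $\delta\langle\Gamma'[0]\varphi_0,\varphi_0\rangle u_1(T)$, the even Taylor contributions reducing to boundary quadratic forms, the crude bound $|u_{n+k}(T)|\leq CT^{k-\frac12}\|u_n\|_{L^2}$) is sound and parallels the paper.
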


\begin{rk} \label{Rk1.6}
 At an heuristic level, the estimate \eqref{eq:DRIFT_HN} corresponds to the fact that, in the asymptotic of small controls in $H^{2n+2}(0,T)$, one has
 \begin{equation} \label{trop_vague}
   \langle z(T),\varphi_0\rangle \approx \langle z_0, \varphi_0 \rangle - a (-1)^n \|u_n\|_{L^2(0,T)}^2.
 \end{equation}
 The approximate equality \eqref{trop_vague} indicates that the quadratic terms induce a drift in the dynamics of the system, which is quantified by the $L^2(0,T)$ norm of $u_n$. In particular, initial states for which $\langle z_0,\varphi_0 \rangle$ has the same sign as $-a(-1)^n$ cannot be driven to zero. 
\end{rk}

\begin{rk}
Under assumption (\ref{K_C2n}), the series considered in (\ref{Ad_2l-1}) and (\ref{Ad_2n-1}) converge.
Under appropriate regularity assumptions on $\Gamma$, these two relations may be rewritten in term of the Lie brackets 
\begin{align}
\langle [\text{ad}_{f_0}^{2\ell-1}(f_1), f_1](0) , \varphi_0 \rangle & =0, \quad \text{ for every } \ell \in \{1,...,n-1\},
\\
\langle [\text{ad}_{f_0}^{2n-1}(f_1),f_1](0) , \varphi_0 \rangle & \neq 0,
\end{align}
where $f_0 := \Delta_N$ and $f_1 := \Gamma$. These Lie bracket conditions are exactly those that appear for finite dimensional systems $x'=f_0(x)+uf_1(x)$ in \cite[Theorems 2 and 3]{BMJDE} .

Here, the notation $\text{ad}_{f_0}^j$ refers to the usual definition of iterated Lie brackets of vector fields and is used formally in infinite dimension.
For smooth vector fields $X, Y \in C^\infty(\R^d,\R^d)$, $[X,Y]$ is the smooth vector field on $\R^d$ defined by 
$[X,Y](x)=Y'(x)X(x)-X'(x)Y(x)$ for every $x\in\R^d$ and we define by induction on $j \in\N$, $\text{ad}_{X}^0(Y):=Y$ and $\text{ad}_{X}^{j+1}(Y):=[X,\text{ad}_X^j(Y)]$.

These formal computations become rigorous for instance when $\Gamma[z]=\mu+\Gamma'[0]z$, $\mu \in D(\Delta_N^{2n-1})$, and $\Gamma'[0]$ maps $D(\Delta_N^j)$ into itself for any $j\in\{1,\dots,n\}$.
Indeed, then for any $j \in \{1,\dots,n\}$ and $z\in D(\Delta_N^j)$, the following equality holds in $L^2(0,\pi)$: 
\begin{equation}
\text{ad}_{\Delta_N}^j (\Gamma)(z)=\text{ad}_{\Delta_N}^j \left( \Gamma'(0) \right) z +  (-1)^j \Delta_N^j \mu,
\end{equation}
where the first bracket in the right hand side is a commutator of operators.
Thus
\begin{equation} \label{crohet_interm_1}
[\text{ad}_{\Delta_N}^{2\ell-1}(\Gamma), \Gamma ](0) =  \Gamma'[0]  (-1)^{2\ell-1} \Delta_N^{2\ell-1} \mu - \text{ad}_{\Delta_N}^{2\ell-1}\left( \Gamma'(0) \right) \mu.
\end{equation}
Taking into account that $\Delta_N \varphi_0=0$, we obtain
$\langle \text{ad}_{\Delta_N}^{2\ell-1}\left( \Gamma'(0) \right) \mu , \varphi_0 \rangle = \langle  \Gamma'(0) \Delta_N^{2\ell-1}\mu,\varphi_0\rangle $
and we deduce from (\ref{crohet_interm_1}) that
$\langle [\text{ad}_{\Delta_N}^{2\ell-1}(\Gamma), \Gamma ](0) ,\varphi_0\rangle = - 2 \langle \Gamma'[0]  \Delta_N^{2\ell-1} \mu,\varphi_0\rangle$.
Finally, for any $\ell \in \{1,\dots,n\}$, one has
\begin{equation}
 \begin{split}
\sum_{j=1}^\infty j^{2(2\ell-1)} c_j 
& = \sum_{j=1}^\infty j^{2(2\ell-1)} \langle \gmu , \varphi_j \rangle \langle \glambda \varphi_j, \varphi_0 \rangle 
=  \sum_{j=1}^\infty \langle \Delta_N^{2\ell-1} \gmu , \varphi_j \rangle \langle  \varphi_j, \glambda^* \varphi_0 \rangle
\\ & = \langle \Delta_N^{2\ell-1} \gmu , \glambda^* \varphi_0 \rangle 
= \langle \glambda \Delta_N^{2\ell-1} \gmu ,  \varphi_0 \rangle 
 = - \frac{1}{2} \langle [\text{ad}_{\Delta_N}^{2\ell-1}(\Gamma), \Gamma ](0) , \varphi_0 \rangle.
 \end{split}
\end{equation}
\end{rk}

\begin{rk}
 For control-affine systems in finite dimension, we proved in \cite[Theorem 3]{BMJDE} that the optimal norm for the smallness assumption on $u$ is the $W^{2n-3,\infty}$ one. 
 This norm is optimal in the following sense:
 \begin{itemize}
 \item there exists finite dimensional systems $x'=f(x,u)$ that are not $W^{2n-3,\infty}$-STLC because of such a drift, 
 but for which $W^{2n-4,\infty}$-STLC holds: 
 STLC is possible with controls small in $W^{2n-4,\infty}(0,T)$ but large in $W^{2n-3,\infty}(0,T)$, typically oscillating controls,
 \item in such cases, the cubic (or higher order) term is responsible for the controllability: it dominates the quadratic term in this oscillation regime.
 \end{itemize}
 The $H^{2n+2}$-norm used in Theorem \ref{thm:obs-integer}, for the nonlinear heat equation, is not the optimal one, but allows a lighter exposition.
 Here, the optimal norm is an open problem.
 To solve it, one would need a sharp quantification of the cubic remainder in $\langle z(t),\varphi_0\rangle$. 
\end{rk}

\begin{rk}
 Assumption \eqref{K_C2n} could probably also be softened. The convergence of this sum is indeed not necessary to define the drift amplitude $a$ in condition \eqref{Ad_2n-1}. However, this regularity avoids technical complications in the proofs when estimating the remainders. Hence, we will stick with it since our main goal is to highlight phenomenons and not to provide an optimal general framework.
\end{rk}

\subsection{Obstructions caused by quadratic fractional drifts}

In the context of parabolic equations, which are control systems in infinite dimension, a whole new continuous family of drifts can occur, quantified by fractional-order negative Sobolev norms. A fractional drift quantified by the $H^{-5/4}$ norm of the control had already been observed by the second author for a Burgers equation (see \cite{2015arXiv151104995M}). The following theorem, proved in \cref{sec:obs-fractional}, is the first main result of our work and proves that any negative fractional drift is possible.

\begin{thm} \label{thm:obs-fractional}
 Let $n \in \N$, $s \in (0,1)$, $a \in \R^*$, $\alpha > 4s-1$ and $\Gamma$ be such that \eqref{def:Gamma} and \eqref{lost_direction} hold. Assume that the coefficients $c_j$ defined in \eqref{Def:cj} satisfy
 \begin{align} \label{cj_asympt}
  & c_j = \frac{a}{j^{4n-1+4s}} + O\left( \frac{1}{j^{4n+\alpha}} \right),
  \\
  \label{Ad_2l-1_frac}
  & \sum_{j=1}^\infty j^{2(2\ell-1)} c_j = 0, 
  \quad \text{ for every } \ell \in \{1,...,n\}.
 \end{align}
 Then the system (\ref{eq:z}) is not $H^{2n+2s+\frac{3}{2}}$-STLNC.

\bigskip \noindent
More precisely, there exists a constant $\gamma(s) > 0$ (see \eqref{gamma}) such that, for every $\varepsilon > 0$, there exists $T^*>0$ such that, for every $T \in (0,T^*)$, there exists $\eta>0$ such that, for every $\delta \in [-1,1]$, for every $u \in H^{2n+2s+\frac32}(0,T)$ with $\|u\|_{H^{2n+2s+\frac{3}{2}}(0,T)} \leq \eta$, if the solution of \eqref{eq:z} with initial data $z_0= \delta \varphi_0$ satisfies the condition \eqref{eq:return}, then 
\begin{equation} \label{eq:DRIFT_HS}
 \left| \langle z(T),\varphi_0\rangle - \delta - a \gamma(s) (-1)^n \|u_n\|_{H^{-s}(\R)}^2 \right| 
 \leq \varepsilon \left( |\delta| + \|u_n\|_{H^{-s}(\R)}^2 \right).
\end{equation}
\end{thm}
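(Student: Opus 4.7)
\emph{Proof plan.} I would start by expanding $z$ at second order in $u$ around the trajectory $\delta \varphi_0$ and projecting on $\varphi_0$, then extract the leading behavior of the quadratic part via Fourier analysis. Write $z = \delta \varphi_0 + z^{(1)} + z^{(2)} + R$ with $z^{(k)}$ the $k$-homogeneous Taylor term in $u$ (well-defined thanks to the $C^2$-dependence in \eqref{def:Gamma}) and $R$ the cubic remainder. Since $\Delta_N \varphi_0 = 0$ and $\langle \mu, \varphi_0 \rangle = 0$, the linear projection $\langle z^{(1)}(T), \varphi_0 \rangle$ reduces to a mixed term of size $O(|\delta|\,\|u\|_{L^\infty})$ that fits on the right-hand side of \eqref{eq:DRIFT_HS}. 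Projecting the Duhamel formula for $z^{(2)}$ onto $\varphi_0$, using $e^{t\Delta_N}\varphi_0 = \varphi_0$, and expanding $z^{(1)}$ on the basis $(\varphi_j)_{j \geq 1}$ produces the exact identity
\begin{equation} \label{plan:quad}
\langle z^{(2)}(T), \varphi_0 \rangle = \sum_{j \geq 1} c_j \int_0^T\!\!\int_0^t u(t)\, u(\tau)\, e^{-j^2(t-\tau)} \, d\tau \, dt.
\end{equation}

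Next, extending $u$ by zero to $\R$, Plancherel rewrites \eqref{plan:quad} as $\tfrac{1}{2\pi}\int_\R |\widehat{u}(\xi)|^2 F(\xi)\, d\xi$ with $F(\xi) := \sum_{j \geq 1} c_j\, j^2/(j^4 + \xi^2)$ (using that the Fourier transform of $t\mapsto e^{-j^2 t}\mathbf{1}_{t>0}$ is $1/(j^2+i\xi)$). Taylor-expanding the symbol in $1/\xi^2$ to order $n$ and invoking \eqref{Ad_2l-1_frac} kills the first $n$ polynomial coefficients of $F$ at infinity (because the exponents $4k+2$ for $k=0,\dots,n-1$ match the exponents $2(2\ell-1)$ for $\ell=1,\dots,n$), leaving
\begin{equation} \label{plan:F}
F(\xi) = \frac{(-1)^n}{\xi^{2n}} \sum_{j \geq 1} c_j\, \frac{j^{4n+2}}{j^4 + \xi^2}.
\end{equation}
Substituting the asymptotic \eqref{cj_asympt} and rescaling $j = |\xi|^{1/2}\, t$ converts the residual sum into a Riemann integral, giving $F(\xi) \sim a \gamma(s) (-1)^n |\xi|^{-(2n+2s)}$ as $|\xi| \to \infty$, where $\gamma(s)$ is proportional to the convergent integral $\int_0^\infty t^{3-4s}/(t^4 + 1)\, dt$ (convergence uses $0 < s < 1$) with the precise normalization fixed in \eqref{gamma}. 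The identity $\widehat{u}(\xi) = (i\xi)^n \widehat{u_n}(\xi) + P(\xi)$, where $P$ is a polynomial in $\xi$ of degree $n-1$ whose coefficients are the traces $u_k(T)$, then turns the leading piece of the Fourier integral into $a\gamma(s)(-1)^n \|u_n\|_{H^{-s}(\R)}^2$.

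Three error terms must finally be absorbed into the right-hand side of \eqref{eq:DRIFT_HS}. First, the tail $O(1/j^{4n+\alpha})$ in \eqref{cj_asympt} yields, via the same rescaling, a symbol decaying strictly faster than $|\xi|^{-(2n+2s)}$ thanks to $\alpha > 4s-1$, hence contributes $o(\|u_n\|_{H^{-s}}^2)$. Second, the polynomial correction $P$ produces boundary errors bounded by $\sum_k |u_k(T)|^2$, absorbed via trace estimates using the $H^{2n+2s+3/2}$-smallness of $u$ together with smallness of $T$, plus the observation that $F(\xi)$ is bounded near $\xi=0$. Third, the cubic remainder $R$ is controlled by standard energy estimates for the Neumann heat semigroup and the bilinear bounds of \eqref{def:Gamma}; the return condition \eqref{eq:return} enters precisely here, providing the constraints on $z^{(1)}(T) + z^{(2)}(T) + R(T)$ along each $\varphi_j$, $j\geq 1$, that allow linear-in-$u$ quantities to be traded for quadratic and cubic ones, ensuring the overall error on $\varphi_0$ is $o(|\delta| + \|u_n\|_{H^{-s}}^2)$. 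The main obstacle is Step~2: the $n$ cancellations must be applied exactly so as to isolate the fractional contribution of $c_j$ and identify the correct constant $\gamma(s)$, and a secondary subtlety is that the polynomial error $P(\xi)$ has no Fourier decay and must be handled via trace bounds and short-time smallness rather than by $|\xi|^{-\infty}$ decay.
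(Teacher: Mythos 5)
Your architecture is sound, and your frequency-domain computation is essentially the Fourier-dual of the paper's time-domain argument rather than a truly different route: expanding $j^2/(j^4+\xi^2)$ in powers of $1/\xi^2$ and using \eqref{Ad_2l-1_frac} to kill the coefficients of $\xi^{-2},\dots,\xi^{-2n}$ is exactly the frequency-side counterpart of the $2n$ integrations by parts of \cref{Prop:fq_syst}; your residual sum is $\tfrac12\widehat{K^{(2n)}}(\xi)$ up to the factor $(-1)^n\xi^{-2n}$, your polynomial $P(\xi)$ carrying the traces $u_k(T)$ reproduces the paper's boundary terms (handled through \eqref{eq:return} via \cref{Prop:terme_bord}), and the identification of the limit symbol $a\gamma(s)(-1)^n|\xi|^{-2n-2s}$ by sum--integral comparison is \cref{Prop:Hs_0<s<1/2}.

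There are, however, two genuine gaps in your error analysis. The first and most serious: you assert that a symbol decaying like $|\xi|^{-(2n+2s)-2\beta}$ ``contributes $o(\|u_n\|_{H^{-s}}^2)$''. Such a symbol only contributes $O(\|u_n\|_{H^{-s-\beta}(\R)}^2)$, and for a general $L^2$ function the weaker negative norm is merely bounded by the stronger one with constant $1$; nothing makes it small relative to $\|u_n\|_{H^{-s}(\R)}^2$. This is precisely where the paper invokes the uncertainty principle (\cref{thm:uncertainty}, \cref{Prop_Z}): since $u_n$ is supported in $[0,T]$, its Fourier mass concentrates in $|\xi|\gtrsim 1/T$, yielding $\|u_n\|_{H^{-s-\beta}}^2 \leq C T^{2\beta}\|u_n\|_{H^{-s}}^2 + C T^{-1}|\widehat{u_n}(i)|^2$, and the extra term $|\widehat{u_n}(i)|^2$ must then be traded for $u_{n+1}(T)$ and $\|u_{n+1}\|_{L^2}^2$ and controlled via \cref{Prop:terme_bord} with $m=n+1$ and \cref{Prop:u(n+1)_u(n)_H(-s)}. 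Without this chain the error is only $O(\|u_n\|_{H^{-s}}^2)$ with a constant that does not shrink as $T\to 0$, and the conclusion \eqref{eq:DRIFT_HS} for arbitrary $\varepsilon$ fails. The second gap: ``standard energy estimates'' bound the cubic remainder by $\|u\|_{L^\infty(0,T)}^3$, but converting this into $\varepsilon\,\|u_n\|_{H^{-s}(\R)}^2$ requires the fractional Gagliardo--Nirenberg inequality of \cref{thm:gns}, namely $\|u_n^{(n)}\|_{L^\infty(0,T)}^3 \leq C T^{-3(n+1)}\|u_n\|_{H^{-s}(\R)}^2\|u_n\|_{H^{3n+2s+\frac32}(0,T)}$, whose proof mixes norms on $(0,T)$ and on $\R$ through a controlled extension operator; this is where the specific smallness threshold $H^{2n+2s+\frac32}$ actually enters, and it is not a routine estimate.
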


\begin{rk}
 As in the integer-order obstructions, the following comments can be made.
 \begin{itemize}
 
 \item At an heuristic level, the estimate \eqref{eq:DRIFT_HS} corresponds to the fact that, in the asymptotic of small controls in $H^{2n+2s+\frac{3}{2}}(0,T)$, one has
 \begin{equation} \label{trop_vague2}
   \langle z(T),\varphi_0\rangle \approx \langle z_0, \varphi_0 \rangle + a \gamma(s) (-1)^n \|u_n\|_{H^{-s}(\R)}^2.
 \end{equation}

 \item The conditions \eqref{Ad_2l-1_frac} can be interpreted in terms of Lie brackets between $\Delta_N$ and $\Gamma$.
 
 \item The smallness in $H^{2n+2s+\frac{3}{2}}$ of the controls is not the optimal assumption.
 
 \end{itemize}
\end{rk}


\begin{rk}
 Despite the resemblance between the integer-order and the fractional-order statements, we stress that the nature of the underlying cause might be different. Indeed, the integer-order obstructions occur when the weighted sum \eqref{Ad_2n-1} of the coefficients $c_j$ is non-zero, whereas the fractional-order obstructions depend only on the asymptotic behavior \eqref{cj_asympt} of the coefficients.
\end{rk}

\begin{rk} \label{rk:weights}
 We state \cref{thm:obs-fractional} with drifts quantified by fractional negative Sobolev norms to keep the statement easily understandable. However, as claimed in the abstract, depending on the asymptotic behavior of the sequence $c_j$, the drifts can be quantified by essentially arbitrary weighted fractional negative Sobolev norms. We refer to \cref{sec:weights} for examples and more precise statements. 
\end{rk}

\subsection{Controllability stemming from the quadratic order}

Finally, and even more strikingly, for parabolic equations, we can sometimes recover small-time local null controllability from the quadratic expansion.  This fact is most surprising, as it is never possible in finite dimension. Indeed, for finite dimensional control systems, if the linearized system misses one direction, then small-time local controllability can only be recovered thanks to cubic -- or higher order -- terms (thus, with a cubic or more control cost).

Up to our knowledge, the following result is the first example in which small-time controllability is restored at the quadratic order for a scalar-input system. In all previously known situations where the linearized system misses at least one direction and controllability is restored, either:

\begin{itemize}
 
 \item controllability is restored in small time but by means of a \emph{cubic expansion} with a vanishing quadratic term, 
 (see e.g.\ \cite{MR2060480} where the authors prove small-time local null controllability for a Korteweg-de-Vries system with a critical length thanks to cubic terms),

 \item controllability is restored by means of a quadratic expansion but only 
 \emph{in large time} (see e.g. \cite{MR2338431,MR2504039} where the authors obtain controllability in large time for Korteweg-de-Vries systems with critical lengths and
 \cite{MR2375753, MR2200740, MR3167929} where the authors obtain controllability of bilinear Schrödinger equation),

\item controllability is restored in \emph{large time} by means of 
the return method and possibly other technics
 (see \cite{MR1932962} about the Saint-Venant equation, 
and \cite{MR2144647, MR2200740} about bilinear Schrödinger equations
where the large time is due to quasi-static transformations,
and \cite{MR3208452} where a large time is needed to construct  the reference trajectory of the return method),

 \item controllability is restored \emph{with non-scalar controls} (see e.g.\ \cite{MR1380673} where the author prove small-time controllability for the Euler equation using boundary controls, which corresponds to an infinite number of scalar controls).

\end{itemize}
The following result, proved in \cref{sec:sstlc-quad}, is the second main result of our work.

\begin{thm} \label{thm:sstlc-quad}
 There exists a nonlinearity $\Gamma$ satisfying \eqref{def:Gamma} such that $\langle \gmu, \varphi_0 \rangle = 0$ and the nonlinear system~\eqref{eq:z} is smoothly small-time locally null controllable with \emph{quadratic cost}. 
 More precisely, for each $T > 0$ and $m \in \N^*$, there exists $C_{T,m} > 0$ such that, for each $z_0 \in L^2(0,\pi)$, there exists a control $u \in H^m_0(0,T)$ and a solution $z \in Z$ to \eqref{eq:z}, such that $z(T) = 0$ and 
 \begin{equation} \label{eq:quad.cost.N=1}
 \|u\|_{H^m} \leq C_{T,m} \Big( |\langle z_0,\varphi_0\rangle|^{\frac12} 
 + \|z_0-\langle z_0,\varphi_0\rangle \varphi_0\|_{L^2} \Big).
 \end{equation}
\end{thm}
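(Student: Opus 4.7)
We take $\Gamma$ affine, $\Gamma[z] := \mu + \glambda z$, which automatically satisfies \eqref{def:Gamma}. The parameters $\mu \in H^{-1}_N(0,\pi)$ and $\glambda \in \mathcal{L}(H^1_N; H^{-1}_N)$ are prescribed so that $\langle\mu,\varphi_0\rangle = 0$, $\langle\mu,\varphi_j\rangle \neq 0$ for every $j \in \N^*$ with the decay required in \cref{thm:linear}, and so that the coefficients $c_j := \langle\mu,\varphi_j\rangle\langle\glambda\varphi_j,\varphi_0\rangle$ make the second-order functional
\begin{equation*}
  Q_T(u) \;:=\; \sum_{j=1}^\infty c_j \int_0^T u(\tau) \int_0^\tau u(s)\, e^{-j^2(\tau-s)}\, \dd s\, \dd\tau \;=\; \frac{1}{2\pi}\int_{\R} |\widehat{u}(\xi)|^2\, m(\xi)\, \dd\xi
\end{equation*}
\emph{indefinite} on $H^m_0(0,T)$, with $m(\xi) := \sum_j c_j\, j^2 / (j^4 + \xi^2)$ (the second equality, using the zero extension of $u$, comes from Plancherel applied to $e^{-j^2|\cdot|}$). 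The simplest realization keeps only $c_1 > 0$ and $c_2 < 0$ of comparable size, setting $c_j = 0$ for $j \geq 3$: then $m(0) > 0$ while $m(\xi) < 0$ for $|\xi|$ large, so $Q_T$ takes both signs. Such $c_j$ are produced by any $\mu$ with non-vanishing rapidly decaying Fourier coefficients, together with the rank-one operator $\glambda f := (\alpha_1 \langle f,\varphi_1\rangle + \alpha_2 \langle f,\varphi_2\rangle) \varphi_0$ with $\alpha_1 \alpha_2 < 0$, which is trivially in $\mathcal{L}(H^1_N;H^{-1}_N)$.

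\textbf{Quadratic right inverse and ansatz.} From the indefiniteness we pick smooth profiles $v^\pm \in H^m_0(0,T)$ with $Q_T(v^\pm) = \pm 1$, obtained by concentrating $\widehat{v^\pm}$ in a region where $m$ has the prescribed sign. The square-root ansatz $V(e) := |e|^{1/2}\, v^{\mathrm{sign}(e)}$ then satisfies $Q_T(V(e)) = e$ and $\|V(e)\|_{H^m} \lesssim |e|^{1/2}$. For $z_0 \in L^2$, set $\delta_0 := \langle z_0,\varphi_0\rangle$ and $z_0^\perp := z_0 - \delta_0\varphi_0$, and seek $u = V(e) + w$ with $e \in \R$ close to $-\delta_0$ and $w \in H^m_0(0,T)$ of size comparable to $|\delta_0|^{1/2} + \|z_0^\perp\|_{L^2}$. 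Expanding the solution of \eqref{eq:z} to second order in $u$ and using $\langle\mu,\varphi_0\rangle = 0$, the condition $z(T)=0$ splits into
\begin{equation*}
  \delta_0 + Q_T(V(e)+w) + \mathcal{R}_0(u) = 0, \qquad P^\perp e^{-T\Delta_N} z_0^\perp + \mathsf{L}(V(e)+w) + P^\perp \mathcal{R}(u) = 0,
\end{equation*}
where $\mathsf L$ is the linearized reachability map onto $\{\varphi_0\}^\perp$ (which by \cref{thm:sstlc-linear} admits a continuous right inverse $\mathsf K$ valued in $H^m_0$ with linear cost), while $\mathcal{R}_0(u)$ and $\mathcal{R}(u)$ are respectively cubic and superlinear in $\|u\|_{H^m}$ on bounded sets, thanks to \eqref{def:Gamma} and the $Z$-well-posedness of \eqref{eq:z}.

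\textbf{Fixed point and cost.} For each fixed $e$, the $\{\varphi_0\}^\perp$-equation is inverted by a nested contraction $w = \mathsf K\bigl(-P^\perp e^{-T\Delta_N} z_0^\perp - \mathsf L(V(e)) - P^\perp \mathcal{R}(V(e)+w)\bigr)$ on a ball of radius $\sim |\delta_0|^{1/2} + \|z_0^\perp\|_{L^2}$, producing $w = w(e)$ depending smoothly on $e$. Substituting into the scalar $\varphi_0$-equation, $\Phi(e) := e + \delta_0 + 2B_T(V(e), w(e)) + Q_T(w(e)) + \mathcal{R}_0 = 0$ where $B_T$ is the bilinear form associated with $Q_T$. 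Since $\|B_T(V(e),\cdot)\|_{(H^m_0)^\ast} \lesssim |e|^{1/2}$ is small and the remaining terms are of strictly higher order in the combined smallness of $(\delta_0, z_0^\perp)$, one has $\Phi'(e) = 1 + o(1)$; the implicit function theorem then yields a unique solution $e = -\delta_0 + o(\delta_0)$, and the resulting control $u = V(e) + w(e) \in H^m_0(0,T)$ drives the state to $z(T) = 0$ while satisfying $\|u\|_{H^m} \leq C(|\delta_0|^{1/2} + \|z_0^\perp\|_{L^2})$, which is exactly \eqref{eq:quad.cost.N=1}.

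\textbf{Main obstacle.} The subtle point is the coexistence of two different scales—$V(e)$ of size $|\delta_0|^{1/2}$ and $w$ of linear size in $\|z_0^\perp\|_{L^2}$—coupled through the cross-term $2B_T(V(e), w)$ in $Q_T$. Closing the fixed point requires exploiting that this cross-term has size $\lesssim |\delta_0|^{1/2}\|w\|_{H^m}$ while the cubic remainder $\mathcal R_0$ is $O(|\delta_0|^{3/2} + \|z_0^\perp\|_{L^2}^3)$, so neither destroys the near-identity behavior of $\Phi$. This is where the regularity structure of \eqref{def:Gamma}, the smoothing of the parabolic semigroup, and the well-posedness in $Z$ all play a critical role; any loss in these estimates would potentially break the quadratic-cost bound.
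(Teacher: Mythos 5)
Your construction of $\Gamma$ does not work, and the failure is exactly the phenomenon quantified by \cref{thm:obs-integer}. With only $c_1>0$, $c_2<0$ and $c_j=0$ for $j\ge 3$, the sum $a=\sum_j j^2c_j=c_1+4c_2$ is generically nonzero (e.g.\ $c_1=1$, $c_2=-1$ gives $a=-3$), so \cref{thm:obs-integer} with $n=1$ applies and the system is \emph{not} $H^4$-STLNC: there is a coercive drift $\approx -a(-1)\|u_1\|_{L^2}^2$ of fixed sign. More generally, for any finitely supported nonzero $(c_j)$, Vandermonde invertibility shows that the moments $\sum_j j^{2(2\ell-1)}c_j$ cannot all vanish, so some integer obstruction always applies. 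The flaw in your reasoning is the criterion ``$Q_T$ is indefinite on $H^m_0(0,T)$.'' The positive values of your multiplier $m(\xi)$ live only at low frequencies $|\xi|=O(1)$; for small $T$ the uncertainty principle (\cref{thm:uncertainty}) prevents a control supported in $[0,T]$ from concentrating there, and more decisively, the constraint that the linearly controlled modes return to zero (the moment conditions \eqref{eq:return}, equivalently $u_1(T)=\dots=u_n(T)\approx 0$ via \cref{Prop:terme_bord}) kills precisely the low-frequency directions along which $Q_T>0$. What is actually needed — and what the paper's construction \eqref{magic} achieves with $c_j=\Theta(\ln j)\,j^{1-4s}$ and the oscillating plateau function $\Theta$ — is that $\widehat K(\xi)$ change sign along sequences of frequencies tending to $+\infty$, so that for every small $T$ one can place oscillating controls $v_{\omega_\pm,T/4}$ with $\omega_\pm\gg 1/T$ on plateaus of either sign; this forces infinitely many nonzero $c_j$ with signs oscillating on a logarithmic scale. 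No affine finite-rank $\glambda$ can do this.

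A secondary gap: even granting profiles $v^\pm$ with $Q_T(v^\pm)=\pm1$, your scalar equation $\Phi(e)=0$ cannot be closed as a near-identity perturbation, because the correction $w$ needed to cancel $\mathsf L(V(e))$ has size $\sim\|V(e)\|\sim|e|^{1/2}$, so the cross-term $2B_T(V(e),w)$ is $O(|e|)$ — the \emph{same} order as the principal term $Q_T(V(e))=e$, not higher order. The paper sidesteps this in two ways: the magic system \eqref{magic} is triangular (the quadratic term feeds only $\varphi_0$ and the linear dynamics of the other modes see no feedback), so the two phases decouple exactly and the remainders $\mathcal R_0,\mathcal R$ vanish; and the moment correction $\mathfrak L_1^T(d(v_{\omega,T/4}))$ in \eqref{def:vtilde} is built into the elementary profile \emph{before} normalization and is shown to be $O(\omega^{-1})$, negligible against the main contribution $O(\omega^{-2s})$ of \cref{danslemille5}, so the sign of the corrected quadratic form is verified directly rather than by perturbing an uncorrected one.
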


\begin{rk}[Local vs. global]
 Although we are mostly interested in local controllability properties, \cref{thm:sstlc-quad} is stated as a global controllability result because the system we construct is homogeneous with respect to dilations, so that local and global notions are equivalent.
\end{rk}

\begin{rk}[Quadratic cost]
 \label{rk:quad.cost}
 The size estimate \eqref{eq:quad.cost.N=1} for the control is reminiscent of the fact that the controllability of the first mode $\langle z,\varphi_0\rangle$
 stems from the quadratic order, whereas the controllability of the other modes stems from the linear order (see \cref{sec:quad.cost} for more details).
 
 For small initial states, this cost estimate highlights that the constructed control is "more expansive" than a linear control (with respect to the size of $z_0$), but "less expansive" than a control strategy relying on a cubic (or higher order) expansion.
 
 In finite dimension, there is no system for which the linearized system misses a direction and for which small-time local controllability is recovered with quadratic cost (we refer to \cite{BMJDE} for more precise statements).
\end{rk}

\subsection{Recovering an infinite number of lost directions}

Eventually, we prove that not only a finite number of directions can be recovered at the quadratic order, but even an infinite number of lost directions. Our third main result is the following theorem, proved in \cref{sec:infini}, which,
up to our knowledge, is the first example of recovering an infinite number of lost directions thanks to a power series expansion method.

\begin{thm}
 \label{thm:infini}
 Let $s \in (0,\frac{1}{2})$. There exists a nonlinearity $\Gamma_s : H^1_N(0,\pi) \to H^{-1}_N(0,\pi)$, satisfying the regularity assumptions~\eqref{def:Gamma} and, for every $k\in \N$,
 \begin{equation}
  \label{lost}
  \langle \Gamma_s[0], \varphi_{2k+1} \rangle = 0,
 \end{equation}
 and for which the nonlinear system \eqref{eq:z} is null controllable in small-time with quadratic cost. 
 
 \medskip
 
 More precisely, for each $T > 0$, there exists $C_T > 0$, such that, for each $z_0 \in H^1_N(0,\pi)$, there exists a control $u \in H^{-s}(0,T)$ and a solution $z \in Z$ to \eqref{eq:z}, such that $z(T) = 0$ and
 \begin{equation}
  \label{eq:quad.cost}
  \| u \|_{H^{-s}(0,T)}
  \leq 
  \|(\langle z_0,\varphi_{2k+1}\rangle)_{k\in\N}\|_{\ell^1}^{\frac12}
  + C_T \| (\langle z_0,\varphi_{2k}\rangle)_{k\in\N} \|_{\ell^2}
  .
 \end{equation}
\end{thm}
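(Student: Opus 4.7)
The plan is to realize $\Gamma_s$ as an affine map $\Gamma_s[z] = \mu + B z$ with $\mu \in H^{-1}_N(0,\pi)$ supported on the even eigenfunctions (which immediately enforces \eqref{lost}) and $B \in \mathcal{L}(H^1_N,H^{-1}_N)$ engineered so that, at the quadratic order, each odd mode $\varphi_{2k+1}$ can be steered independently. Concretely, I would choose $\mu = \sum_{k\ge 0} \mu_{2k}\varphi_{2k}$ with a decay on $\mu_{2k}$ ensuring that the even-mode sub-dynamics falls under Theorem~\ref{thm:sstlc-linear}, and a $B$ whose matrix elements $b_{j,k} := \langle B\varphi_{2j},\varphi_{2k+1}\rangle$ produce a prescribed bilinear coupling between the driven even modes and the lost odd modes. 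The natural first simplification is to treat even and odd modes separately: even modes are driven by the linear part of a control $v$ furnished by Theorem~\ref{thm:sstlc-linear} (producing the $\ell^2$ term on the right-hand side of \eqref{eq:quad.cost}), and odd modes are compensated by the quadratic effect of an additional oscillating component $w$.

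Next, I would write a second-order expansion $z = z_1 + z_2 + r$, where $z_1$ is the linear trajectory driven by $u\mu$ and $z_2$ solves the Duhamel equation with source $uB z_1$. Since $\mu$ has no odd component, $\langle z_1(T),\varphi_{2k+1}\rangle = 0$, and the odd-mode projection at second order reduces to an explicit quadratic functional
\begin{equation*}
 \langle z_2(T),\varphi_{2k+1}\rangle
 = Q_k(u)
 = \sum_{j\ge 0} b_{j,k}\,\mu_{2j} \int_0^T \int_0^t u(t) u(\tau) e^{-(2j)^2(t-\tau)} e^{-(2k+1)^2 (T-t)} \, \mathrm{d}\tau\,\mathrm{d}t.
\end{equation*}
On the Fourier side, $Q_k(u)$ becomes a quadratic form in $\widehat{u}$ whose kernel is a linear combination of weighted products of heat resolvents $((2j)^2 + i\xi)^{-1}$. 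The whole point of the construction is to choose $B$ (for instance by picking a well-separated increasing sequence $(\omega_k)_{k\ge 0}$ and adjusting $b_{j,k}$ so that the kernel of $Q_k$ is maximal near frequency $\omega_k$ and negligible near the other $\omega_j$'s) so that $Q_k$ becomes diagonal in a family of oscillating time profiles $\psi_k \in H^m_0(0,T)$ concentrated in frequency near $\pm\omega_k$, with $Q_k(\psi_k) = c_k \neq 0$ of a prescribed sign and cross-terms $|Q_k(\psi_j,\psi_l)|$ decaying rapidly in $|j-k|+|j-l|$.

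With such a family in hand, the ansatz $u = v + \sum_k \alpha_k \psi_k$ reduces the control problem on the odd modes to a nearly diagonal infinite system $c_k \alpha_k^2 + (\text{small cross-terms}) = -\langle z_0,\varphi_{2k+1}\rangle$, which can be solved by Banach or Brouwer fixed-point with $|\alpha_k| \sim |\langle z_0,\varphi_{2k+1}\rangle|^{1/2}$. The cost bound \eqref{eq:quad.cost} then follows from the $H^{-s}$ quasi-orthogonality of the $\psi_k$ (whose spectra are separated packets around $\omega_k$), giving
\begin{equation*}
 \Big\|\sum_k \alpha_k \psi_k\Big\|_{H^{-s}(\R)}^2
 \lesssim \sum_k |\alpha_k|^2 \|\psi_k\|_{H^{-s}(\R)}^2
 \lesssim \sum_k |\langle z_0,\varphi_{2k+1}\rangle|,
\end{equation*}
after normalizing so that $\|\psi_k\|_{H^{-s}}^2 \asymp 1/|c_k|$ is uniformly bounded. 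The hardest step, where the restriction $s < 1/2$ must enter crucially, is calibrating the growth of $(\omega_k)$, the decay of $\mu_{2k}$ and $b_{j,k}$, and the support of $\widehat{\psi_k}$ to simultaneously achieve: (i)~uniform diagonal dominance of the infinite quadratic system; (ii)~summability of the $\|\psi_k\|_{H^{-s}}^2/|c_k|$; (iii)~enough residual regularity on the oscillating part to make sense of the full nonlinear PDE and to absorb the cubic-and-higher remainder $r$ in the fixed-point closure; (iv)~compatibility with the even-mode linear controller $v$, whose unavoidable quadratic back-reaction on the odd modes must remain subordinate to the main quadratic term from $\sum \alpha_k \psi_k$.
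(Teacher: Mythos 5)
Your proposal follows essentially the same route as the paper: an affine nonlinearity $\mu + Bz$ with $\mu$ supported on the even modes and $B$ coupling even to odd modes (so the expansion terminates at second order), a two-phase control (linear for the even modes, then frequency-localized oscillating packets whose quadratic effect steers the odd modes), a nearly diagonal infinite quadratic system solved by a fixed point, and $H^{-s}$ quasi-orthogonality of the packets for the cost estimate. The paper's concrete realization of your ``calibration'' step is to take $b_{j,k} \propto \Theta_k(\ln j)\, j^{\frac12 - s}$ with $\Theta_k$ sign-oscillating on logarithmically spaced plateaus having disjoint supports across $k$ — so that each diagonal coefficient $c_k$ is available with either sign at arbitrarily high frequencies, as required for arbitrarily small $T$ — and the fixed point is Schauder rather than Banach, since the map from targets to controls involves $|y_k|^{1/2}$ and sign-dependent frequency choices.
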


\begin{rk}[Lost directions]
 Conditions \eqref{lost} imply that an infinite number of directions are "lost" by the linearized system \eqref{eq:z-linear} and "recovered" through the nonlinearity.
\end{rk}

\begin{rk}[Control cost]
A surprising feature of the cost estimate \eqref{eq:quad.cost} is that the cost of control for the odd modes does not blow up as $T \to 0$. This is reminiscent of the fact that we construct a system which behaves really nicely with respect to controllability with $H^{-s}$ controls. This surprising feature is discussed in \cref{sec:cost1}.
\end{rk}

\begin{rk}[Initial state regularity]
 Our result assumes that the initial state is in $H^1_N(0,\pi)$. Of course, if the initial state is only in $L^2(0,\pi)$, one can use a null control on a small time interval in order to let the smoothing effect of the heat equation take place. The main consequence is that the $\ell^1$ norm of the odd coefficients of the regularized initial data scales like $T^{-\frac14}$, which deteriorates the constant control cost we obtained for slightly smoother initial data.
\end{rk}

\begin{rk}[Control regularity]
 The possibility to restore small-time null controllability using more regular controls, for the nonlinearity $\Gamma_s$ we construct, is unlikely. The existence of a scalar-input nonlinear system for which smooth small-time controllability could be restored with quadratic cost, although an infinite number of directions are lost at the linear order is an open question. 
\end{rk}

\subsection{Examples}

The goal of this section is to propose examples of nonlinearities $\Gamma$ to which \cref{thm:obs-integer} and \cref{thm:obs-fractional} apply.
In particular, we want to show that every situation can happen already with affine nonlinearities $\Gamma[z]=\mu+\lambda z$, where $\mu, \lambda \in L^2(0,\pi)$ are functions, visualize the different assumptions in this case and understand the genericity of the different situations. For such affine nonlinearities, definition \eqref{Def:cj} simplifies to 
\begin{equation}
\forall j \in \mathbb{N}, \quad c_j=\langle \mu , \varphi_j \rangle \langle \varphi_j , \lambda \rangle.
\end{equation}

We first focus on the first integer obstruction $n=1$ in Theorem \ref{thm:obs-integer}.
Among the functions $(\lambda,\mu)$ such that the series $\sum j^4 |c_j|^2$ converges, then, generically, the sum $\sum j^2 c_j$ does not vanish, 
and we observe a drift quantified by $\|u_1\|_{L^2(0,T)}^2$.
This is for instance the case in the following example.

\paragraph{Example 1.} We consider $\mu(x)=\rho_1(x)$ and $\lambda(x)=\rho_2(x)$ where
\begin{equation} \label{def:rho12}
\rho_1(x)=x^2-\frac{\pi^2}{3} \quad \text{ and } \quad \rho_2(x)=\frac{x^4}{4}-\pi\frac{x^3}{3}+\frac{\pi^4}{30}.
\end{equation}
Note that $\rho_2 \in D(\Delta_N)$, whereas $\rho_1 \notin D(\Delta_N)$, thus $\langle \rho_2,\varphi_j\rangle$ are expected to decrease faster than $\langle \rho_1,\varphi_j\rangle$.
Indeed, easy computations show that
\begin{equation} \label{Fourier_rho1}
\langle \rho_1,\varphi_0\rangle=0  \text{ and for every } j\in\mathbb{N}^*, \quad \langle \rho_1 , \varphi_j\rangle=2\sqrt{2\pi}\frac{(-1)^j}{j^2},
\end{equation}
\begin{equation} \label{Fourier_rho2}
\langle \rho_2 , \varphi_0\rangle=0 \text{ and for every } j\in\mathbb{N}^*, \quad \langle \rho_2 , \varphi_j\rangle=-2\sqrt{2\pi}\frac{[2(-1)^j+1]}{j^4}.
\end{equation}
Therefore (\ref{lost_direction}) holds because $\langle \mu,\varphi_0\rangle=\langle\rho_1,\varphi_0\rangle=0$ 
and (\ref{K_C2n}) holds with $n=1$ because $c_j$ behaves asymptotically like $\frac{1}{j^6}$. 
Moreover, (\ref{Ad_2n-1}) holds with $n=1$ because
\begin{equation}\sum_{j=1}^{\infty} j^2 c_j = -(2\sqrt{2\pi})^2 \sum_{j=1}^{\infty} j^2 \frac{(-1)^j}{j^2} \frac{[2(-1)^j+1]}{j^4} < 0.
\end{equation}
Thus Theorem \ref{thm:obs-integer} applies: there is a drift quantified by $\| u_1 \|_{L^2(0,T)}^2$.

\bigskip

By modifying a bit this example, we may easily construct another example for which (\ref{Ad_2n-1}) does not hold anymore for $n=1$ and the serie involved in (\ref{Ad_2n-1}) for $n=2$ is not convergent.
Then, we are in between of the first integer obstruction ($n=1$) and the second integer obstruction ($n=2$), in the domain of fractional obstructions treated by Theorem  \ref{thm:obs-fractional}.

\paragraph{Example 2.} We consider $\mu(x)=\rho_1(x)$ and $\lambda(x)=\rho_2(x)-\alpha \varphi_1(x)$ where 
\begin{equation}\alpha = 2\sqrt{2\pi} \sum_{j=1}^{\infty}  \frac{(-1)^j [2(-1)^j+1]}{j^4}.
\end{equation}
Then (\ref{lost_direction}) holds because $\langle \mu,\varphi_0\rangle=\langle\rho_1,\varphi_0\rangle=0$,
(\ref{Ad_2l-1_frac}) holds with $n=1$ by choice of $\alpha$.
Thanks to (\ref{Fourier_rho1}) and (\ref{Fourier_rho2}), the assumption (\ref{cj_asympt}) holds with $n=1$ and $s=\frac{3}{4}$.
Thus Theorem \ref{thm:obs-fractional} applies: there is a drift quantified by $\|u_1\|_{H^{-3/4}(\mathbb{R})}^2$.

\bigskip

In order to observe the second integer obstruction, we need smoother functions $\lambda, \mu$ so that the series involved in (\ref{Ad_2n-1}) with $n=2$ does converge.
This is the case in the following example.

\paragraph{Example 3.} We consider $\mu=\rho_2$ and $\lambda=\rho_3+\beta\varphi_1+\gamma\varphi_2$ where 
$\rho_3$ solves $\rho_3''=\rho_2$ on $(0,\pi)$, $\rho_3'(0)=\rho_3'(\pi)=0$ and $\beta,\gamma\in\mathbb{R}$.
Easy computations show that
\begin{equation}
\forall j \in\N^*, \quad \langle \rho_3,\varphi_j\rangle=-\frac{1}{j^2} \langle \rho_2,\varphi_j\rangle=2\sqrt{2\pi}\frac{[2(-1)^j+1]}{j^6}.
\end{equation}
Then (\ref{lost_direction}) holds because $\langle \mu,\varphi_0\rangle=\langle\rho_2,\varphi_0\rangle=0$,
and (\ref{K_C2n}) holds with $n=2$ because $c_j$ behaves asymptotically like $\frac{1}{j^{10}}$. 
For appropriate choices of $(\beta,\gamma)\in\R^2$, namely
\begin{equation}
-2\sqrt{2\pi} \sum_{j=1}^\infty \frac{[2(-1)^j+1]^2}{j^8} + \beta - \frac{3}{4} \gamma = 0
\quad \text{ and } \quad
-2\sqrt{2\pi} \sum_{j=1}^\infty \frac{[2(-1)^j+1]^2}{j^4} + \beta - 48 \gamma  \neq 0
\end{equation}
then (\ref{Ad_2l-1}) and (\ref{Ad_2n-1}) hold with $n=2$, thus Theorem \ref{thm:obs-integer} applies: there is a drift quantified by $\| u_2 \|_{L^2(0,T)}^2$.
For a different choice of $(\beta,\gamma)$, namely
\begin{equation}
-2\sqrt{2\pi} \sum_{j=1}^\infty \frac{[2(-1)^j+1]^2}{j^8} + \beta - \frac{3}{4} \gamma = 0
\quad \text{ and } \quad
-2\sqrt{2\pi} \sum_{j=1}^\infty \frac{[2(-1)^j+1]^2}{j^4} + \beta - 48 \gamma  = 0
\end{equation}
Theorem \ref{thm:obs-fractional} applies: there is a drift quantified by $\|u_2\|_{H^{-3/4}(\mathbb{R})}^2$.

\bigskip

Iterating this construction, we may construct functions $\lambda, \mu$, relying on the function $\rho_3$ such that Theorem \ref{thm:obs-fractional} applies with $n=2$ and $s=\frac{1}{4}$. 
Then there is a drift quantified by $\|u_3\|_{H^{-1/4}(\mathbb{R})}^2$.

\section{Smooth controllability stemming from the linear order}
\label{sec:sstlc-linear}

The goal of this section is to prove \cref{thm:sstlc-linear}. The idea that small-time controllability of the linearized system implies controllability of the nonlinear system is quite classical. However, there are two difficulties here. First, we are seeking a null controllability result and we only assumed the controllability of the linearized system at the null equilibrium (and not around any state near the equilibrium). This prevents us from using classical fixed-point methods and requires a specific powerful method. We will use the \emph{source term method} introduced by Liu, Takahashi and Tucsnak in \cite{MR3023058} in the context of a fluid-structure system. Second, we wish to build regular controls, even for the nonlinear system. This will require that we adapt accordingly the source term method.

\subsection{Classical well-posedness results}

We recall, for the sake of completeness, usual well-posedness results for the class of parabolic equations we are looking at, under the regularity assumption \eqref{def:Gamma} on $\Gamma$.

\begin{lemma} \label{thm:wp-linear}
 Let $T > 0$. There exists $C_T > 0$, which is a non-decreasing function of $T$, such that, for any $f \in L^2((0,T);H^{-1}_N(0,\pi))$ and any $z_0 \in L^2(0,\pi)$, there exists a unique solution $z \in Z$ to
 \begin{equation} \label{eq:z-f}
 \left\{
 \begin{aligned}
 & \partial_t z(t,x) - \partial_{xx} z(t,x) = f(t,x), \\
 & z_x(t,0) = z_x(t,\pi) = 0, \\
 & z(0,x) = z_0(x).
 \end{aligned}
 \right.
 \end{equation}
 Moreover, it satisfies the estimate
 \begin{equation} \label{eq:wp-linear}
 \|z\|_Z
 \leq  
 C_T \left(\|z_0\|_{L^2} + \|f\|_{L^2((0,T); H^{-1}_N(0,\pi))}\right).
 \end{equation}
\end{lemma}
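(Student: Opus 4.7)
The plan is to prove this by spectral decomposition in the orthonormal basis $(\varphi_k)_{k\in\N}$ of eigenfunctions of $\Delta_N$, combined with the standard parabolic energy estimate. Writing $z_k(t) := \langle z(t),\varphi_k\rangle$, $z_{0,k} := \langle z_0,\varphi_k\rangle$ and $f_k(t) := (f(t),\varphi_k)_{H^{-1}_N,H^1_N}$, the PDE \eqref{eq:z-f} formally decouples into the family of scalar ODEs $z_k' + k^2 z_k = f_k$ with $z_k(0) = z_{0,k}$, whose unique solution is given by Duhamel's formula $z_k(t) = e^{-k^2 t} z_{0,k} + \int_0^t e^{-k^2(t-s)} f_k(s)\dd s$. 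This formula produces a natural candidate for $z$ and directly ensures uniqueness once the spectral expansion is justified.

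For existence and the quantitative bound, I would work with the finite-dimensional Galerkin truncations $z^N(t) := \sum_{k \leq N} z_k(t)\varphi_k$ (which satisfy the equation in a strong sense) to make the following formal computation rigorous. Pairing the equation with $z$ and integrating by parts yields
\begin{equation*}
 \tfrac{1}{2}\tfrac{\dd}{\dd t}\|z\|_{L^2}^2 + \|\partial_x z\|_{L^2}^2 = (f,z)_{H^{-1}_N,H^1_N}.
\end{equation*}
Using the elementary inequality $|(f,z)_{H^{-1}_N,H^1_N}| \leq \tfrac{1}{2}\|f\|_{H^{-1}_N}^2 + \tfrac{1}{2}\|z\|_{H^1_N}^2$ and the identity $\|z\|_{H^1_N}^2 = \|z\|_{L^2}^2 + \|\partial_x z\|_{L^2}^2$ that follows from the spectral definition of $H^1_N$, one absorbs the $\|\partial_x z\|_{L^2}^2$ term on the left and obtains
\begin{equation*}
 \tfrac{\dd}{\dd t}\|z\|_{L^2}^2 + \|z\|_{H^1_N}^2 \leq \|z\|_{L^2}^2 + \|f\|_{H^{-1}_N}^2.
\end{equation*}
Gronwall's lemma then gives $\|z(t)\|_{L^2}^2 \leq e^t(\|z_0\|_{L^2}^2 + \|f\|_{L^2((0,t);H^{-1}_N)}^2)$, and integrating the differential inequality over $(0,T)$ yields the $L^2((0,T);H^1_N)$ bound. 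Both contributions combine into \eqref{eq:wp-linear} with $C_T := (1 + T)^{1/2} e^{T/2}$ (or any equivalent explicit form), which is manifestly non-decreasing in $T$.

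The truncated computations are rigorous; passing to the limit $N \to \infty$ relies on the uniform estimates, which give weak convergence of $z^N$ in $L^2((0,T);H^1_N)$ and weak-$*$ convergence in $L^\infty((0,T);L^2)$ to a limit $z$ that solves the equation in the sense of distributions. The time-continuity $z \in C^0([0,T];L^2)$ then follows from the classical embedding (Lions--Magenes)
\begin{equation*}
 \bigl\{ u \in L^2((0,T);H^1_N) \,:\, \partial_t u \in L^2((0,T);H^{-1}_N) \bigr\} \hookrightarrow C^0([0,T];L^2),
\end{equation*}
since $\partial_t z = \partial_{xx} z + f$ is controlled in $L^2((0,T);H^{-1}_N)$ by the $H^1_N$ bound on $z$ and the hypothesis on $f$. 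Uniqueness is immediate: applying the same energy estimate to the difference of two solutions with $f = 0$ and $z_0 = 0$ forces $z \equiv 0$. There is no serious obstacle; the only minor care needed is to check that the constant $C_T$ is explicit and non-decreasing in $T$, which the Gronwall form above provides directly.
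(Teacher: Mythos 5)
Your proof is correct and is precisely the classical Galerkin/energy-estimate argument that the paper itself does not spell out but simply cites as standard (together with the remark that $C_T$ must grow with $T$ because the zero mode of $\Delta_N$ is undamped, which your Gronwall constant $e^{T/2}$ captures). One cosmetic caveat: with the paper's definition $\|z\|_{H^1_N}^2=\sum_k(1+k)^2|\langle z,\varphi_k\rangle|^2$, the relation $\|z\|_{H^1_N}^2=\|z\|_{L^2}^2+\|\partial_x z\|_{L^2}^2$ is only a norm equivalence (since $(1+k)^2\neq 1+k^2$), not an identity, but this affects nothing beyond the value of the constants.
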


\begin{proof}
 The proof of this statement is classical. We refer for example to \cite{MR710486}. The fact that $C_T$ depends on $T$ may seem unusual at first glance. However, for the Neumann-Laplacian~\eqref{eq:neumann-laplacian}, the eigenvector $\varphi_0$ is associated with the eigenvalue $0$. Thus, the contribution of this first mode to the $L^2((0,T);H^1_N(0,\pi))$ norm of $z$ is not bounded as $T \to +\infty$.
\end{proof}

\begin{lemma} \label{Prop:WP+source}
 Let $\Gamma$ satisfying (\ref{def:Gamma}) and $T > 0$. 
 There exist constants $C,\eta > 0$ such that, 
 for every $z_0 \in L^2(0,\pi)$ and 
 $u \in L^\infty(0,T)$ with $\|u\|_{L^\infty} \leq \eta$, there exists a unique solution $z \in Z$ to
 \begin{equation} \label{eq:z+source}
 \left\{
 \begin{aligned}
 & \partial_t z(t,x) - \partial_{xx} z(t,x) = u(t) \Gamma[z(t)](x), \\
 & \partial_x z (t,0) =  \partial_x z(t,\pi) = 0, \\
 & z(0,x) = z_0(x).
 \end{aligned}
 \right.
 \end{equation}
 Moreover, this solution satisfies
 \begin{equation} \label{wp-estimate}
   \|z\|_{Z}
   \leq C \left( \|z_0\|_{L^2}  + \|u\|_{L^\infty(0,T)} \right).
 \end{equation}
\end{lemma}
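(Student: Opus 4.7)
The plan is to run a Banach fixed-point argument based on the linear well-posedness from \cref{thm:wp-linear}. For $w \in Z$, I would define $\Phi(w) := z$, where $z$ is the unique $Z$-solution of \eqref{eq:z-f} with initial data $z_0$ and source $f(t,\cdot) := u(t)\, \Gamma[w(t)]$. The regularity assumption \eqref{def:Gamma} yields, via the mean-value inequality, the pointwise-in-time bounds
\begin{equation}
\|\Gamma[w(t)]\|_{H^{-1}_N} \leq \|\mu\|_{H^{-1}_N} + C_\Gamma \|w(t)\|_{H^1_N}, \qquad \|\Gamma[w_1(t)] - \Gamma[w_2(t)]\|_{H^{-1}_N} \leq C_\Gamma \|w_1(t)-w_2(t)\|_{H^1_N}.
\end{equation}
Integrating in time and using $\|w\|_{L^2((0,T);H^1_N)} \leq \|w\|_Z$ shows that $f \in L^2((0,T);H^{-1}_N)$, so $\Phi$ maps $Z$ into itself.

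Combining these with the linear estimate \eqref{eq:wp-linear} gives
\begin{align}
\|\Phi(w)\|_Z &\leq C_T \Bigl( \|z_0\|_{L^2} + \|u\|_{L^\infty} \bigl( T^{1/2} \|\mu\|_{H^{-1}_N} + C_\Gamma \|w\|_Z \bigr) \Bigr), \\
\|\Phi(w_1) - \Phi(w_2)\|_Z &\leq C_T\, C_\Gamma\, \|u\|_{L^\infty}\, \|w_1 - w_2\|_Z.
\end{align}
Choosing $\eta > 0$ small enough that $C_T\, C_\Gamma\, \eta \leq \tfrac{1}{2}$ (depending only on $T$, $C_\Gamma$), the second line makes $\Phi$ a $\tfrac{1}{2}$-Lipschitz map on $Z$ whenever $\|u\|_{L^\infty} \leq \eta$. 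Banach's fixed-point theorem then produces a unique $z \in Z$ with $\Phi(z) = z$, which is exactly the unique solution of \eqref{eq:z+source}.

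Finally, applying the first inequality to the fixed point $z$ itself and absorbing the term $C_T C_\Gamma \|u\|_{L^\infty} \|z\|_Z \leq \tfrac{1}{2}\|z\|_Z$ on the left gives the desired bound \eqref{wp-estimate} with, for instance, $C := 2 C_T \max(1, T^{1/2} \|\mu\|_{H^{-1}_N})$. I do not anticipate any serious obstacle: the only delicate point is that the smallness of $\|u\|_{L^\infty}$ plays a double role (ensuring strict contractivity to obtain existence and uniqueness, and allowing the $\|z\|_Z$ term to be absorbed when turning the a priori inequality into the linear-in-data estimate), but both uses are covered by the single choice of $\eta$ above.
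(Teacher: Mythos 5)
Your proposal is correct and follows essentially the same route as the paper: a Banach fixed-point argument on $Z$ built from the linear solution operator of \cref{thm:wp-linear}, with the uniform Lipschitz bound on $\Gamma$ from \eqref{def:Gamma} giving contractivity once $\|u\|_{L^\infty} \leq \eta := (2C_T C_\Gamma)^{-1}$, and the final estimate obtained by absorbing the $\|z\|_Z$ term. The only cosmetic difference is that you spell out the absorption step explicitly, whereas the paper deduces \eqref{wp-estimate} by one more application of \cref{thm:wp-linear} to the fixed point.
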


\begin{proof}
 Let $T > 0$, $\Gamma$ satisfying (\ref{def:Gamma}), 
 $z_0 \in L^2(0,\pi)$ and $u \in L^\infty(0,T)$. We construct a map $\mathcal{F} : Z \to Z$ by associating, to any $z \in Z$, the value $\mathcal{F}(z) := h$, where $h$ is the solution to
 \begin{equation}
 \left\{
 \begin{aligned}
 & \partial_t h(t,x) - \partial_{xx} h(t,x) = 
 u(t) \Gamma[z(t)](x), \\
 & \partial_x h (t,0) =  \partial_x h(t,\pi) = 0, \\
 & h(0,x) = z_0(x).
 \end{aligned}
 \right.
 \end{equation}
 From the regularity assumption \eqref{def:Gamma} on $\Gamma$, for every $z_1, z_2 \in H^1_N(0,\pi)$, 
 \begin{equation} 
  \| \Gamma[z_1] - \Gamma[z_2] \|_{H^{-1}_N(0,\pi)}
  \leq C_\Gamma \|z_1-z_2\|_{H^1_N(0,\pi)}. 
 \end{equation}
 Then, by \cref{thm:wp-linear}, the map $\mathcal{F}$ is well-defined and moreover, for every $z_1, z_2 \in Z$,
 \begin{equation}
  \| \mathcal{F}(z_1)-\mathcal{F}(z_2) \|_{Z} 
  \leq 
  C_T C_\Gamma \|u\|_{L^\infty(0,T)} \|z_1-z_2\|_{Z}.
 \end{equation}
 For $\|u\|_{L^\infty(0,T)} \leq \eta := (2C_TC_\Gamma)^{-1}$, this proves that $\mathcal{F}$ is a contraction mapping on $Z$. Thanks to the Banach fixed point theorem, it admits a unique fixed point. Using once more \cref{thm:wp-linear}, we obtain estimate \eqref{wp-estimate} with $C := 2C_T (\|\mu\|_{H^{-1}_N(0,\pi)}+1)$.
\end{proof}



\subsection{Smooth resolution of moment problems}

To study the linear problem and obtain estimates useful for the nonlinear problem, we will use the well-known \emph{moment method} introduced by Fattorini and Russel (see e.g. the seminal works \cite{MR0335014,MR0510972}). We start with the following results, concerning the solvability of moment problems with smooth controls. 

\begin{lemma}[Existence of biorthogonal families] \label{thm:bbgo}
 Let $m \in \N$. There exist $C_1, T_1 > 0$, such that, for any $T \in (0,T_1]$, there exists a family $(\psi^T_{k,j})_{k \in \N, 0 \leq j \leq m}$ of functions in $L^2(-\frac{T}{2},\frac{T}{2})$, such that, for any $k, k' \in \N$ and $0 \leq j, j' \leq m$,
 \begin{equation} \label{eq:biorth}
  \int_{-T/2}^{T/2} t^{j}\, e^{-(1+k^2)t}\, \psi^T_{k',j'}(t) \dd t = \delta_{j,j'} \delta_{k,k'},
 \end{equation}
 which moreover satisfies, for $k \in \N$ and $0 \leq j \leq m$,
 \begin{equation} \label{eq:psi-size}
 \| \psi^T_{k,j} \|_{L^2\left(-\frac{T}{2},\frac{T}{2}\right)} 
 \leq 
 C_1 e^{k C_1 + C_1/T}.
 \end{equation}
\end{lemma}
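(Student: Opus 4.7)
The plan is to carry out the classical Fattorini-Russell moment construction for the generalized sequence of exponentials $\{t^j e^{-\lambda_k t}\}_{k\in\N,\,0\leq j\leq m}$ with $\lambda_k := 1+k^2$, where each eigenvalue $\lambda_k$ is effectively counted with multiplicity $m+1$. The first step is to reformulate \eqref{eq:biorth} via the Fourier transform: setting $\Psi^T_{k,j}(\xi) := \int_{-T/2}^{T/2} \psi^T_{k,j}(t) e^{-i\xi t}\dd t$, the biorthogonality relations become, up to explicit factors of $i^j$, Hermite-type interpolation conditions at the complex nodes $(-i\lambda_{k'})_{k'\in\N}$: the function $\Psi^T_{k,j}$ must vanish together with its first $m$ derivatives at $-i\lambda_{k'}$ for $k'\neq k$, while its $j'$-th derivative at $-i\lambda_k$ must equal $\delta_{j,j'}$. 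By Paley-Wiener, producing $\psi^T_{k,j}\in L^2(-T/2,T/2)$ is equivalent to constructing an entire $\Psi^T_{k,j}$ of exponential type at most $T/2$, belonging to $L^2(\R)$, that satisfies these interpolation conditions.

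To build such entire functions, I would start from the canonical Weierstrass-type product
\[ W(\xi) := \prod_{\ell=0}^\infty \left(1 - \frac{i\xi}{\lambda_\ell}\right)^{m+1}, \]
which converges because $\sum_\ell 1/\lambda_\ell < \infty$ and has a zero of order $m+1$ at each node $-i\lambda_\ell$. The quotient $W(\xi)/(1-i\xi/\lambda_k)^{m+1}$ then vanishes to order $m+1$ at every node except $-i\lambda_k$. Multiplying it by an appropriate polynomial of degree $\leq m$ in $(\xi+i\lambda_k)$, selected by Hermite interpolation at $-i\lambda_k$, yields a holomorphic template realizing the prescribed interpolation data. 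This template has slow growth (order $1/2$) but is neither of exponential type $T/2$ nor in $L^2(\R)$, so we multiply it by an auxiliary entire function $M_T$ of exponential type $T/2$, bounded on $\R$ and equal to $1$ at each node $-i\lambda_\ell$ together with its first $m$ derivatives; a classical choice is an iterated product of suitably scaled $\sinc$-type factors. Applying the inverse Fourier transform produces $\psi^T_{k,j}$, automatically supported in $[-T/2,T/2]$ by Paley-Wiener.

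The main difficulty lies in the quantitative bound \eqref{eq:psi-size}. It requires two sharp estimates on $W$: an upper bound $|W(\xi)| \leq C e^{C\sqrt{|\xi|}}$ valid for $\xi \in \R$, reflecting the order $1/2$ of the zero sequence and obtained by comparing $\sum_\ell \log(1+\xi^2/\lambda_\ell^2)$ with its continuous analogue; together with a lower bound of the form $|W^{(m+1)}(-i\lambda_k)| \geq c\,e^{-C_1 k}$, stemming from standard asymptotics for $\prod_{\ell\neq k}|1-\lambda_k/\lambda_\ell|$ when $\lambda_\ell \sim \ell^2$. Combining these estimates with Plancherel and with the $L^\infty(\R)$ control of the multiplier $M_T$ yields the announced bound $\|\psi^T_{k,j}\|_{L^2(-T/2,T/2)} \leq C_1 e^{C_1 k + C_1/T}$, the factor $e^{C_1/T}$ arising from the cost of compressing the exponential type of $M_T$ down to $T/2$ while preserving its $L^\infty$ control. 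I expect this final book-keeping of constants, made uniform in $k\in\N$ and in $T\to 0^+$, to be the most delicate part of the argument.
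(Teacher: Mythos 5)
The paper does not actually prove this lemma: it verifies that the sequence $\Lambda_k = 1+k^2$ satisfies the gap and counting-function hypotheses of \cite[Theorem 1.5]{MR3262588} and cites that result. What you propose is, in outline, precisely the complex-analytic construction that underlies the cited theorem (Fattorini--Russell, refined by Benabdallah, Boyer, Gonz\'alez-Burgos and Olive): reformulation of \eqref{eq:biorth} as Hermite interpolation at the nodes $-i\Lambda_{k'}$ for entire functions of exponential type $T/2$, a Weierstrass product with zeros of multiplicity $m+1$, division by the factor at the selected node, correction by an interpolating polynomial, and multiplication by a type-$T/2$ mollifier before applying Paley--Wiener. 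So your route is not wrong, and it is instructive to see the machinery spelled out; but it reproves a quoted black box rather than matching the paper's (one-line) argument.

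Two points in your sketch deserve a warning. First, the requirement that the multiplier $M_T$ be \emph{equal to $1$ together with its first $m$ derivatives at every node} $-i\Lambda_\ell$ is not achievable by "an iterated product of suitably scaled $\sinc$-type factors": such products are small, not equal to $1$, at points far down the imaginary axis. The standard device is weaker and suffices: one only needs a type-$T/2$ multiplier that is bounded on $\R$, decays fast enough for integrability, and admits a lower bound $|M_T(-i\Lambda_k)| \geq c\, e^{-C/T - C\sqrt{\Lambda_k}}$ (with matching control of its first $m$ derivatives there); the Hermite polynomial is then chosen to interpolate the data of the \emph{full} product $W \cdot M_T$ at the node, and the lower bound on $M_T$ enters the denominator of the final estimate exactly where your $e^{C_1/T}$ and $e^{C_1 k}$ factors come from. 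Second, the statement to be proved \emph{is} the quantitative bound \eqref{eq:psi-size}, uniform in $k$ and as $T\to 0^+$; your proposal asserts the three key estimates (upper bound $e^{C\sqrt{|\xi|}}$ on $W$, lower bound $e^{-C_1 k}$ at the node, and the $e^{C/T}$ cost of the mollifier) without deriving them, and you correctly identify this book-keeping as the delicate part. As it stands the proposal is a faithful roadmap to the proof of the cited theorem, not a self-contained proof; for the purposes of this paper the efficient course is the one the authors take, namely to check the hypotheses of \cite[Theorem 1.5]{MR3262588} for $\Lambda_k = 1+k^2$ and invoke it.
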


\begin{proof}
 This result is stated more generally in \cite[Theorem 1.5]{MR3262588} for any sequence of eigenvalues satisfying a set of appropriate assumptions. Here, the sequence of eigenvalues given by $\Lambda_k := 1+k^2$ for $k \in \N$ satisfies all the required assumptions. 
\end{proof}

\cref{thm:bbgo} was introduced to control systems of parabolic equations, which required being able to solve moment problems with polynomial terms. Here, we consider a single scalar parabolic equation, but we wish to build regular controls and estimate their size.

\begin{proposition}[Solvability of moment problems] \label{thm:moments}
 Let $m \in \N$ and $\eta_1 > \frac{1}{2}$. There exist $M_1, T_1 > 0$, such that the following property holds. Let $T \in (0,T_1]$ and define the normed vector space
 \begin{equation} \label{eq:DT-def}
  D_T := \left\{ d = (d_k)_{k \in \N} \in \R^\N, \enskip \|d\|_{D_T} := \sup_{k \in \N} |d_k| e^{\eta_1 T k^2} < + \infty \right\}. 
 \end{equation}
 There exists a continuous linear map $\mathfrak{L}^T_1 : D_T \to H^m_0(0,T)$ such that, for any sequence $d = (d_k)_{k \in \N} \in D_T$, the control $u := \mathfrak{L}^T_1(d)$ satisfies, for all $k \in \N$, the moment condition
 \begin{equation} \label{eq:u-dk}
  \int_0^T u(t) e^{-k^2(T-t)} \dd t = d_k
 \end{equation}
 and the size estimate
 \begin{equation} \label{eq:u-moments-size}
  \|u\|_{H^m(0,T)} \leq M_1 e^{M_1/T} \|d\|_{D_T}.
 \end{equation}
\end{proposition}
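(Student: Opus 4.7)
My strategy is to reduce the problem to one that fits exactly the biorthogonal family of \cref{thm:bbgo}, via $m$ integrations by parts.

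I seek $u$ as the $m$-fold antiderivative of a function $f \in L^2(0,T)$, namely $u(t) = \tfrac{1}{(m-1)!}\int_0^t (t-s)^{m-1} f(s)\,ds$. By construction $u^{(j)}(0) = 0$ for $0 \le j \le m-1$, while the conditions $u^{(j)}(T)=0$ for those same $j$ are equivalent to the polynomial moment conditions $\int_0^T s^\ell f(s)\,ds = 0$ for $\ell=0,\ldots,m-1$. Integrating by parts $m$ times in \eqref{eq:u-dk}, all boundary terms vanish since $u \in H^m_0$, and the moment conditions become
\begin{align*}
  \int_0^T t^m f(t)\,dt &= (-1)^m m!\, d_0, \\
  \int_0^T f(t)\, e^{-k^2(T-t)}\,dt &= (-1)^m k^{2m}\, d_k \quad (k \geq 1).
\end{align*}
After the change of variables $s := T/2 - t$ and setting $g(s) := e^s f(T/2-s)$, expanding $(T/2-s)^\ell$ in monomials of $s$ yields a triangular linear system whose inversion reformulates the whole collection of conditions on $f$ as
\begin{align*}
  \int_{-T/2}^{T/2} g(s)\, s^\ell\, e^{-s}\,ds &= m!\, d_0\, \delta_{\ell,m} \qquad (0 \le \ell \le m), \\
  \int_{-T/2}^{T/2} g(s)\, e^{-(1+k^2)s}\,ds &= (-1)^m k^{2m} e^{k^2 T/2}\, d_k \quad (k \ge 1).
\end{align*}
These are exactly moment conditions on $g$ against the functions $t^j e^{-(1+k^2)t}$ for $(k,j) \in (\{0\} \times \{0,\ldots,m\}) \cup (\N^* \times \{0\})$, which belong to the biorthogonal family of \cref{thm:bbgo}.

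\emph{Construction and estimates.} I define
\begin{equation*}
  g(s) := m!\, d_0\, \psi^T_{0,m}(s) + (-1)^m \sum_{k=1}^\infty k^{2m} e^{k^2 T/2}\, d_k\, \psi^T_{k,0}(s),
\end{equation*}
which by \eqref{eq:biorth} solves the reformulated moment problem. Using \eqref{eq:psi-size} and $|d_k| \leq \|d\|_{D_T}\, e^{-\eta_1 T k^2}$, the $L^2$ norm of the $k$-th summand is at most $\|d\|_{D_T}\, C_1 e^{C_1/T}\, k^{2m}\, e^{-(\eta_1 - \frac{1}{2}) T k^2 + k C_1}$. Since $\eta_1 > \frac{1}{2}$, the Gaussian factor provides decay in $k$; completing the square yields a summable series bounded by $C\, e^{C'/T}\|d\|_{D_T}$. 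Setting $\mathfrak{L}^T_1(d) := u$ with $f(t) := e^{t - T/2}\, g(T/2 - t)$ and $u$ the $m$-fold antiderivative above, $u \in H^m_0(0,T)$ satisfies \eqref{eq:u-dk} by construction, and $\|u\|_{H^m(0,T)} \leq C \|f\|_{L^2} \leq M_1 e^{M_1/T}\|d\|_{D_T}$ for suitable $M_1, T_1$. Linearity and continuity of $\mathfrak{L}^T_1$ are clear from the explicit formula.

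\emph{Main difficulty.} The conceptual crux is recognizing that the $H^m_0$ regularity of $u$ can be encoded as $m$ polynomial moment conditions on $f = u^{(m)}$, and that these precisely match the $j \in \{0,\ldots,m-1\}$ slots of the $k=0$ block of \cref{thm:bbgo}, while the $j=m$ slot absorbs the $k=0$ mode of the original moments; without these polynomial slots one could only build $L^2$ controls. Quantitatively, the estimate hinges on the Gaussian decay $e^{-(\eta_1 - \frac{1}{2})Tk^2}$ absorbing both the $k^{2m}$ polynomial growth produced by the $m$ integrations by parts and the $e^{k C_1}$ growth of the biorthogonals from \eqref{eq:psi-size}, which is precisely why the strict inequality $\eta_1 > \frac{1}{2}$ is required.
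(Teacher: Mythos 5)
Your proposal is correct and follows essentially the same route as the paper: writing $u^{(m)}(t)$ as $e^{-(T/2-t)}g(T/2-t)$ (your $g$ is the paper's $v$), converting the $H^m_0$ constraints into the polynomial slots $j\le m-1$ of the $k=0$ block of \cref{thm:bbgo}, absorbing the $k=0$ exponential moment into the $j=m$ slot, and summing the biorthogonal series with the Gaussian factor $e^{-(\eta_1-\frac12)Tk^2}$ controlling both $k^{2m}$ and $e^{kC_1}$. The construction, estimates, and role of $\eta_1>\frac12$ all match the paper's proof.
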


\begin{proof}
 Let $C_1, T_1 > 0$ be given by \cref{thm:bbgo}. Let $m \in \N$. First, there exists a constant $S_1 > 0$ such that, for any $T \in (0,T_1]$ and any $u \in H^m_0(0,T)$, one has
 \begin{equation} \label{eq:uhm-uml2}
  \|u\|_{H^m(0,T)} \leq S_1 \|u^{(m)}\|_{L^2(0,T)}.
 \end{equation}
 Let $\eta_1 > \frac{1}{2}$. We define $\alpha_1 := \frac{1}{2} (\eta_1 - \frac{1}{2})$ and 
 \begin{equation} \label{eq:m1}
  M_1 := \max \left\{ 
  1 + C_1 + \frac{(C_1+2m)^2}{4 \alpha_1}, \enskip 
  C_1 S_1 \left(1 + m! + \frac{T_1}{1 - e^{-\alpha_1 T_1}}\right)  \right\}.
 \end{equation}
 Let $T \in (0,T_1]$ and $d \in D_T$. To build a control $u := \mathfrak{L}^T_1(d) \in H^m_0(0,T)$, we look for its $m$-th derivative $u^{(m)}$ under the form
 \begin{equation} \label{eq:um-v}
  u^{(m)}(t) = v\left(\frac{T}{2}-t\right) e^{-\left(\frac{T}{2}-t\right) },
 \end{equation}
 where $v \in L^2(-\frac{T}{2},\frac{T}{2})$. Using \eqref{eq:um-v} and iterated integration by parts, we obtain that $u \in H^m_0(0,T)$ if and only if, for $0 \leq j \leq m-1$,
 \begin{equation} \label{eq:v-tauj}
  \int_{-T/2}^{T/2} \tau^j v(\tau) e^{-\tau} \dd \tau = 0.
 \end{equation}
 Integrating by parts and using (\ref{eq:v-tauj}),
 we obtain that \eqref{eq:u-dk} for $k = 0$ is equivalent to 
 \begin{equation} \label{eq:v-taum}
  \int_{-T/2}^{T/2} \tau^m v(\tau) e^{-\tau} \dd \tau = m!\, d_0
 \end{equation}
 and that \eqref{eq:u-dk} for $k \geq 1$ is equivalent to
 \begin{equation} \label{eq:v-expk}
 \int_{-T/2}^{T/2} v(\tau) e^{-(1+k^2) \tau} \dd \tau = 
 (-1)^m k^{2m} e^{k^2 T /2} d_k.
 \end{equation}
 We set
 \begin{equation} \label{eq:v-sum}
  v(t) := m! \, d_0 \psi^T_{0,m}(t) + (-1)^{m} \sum_{k \geq 1} k^{2m} e^{k^2 T/2} d_k \psi^T_{k,0}(t).
 \end{equation}
 Thanks to the size estimate of the biorthogonal family \eqref{eq:psi-size} and the decay of $d_k$ \eqref{eq:DT-def}, the above serie converges in $L^2(-T/2,T/2)$ and
 \begin{equation} \label{eq:v-size1}
  \|v\|_{L^2} \leq m!\, |d_0| C_1 e^{C_1/T}
  + C_1 \sum_{k\geq 1} |d_k| k^{2m} e^{k^2 T/2} e^{kC_1 + C_1/T}.
 \end{equation}
  Thanks to the biorthogonality condition \eqref{eq:biorth}, the relations \eqref{eq:v-tauj}, \eqref{eq:v-taum} and \eqref{eq:v-expk} are satisfied. Thus $u \in H^m_0(0,T)$ and solves the moment problem \eqref{eq:u-dk} for $k \in \N$. 
  Since $d \in D_T$, using the definition of $\|d\|_{D_T}$ in \eqref{eq:DT-def} and the relations $k^{2m} \leq e^{2mk}$ and $2 \alpha_1 = \eta_1 - \frac{1}{2}$, we obtain from \eqref{eq:v-size1} that
 \begin{equation} \label{eq:v-tech1}
  \|v\|_{L^2} \leq C_1 e^{C_1/T} \left( m!\, +
  \sum_{k\geq 1} e^{(C_1+2m)k} e^{-2 \alpha_1 k^2 T} \right) \|d\|_{D_T}.
 \end{equation}
 For any $k \in \N$,
 \begin{equation} \label{eq:v-tech2}
  \exp \left( (C_1+2m) k \right) 
  \leq 
  \exp \left( \alpha_1 k^2 T  
  + \frac{(C_1+2m)^2}{4 T \alpha_1}
   \right).
 \end{equation}
 Moreover, for $T \in (0,T_1]$,
 \begin{equation} \label{eq:v-tech3}
  \sum_{k \geq 1} e^{-\alpha_1 T k^2} \leq \frac{1}{T} \frac{T_1}{1- e^{-\alpha_1 T_1}}
  \leq e^{1/T} \frac{T_1}{1- e^{-\alpha_1 T_1}}.
 \end{equation}
 Gathering \eqref{eq:uhm-uml2} \eqref{eq:v-tech1}, \eqref{eq:v-tech2}, and \eqref{eq:v-tech3} proves the size estimate \eqref{eq:u-moments-size} with the claimed constant $M_1$ defined in \eqref{eq:m1}.
\end{proof}

\subsection{Cost of controllability for the linearized system}
\label{sec:linear-cost}

The first step of the source-term method is to compute an estimate of the cost of controllability for the linearized system \eqref{eq:z-linear}. Roughly speaking, the cost of controllability is the minimal size of the controls one must use to drive an initial state to zero in a given time. This topic has received much attention. In the particular case of \eqref{eq:z-linear}, we refer to the recent work \cite{MR3619248} and the references therein. 

In order for the linear system \eqref{eq:z-linear} to be small-time null controllable, it is necessary to assume that the coefficients $|\langle \gmu, \varphi_k\rangle| \neq 0$ do not decay too fast. More precisely, that
\begin{equation} \label{eq:hyp-mu-T0}
 T_\gmu := \limsup_{k \to +\infty} \frac{- \log |\langle \gmu, \varphi_k \rangle|}{k^2} = 0.
\end{equation}
When $T_\gmu$ is positive, then it is the minimal time of null controllability for the linearized system (see \cite{MR3250361,MR3619248} for more details). In the sequel, we always assume that \eqref{eq:hyp-mu-T0} holds as we are interested in obstructions to controllability caused by the quadratic  properties of the system. 

In order for the linear system \eqref{eq:z-linear} to be controllable with the usual control cost for the heat equation of the form $e^{C/T}$, we must add a stronger assumption than \eqref{eq:hyp-mu-T0}. We assume
\begin{equation} \label{eq:mu-b}
 b_\gmu := \limsup_{k \to +\infty} \frac{- \log |\langle \gmu, \varphi_k\rangle|}{k} < + \infty.
\end{equation}
Assumption \eqref{eq:mu-b} implies \eqref{eq:hyp-mu-T0} and is satisfied for a very wide class of $\gmu \in H^{-1}_N(0,\pi)$.

The following result is quite classical. We include a proof for the sake of completeness and because it is not so frequent to build regular controls in the dissipative case. For time-reversible systems, some authors studied the behavior of the HUM operator (see e.g.\ \cite{MR2486082,MR2649987}) or developed methods to obtain regular controls from the HUM method (see e.g.\ \cite{MR2679646}).

\begin{proposition} \label{thm:linear-cost}
 Let $\gmu \in H^{-1}_N(0,\pi)$. Assume that, for every $k \in \N$, $\langle \gmu, \varphi_k \rangle \neq 0$ and that $\gmu$ satisfies~\eqref{eq:mu-b}. Let $m \in \N$. There exists $M_2 > 0$ such that, for any $T > 0$, there exists a continuous linear map $\mathfrak{L}^T_2 : L^2(0,\pi) \to H^m_0(0,T)$ such that, for any $z_0 \in L^2(0,\pi)$, the solution $z \in Z$ to the linear system \eqref{eq:z-linear} with control $u := \mathfrak{L}^T_2(z_0)$ satisfies $z(T) = 0$ and
 \begin{equation} \label{eq:cost}
  \|u\|_{H^m(0,T)} \leq M_2 e^{M_2/T} \|z_0\|_{L^2(0,\pi)}.
 \end{equation}
\end{proposition}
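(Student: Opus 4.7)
The plan is to reduce the null-controllability requirement to a moment problem and invoke \cref{thm:moments}. First I would expand $z(t,x) = \sum_{k\geq 0} z_k(t)\varphi_k(x)$ with $z_k := \langle z,\varphi_k\rangle$, so that the eigenmodes decouple into the scalar ODEs $z_k'(t) + k^2 z_k(t) = u(t)\langle\mu,\varphi_k\rangle$. Duhamel's formula then converts the condition $z_k(T) = 0$ for every $k$ into the moment equations
\begin{equation}
 \int_0^T u(t)\, e^{-k^2(T-t)} \dd t = d_k,
 \qquad d_k := -\frac{\langle z_0,\varphi_k\rangle}{\langle\mu,\varphi_k\rangle}\, e^{-k^2 T},
\end{equation}
which are exactly of the form \eqref{eq:u-dk} handled by \cref{thm:moments}; note that $d_k$ is well-defined thanks to the assumption $\langle\mu,\varphi_k\rangle \neq 0$.

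The crux is to verify that the sequence $(d_k)$ lies in $D_T$ with $\|d\|_{D_T}$ controlled by $e^{C/T}\|z_0\|_{L^2}$. Fix $\eta_1 \in (\tfrac12,1)$ and let $M_1, T_1$ be provided by \cref{thm:moments} for this $\eta_1$. Assumption \eqref{eq:mu-b} yields a constant $B > 0$ such that $|\langle\mu,\varphi_k\rangle| \geq B^{-1} e^{-Bk}$ for every $k \in \N$; combining this with the trivial estimate $|\langle z_0,\varphi_k\rangle| \leq \|z_0\|_{L^2}$ and maximizing the exponent $Bk - (1-\eta_1)Tk^2$ over $k \geq 0$ gives
\begin{equation}
 |d_k|\, e^{\eta_1 T k^2} \leq B\,\|z_0\|_{L^2}\, \exp\!\Big(\frac{B^2}{4(1-\eta_1)T}\Big),
\end{equation}
hence $\|d\|_{D_T} \leq B\, e^{C/T}\|z_0\|_{L^2}$ for some $C = C(B,\eta_1)$.

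For $T \in (0,T_1]$, setting $\mathfrak{L}^T_2(z_0) := \mathfrak{L}^T_1((d_k))$ yields a continuous linear map $L^2(0,\pi) \to H^m_0(0,T)$ by composition; the corresponding $u$ solves the moment problem, so that $z(T)=0$, and satisfies $\|u\|_{H^m(0,T)} \leq M_1 e^{M_1/T}\|d\|_{D_T} \leq M_2 e^{M_2/T}\|z_0\|_{L^2}$. For $T > T_1$, I would apply the zero control on $[0,T-T_1]$, so that the state at time $T-T_1$ has $L^2$-norm at most $\|z_0\|_{L^2}$ by dissipativity of the heat semigroup, and then run the previous construction on $[T-T_1,T]$ with this effective initial state; since the moment-method control lies in $H^m_0(T-T_1,T)$, its extension by zero belongs to $H^m_0(0,T)$, and the resulting cost $M_1 e^{M_1/T_1}\|z_0\|_{L^2}$ is dominated by $M_2 e^{M_2/T}$ after enlarging $M_2$ if necessary.

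The main delicacy lies in the balance underlying the estimate of $\|d\|_{D_T}$: the exponential factor $e^{Bk}$ coming from the possibly slow decay of $\langle\mu,\varphi_k\rangle$ must be absorbed by the Gaussian factor $e^{-(1-\eta_1)Tk^2}$ provided by the heat semigroup, which forces the choice $\eta_1 < 1$ and produces the characteristic $e^{C/T}$ blow-up in the cost as $T \to 0^+$. Linearity and continuity of $\mathfrak{L}^T_2$ then follow directly from those of $\mathfrak{L}^T_1$ and of the map $z_0 \mapsto (d_k)_{k\in\N}$.
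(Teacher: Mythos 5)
Your proposal is correct and follows essentially the same route as the paper: reduce $z(T)=0$ to the moment problem \eqref{eq:u-dk} with $d_k = -\langle z_0,\varphi_k\rangle\langle\mu,\varphi_k\rangle^{-1}e^{-k^2T}$, use \eqref{eq:mu-b} to absorb the factor $e^{Bk}$ into the Gaussian decay $e^{-(1-\eta_1)Tk^2}$ so that $\|d\|_{D_T}\leq Be^{C/T}\|z_0\|_{L^2}$, apply \cref{thm:moments}, and handle $T>T_1$ by controlling only on a subinterval of length $T_1$. The only (immaterial) differences from the paper's proof are that you bound $|\langle\mu,\varphi_k\rangle|^{-1}$ uniformly by $Be^{Bk}$ rather than splitting low and high frequencies, and you place the active control at the end of $[0,T]$ rather than at the beginning.
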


\begin{proof}
 From the decay assumption \eqref{eq:mu-b}, there exists $k_b \in \N$ such that, for $k \geq k_b$, one has, for any $\tau > 0$,
 \begin{equation} \label{eq:muk-large}
  \frac{1}{|\langle \gmu, \varphi_k \rangle|} \leq e^{(b_\gmu+1)k}
  \leq e^{(b_\gmu+1)^2/\tau} e^{k^2 \tau / 4}.
 \end{equation}
 Moreover, for $k \leq k_b$, one has
 \begin{equation} \label{eq:muk-small}
  \frac{1}{|\langle \gmu, \varphi_k \rangle|} 
  \leq 
  B_\gmu := \sup_{k' \leq k_b} 
  \frac{1}{|\langle \gmu, \varphi_{k'} \rangle|} 
  < + \infty.
 \end{equation}
 Let $m \in \N$ and $M_1, T_1 > 0$ be given by \cref{thm:moments} for the exponent $\eta_1 = 3/4$. We set $M := \max \{ M_1, M_1 B_\gmu, M_1 + (1+b_\gmu)^2 \}$ and $M_2 := \max \{ M, M e^{M/T_1} \}$.

 \bigskip
 
 Let $T > 0$ and $z_0 \in L^2(0,\pi)$. Let $\tau := \min \{ T, T_1 \}$. Each component of the state $z(\tau)$ can be computed explicitly as
 \begin{equation}
   \langle z(\tau), \varphi_k \rangle = \langle z_0, \varphi_k \rangle e^{-k^2 \tau}
   + \langle \gmu, \varphi_k \rangle \int_0^\tau u(t) e^{-k^2(\tau-t)}  \dd t.
 \end{equation}
 We construct a sequence $(d_k)_{k \in \N}$ as
 \begin{equation}
  d_k := - \frac{\langle z_0, \varphi_k \rangle}{\langle \gmu, \varphi_k \rangle} e^{-k^2 \tau}.
 \end{equation}
 Since $|\langle z_0, \varphi_k \rangle| \leq \|z_0\|_{L^2}$, we obtain thanks to \eqref{eq:muk-large} and \eqref{eq:muk-small}, for any $k \in \N$,
 \begin{equation} \label{eq:sizedk}
  |d_k| \leq \|z_0\|_{L^2} \max \{ B_\gmu, e^{(1+b_\gmu)^2 / \tau} \} e^{-3k^2 \tau/4}.
 \end{equation}
 From \eqref{eq:sizedk}, $d \in D_\tau$ with $\eta_1 = 3/4$. We set $u = \mathfrak{L}^T_2(z_0) := \mathfrak{L}^{\tau}_1(d)$, which we extend by zero on $[\tau,T]$ if $T > \tau$. From the size estimate for the resolution of the moment problem \eqref{eq:u-moments-size} and \eqref{eq:sizedk}, we have 
 \begin{equation}
  \begin{split}
   \|u\|_{H^m(0,T)} 
   & \leq M_1 e^{M_1/\tau} \|d\|_{D_\tau} 
   \\
   & \leq M_1 e^{M_1/\tau} \|z_0\|_{L^2} \max \{ B_\gmu, e^{(1+b_\gmu)^2 / \tau} \}
   \\
   & \leq M e^{M/\tau} \|z_0\|_{L^2}
   \\
   & \leq M_2 e^{M_2 / T} \|z_0\|_{L^2}.
  \end{split}
 \end{equation} 
 This concludes the proof of the cost estimate \eqref{eq:cost}.
\end{proof}

\subsection{Controllability despite a source term}

The key point of the source term method is to prove that, if a linear system is null controllable (i.e. if one can use a control to drive a non-zero initial state back to zero), then one can also use a control to drive the state to zero despite a source term, provided that it vanishes quick enough near the final time, compared to the control cost in small time. We consider the forced version of the linear control system \eqref{eq:z-linear}: 
\begin{equation} \label{eq:z-linear-forced}
\left\{
\begin{aligned}
& \partial_t z(t,x) - \partial_{xx} z(t,x) = u(t) \gmu(x) + f(t,x), \\
& \partial_x z (t,0) =  \partial_x z(t,\pi) = 0, \\
& z(0,x) = z_0(x).
\end{aligned}
\right.
\end{equation}
Let $\gmu \in H^{-1}_N(0,\pi)$ satisfying \eqref{eq:mu-b}. Let $M_2$ be given by \cref{thm:linear-cost}.
Let $T > 0$, $q \in (1,\sqrt{2})$ and $p > q^2/(2-q^2)$. We define the weights
\begin{align}
 \label{eq:rho0}
\rho_0(t) & := M_2^{-p} \exp\left(-\frac{pM_2}{(q-1)(T-t)}\right), \\
 \label{eq:rhoF}
\rho_\mathcal{F}(t) & := M_2^{-1-p} \exp\left(-\frac{(1+p)q^2M_2}{(q-1)(T-t)}\right).
\end{align}
Then we define associated spaces for the source term, the state and the control
\begin{align}
\label{eq:F}
\mathcal{F} & := \left\{ f \in L^2((0,T);{H^{-1}_N(0,\pi)}), \enskip \int_0^T \|f(t)\|^2_{H^{-1}_N(0,\pi)} / \rho^2_\mathcal{F}(t) \dd t < + \infty \right\}, \\
\label{eq:Z}
\mathcal{Z} & := \left\{ z \in L^2((0,T);H^1_N(0,\pi)), \enskip \int_0^T \|z(t)\|_{H^1_N(0,\pi)}^2 / \rho^2_0(t) \dd t < + \infty \right\}, \\
\label{eq:U}
\mathcal{U} & := \left\{ u \in L^\infty(0,T), \enskip \sup_{t\in[0,T]} |u(t)|^2 / \rho^2_0(t) < + \infty \right\}.
\end{align}

\begin{proposition} \label{thm:source-control}
 Let $\gmu \in H^{-1}_N(0,\pi)$. Assume that, for every $k \in \N$, $\langle \gmu, \varphi_k \rangle \neq 0$ and that $\gmu$ satisfies \eqref{eq:mu-b}. Let $m \in \N^*$ and $T > 0$. There exists $C_3 > 0$ and a continuous linear map $\mathfrak{L}_3 : L^2(0,\pi) \times \mathcal{F} \to \mathcal{U} \cap H^m_0(0,T)$ such that, for any $z_0 \in L^2(0,\pi)$ and any $f \in \mathcal{F}$, the solution $z \in Z$ to \eqref{eq:z-linear-forced} with a control $u := \mathfrak{L}_3(z_0,f)$ satisfies $z(T) = 0$ and 
 \begin{equation} \label{eq:source-control}
  \|z\|_{\mathcal{Z}} 
  + \| z / \rho_0 \|_{C^0([0,T];L^2(0,\pi))} + \|u\|_{\mathcal{U}}
  + \|u\|_{H^m(0,T)} 
  \leq C_3 \left( \| z_0 \|_{L^2(0,\pi)} + 
   \|f\|_{\mathcal{F}} \right). 
 \end{equation}
\end{proposition}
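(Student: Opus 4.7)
The plan is to implement the source-term method of Liu, Takahashi and Tucsnak \cite{MR3023058}, using \cref{thm:linear-cost} as the subinterval-level building block so that the $H^m_0$ regularity is automatically preserved.

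First I would partition $[0,T]$ by $T_k := T(1-q^{-k})$ for $k \in \N$, so that $T_0 = 0$, $T_k \nearrow T$, and the interval lengths $\tau_k := T_{k+1} - T_k = T(q-1)q^{-(k+1)}$ shrink geometrically. On $I_k := [T_k, T_{k+1}]$ I construct, by induction on $k$, a control piece $u_k \in H^m_0(I_k)$ and split the state as $z|_{I_k} = w_k + g_k$, where $g_k$ solves $\partial_t g_k - \partial_{xx} g_k = f$ with $g_k(T_k) = 0$, and $w_k$ solves $\partial_t w_k - \partial_{xx} w_k = u_k \mu$ with $w_k(T_k) = \xi_k$ and $w_k(T_{k+1}) = 0$; the control $u_k$ is obtained by translating to $I_k$ the map $\mathfrak{L}^{\tau_k}_2$ of \cref{thm:linear-cost} applied to $\xi_k$. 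Setting $\xi_0 := z_0$ and $\xi_{k+1} := g_k(T_{k+1})$, both one-sided traces at $T_k$ equal $\xi_k$, so the concatenation $u := \sum_k u_k$ together with $z$ solves \eqref{eq:z-linear-forced} on $[0,T]$ and is manifestly linear in $(z_0,f)$.

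The crux of the proof is a weight-comparison computation. From \cref{thm:wp-linear} and the monotonicity of $\rho_\mathcal{F}$ one gets $\|\xi_k\|_{L^2} \leq C_T \rho_\mathcal{F}(T_{k-1}) \|f\|_\mathcal{F}$ for $k\geq 1$, and \cref{thm:linear-cost} then yields $\|u_k\|_{H^m(I_k)} \leq M_2 e^{M_2/\tau_k} \|\xi_k\|_{L^2}$. Using $T - T_k = Tq^{-k}$, the key identity
\begin{equation*}
 \frac{M_2}{\tau_k} - \frac{(1+p)q^2 M_2}{(q-1)(T-T_{k-1})} + \frac{pM_2}{(q-1)(T-T_{k+1})} = 0
\end{equation*}
reduces the product $e^{M_2/\tau_k}\rho_\mathcal{F}(T_{k-1})/\rho_0(T_{k+1})$ to the constant $M_2^{-1}$, so $\|u_k\|_{H^m(I_k)} \leq C_T \rho_0(T_{k+1}) \|f\|_\mathcal{F}$, which is square-summable in $k$ since $\rho_0(T_{k+1})$ decays super-geometrically. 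Since the $u_k$ have disjoint supports and each vanishes to order $m-1$ at the endpoints of $I_k$, the sum lies in $H^m_0(0,T)$ with $\|u\|_{H^m}^2 = \sum_k \|u_k\|_{H^m(I_k)}^2$, to which one adds the single-step bound $\|u_0\|_{H^m(I_0)} \leq M_2 e^{M_2/\tau_0} \|z_0\|_{L^2}$ for the initial datum. The $\mathcal{U}$-bound then follows from Sobolev embedding $H^m_0(I_k) \hookrightarrow L^\infty(I_k)$ with constant $\tau_k^{m-1/2}$, combined with $\rho_0(t) \geq \rho_0(T_{k+1})$ on $I_k$; the $\mathcal{Z}$- and weighted $C^0([0,T];L^2)$-bounds come from \cref{thm:wp-linear} applied to $w_k$ and $g_k$ on each $I_k$, divided by $\rho_0$, which is where the condition $p > q^2/(2-q^2)$ enters to guarantee convergence of the $\rho_0^{-2}$-weighted integrals of $f$. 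Finally $z(T) = 0$ because $\|z(T_k)\|_{L^2} = \|\xi_k\|_{L^2} \to 0$.

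The main obstacle will be the tight accounting in this weight-comparison step: the prefactors $M_2^{-p}, M_2^{-1-p}$ and the exponents $p$, $(1+p)q^2$ in \eqref{eq:rho0}, \eqref{eq:rhoF} are calibrated exactly so that the cost $e^{M_2/\tau_k}$ produced by \cref{thm:linear-cost} is absorbed by the decay of $\rho_\mathcal{F}$, with a uniform margin that makes the weighted norms summable. Once this calibration is in place, the remainder reduces to routine parabolic energy estimates applied interval by interval, and the linearity of $\mathfrak{L}_3$ follows from linearity of every ingredient in the induction.
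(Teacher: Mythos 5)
Your proposal is correct and follows essentially the same route as the paper's proof: the same partition $T_k = T(1-q^{-k})$, the same splitting on each subinterval into a forced piece with zero data and a controlled piece driven by $\mathfrak{L}_2^{\tau_k}$, the same pasting argument for $H^m_0$ regularity, and the same weight calibration (your key identity is exactly the cancellation the paper performs when combining \eqref{eq:rho0} and \eqref{eq:rhoF} with the cost $e^{M_2/\tau_k}$). The only inaccuracy is cosmetic: the condition $p > q^2/(2-q^2)$ is not what makes the $\rho_0^{-2}$-weighted sums converge here (any $p>0$ works since $q>1$ forces the relevant exponents to be negative); that condition is used later, in \cref{thm:uz-estimate}, to bound $\rho_0^4/\rho_\mathcal{F}^2$ in the nonlinear fixed point.
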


\begin{rk}
 \cref{thm:source-control} is inspired by \cite[Proposition 2.3]{MR3023058}. However, we give a proof below because we introduced changes in the definitions of the functional spaces. Namely, we use forces with weaker regularity ($H^{-1}_N(0,\pi)$ instead of $H^1_N(0,\pi)$), we ask for stronger regularity on the state ($H^1_N(0,\pi)$ instead of $L^2(0,\pi)$) and stronger regularity on the constructed controls. Our proof follows the time decomposition scheme introduced in the original paper.
\end{rk}

\begin{proof}
 For $k \geq 0$, we define $T_k := T(1-q^{-k})$. On the one hand, we let $a_0 := z_0$ and, for $k \geq 0$, we define $a_{k+1} := z_f(T_{k+1}^-)$ where $z_f$ is the solution to
 \begin{equation}
  \left\{
  \begin{aligned}
   & \partial_t z_f  - \Delta_N  z_f   = f   \quad 
    \text{ on } (T_k,T_{k+1}), \\
   & z_f(T_k^+,.) = 0 
  \end{aligned}
  \right.
 \end{equation}
 From \cref{thm:wp-linear}, using the energy estimate \eqref{eq:wp-linear}, we have
 \begin{equation} \label{eq:ak1-estimate}
 \|a_{k+1}\|_{L^2(0,\pi)} 
 \leq \|z_f\|_{C^0([T_k,T_{k+1}], L^2(0,\pi))}
 \leq C_T \| f \|_{L^2((T_k,T_{k+1}),H^{-1}_N(0,\pi))}.
 \end{equation}
 On the other hand, for $k\geq0$, we also consider the control systems
 \begin{equation}
  \left\{
  \begin{aligned}
   & \partial_t z_u - \Delta_N z_u = u_k \gmu \quad \text{on } (T_k,T_{k+1}), \\
   & z_u(T_k^+) = a_k.
  \end{aligned}
  \right.
 \end{equation}
 Exceptionally, in this proof, the notation $u_k$ denotes a sequence of controls, and not the $k$-th primitive of a given function $u$ as in (\ref{primitive_n}). Using \cref{thm:linear-cost}, we define $u_k \in H^m_0(T_k,T_{k+1})$ as $u_k(t) := (\mathfrak{L}_2^{T_{k+1}-T_k}(a_k))(t-T_k)$. Hence, $z_u(T_{k+1}^-) = 0$ and, thanks to the cost estimate \eqref{eq:cost},
 \begin{equation} \label{eq:uk-estimate}
  \|u_k\|^2_{H^m(T_k,T_{k+1})} \leq M_2^2 e^{2M_2/(T_{k+1}-T_k)} \|a_k\|^2_{L^2(0,\pi)}.
 \end{equation}
 In particular, for $k = 0$,
 \begin{equation}
  \|u_0\|^2_{H^m(T_0,T_1)} \leq M_2^2 e^{2qM_2/(T(q-1))} \|z_0\|^2_{L^2(0,\pi)}.
 \end{equation}
 And, since $\rho_0$ is decreasing
 \begin{equation}
  \|u_0^{(m)}/\rho_0\|^2_{L^2(T_0,T_1)}
  + \|u_0/\rho_0\|^2_{L^\infty(T_0,T_1)}
  \leq (1+T) \rho_0^{-2}(T_1) M_2^2 e^{2qM_2/(T(q-1))} \|z_0\|^2_{L^2(0,\pi)}.
 \end{equation}
 For $k \geq 0$, since $\rho_\mathcal{F}$ is decreasing, combining \eqref{eq:ak1-estimate} and \eqref{eq:uk-estimate} yields
 \begin{equation}
  \|u_{k+1}\|^2_{H^m(T_{k+1},T_{k+2})} 
  \leq C_T^2 M_2^2 e^{2M_2/(T_{k+2}-T_{k+1})} \rho_\mathcal{F}^2(T_k) \| f / \rho_\mathcal{F} \|_{L^2((T_k,T_{k+1}),H^{-1}_N(0,\pi))}^2.
 \end{equation}
 In particular, since $\rho_0$ is decreasing and $m \geq 1$,
 \begin{equation}
 \begin{split}
  & \|u_{k+1}^{(m)}/\rho_0\|^2_{L^2(T_{k+1},T_{k+2})}
 + \|u_{k+1}/\rho_0\|^2_{L^\infty(T_{k+1},T_{k+2})} 
  \\ 
 & \leq C_T^2(1+T) M_2^2 e^{2M_2/(T_{k+2}-T_{k+1})} \rho_0^{-2}(T_{k+2}) \rho_\mathcal{F}^2(T_k) \| f / \rho_\mathcal{F} \|_{L^2((T_k,T_{k+1}),H^{-1}_N(0,\pi))}^2.
 \end{split}
 \end{equation}
 Using the definitions of the weights \eqref{eq:rho0} and \eqref{eq:rhoF}, we obtain
 \begin{equation}
  \begin{split}
 \|u_{k+1}^{(m)}/\rho_0\|^2_{L^2(T_{k+1},T_{k+2})}
 & + \|u_{k+1}/\rho_0\|^2_{L^\infty(T_{k+1},T_{k+2})} \\
 & 
 \leq C_T^2(1+T) \| f / \rho_\mathcal{F} \|_{L^2((T_k,T_{k+1}),H^{-1}_N(0,\pi))}^2.
  \end{split}
 \end{equation}
 As in the original proof, we can paste the controls $u_k$ for $k \geq 0$ together by defining
 \begin{equation}
  \mathfrak{L}_3 (z_0,f) := u_0(z_0) + \sum_{k\geq1} u_k(f).
 \end{equation}
 The concatenated control $u := \mathfrak{L}_3(z_0,f)$ remains $H^m_0$ even across the junctions because its derivatives vanish at each $T_k$. And we have the estimate
 \begin{equation}
  \begin{split}
   \|u^{(m)}/\rho_0\|^2_{L^2(0,T)}
  & + \|u/\rho_0\|^2_{L^\infty(0,T)} 
  \leq C_T^2(1+T) \| f \|_{\mathcal{F}}^2
  \\
  & + (1+T)  \rho_0^{-2}(T_1) M_2^2 e^{2qM_2/(T(q-1))} \|z_0\|^2_{L^2(0,\pi)}.
  \end{split}
 \end{equation}
 The state $z$ can also be reconstructed by concatenation of $z_f + z_u$, which are continuous at each junction thanks to the construction. Then we estimate the state. We use the energy estimate \eqref{eq:wp-linear} from \cref{thm:wp-linear} on each time interval. Hence
 \begin{equation}
 \|z_f\|^2_{C^0([T_k,T_{k+1}],L^2(0,\pi))}
 + \|z_f\|^2_{L^2((T_k,T_{k+1}),H^1_N(0,\pi))}
 \leq 
 C_T^2 \|f\|^2_{L^2((T_k,T_{k+1}),H^{-1}_N(0,\pi))}.
 \end{equation}
 and
 \begin{equation}
  \begin{split}
 \|z_u\|^2_{C^0([T_k,T_{k+1}],L^2(0,\pi))}
 & + \|z_u\|^2_{L^2((T_k,T_{k+1}),H^1_N(0,\pi))}
 \\ & \leq 
 C_T^2 \|a_k\|_{L^2}^2 + C_T^2 \|\gmu\|_{H^{-1}(0,\pi)}^2 \|u_k\|_{L^2(T_k,T_{k+1})}^2.
  \end{split}
 \end{equation}
 Proceeding similarly as for the estimate on the control, we obtain respectively
 \begin{equation}
   \|z_f/\rho_0\|^2_{C^0([0;T],L^2(0,\pi))}
 + \|z_f/\rho_0\|^2_{L^2((0,T),H^1_N(0,\pi))}
 \leq 
 C_T^2 M_2^{-2} \|f\|_{\mathcal{F}}^2
 \end{equation}
 and
  \begin{equation}
  \begin{split}
 \|z_u/&\rho_0\|^2_{C^0([0,T],L^2(0,\pi))}
 + \|z_u/\rho_0\|^2_{L^2((0,T),H^1_N(0,\pi))}
 \\ & \leq 
 C_T^2 (M_2^{-2} + \|\gmu\|_{H^{-1}(0,\pi)}^2) 
\|f\|_{\mathcal{F}}^2
\\
 & + C_T^2 \rho_0^{-2}(T_1) \left( 1 + \|\gmu\|_{H^{-1}(0,\pi)}^2 M_2^2 e^{2qM_2/(T(q-1))} \right) \|z_0\|_{L^2(0,\pi)}^2.
  \end{split}
 \end{equation}
 This concludes the proof of estimate \eqref{eq:source-control} for an appropriate choice of constant $C_3$.
 The estimate of $z/\rho_0$ in $C^0([0,T],L^2(0,\pi))$ implies that $z(T,\cdot) = 0$ because $\rho_0(T) = 0$.
\end{proof}

\subsection{Fixed-point argument for the nonlinear system}

We conclude the proof of \cref{thm:sstlc-linear} thanks to a fixed point argument. 

\bigskip

Let $\Gamma$ satisfying \eqref{def:Gamma}. Let $T > 0$ and $m \in \N^*$. Let $C_3 > 0$ be given by \cref{thm:source-control}. We define a small radius 
\begin{equation} \label{def:deltafl}
 \delta_{\mathfrak{L}} := M_2^{p-1} \left(8 C_\Gamma C_3^2\right)^{-1}
\end{equation}
and the associated ball of $L^2(0,\pi)$:
\begin{equation}
 B_{\delta_\mathfrak{L}} := \left\{ z_0 \in L^2(0,\pi); \enskip \|z_0\|_{L^2(0,\pi)} \leq \delta_{\mathfrak{L}} \right\}.
\end{equation}
Moreover, for any $r > 0$, we set
\begin{equation}
 \mathcal{F}_r := \left\{ f \in \mathcal{F}; \enskip \|f\|_{\mathcal{F}} \leq r \right\}.
\end{equation}
We construct a map $\mathcal{N} : B_{\delta_{\mathfrak{L}}} \times \mathcal{F}_{\delta_{\mathfrak{L}}} \to \mathcal{F}_{\delta_{\mathfrak{L}}}$ by setting, for $z_0 \in B_{\delta_{\mathfrak{L}}}$ and $f\in\mathcal{F}_{\delta_{\mathfrak{L}}}$,
\begin{equation}
 \mathcal{N}(z_0,f) := u (\Gamma[z]-\Gamma[0]),
\end{equation}
where $u := \mathfrak{L}_3(z_0,f)$ is given by \cref{thm:source-control} and $z$ is the associated trajectory to \eqref{eq:z-linear-forced} with initial data $z_0$, control $u$ and source $f$. 

\begin{itemize}
 \item \emph{First step}. For each $z_0 \in B_{\delta_{\mathfrak{L}}}$, the  application $\mathcal{N}(z_0,\cdot)$ maps $\mathcal{F}_{\delta_{\mathfrak{L}}}$ to itself. Indeed, thanks to \cref{thm:uz-estimate} (see below), the source-cost estimate \eqref{eq:source-control} and the definition of~$\delta_{\mathfrak{L}}$ in \eqref{def:deltafl}, one has
 \begin{equation}
  \begin{split}
   \| \mathcal{N}(z_0,f) \|_{\mathcal{F}}
   & \leq C_\Gamma M_2^{1-p} \|u\|_{\mathcal{U}} \|z\|_{\mathcal{Z}}
   \\
   & \leq C_\Gamma M_2^{1-p} C_3^2 
   \left( \|z_0\|_{L^2(0,\pi)} + \|f\|_{\mathcal{F}}\right)^2
   \\
   & \leq C_\Gamma M_2^{1-p} C_3^2 
   \cdot 4 \delta_{\mathfrak{L}}^2 \leq \frac12 \delta_{\mathfrak{L}}.
  \end{split}
 \end{equation}
 
 \item \emph{Second step}. For each $z_0 \in B_{\delta_{\mathfrak{L}}}$, the application $\mathcal{N}(z_0,\cdot)$ is a contraction on $\mathcal{F}_{\delta_{\mathfrak{L}}}$ with a uniform constant $\frac12$. Indeed, using \cref{thm:uz-estimate} and \cref{thm:source-control} once more, for $f_1,f_2 \in \mathcal{F}_{\delta_{\mathfrak{L}}}$,
  \begin{equation}
 \begin{split}
   & \|  \mathcal{N}(z_0,f_1) - \mathcal{N}(z_0,f_2) \|_{\mathcal{F}}
 \\
& = \| (u_1-u_2)(\Gamma[z_1]-\Gamma[0])   +
    \ u_2(\Gamma[z_2]-\Gamma[z_1])  \|_{\mathcal{F}}
\\  
& \leq C_\Gamma M_2^{1-p} \Big( 
\|u_1-u_2\|_{\mathcal{U}} \|z_1 \|_{\mathcal{Z}} 
+ \|u_2\|_{\mathcal{U}} \|z_1-z_2\|_{\mathcal{Z}} \Big) 
\\
 & \leq C_\Gamma C_3^2 M_2^{1-p} \left(2\|z_0\|_{L^2(0,\pi)} + \|f_1\|_{\mathcal{F}} + \|f_2\|_{\mathcal{F}}\right) \|f_1-f_2\|_{\mathcal{F}}
 \\
 & \leq C_\Gamma C_3^2 M_2^{1-p}  
 \cdot 4 \delta_{\mathfrak{L}} \|f_1-f_2\|_{\mathcal{F}}
 \leq \frac12 \|f_1-f_2\|_{\mathcal{F}}.
 \end{split}
 \end{equation}
 
 \item \emph{Third step}. Thanks to the Banach fixed point theorem, for any $z_0 \in B_{\delta_{\mathfrak{L}}}$, the application $\mathcal{N}(z_0,\cdot)$ admits a unique fixed point $f_{z_0} \in \mathcal{F}_{\delta_{\mathfrak{L}}}$. We define
 \begin{equation}
  \mathfrak{L}(z_0) := \mathfrak{L}_3(z_0, f_{z_0}) \in H^m_0(0,T).
 \end{equation}
 Since the application $\mathcal{N}(z_0,\cdot)$ is continuous and has a uniform contraction constant, this defines a continuous map $\mathfrak{L}$ on $B_{\delta_{\mathfrak{L}}}$.
 
 \item \emph{Fourth step}. We estimate the size of $\mathfrak{L}(z_0)$. Let $z_0 \in B_{\delta_{\mathfrak{L}}}$ and $r := \|z_0\|_{L^2(0,\pi)} < \delta_{\mathfrak{L}}$. For $f\in\mathcal{F}_r$, repeating the same estimates as in the first step yields
 \begin{equation}
  \| \mathcal{N}(z_0,f) \|_{\mathcal{F}}
  \leq 4 C_\Gamma C_3^2 M_2^{1-p} r^2
  \leq r.
 \end{equation}
 Hence the application $\mathcal{N}(z_0,\cdot)$ actually leaves $\mathcal{F}_r$ invariant. This ensures that $f_{z_0} \in \mathcal{F}_r$. We conclude using \eqref{eq:source-control} that
 \begin{equation}
  \| \mathfrak{L}(z_0) \|_{H^m(0,T)} \leq C_3 \left(\|z_0\|_{L^2(0,\pi)} + \|f_{z_0}\|_{\mathcal{F}} \right)
  \leq 2 C_3 \|z_0\|_{L^2(0,\pi)}.
 \end{equation}
\end{itemize}
Therefore, the nonlinear system is smoothly small-time locally null controllable, with a control cost which depends linearly on the size of the initial data. This concludes the proof of \cref{thm:sstlc-linear}.

\begin{lemma} \label{thm:uz-estimate}
 Let $\Gamma$ satisfying \eqref{def:Gamma}, $v \in \mathcal{U}$ and 
 $\zeta_1, \zeta_2 \in \mathcal{Z}$. Then
 \begin{equation} \label{eq:uz-estimate}
  \| v (\Gamma[\zeta_1] - \Gamma[\zeta_2]) \|_{\mathcal{F}} 
  \leq C_\Gamma M_2^{1-p} \|v\|_{\mathcal{U}} \|\zeta_1-\zeta_2\|_{\mathcal{Z}}.
 \end{equation}
\end{lemma}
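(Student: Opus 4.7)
The plan is to reduce the inequality to a pointwise bound, using the $C^2$-regularity of $\Gamma$ from \eqref{def:Gamma} to obtain a Lipschitz estimate for the difference. More precisely, hypothesis \eqref{def:Gamma} together with the mean value theorem gives
\begin{equation}
  \| \Gamma[\zeta_1(t)] - \Gamma[\zeta_2(t)] \|_{H^{-1}_N(0,\pi)} \leq C_\Gamma \, \|\zeta_1(t) - \zeta_2(t)\|_{H^1_N(0,\pi)}
\end{equation}
for almost every $t \in (0,T)$, so that $\| v(t)(\Gamma[\zeta_1(t)] - \Gamma[\zeta_2(t)])\|_{H^{-1}_N} \leq C_\Gamma |v(t)|\,\|\zeta_1(t)-\zeta_2(t)\|_{H^1_N}$.

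Next, I would factor the integrand of $\|v(\Gamma[\zeta_1]-\Gamma[\zeta_2])\|_{\mathcal F}^2$ so that it matches the natural weighted norms of $\mathcal U$ and $\mathcal Z$. Writing
\begin{equation}
  \frac{\|v(t)(\Gamma[\zeta_1(t)]-\Gamma[\zeta_2(t)])\|_{H^{-1}_N}^2}{\rho_\mathcal{F}(t)^2}
  \leq C_\Gamma^2 \left(\frac{|v(t)|}{\rho_0(t)}\right)^2 \left(\frac{\|\zeta_1(t)-\zeta_2(t)\|_{H^1_N}}{\rho_0(t)}\right)^2 \left(\frac{\rho_0(t)^2}{\rho_\mathcal{F}(t)}\right)^2,
\end{equation}
one bounds the first factor pointwise by $\|v\|_\mathcal{U}^2$, integrates the second factor against $\mathrm{d}t$ to obtain $\|\zeta_1-\zeta_2\|_\mathcal{Z}^2$, and is left with the task of uniformly bounding the weight ratio $\rho_0^2 / \rho_\mathcal{F}$ on $[0,T]$.

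The one substantive step, and the only place where the specific choices of $p$ and $q$ enter, is this weight comparison. Using the explicit formulas \eqref{eq:rho0} and \eqref{eq:rhoF}, a direct computation gives
\begin{equation}
  \frac{\rho_0(t)^2}{\rho_\mathcal{F}(t)} = M_2^{1-p} \exp\!\left( \frac{M_2}{(q-1)(T-t)}\bigl[(1+p)q^2 - 2p\bigr] \right).
\end{equation}
The assumption $q \in (1,\sqrt 2)$ and $p > q^2/(2-q^2)$ is precisely equivalent to $(1+p)q^2 - 2p < 0$, so the exponential factor is bounded by $1$ for every $t \in [0,T]$, yielding $\rho_0^2/\rho_\mathcal{F} \leq M_2^{1-p}$ uniformly on $[0,T]$.

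Putting these three ingredients together, I would obtain
\begin{equation}
  \|v(\Gamma[\zeta_1]-\Gamma[\zeta_2])\|_{\mathcal F}^2
  \leq C_\Gamma^2 M_2^{2(1-p)} \|v\|_\mathcal{U}^2 \int_0^T \frac{\|\zeta_1(t)-\zeta_2(t)\|_{H^1_N}^2}{\rho_0(t)^2}\,\mathrm{d}t
  = C_\Gamma^2 M_2^{2(1-p)} \|v\|_\mathcal{U}^2 \|\zeta_1-\zeta_2\|_\mathcal{Z}^2,
\end{equation}
and taking square roots yields \eqref{eq:uz-estimate}. The only delicate point is the weight comparison, and it is really an algebraic verification; the rest is a clean application of the Lipschitz bound coming from \eqref{def:Gamma}.
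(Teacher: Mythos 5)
Your proof is correct and follows essentially the same route as the paper's: the pointwise Lipschitz bound from \eqref{def:Gamma}, the bound $|v(t)|\le \rho_0(t)\|v\|_{\mathcal U}$, and the reduction to the uniform weight comparison $\rho_0^2/\rho_{\mathcal F}\le M_2^{1-p}$, which is exactly where the condition $p>q^2/(2-q^2)$ enters in the paper as well. Your explicit computation of the exponent $(1+p)q^2-2p$ just spells out what the paper leaves as a one-line remark.
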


\begin{proof}
 For almost every $t \in (0,T)$, using \eqref{eq:U} and \eqref{def:Gamma}, one has
 \begin{equation}
 \| v(t) (\Gamma[\zeta_1(t)] - \Gamma[\zeta_2(t)]) \|^2_{H^{-1}_N(0,\pi)} 
 \leq C_\Gamma^2 \rho^2_0(t)  \|v\|^2_{\mathcal{U}}  \|(\zeta_1-\zeta_2)(t)\|_{H^1_N(0,\pi)}^2.
 \end{equation}
 Hence, using \eqref{eq:F} and \eqref{eq:Z}
 \begin{equation}
 \| v  (\Gamma[\zeta_1] - \Gamma[\zeta_2]) \|_{\mathcal{F}}^2 \leq C_\Gamma^2 \sup_{t\in[0,T]} \frac{\rho_0^4(t)}{\rho^2_\mathcal{F}(t)} \cdot  \|v\|^2_{\mathcal{U}} \|\zeta_1-\zeta_2\|^2_{\mathcal{Z}}.
 \end{equation}
 The supremum is finite and bounded by $M_2^{1-p}$, provided that $p \geq q^2/(2-q^2)$, which we assumed. This concludes the proof of estimate \eqref{eq:uz-estimate}.
\end{proof}

\subsection{Controllability with linear cost and linear controllability}
\label{sec:sstlc-linear-equiv}

As stated in \cref{rk:sstlc-linear-equiv}, obtaining controllability with a linear cost for the nonlinear system implies controllability for the linear system. We provide a short proof below.

\bigskip

Let $\Gamma$ satisfying \eqref{def:Gamma} and $T > 0$. Let us assume that there exists $C, \delta > 0$ such that, for any $z_0 \in L^2(0,\pi)$ with $\|z_0\|_{L^2} \leq \delta$, there exists $u \in L^\infty(0,T)$ with $\|u\|_{L^\infty(0,T)} \leq C \|z_0\|_{L^2}$ such that the solution to the nonlinear system \eqref{eq:z} satisfies $z(T) = 0$. We want to prove that this implies that the linear system is null-controllable in time $T$.

Let $z_0 \in L^2(0,\pi)$. We consider the family of initial data $\varepsilon z_0$ for $\varepsilon > 0$. From our assumption, for $\varepsilon$ small enough, there exists $u^\varepsilon$ with $\|u^\varepsilon\|_{L^\infty(0,T)} \leq C \varepsilon \|z_0\|_{L^2}$ such that the associated solution $z^\varepsilon \in Z$ to the nonlinear system \eqref{eq:z} satisfies $z^\varepsilon(T) = 0$. The sequence of controls $u^\varepsilon / \varepsilon$ is bounded in $L^2(0,T)$ and thus weakly converges in $L^2(0,T)$ towards some given control $u \in L^2(0,T)$. We will prove that the control $u$ drives the initial state $z_0$ to $0$ for the linear system \eqref{eq:z-linear}.

On the one hand, let $y^\varepsilon \in Z$ be the solution to the linear system \eqref{eq:z-linear} with initial data~$z_0$ and control $u^\varepsilon / \varepsilon$. Let $y$ be the solution to \eqref{eq:z-linear} with initial data $z_0$ and control $u$. Then
\begin{equation} \label{eq:y-yeps-series}
 \|y(T) - y^\varepsilon(T)\|_{L^2(0,\pi)}^2 = \sum_{k=0}^{\infty} \left| \langle \gmu, \varphi_k \rangle \left( \int_0^T \left(u(t) - \frac{u^\varepsilon(t)}{\varepsilon}\right) e^{-k^2(T-t)} \dd t \right) \right|^2. 
\end{equation}
Since $u^\varepsilon/\varepsilon$ converges weakly to $u$, each term of the series \eqref{eq:y-yeps-series} converges to $0$. Moreover, using that $\gmu \in H^{-1}_N(0,\pi)$, we can get a uniform bound to apply the dominated convergence theorem. Hence $y^\varepsilon(T)$ converges (strongly) towards $y(T)$ in $L^2(0,\pi)$.

On the other hand, one has
\begin{equation}
 z^\varepsilon(T) = \varepsilon y^\varepsilon(T) + \underset{\varepsilon\to 0}{O}(\varepsilon^2).
\end{equation}
Since $z^\varepsilon(T) = 0$ and $y^\varepsilon(T)$ converges to $y(T)$, this proves that $y(T) = 0$. Hence, for any $z_0 \in L^2(0,\pi)$ we build a control $u \in L^2(0,T)$ driving the solution of the linear system \eqref{eq:z-linear} to zero.

\section{Obstructions caused by quadratic integer drifts}
\label{sec:obs-integer}

The goal of this section is to prove \cref{thm:obs-integer}. We start by explaining the heuristic of the proof. Then, we justify the successive steps of the heuristic in the following paragraphs.

\subsection{Heuristic}

Let $\Gamma$ satisfying (\ref{def:Gamma}) and (\ref{lost_direction}). Let $T > 0$. Let $\eta>0$ small enough be given by \cref{Prop:WP+source}. Let $u \in L^\infty(0,T)$ with $\|u\|_{L^\infty} < \eta$. Let $\delta \in [-1,1]$. We consider the solution $z$ to (\ref{eq:z}) with $z_0=\delta \varphi_0$.

\bigskip
\noindent
By \cref{thm:wp-linear}, one may consider the solutions $z_1, z_2 \in Z$ of
\begin{itemize} 
\item the linearized system of (\ref{eq:z}), i.e.,
\begin{equation} \label{eq:z1}
\left\{
\begin{aligned}
& \partial_t z_1(t,x) - \partial_{xx} z_1(t,x) = u(t) \gmu(x), \\
& \partial_x z_1 (t,0) =  \partial_x z_1(t,\pi) = 0, \\
& z_1(0,.) = 0,
\end{aligned}
\right.
\end{equation}
which can be explicitly computed as
\begin{equation} \label{z1_explicit}
z_1(t)= \sum_{j=0}^\infty \langle \gmu , \varphi_j \rangle \left(\int_0^t u(\tau)e^{-j^2(t-\tau)} \dd\tau\right) \varphi_j,
\end{equation}
\item the second-order system of (\ref{eq:z}), i.e.,
\begin{equation} \label{eq:z2}
\left\{
\begin{aligned}
& \partial_t z_2(t,x) - \partial_{xx} z_2(t,x) = u(t) (\glambda z_1(t))(x), \\
& \partial_x z_2 (t,0) =  \partial_x z_2(t,\pi) = 0, \\
& z_2(0,.) = 0.
\end{aligned}
\right.
\end{equation}
\end{itemize}

\noindent
Under appropriate assumptions, and in an appropriate sense, the nonlinear solution can be approximated by its second-order Taylor expansion with respect to $u$, so that one has
\begin{equation} \label{approx_formel}
z(T) \approx \delta \varphi_0 + z_1(T) + z_2(T).
\end{equation}
On the one hand, the assumption (\ref{lost_direction}) leads to
\begin{equation} \label{eq:app1}
 \langle z_1(t),\varphi_0\rangle =0.
\end{equation}
Thus, the component along $\varphi_0$ is not controlled on the linearized system. On the other hand, straightforward computations lead to
\begin{equation} \label{eq:app2}
  \langle z_2(T),\varphi_0\rangle =  \int_0^T u(t) \int_0^t u(\tau) K(t-\tau) \dd \tau \dd t,
\end{equation}
where we introduce the quadratic kernel
\begin{equation} \label{def:K}
 K(\sigma):=\sum_{j=1}^\infty c_j e^{-j^2 |\sigma| }
\end{equation}
and the coefficients $c_j$ are defined in (\ref{Def:cj}). Using integration by parts, we will prove that, in an appropriate sense, there holds
\begin{equation} \label{eq:app3}
 \int_0^T u(t) \int_0^t u(\tau) K(t-\tau) \dd \tau \dd t \approx (-1)^n K^{(2n-1)}(0) \| u_n \|_{L^2(0,T)}^2.
\end{equation}
Combining \eqref{eq:app1}, \eqref{eq:app2} and \eqref{eq:app3} will prove the asymptotic behavior \eqref{trop_vague}, under the assumption that the control is small in an appropriate Sobolev space. In the following paragraphs:
\begin{itemize}
 \item we quantify (\ref{approx_formel}) in \cref{sec3:approx}, using the regularity assumption \eqref{def:Gamma},
 \item we quantify \eqref{eq:app3} in \cref{sec3:ibp}, under the assumption that the final state satisfies \eqref{eq:return},
 \item we combine these elements to conclude the proof of \cref{thm:obs-integer} in \cref{sec3:fin}.
\end{itemize}

\subsection{Approximation of the nonlinear solution} \label{sec3:approx}

We start with a definition that lightens the notations in the sequel,
which corresponds to boundedness under smallness assumption on both $T$ and $\|u\|_{L^\infty(0,T)}$, that holds uniformly with respect to $\delta \in [-1,1]$.

\begin{definition} \label{def:OO}
 Given two observable quantities $A(T,u,\delta)$ and $B(T,u,\delta)$ of interest, we will write $A(T,u,\delta)=\mathcal{O}(B(T,u,\delta))$ when there exist $C, T^* > 0$ such that, for any $T\in(0,T^*]$, there exists $\eta>0$ such that, 
 for any $u \in L^\infty(0,T)$ with $\|u\|_{L^\infty(0,T)} \leq \eta$ and any $\delta \in [-1,1]$, then one has the estimate $| A(T,u,\delta) | \leq C | B(T,u,\delta) |$. 
 In particular, the following examples hold true and will be used in the sequel:
 \begin{gather}
  \label{O1}
  \| u \|_{L^\infty(0,T)} = \mathcal{O}(T),
  \\
  \label{O2}
  A = \mathcal{O}(TA + B)
  \quad \Rightarrow \quad
  A = \mathcal{O}(B).
 \end{gather}
\end{definition}

\begin{proposition} \label{Prop:NL/quad}
 Let $\Gamma$ satisfying (\ref{def:Gamma}). For $\delta \in [-1,1]$, let $z$ denote the solution of system (\ref{eq:z}) with $z_0=\delta \varphi_0$, and $z_1, z_2$ denote the solutions of systems (\ref{eq:z1}), (\ref{eq:z2}). There holds
\begin{equation} \label{error_1}
\| z - \left( \delta \varphi_0 + z_1  \right) \|_Z 
= \mathcal{O}\left( \|u\|_{L^\infty(0,T)}^2 + |\delta| \|u\|_{L^\infty(0,T)} \right),
\end{equation}
\begin{equation} \label{error_2}
\| z - \left( \delta \varphi_0 + z_1 + z_2 \right) \|_Z 
= \mathcal{O}\left( \|u\|_{L^\infty(0,T)}^3 + |\delta| \|u\|_{L^\infty(0,T)} \right).
\end{equation}
\end{proposition}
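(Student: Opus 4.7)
My plan is to work directly with the PDEs satisfied by the successive remainders $r_1 := z - \delta\varphi_0 - z_1$ and $r_2 := r_1 - z_2$, applying the linear well-posedness estimate of \cref{thm:wp-linear} together with the first- and second-order Taylor expansions of $\Gamma$ at the origin provided by \eqref{def:Gamma}. As preliminary a priori bounds, \cref{Prop:WP+source} (applied with initial datum $\delta\varphi_0$, of $L^2$-norm $|\delta|$) gives $\|z\|_Z = \mathcal{O}(|\delta| + \|u\|_{L^\infty})$, while \cref{thm:wp-linear} applied to \eqref{eq:z1} and \eqref{eq:z2} together with the bound $\|u\,\Gamma'[0]z_1\|_{L^2(H^{-1}_N)} \leq C_\Gamma \|u\|_{L^\infty}\|z_1\|_{L^2(H^1_N)}$ yields $\|z_1\|_Z = \mathcal{O}(\|u\|_{L^\infty})$ and $\|z_2\|_Z = \mathcal{O}(\|u\|_{L^\infty}^2)$.

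For \eqref{error_1}: because $\Delta_N\varphi_0 = 0$, subtracting the equations shows that $r_1$ solves the heat equation with zero initial datum, Neumann boundary conditions, and source $u(\Gamma[z]-\Gamma[0])$. The mean-value identity $\Gamma[z]-\Gamma[0] = \int_0^1 \Gamma'[sz]\,z\,\mathrm{d}s$ combined with the uniform bound on $\Gamma'$ in \eqref{def:Gamma} gives $\|u(\Gamma[z]-\Gamma[0])\|_{L^2((0,T);H^{-1}_N)} \leq C_\Gamma \|u\|_{L^\infty}\|z\|_{L^2(H^1_N)}$, and injecting the $Z$-bound on $z$ into \cref{thm:wp-linear} applied to $r_1$ directly yields \eqref{error_1}.

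For \eqref{error_2}: I would substitute the second-order Taylor expansion
\[
 \Gamma[z] - \Gamma[0] - \Gamma'[0]\,z_1
 = \Gamma'[0](z - z_1) + R(z)
 = \delta\,\Gamma'[0]\varphi_0 + \Gamma'[0]\,z_2 + \Gamma'[0]\,r_2 + R(z),
\]
with $R(z) := \int_0^1 (1-s)\,\Gamma''[sz](z,z)\,\mathrm{d}s$ of $H^{-1}_N$-norm bounded by $\tfrac12 C_\Gamma\|z\|_{H^1_N}^2$, into the source of $r_2$. Moving the $u\Gamma'[0]r_2$ term to the left-hand side produces a perturbed heat equation, for which the Banach fixed-point argument underlying \cref{Prop:WP+source} applies (provided $\|u\|_{L^\infty}$ is small enough) and gives $\|r_2\|_Z \leq C\,\|\mathrm{RHS}\|_{L^2((0,T);H^{-1}_N)}$. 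The contributions $\delta u\,\Gamma'[0]\varphi_0$ and $u\,\Gamma'[0]z_2$ are directly bounded by $\mathcal{O}(|\delta|\|u\|_{L^\infty})$ and $\mathcal{O}(\|u\|_{L^\infty}^3)$, respectively, via the a priori estimates.

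The main obstacle is the cubic remainder $uR(z)$: the pointwise bound $\|R(z(t))\|_{H^{-1}_N} \leq C\|z(t)\|_{H^1_N}^2$ coupled with the $Z$-bound on $z$ only controls $R(z)$ in $L^1((0,T);H^{-1}_N)$, whereas the estimate required is in $L^2((0,T);H^{-1}_N)$ and hence calls for an $L^4((0,T);H^1_N)$ bound on $z$. I would close this gap by a parabolic bootstrap exploiting the very special nature of the initial datum: $z_0 = \delta\varphi_0$ lies in $D(\Delta_N)$ and $\Delta_N\varphi_0 = 0$, so $z - \delta\varphi_0$ solves the heat equation with a zero initial datum and $\varphi_0$ plays no role in the dynamics of modes $k\geq 1$. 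A mode-by-mode Duhamel computation, using the decay $|\int_0^t u(s)e^{-k^2(t-s)}\mathrm{d}s| \leq \|u\|_{L^\infty}/k^2$ for $k\geq 1$ and the summability $\sum_k \mu_k^2/(1+k)^2 < \infty$ coming from $\mu\in H^{-1}_N$, first gives $\|z_1\|_{L^\infty(H^1_N)} = \mathcal{O}(\|u\|_{L^\infty})$; a bootstrap of the same estimate on the nonlinear equation solved by $z-\delta\varphi_0$ then propagates a uniform-in-time bound $\|z(t)\|_{H^1_N} = \mathcal{O}(|\delta|+\|u\|_{L^\infty})$. Combined with the already-known $L^2(H^1_N)$ estimate, this yields $\int_0^T \|z(t)\|_{H^1_N}^4\,\mathrm{d}t = \mathcal{O}\bigl((|\delta|+\|u\|_{L^\infty})^4\bigr)$; expanding this quartic power and using $|\delta|\leq 1$ to absorb mixed terms of the form $|\delta|^i\|u\|_{L^\infty}^{3-i}$ into $|\delta|\|u\|_{L^\infty} + \|u\|_{L^\infty}^3$ concludes the proof of \eqref{error_2}.
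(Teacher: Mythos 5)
Your overall skeleton --- writing the heat equations satisfied by the successive remainders, bounding their sources via the Taylor expansion of $\Gamma$ in \eqref{def:Gamma}, and closing with \cref{thm:wp-linear} --- is sound, and your argument for \eqref{error_1} is correct. The paper organizes the expansion slightly differently: it introduces the pure-control solution $z^u$ (same equation, zero initial datum), expands $z^u = z_1 + z_2 + \mathcal{O}(\|u\|_{L^\infty}^3)$, and absorbs the entire effect of the initial datum into $\overline{z} := z - \delta\varphi_0 - z^u$, which solves a heat equation with source $u(\Gamma[z]-\Gamma[z^u])$ and is bounded by $\mathcal{O}(|\delta|\,\|u\|_{L^\infty})$ after absorbing $\|\overline{z}\|_Z$ for $\|u\|_{L^\infty}$ small; you instead carry $\delta\varphi_0$ through the quadratic expansion. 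Both routes are viable, and you are right that the only delicate point in either is the quadratic Taylor remainder of $\Gamma$, which requires more than the $L^2((0,T);H^1_N)$ control provided by the $Z$-norm.

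The genuine gap is in your resolution of that point. The ``parabolic bootstrap'' giving $\|z(t)\|_{H^1_N}=\mathcal{O}(|\delta|+\|u\|_{L^\infty})$ uniformly in time does not close under \eqref{def:Gamma}: the mode-by-mode gain $\left|\int_0^t u(s)e^{-k^2(t-s)}\,\mathrm{d}s\right|\leq \|u\|_{L^\infty}k^{-2}$ is available for $z_1$ only because its source $u(t)\mu(x)$ is separated in time and space, whereas the source $u(t)\Gamma[z(t)]$ of the nonlinear equation is merely an $H^{-1}_N$-valued function of time; since $e^{-t\Delta_N}$ maps $H^{-1}_N$ to $H^1_N$ only with the non-integrable factor $t^{-1}$, no $L^\infty((0,T);H^1_N)$ (nor $L^4((0,T);H^1_N)$) bound on $z$ follows, and the Cauchy--Schwarz variant of the mode-by-mode estimate loses a factor $(1+k)$ that makes the $H^1_N$ sum diverge. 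The remedy is to never evaluate the second-order remainder at $z$: decompose $\Gamma[z]-\Gamma[0]-\Gamma'[0]z_1 = \big(\Gamma[z]-\Gamma[z_1+\delta\varphi_0]\big) + \Gamma'[0](\delta\varphi_0) + R(z_1+\delta\varphi_0)$ (or pass through $z^u$ as the paper does). The first term is bounded in $L^2(H^{-1}_N)$ by $C_\Gamma\|z-z_1-\delta\varphi_0\|_{L^2(H^1_N)}$, i.e.\ by \eqref{error_1}; the second contributes $\mathcal{O}(|\delta|\|u\|_{L^\infty})$ after multiplication by $u$; and the third only involves $z_1+\delta\varphi_0$, for which your explicit Duhamel computation \emph{does} give $\|z_1+\delta\varphi_0\|_{L^\infty((0,T);H^1_N)}=\mathcal{O}(|\delta|+\|u\|_{L^\infty})$, whence $\|u\,R(z_1+\delta\varphi_0)\|_{L^2(H^{-1}_N)}=\mathcal{O}(\|u\|_{L^\infty}(\delta^2+\|u\|_{L^\infty}^2))=\mathcal{O}(|\delta|\|u\|_{L^\infty}+\|u\|_{L^\infty}^3)$ using $|\delta|\leq 1$. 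With this modification no regularity of the nonlinear solution beyond $Z$ is needed and \eqref{error_2} follows from \cref{thm:wp-linear}.
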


\begin{proof}
 We denote by $z^u$ the "pure-control" solution to
\begin{equation} 
\left\{
\begin{aligned}
& \partial_t z^u(t,x) - \partial_{xx} z^u(t,x) = u(t) \Gamma[z^u(t)](x), \\
& \partial_x z^u (t,0) =  \partial_x z^u(t,\pi) = 0, \\
& z^u(0,x) = 0,
\end{aligned}
\right.
\end{equation}
The following estimates are direct consequences of the iterated application of \cref{Prop:WP+source}, then \cref{thm:wp-linear} and the regularity assumption \eqref{def:Gamma} on $\Gamma$. There holds
\begin{align}
 \|z^u\|_{Z} 
 & = \mathcal{O} \Big( \|u\|_{L^\infty(0,T)} \Big),
 \\
 \label{eq_cube1}
 \|z^u-z_1\|_{Z} 
 & = \mathcal{O} \Big( \|u\|_{L^\infty(0,T)}^2 \Big),
 \\
 \label{eq_cube2}
 \|z^u-z_1-z_2\|_{Z} 
 & = \mathcal{O} \Big( \|u\|_{L^\infty(0,T)}^3 \Big).
\end{align}
Moreover, we can write $z= \delta \varphi_0 + z^u + \overline{z}$ 
where the function $\overline{z}$ solves
\begin{equation}
 \left\{
\begin{aligned}
& \partial_t \overline{z}(t,x) - \partial_{xx} \overline{z}(t,x) = 
u(t) \left( \Gamma[z(t)]-\Gamma[z^u(t)] \right), \\
& \partial_x \overline{z} (t,0) =  \partial_x \overline{z}(t,\pi) = 0, \\
& \overline{z}(0,x) = 0,
\end{aligned}
\right.
\end{equation}
By \cref{thm:wp-linear} and \eqref{def:Gamma}, 
\begin{equation} 
 \label{eq_cubez}
 \|\overline{z}\|_Z=\mathcal{O}\Big( |\delta| \|u\|_{L^\infty(0,T)} \Big).
\end{equation}
Combining \eqref{eq_cube1} and \eqref{eq_cubez} proves \eqref{error_1}. Combining \eqref{eq_cube2} and \eqref{eq_cubez} proves \eqref{error_2}.
\end{proof}

\subsection{Study of the quadratic form} \label{sec3:ibp}

Under assumption (\ref{K_C2n}), the function $K$ belongs to $C^{2n}([0,\infty),\R)$ and we can integrate by parts $2n$ times in the quadratic form, which yields the following result.

\begin{proposition} \label{Prop:fq_syst}
 Let $n \in \N^*$ and $K \in C^{2n}(0,+\infty) \cap C^{2n-1}([0,+\infty))$ with $K^{(2n)} \in L^1_{\mathrm{loc}}([0,+\infty))$. There exists a quadratic form $Q_n$ on $\R^{2n}$, such that, for  $T>0$ and $u \in L^\infty(0,T)$,
 \begin{equation}
  \begin{split}
  \int_0^T u(t) & \int_0^t u(\tau) K(t-\tau) \dd\tau \dd t
  = (-1)^n \int_0^T u_n(t) \int_0^t u_n(\tau) K^{(2n)}(t-\tau)\dd\tau \dd t 
  \\ 
  & + \sum_{\ell=1}^n (-1)^\ell K^{(2\ell-1)}(0) \int_0^T u_\ell(t)^2 \dd t
  + Q_n \Big( u_1(T),...,u_{n}(T),\alpha_1,...,\alpha_n \Big),
  \end{split}
 \end{equation}
 where we use the shorthand notation, for $j \in \{1,...,n\}$,
 \begin{equation}
  \alpha_j := \int_0^T u_{n}(t) K^{(n+j-1)}(T-t) \dd t.
 \end{equation}
\end{proposition}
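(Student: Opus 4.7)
I would attack this by iterated integration by parts, transferring derivatives from $u$ onto $K$ one pair at a time, using the identities $u_{k-1}=u_k'$ and $u_k(0)=0$. Set
\begin{equation*}
 J_\ell(u) := (-1)^\ell \int_0^T u_\ell(t) \int_0^t u_\ell(\tau)\, K^{(2\ell)}(t-\tau) \dd\tau \dd t,
\end{equation*}
so the target identity reads
\[
 J_0(u) = J_n(u) + \sum_{\ell=1}^n (-1)^\ell K^{(2\ell-1)}(0) \|u_\ell\|_{L^2(0,T)}^2 + Q_n\bigl(u_1(T),\dots,u_n(T),\alpha_1,\dots,\alpha_n\bigr).
\]
The plan is to prove, by induction on $\ell \in \{1,\dots,n\}$, the one-step identity
\begin{equation*}
 J_{\ell-1}(u) - J_\ell(u) = (-1)^\ell K^{(2\ell-1)}(0) \int_0^T u_\ell(t)^2 \dd t + B_\ell,
\end{equation*}
for an explicit bilinear boundary term $B_\ell$, and then to verify that $\sum_{\ell=1}^n B_\ell$ is a quadratic form in the $2n$ variables $(u_1(T),\dots,u_n(T),\alpha_1,\dots,\alpha_n)$.

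\textbf{One-step reduction.} First I would integrate by parts in the outer variable $t$ with $u_{\ell-1}(t) = u_\ell'(t)$, which produces (using $u_\ell(0)=0$) the boundary $(-1)^{\ell-1}\, u_\ell(T)\, G_{\ell-1}(T)$, where
\begin{equation*}
 G_{\ell-1}(T) := \int_0^T u_{\ell-1}(\tau)\, K^{(2\ell-2)}(T-\tau) \dd\tau,
\end{equation*}
plus an interior integral. Differentiating the inner $\tau$-integral with respect to $t$ splits that interior term into a diagonal piece that integrates explicitly to $-\tfrac{1}{2}(-1)^{\ell-1} K^{(2\ell-2)}(0)\, u_\ell(T)^2$, plus a convolution against $K^{(2\ell-1)}(t-\tau)$. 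A second integration by parts, now in the inner variable $\tau$ with $u_{\ell-1}=u_\ell'$, vanishes at $\tau=0$, produces the symmetric square $(-1)^\ell K^{(2\ell-1)}(0) \int_0^T u_\ell(t)^2 \dd t$, and leaves exactly $J_\ell(u)$. The induction step therefore holds with $B_\ell = (-1)^{\ell-1}\bigl[u_\ell(T)\,G_{\ell-1}(T) - \tfrac{1}{2}K^{(2\ell-2)}(0)\, u_\ell(T)^2\bigr]$.

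\textbf{Assembly and hard part.} It remains to rewrite each $G_{\ell-1}(T)$ in the prescribed $2n$ variables. Iterating IBP $(n-\ell+1)$ times in $G_{\ell-1}(T)$, each step raising the primitive index on $u$ by one and the derivative on $K$ by one, yields
\begin{equation*}
 G_{\ell-1}(T) = \sum_{i=0}^{n-\ell} K^{(2\ell-2+i)}(0)\, u_{\ell+i}(T) + \alpha_\ell.
\end{equation*}
Substituting into $B_\ell$ and summing, $Q_n$ is manifestly a polynomial of total degree two in $(u_1(T),\dots,u_n(T),\alpha_1,\dots,\alpha_n)$, with coefficients depending only on the fixed numbers $K^{(j)}(0)$ for $0 \leq j \leq 2n-2$. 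The main obstacle is purely combinatorial: keeping the signs and index shifts consistent through the nested iterations, and checking that every boundary at an endpoint $\tau=0$ indeed vanishes. The regularity hypothesis is used sharply here, since only $K^{(j)}(0)$ with $j\leq 2n-1$ are needed pointwise, while $K^{(2n)}$ appears only under an integral (in $J_n(u)$), where $L^1_{\mathrm{loc}}$ regularity suffices to pair against $u_n\in L^\infty(0,T)$.
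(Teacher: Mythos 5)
Your proof is correct and follows essentially the same route as the paper: a finite induction on the number of integration-by-parts pairs, with the same one-step identity (two integrations by parts producing the diagonal term $(-1)^\ell K^{(2\ell-1)}(0)\|u_\ell\|_{L^2}^2$ and the boundary term $u_\ell(T)G_{\ell-1}(T)-\tfrac12 K^{(2\ell-2)}(0)u_\ell(T)^2$) and the same re-expansion of $G_{\ell-1}(T)$ into $\alpha_\ell$ plus a linear combination of the $u_{\ell+i}(T)$. The only difference is presentational: you telescope and sum the boundary terms at the end, while the paper carries the partial quadratic form $Q_n^m$ along the induction.
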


\begin{proof} 
 Let $K$ be a function satisfying the above assumptions. We prove, by finite induction on $m \in \{0,...,n\}$, that there exists a quadratic form $Q_n^m$ on $\R^{2m}$ such that
 \begin{equation} \label{hyp_rec_m}
  \begin{split}
  \int_0^T u(t) & \int_0^t u(\tau) K(t-\tau) \dd\tau \dd t
  = (-1)^m \int_0^T u_m(t) \int_0^t u_m(\tau) K^{(2m)}(t-\tau)\dd\tau \dd t 
  \\ 
  & + \sum_{\ell=1}^m (-1)^\ell K^{(2\ell-1)}(0) \int_0^T u_\ell(t)^2 \dd t
  + Q_n^m \Big( u_1(T),...,u_{m}(T),\alpha_1,...,\alpha_m \Big),
  \end{split}
 \end{equation}
 with the convention that the sum is empty when $m=0$ and $Q_n^0:=0$, so that the equality clearly holds for $m=0$. Let $m \in \{0,...,n-1\}$ be such that \eqref{hyp_rec_m} holds. We prove it at step $m+1$. Two integrations by part prove that
\begin{equation}
 \begin{split}
  \int_0^T u_m(t) & \int_0^t u_m(\tau) K^{(2m)}(t-\tau) \dd \tau \dd t 
  = - K^{(2m+1)}(0) \int_0^T u_{m+1}^2(t) \dd t
  \\
  & - \int_0^T u_{m+1}(t) \int_0^t u_{m+1}(\tau)K^{(2m+2)}(t-\tau)\dd\tau \dd t
   - \frac{K^{(2m)}(0)}{2} u_{m+1}(T)^2 
  \\
  & + u_{m+1}(T) \int_0^T u_m(\tau) K^{(2m)}(T-\tau) \dd\tau. 
 \end{split}
\end{equation}
Moreover, the integral in the last term can be rewritten as
\begin{equation}
 \int_0^T u_m(\tau) K^{(2m)}(T-\tau) \dd\tau
 = \alpha_{m+1} + \sum_{\ell=1}^{n-m} u_{m+\ell}(T) K^{(2m+\ell-1)}(0).
\end{equation}
This concludes the proof of \eqref{hyp_rec_m} at step $m+1$. 
\end{proof}
 
The following statement proves that, for particular motions of $z$, the boundary terms arising in \cref{Prop:fq_syst} can be neglected.

\begin{proposition} \label{Prop:terme_bord}
 Let $\Gamma$ be such that (\ref{def:Gamma}) and (\ref{lost_direction}) hold. Let $m \in \N^*$ and $J$ be a finite subset of $\N^*$ with cardinal $|J|=m$ and such that $\langle \gmu , \varphi_j \rangle \neq 0$ for every $j \in J$. Let $z$ be a solution to (\ref{eq:z}) with $z_0=\delta \varphi_0$ satisfying $\langle z(T) ,   \varphi_j \rangle=0$ for every $j \in J$. Then
 \begin{equation} \label{bords}
  \sum_{\ell=1}^{m} |u_\ell(T)| 
  = \mathcal{O} \Big( \sqrt{T} \|u_{m}\|_{L^2(0,T)} + \|u\|_{L^\infty(0,T)}^2 
 + |\delta|\, \|u\|_{L^\infty(0,T)}  \Big).
\end{equation}
\end{proposition}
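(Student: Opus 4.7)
The plan is to extract linear estimates on the boundary values $u_1(T), \ldots, u_m(T)$ by combining the $m$ orthogonality constraints $\langle z(T), \varphi_j \rangle = 0$ (for $j \in J$) with the explicit form of the linearized solution, and then inverting a Vandermonde system. Each constraint encodes information about the quantity $I_j := \int_0^T u(t)\, e^{-j^2(T-t)} \dd t$; iterated integration by parts will express $I_j$ as a linear combination of the boundary values $u_\ell(T)$ plus a small remainder involving $u_m$.

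As a first step, I will transfer the information carried by \eqref{eq:return} onto the linearized trajectory $z_1$. Since $\delta \varphi_0 \perp \varphi_j$ for every $j \in J \subset \N^*$, the quadratic approximation \eqref{error_1} combined with the continuous embedding $Z \hookrightarrow C^0([0,T]; L^2(0,\pi))$ yields
\begin{equation*}
 |\langle z_1(T), \varphi_j \rangle| = \mathcal{O}\bigl(\|u\|_{L^\infty(0,T)}^2 + |\delta|\,\|u\|_{L^\infty(0,T)}\bigr).
\end{equation*}
Using the explicit formula \eqref{z1_explicit} and the fact that $|\langle \mu, \varphi_j\rangle|$ is bounded below by a constant depending only on the finite set $J$, each $I_j$ (for $j \in J$) inherits the same asymptotic bound.

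Next, I will integrate by parts $m$ times in $I_j$, exploiting $u_\ell(0) = 0$ for every $\ell \geq 1$. This produces the identity
\begin{equation*}
 I_j = \sum_{\ell=1}^m (-j^2)^{\ell-1}\, u_\ell(T) + (-j^2)^m \int_0^T u_m(t)\, e^{-j^2(T-t)} \dd t,
\end{equation*}
whose remainder integral is controlled by $j^{2m}\,\sqrt{T}\,\|u_m\|_{L^2(0,T)}$ via Cauchy--Schwarz and the trivial estimate $\|e^{-j^2(T-\cdot)}\|_{L^2(0,T)} \leq \sqrt{T}$. Combined with the first step, this gives, for every $j \in J$,
\begin{equation*}
 \Bigl| \sum_{\ell=1}^m (-j^2)^{\ell-1}\, u_\ell(T) \Bigr| = \mathcal{O}\bigl(\sqrt{T}\,\|u_m\|_{L^2(0,T)} + \|u\|_{L^\infty(0,T)}^2 + |\delta|\,\|u\|_{L^\infty(0,T)}\bigr).
\end{equation*}

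Finally, enumerating $J = \{j_1, \ldots, j_m\}$, these $m$ identities form a linear system $V \vec{x} = \vec{r}$ where $V$ is the $m \times m$ matrix with entries $V_{i,\ell} = (-j_i^2)^{\ell-1}$ and $\vec{x} = (u_1(T), \ldots, u_m(T))^\top$. Because the integers $j_i^2$ are pairwise distinct, $V$ is a nonsingular Vandermonde matrix with $\|V^{-1}\|$ depending only on $J$. Applying $V^{-1}$ and summing over $\ell$ yields \eqref{bords}. I expect the main technical point to be bookkeeping rather than mathematical depth: one has to verify that every implicit constant (from \cref{def:OO}, from the lower bound on $|\langle \mu, \varphi_j\rangle|$ for $j \in J$, and from $\|V^{-1}\|$) depends only on $\Gamma$ and on the fixed finite set $J$, uniformly in $u$ and $\delta$.
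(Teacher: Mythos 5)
Your proposal is correct and follows essentially the same route as the paper: use \eqref{error_1} to bound $\langle z_1(T),\varphi_j\rangle$ (hence $I_j$, since $\langle\mu,\varphi_j\rangle\neq 0$ for the finitely many $j\in J$), integrate by parts $m$ times using $u_\ell(0)=0$, bound the remainder by $\sqrt{T}\,\|u_m\|_{L^2(0,T)}$ via Cauchy--Schwarz, and invert the Vandermonde matrix built on the nodes $(-j^2)_{j\in J}$. The only cosmetic slip is the reference to \eqref{eq:return} where the hypothesis is really the weaker condition $\langle z(T),\varphi_j\rangle=0$ for $j\in J$ only; this does not affect the argument.
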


\begin{proof}
 Let $j \in J$. Since $\langle z(T), \varphi_j \rangle = 0$, estimate \eqref{error_1} yields
 \begin{equation}
  \langle \gmu,\varphi_j\rangle \int_0^T u(t) e^{-j^2(T-t)} \dd t
=\langle z_1(T) , \varphi_j \rangle 
= \mathcal{O}\left( \|u\|_{L^\infty(0,T)}^2 + |\delta| \|u\|_{L^\infty(0,T)} \right).
 \end{equation}
 Then, $m$ iterated integrations by part and the Cauchy-Schwarz inequality prove that
 \begin{equation}
  \begin{split}
   & u_1(T) - j^2 u_2(T) +... + (-1)^{m-1} j^{2(m-1)} u_m(T)  \\
= & (-1)^{m+1} j^{2m} \int_0^T u_m(t) e^{-j^2(T-t)} \dd t + \int_0^T u(t)e^{-j^2(T-t)} \dd t \\
= & \mathcal{O}\Big( \sqrt{T} \|u_m\|_{L^2(0,T)} + \|u\|_{L^\infty(0,T)}^2 
+  |\delta| \|u\|_{L^\infty(0,T)}\Big).
  \end{split}
 \end{equation}
 Let $U=(u_k(T))_{1 \leq k \leq m} \in \R^m$ and $V$ the Vandermonde matrix associated to 
the family $(-j^2)_{j \in J}$. The invertibility of $V$ concludes the proof of \eqref{bords}.
\end{proof}


\subsection{Proof of the integer drift theorem} \label{sec3:fin}

We now conclude the proof of \cref{thm:obs-integer}.
We will use the following interpolation inequality. Such inequalities are referred to as Gagliardo-Nirenberg inequalities (see e.g. \cite[Theorem p.125]{MR0109940}). 

\begin{lemma} \label{thm:gn}
 Let $n \in \N$. There exists $C>0$ such that, for every $T>0$ and $v \in H^{3n+2}(0,T)$,
 \begin{equation} \label{eq:gns_s=0}
  \|v^{(n)}\|_{L^\infty(0,T)}^3 \leq 
  C \|v\|_{L^2(0,T)}^2 \|v^{(3n+2)}\|_{L^1(0,T)} + C T^{-3n-\frac32} \| v\|_{L^2(0,T)}^3.
\end{equation}
\end{lemma}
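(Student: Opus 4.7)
The plan is to reduce to a single fixed interval by scaling, then to invoke the classical one-dimensional Gagliardo--Nirenberg inequality on $\R$. First I would observe that \eqref{eq:gns_s=0} is scale-invariant under the change of variable $w(s) := v(Ts)$, $s \in (0,1)$: since $\|w^{(k)}\|_{L^p(0,1)} = T^{k-1/p} \|v^{(k)}\|_{L^p(0,T)}$, both sides of \eqref{eq:gns_s=0} pick up a common factor $T^{3n}$. Hence it suffices to establish the inequality at $T=1$, namely
\begin{equation}
 \|v^{(n)}\|_{L^\infty(0,1)}^3 \leq C \|v\|_{L^2(0,1)}^2 \|v^{(3n+2)}\|_{L^1(0,1)} + C \|v\|_{L^2(0,1)}^3,
\end{equation}
and then to unscale at the very end to recover the $T^{-3n-3/2}$ factor.

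Next I would build a linear extension operator $E : H^{3n+2}(0,1) \to H^{3n+2}(\R)$ with compactly supported image, via a Seeley-type higher-order reflection at the endpoints $0$ and $1$ followed by multiplication by a smooth cut-off $\chi$ supported in $(-1,2)$ and equal to $1$ on $[0,1]$. Expanding $(Ev)^{(3n+2)}$ by the Leibniz rule produces intermediate derivatives $v^{(j)}$, $j \leq 3n+1$, coming from the cut-off; I would bound these using the elementary interpolation $\|v^{(j)}\|_{L^1(0,1)} \leq \varepsilon \|v^{(3n+2)}\|_{L^1(0,1)} + C_\varepsilon \|v\|_{L^2(0,1)}$, itself a consequence of an Ehrling-type argument (the subspace of $H^{3n+2}(0,1)$ where $v^{(3n+2)}$ vanishes is $(3n+2)$-dimensional, so all its norms are equivalent). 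This produces
\begin{equation}
 \|Ev\|_{L^2(\R)} \leq C \|v\|_{L^2(0,1)},
 \qquad
 \|(Ev)^{(3n+2)}\|_{L^1(\R)} \leq C \|v^{(3n+2)}\|_{L^1(0,1)} + C \|v\|_{L^2(0,1)}.
\end{equation}

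The core Gagliardo--Nirenberg step is then carried out on $\R$ by Fourier splitting. For any $M > 0$, the Fourier inversion formula gives
\begin{equation}
 2\pi \|(Ev)^{(n)}\|_{L^\infty(\R)} \leq \int_{|\xi| \leq M} |\xi|^n |\widehat{Ev}(\xi)| \dd\xi + \int_{|\xi| > M} |\xi|^{-(2n+2)} \bigl|\widehat{(Ev)^{(3n+2)}}(\xi)\bigr| \dd\xi.
\end{equation}
Cauchy--Schwarz controls the first integral by $C M^{n+1/2} \|Ev\|_{L^2}$, and $\|\widehat{f}\|_{L^\infty} \leq \|f\|_{L^1}$ bounds the second by $C M^{-(2n+1)} \|(Ev)^{(3n+2)}\|_{L^1}$. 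Optimising in $M$ so that the two bounds coincide yields the homogeneous estimate $\|(Ev)^{(n)}\|_{L^\infty} \leq C \|Ev\|_{L^2}^{2/3} \|(Ev)^{(3n+2)}\|_{L^1}^{1/3}$, whose exponent $\tfrac13$ is exactly the one forced by the dimensional analysis of the first paragraph. Cubing, inserting the extension bounds, and finally rescaling back to $(0,T)$ delivers \eqref{eq:gns_s=0}.

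The main obstacle I anticipate is the bookkeeping in the extension step: every intermediate-order term produced by the Leibniz expansion of $(\chi \tilde v)^{(3n+2)}$ has to be absorbed into one of the two retained norms on the right-hand side, and the cleanest way I see to do this requires the Ehrling-type interpolation noted above. Everything else is a routine Fourier-splitting optimisation followed by the scaling reversal.
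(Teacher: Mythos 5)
Your proof is correct. Note that the paper does not actually prove \cref{thm:gn}: it cites Nirenberg's classical paper and only writes out a detailed argument for the fractional analogue, \cref{thm:gns}. Your argument has essentially the same architecture as that fractional proof — reduction to $T=1$ by scaling, extension to the line by higher-order reflection composed with a cut-off, and a homogeneous interpolation inequality on $\R$ obtained by optimizing a two-term Fourier bound (the paper optimizes over a dilation parameter $\lambda$, you optimize over a frequency threshold $M$; these are the same computation, and your exponents check out: equating $M^{n+\frac12}\|Ev\|_{L^2}$ with $M^{-(2n+1)}\|(Ev)^{(3n+2)}\|_{L^1}$ gives the $(\tfrac23,\tfrac13)$ split, and cubing plus unscaling produces exactly the $T^{-3n-\frac32}$ weight on the low-order term). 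The one place where the two settings genuinely diverge is the control of the extension operator: in the fractional case the delicate norm is $H^{-s}(\R)$, which the paper handles with a weighted Young convolution inequality (\cref{Lem:Young}), whereas in your $L^1$-endpoint case the delicate quantity is $\|(Ev)^{(3n+2)}\|_{L^1}$ together with the Leibniz by-products $\chi^{(j)}\widetilde{v}^{(3n+2-j)}$; you absorb these with the Ehrling-type interpolation $\|v^{(j)}\|_{L^1(0,1)}\le\varepsilon\|v^{(3n+2)}\|_{L^1(0,1)}+C_\varepsilon\|v\|_{L^2(0,1)}$, which is indeed valid by compactness of $W^{3n+2,1}(0,1)\hookrightarrow W^{j,1}(0,1)$ and the equivalence of $\|v\|_{L^2}+\|v^{(3n+2)}\|_{L^1}$ with the full $W^{3n+2,1}(0,1)$ norm. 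This is a legitimate and self-contained alternative to simply invoking the classical reference, and it is the natural integer-order counterpart of the paper's Steps 1--5 for \cref{thm:gns}.
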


We proceed as explained in the heuristic paragraph. Thanks to (\ref{error_2}) of \cref{Prop:NL/quad},
\begin{equation} 
 \langle z(T),\varphi_0\rangle 
= \delta + \langle z_2(T),\varphi_0\rangle
+ \mathcal{O}\big(\|u\|_{L^\infty(0,T)}^3 + |\delta| \|u\|_{L^\infty(0,T)} \big).
\end{equation}
Moreover, thanks to the assumption \eqref{Ad_2l-1}, $K^{(2\ell-1)}(0) = 0$ for $\ell = 1, ... n-1$. Thus, applying \cref{Prop:fq_syst} yields
\begin{equation}
 \begin{split}
  \langle z_2(T),\varphi_0\rangle
= & \int_0^T u(t) \int_0^t u(\tau) K(t-\tau) \dd\tau \dd t 
\\ = &
(-1)^n  K^{(2n-1)}(0) \int_0^T |u_n|^2  +  (-1)^n \int_0^T u_n(t) \int_0^t u_n(\tau) K^{(2n)}(t-\tau) \dd\tau \dd t
\\
&  
+ \mathcal{O}\left( \sum_{\ell=1}^n |u_\ell(T)|^2 +
\left|\int_0^T u_n(t) K^{(n+\ell-1)}(t) \dd t \right|^2  \right).
 \end{split}
\end{equation}
Thanks to assumption \eqref{K_C2n}, $K^{(2n)} \in L^\infty(\R)$. Thus, using the Cauchy-Schwarz inequality,
\begin{equation}
 \langle z_2(T),\varphi_0\rangle
 = 
(-1)^n K^{(2n-1)}(0) \int_0^T |u_n|^2
       + \mathcal{O}\Big( T \|u_n\|^2_{L^2(0,T)}  + 
\sum_{\ell=1}^n  |u_\ell(T)|^2 \Big).
\end{equation}
Applying \cref{Prop:terme_bord} with $m = n$ (and taking squares on both sides), we obtain
\begin{equation}
 \begin{split}
 \sum_{\ell=1}^n  |u_\ell(T)|^2 
 & = \mathcal{O} \Big(
  T \|u_n\|_{L^2(0,T)}^2 + \|u\|_{L^\infty(0,T)}^4 + |\delta|^2 \|u\|^2_{L^\infty}
 \Big), \\
 & = \mathcal{O} \Big(
  T \|u_n\|_{L^2(0,T)}^2 + \|u\|_{L^\infty(0,T)}^3 + |\delta| \|u\|_{L^\infty}
 \Big),
 \end{split}
\end{equation}
thanks to \cref{def:OO} of the $\mathcal{O}$ notation. Thus, we conclude that
\begin{equation} \label{interm1}
 \langle z(T),\varphi_0\rangle
= \delta + (-1)^n K^{(2n-1)}(0) \int_0^T |u_n|^2 +
\mathcal{O} \Big(
  T \|u_n\|_{L^2(0,T)}^2 + \|u\|_{L^\infty(0,T)}^3 + |\delta| \|u\|_{L^\infty}
 \Big).
\end{equation}
Applying the Gagliardo-Nirenberg inequality (\ref{eq:gns_s=0}) to $v=u_n$, we get
\begin{equation} \label{interm2}
 \begin{split}
  \|u\|_{L^\infty(0,T)}^3
  &\leq 
  C \| u_n \|_{L^2(0,T)}^2
 \left(  \| u^{(2n+2)} \|_{L^1(0,T)} +
  T^{-3n-\frac{3}{2}} \|u_n\|_{L^2(0,T)}
 \right)
 \\
 &\leq 
  C \| u_n \|_{L^2(0,T)}^2
 \left(  \sqrt{T} \| u \|_{H^{2n+2}(0,T)} +
  T^{-2n-1} \|u\|_{L^\infty(0,T)}
 \right).
 \end{split}
\end{equation}
Recalling \eqref{Ad_2n-1}, gathering \eqref{interm1} and \eqref{interm2} proves that
\begin{equation} 
 \begin{split}
 \big| \langle z(T),\varphi_0\rangle & - \delta + (-1)^n a \|u_n\|_{L^2(0,T)}^2 \big|
  = 
  \\ & \mathcal{O} \Big(
  (T + \| u \|_{H^{2n+2}(0,T)} +
  T^{-2n-1} \|u\|_{L^\infty(0,T)}) \|u_n\|_{L^2(0,T)}^2 + |\delta| \|u\|_{L^\infty}
 \Big).
 \end{split}
\end{equation}
Expanding the definition of the notation $\mathcal{O}$, this means that, there exists $C_1,T_1 > 0$ such that, for any $T \in (0,T_1]$, there exists $\eta_1 > 0$ such that, for any $u \in L^\infty(0,T)$ with $\|u\|_{L^\infty(0,T)} \leq \eta_1$, the left-hand side is dominated by $C_1$ times the right-hand side.

Thus, let $\varepsilon > 0$. Let $T^* := \min \{ 1, T_1, C_1 \varepsilon/3 \}$. Let $T \in (0,T^*]$. Let $\eta := \min \{ \eta_1, T^{2n+1} \varepsilon /3 \}$. If $\|u\|_{H^{2n+2}(0,T)} \leq \eta$, these choices imply that
\begin{equation} 
 \big| \langle z(T),\varphi_0\rangle  - \delta + (-1)^n a \|u_n\|_{L^2(0,T)}^2 \big|
  \leq 
  \varepsilon \left( |\delta| + \|u_n\|_{L^2(0,T)}^2 \right). 
\end{equation}
This concludes the proof of \cref{thm:obs-integer}.

\begin{rk}
 For the previous proof of \cref{thm:obs-integer} to work, it is not necessary to assume \eqref{eq:return}. Indeed, we applied \cref{Prop:terme_bord} for $m=n$ and thus we only assumed that 
 \begin{equation} \label{contrainte_cible}
  \langle z(T),\varphi_j \rangle =0, \quad \forall j \in J,
 \end{equation}
 where $J$ is any subset of $\N^*$ of cardinal $n$ such that $\langle \mu , \varphi_j \rangle \neq 0$ for every $j \in J$. 
 
 As a consequence we prove the impossibility of any local motion from an initial condition of the form $z_0 \in \R^*_+ \varphi_0$ (or $z_0 \in \R^*_- \varphi_0$) to a target $z_f \in \overline{\mathrm{Span}}~\{ \varphi_j ; j \in \N^* \setminus J \}$.

 The constraints (\ref{contrainte_cible}) can be interpreted as the infinite dimensional analogous of the fact that, to prove a quadratic obstruction of order $n$ in finite dimension, one needs to impose that $n$ linearly controllable components of the state have returned to zero (see \cite{BMJDE} for more precise statements).
\end{rk}

\section{Obstructions caused by quadratic fractional drifts}
\label{sec:obs-fractional}

The goal of this section is to prove \cref{thm:obs-fractional}. 

\subsection{Heuristic}

We build upon the ideas used for the integer-order drifts. On the one side, we must study the asymptotic quadratic form. On the other side, we must determine if the quadratic approximation describes correctly the nonlinear state. We go through the following steps.
\begin{itemize}

 \item First, we prove in \cref{sec4:hs} that, under assumption \eqref{cj_asympt}, there exist positive constants $\gamma(s)$ and $\beta(s)$ such that the quadratic state satisfies
 \begin{equation}
  \Big| \langle z_2(T), \varphi_0 \rangle
  - a \gamma(s) (-1)^n  \|u_n\|_{H^{-s}(\R)}^2 \Big| 
  \lesssim \|u_n\|_{H^{-s-\beta}(\R)}^2.
 \end{equation}
 
 \item Then, we prove in \cref{sec4:up} that, for small-times, the $H^{-s}$ drift is indeed the dominant phenomenon because
 \begin{equation}
   \|u_n\|_{H^{-s-\beta}(\R)}^2
   \lesssim
   T^{2\beta} \|u_n\|_{H^{-s}(\R)}^2.
 \end{equation}
 
 \item Moreover, we prove in \cref{sec4:gns} a fractional Gagliardo-Nirenberg type interpolation inequality in order to absorb the cubic residuals behind the fractional drift.
 
 \item Eventually, we gather these arguments to conclude the proof of \cref{thm:obs-fractional} in \cref{sec4:fin}.

\end{itemize}

\subsection{Computation of the asymptotic quadratic form}
\label{sec4:hs}

We start with a few technical results comparing series to integrals for asymptotically large frequencies, which we state in a rather general setting, because we intend to reuse them in \cref{sec:sstlc-quad} and \cref{sec:infini}. For the fractional Sobolev drifts case, we only intend to apply them to the constant function $\Theta \equiv 1$. For a function $\Theta : I \to \R$, with $I \subset \R$, we define
\begin{equation}
 \label{W1}
 \| \Theta \|_{W^{1,\infty}(I)} := \| \Theta(x) \|_{L^\infty(I)} + \| \Theta'(x) \|_{L^\infty(I)}.
\end{equation}
For $\sigma,\sigma' \geq 0$ and a function $\Theta : \R_+ \to \R$, we define 
\begin{equation} 
 \label{W1.weight}
 \| \Theta \|_{W^{1,\infty}_{\sigma,\sigma'}} := \| x^{-\sigma} \Theta(x) \|_{L^{\infty}(1,+\infty)} + \| x^{\sigma'} \Theta'(x) \|_{L^{\infty}(1;+\infty)}
 + \| \Theta(x) \|_{L^\infty(0,1)}.
\end{equation}

\begin{lemma} \label{lm:theta.sum.int}
 Let $s \in (0,1)$, $\sigma \in [0,1)$ and $\sigma' > 0$. There exists $C,\beta > 0$ such that, for every function $\Theta \in W^{1,\infty}_{\sigma,\sigma'}$, for every $\xi \in \R$ with $|\xi| \geq 1$,
 \begin{equation} \label{eq:theta.sum.int}
  \left| \sum_{j=1}^\infty
  \frac{j^{3-4s} \Theta(j)}{j^4+\xi^2}
  -
  \int_0^\infty \frac{t^{3-4s} \Theta(t)}{t^4+\xi^2} \dd t
  \right| \leq
  C \| \Theta \|_{W^{1,\infty}_{\sigma,\sigma'}} |\xi|^{-2s-2\beta} .
  \end{equation}
\end{lemma}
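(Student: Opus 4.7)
\medskip

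\noindent\textbf{Proof plan.} Set $\phi(t) := t^{3-4s}\,\Theta(t)/(t^4+\xi^2)$ so that the quantity to estimate is $|\sum_{j=1}^\infty \phi(j) - \int_0^\infty \phi(t)\dd t|$. The plan is to split the integral as $\int_0^1 + \int_1^\infty$ and to compare the sum $\sum_{j\geq 1}\phi(j)$ with the tail integral $\int_1^\infty \phi(t)\dd t$ via the standard trapezoidal bound: since, for every $j\geq 1$,
\begin{equation*}
 \Bigl|\phi(j)-\int_j^{j+1}\phi(t)\dd t\Bigr| = \Bigl|\int_j^{j+1}(j+1-s)\phi'(s)\dd s\Bigr|\leq \int_j^{j+1}|\phi'(s)|\dd s,
\end{equation*}
summing over $j$ gives
\begin{equation*}
 \Bigl|\sum_{j=1}^\infty \phi(j)-\int_1^\infty \phi(t)\dd t\Bigr|\leq \int_1^\infty|\phi'(t)|\dd t.
\end{equation*}
It will then remain to bound the short-range integral $\int_0^1 \phi(t)\dd t$ and the derivative integral $\int_1^\infty |\phi'(t)|\dd t$ by $C\|\Theta\|_{W^{1,\infty}_{\sigma,\sigma'}}|\xi|^{-2s-2\beta}$ for a suitable $\beta>0$.

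\medskip

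\noindent\textbf{Short-range term.} On $[0,1]$ one has $t^4+\xi^2\geq \xi^2\geq 1$ and $|\Theta(t)|\leq \|\Theta\|_{W^{1,\infty}_{\sigma,\sigma'}}$, hence $\bigl|\int_0^1 \phi(t)\dd t\bigr|\leq \frac{1}{4-4s}\|\Theta\|_{W^{1,\infty}_{\sigma,\sigma'}}|\xi|^{-2}$, where $s<1$ ensures the $t^{3-4s}$-integral at the origin converges. Since $|\xi|^{-2}=|\xi|^{-2s}\cdot|\xi|^{-2(1-s)}$, this piece contributes an admissible $|\xi|^{-2s-2\beta}$ for any $\beta\leq 1-s$.

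\medskip

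\noindent\textbf{Main term.} Differentiate $\phi$ by the quotient rule and use the pointwise bounds $|\Theta(t)|\leq \|\Theta\|_{W^{1,\infty}_{\sigma,\sigma'}}\,t^\sigma$ and $|\Theta'(t)|\leq \|\Theta\|_{W^{1,\infty}_{\sigma,\sigma'}}\,t^{-\sigma'}$ valid for $t\geq 1$. This yields
\begin{equation*}
 |\phi'(t)|\leq C\|\Theta\|_{W^{1,\infty}_{\sigma,\sigma'}}\left(\frac{t^{2-4s+\sigma}}{t^4+\xi^2}+\frac{t^{3-4s-\sigma'}}{t^4+\xi^2}+\frac{t^{6-4s+\sigma}}{(t^4+\xi^2)^2}\right).
\end{equation*}
For each of the three pieces I perform the rescaling $t=|\xi|^{1/2}u$, $\dd t=|\xi|^{1/2}\dd u$. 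A direct computation reveals a common prefactor of the form $|\xi|^{-2s}$ times $|\xi|^{-\gamma}$ with $\gamma=(1-\sigma)/2$ for the first and third pieces, and $\gamma=1+\sigma'/2$ for the second piece. Under the assumptions $\sigma<1$ and $\sigma'>0$, each $\gamma$ is strictly positive, and the remaining $u$-integrals are uniformly bounded in $|\xi|\geq 1$ whenever integrability at $0$ holds (i.e.\ when $\sigma>4s-3$); in the remaining sub-case $\sigma\leq 4s-3$, the contribution from $u\in[|\xi|^{-1/2},1]$ is estimated by splitting and produces an even smaller power of $|\xi|$ (a gain of $|\xi|^{-1}$), which only improves the estimate.

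\medskip

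\noindent\textbf{Conclusion and main obstacle.} Choosing $\beta>0$ strictly smaller than the minimum of the positive quantities $1-s$, $(1-\sigma)/4$ and $\sigma'/4$ (adjusted to account for the trivial case), the three contributions combine into the announced bound. The main obstacle is the bookkeeping of the sub-case $s\geq 3/4$ with $\sigma$ small, where the $u$-integral in piece 1 diverges at the origin; this forces a case distinction but, as indicated above, only modifies the admissible range of $\beta$ without affecting the conclusion. The scheme of the proof is essentially a quantitative Euler--Maclaurin comparison tailored to the scale $|\xi|^{1/2}$ that governs the transition between the two regimes of the denominator $t^4+\xi^2$.
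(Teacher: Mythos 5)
Your proof is correct and follows essentially the same route as the paper's: the same trapezoidal/Taylor comparison bounding the sum--integral discrepancy by $\int_1^\infty|\phi'(t)|\dd t$, the same three terms from the quotient rule combined with the weighted bounds $|\Theta(t)|\leq \|\Theta\|_{W^{1,\infty}_{\sigma,\sigma'}}t^\sigma$ and $|\Theta'(t)|\leq \|\Theta\|_{W^{1,\infty}_{\sigma,\sigma'}}t^{-\sigma'}$, the same rescaling $t=|\xi|^{1/2}u$, and the same separate treatment of $\int_0^1$. Only note a small arithmetic slip: the exponent gained by the second piece is $\gamma=\sigma'/2$, not $1+\sigma'/2$ (harmless, since your final choice $\beta<\sigma'/4$ matches the correct value), and the origin-divergence sub-case you discuss for the first piece can also occur for the second piece when $\sigma'\geq 4-4s$, with the identical remedy.
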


\begin{proof}
 Let $s,\sigma,\sigma'$ and $\Theta$ be as above. The Taylor formula leads to
 \begin{equation} \label{tsi1}
  \begin{split}
   \sum_{j=1}^{\infty} \frac{j^{3-4s}\Theta(j)}{j^4+\xi^2} & - \int_1^\infty \frac{t^{3-4s}\Theta(t)}{t^4+\xi^2} \dd t 
   \\
   = & \sum_{j=1}^\infty \int_j^{j+1} (t-j+1) \frac{\dd}{\dd t} \left[\frac{t^{3-4s}\Theta(t)}{t^4+\xi^2}\right] \dd t
   \\
   = & \sum_{j=1}^\infty \int_j^{j+1} (t-j+1) \left( 
   \frac{(3-4s)t^{2-4s}\Theta(t) + t^{3-4s}\Theta'(t)}{t^4+\xi^2}
   - \frac{4 t^{6-4s}\Theta(t)}{(t^4+\xi^2)^2}
   \right) \dd t.
  \end{split}
 \end{equation}
 Hence, recalling the definition \eqref{W1.weight} of the weighted norm, equality \eqref{tsi1} yields
 \begin{equation} \label{tsi2}
  \begin{split}
   \Bigg| \sum_{j=1}^{\infty} \frac{j^{3-4s}\Theta(j)}{j^4+\xi^2} & - \int_1^\infty \frac{t^{3-4s}\Theta(t)}{t^4+\xi^2} \dd t \Bigg| 
   \\
   \leq 4 & \| \Theta \|_{W^{1,\infty}_{\sigma,\sigma'}}
   \left( 
    \int_1^\infty \frac{t^{3-4s-(1-\sigma)}}{t^4+\xi^2} \dd t
    + \int_1^\infty \frac{t^{3-4s-\sigma'}}{t^4+\xi^2} \dd t
    + \int_1^\infty \frac{t^{6-4s+\sigma}}{(t^4+\xi^2)^2} \dd t
   \right).
  \end{split}
 \end{equation}
 Using the change of variable $t = y |\xi|^{\frac12}$, we get, as $|\xi| \to + \infty$,
 \begin{align} 
 \label{tsi3a}
 \int_0^1 \frac{t^{3-4s}}{t^4+\xi^2} \dd t
 & = |\xi|^{-2s} 
 \int_0^{|\xi|^{-\frac12}} \frac{y^{3-4s}}{y^4+1} \dd y
 = O \left( |\xi|^{-2s-2(1-s)} \right),
 \\
 \label{tsi3b}
 \int_1^\infty \frac{t^{6-4s+\sigma}}{(t^4+\xi^2)^2} \dd t
 & = |\xi|^{-2s-\frac{1}{2}+\frac\sigma 2} 
 \int_{|\xi|^{-\frac12}}^\infty \frac{y^{6-4s+\sigma}}{(y^4+1)^2} \dd y
 = O \left( |\xi|^{-2s-\frac{1}{2}+\frac\sigma 2} \right).
 \end{align}
 Moreover, for any $\nu > 0$ ($\nu = \sigma'$ or $\nu = 1-\sigma$),
 \begin{equation} \label{tsi4}
 \begin{split}
 \int_1^\infty \frac{t^{3-4s-\nu}}{t^4+\xi^2} \dd t
 & = |\xi|^{-2s-\frac{\nu}{2}} \int_{|\xi|^{-\frac12}}^\infty \frac{y^{3-4s-\nu}}{y^4+1} \dd y 
 \\
 & = \left\lbrace 
 \begin{aligned}
 & O(|\xi|^{-2s-\frac{\nu}{2}}) && \text{ when } 3-4s-\nu>-1, \\
 & O(|\xi|^{-2s-\frac{\nu}{2}} \ln(|\xi|) ) && \text{ when } 3-4s-\nu=-1, \\
 & O(|\xi|^{-2s-2(1-s)} ) && \text{ when } 3-4s-\nu<-1.
 \end{aligned}
 \right.
 \end{split}
 \end{equation}
 Gathering \eqref{tsi2}, \eqref{tsi3a}, \eqref{tsi3b} and \eqref{tsi4} concludes the proof of \eqref{eq:theta.sum.int}.
\end{proof}

\begin{lemma} \label{lm:sum.int}
 Let $s \in (0,1)$. There exists $C, \beta > 0$ such that, for every $\xi \in \R$ with $|\xi| \geq 1$,
\begin{equation} \label{eq:sum.int.1}
\left| \sum_{j=1}^\infty
\frac{j^{3-4s}}{j^4+\xi^2}
-
\int_0^\infty \frac{t^{3-4s}}{t^4+\xi^2} \dd t
\right| \leq
C  |\xi|^{-2s-2\beta}.
\end{equation}
 and moreover, for every $\Theta \in W^{1,\infty}(\R)$ and $\xi \in \R$ with $|\xi| \geq 1$,
 \begin{equation} \label{eq:sum.int.2}
 \left| \sum_{j=1}^\infty
 \frac{j^{3-4s}\, \Theta(\ln(j))}{j^4+\xi^2} -
 \int_0^\infty \frac{t^{3-4s}\, \Theta(\ln(t))}{t^4+\xi^2} \dd t
 \right| \leq
 C \|\Theta\|_{W^{1,\infty}} |\xi|^{-2s-2\beta}.
 \end{equation}
\end{lemma}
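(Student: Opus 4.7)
The plan is to deduce both estimates of \cref{lm:sum.int} as direct specializations of \cref{lm:theta.sum.int}, treating them as instances of a single framework with appropriately chosen weights and profile functions.

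For \eqref{eq:sum.int.1}, I would simply apply \cref{lm:theta.sum.int} with the constant profile $\Theta \equiv 1$. Picking for instance $\sigma = 1/2$ and $\sigma' = 1/2$ (any $\sigma \in [0,1)$, $\sigma' > 0$ will do), one checks immediately from \eqref{W1.weight} that $\|1\|_{W^{1,\infty}_{\sigma,\sigma'}} \leq 2$ since the derivative vanishes, the function is $1$ on $(0,1)$, and $x^{-\sigma} \leq 1$ on $(1,\infty)$. The inequality \eqref{eq:theta.sum.int} then yields \eqref{eq:sum.int.1} with the same constant $\beta$ given by the previous lemma.

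For \eqref{eq:sum.int.2}, I would apply \cref{lm:theta.sum.int} to the rescaled profile $\tilde\Theta(t) := \Theta(\ln t)$ for $t > 0$. The main bookkeeping step is to bound $\|\tilde\Theta\|_{W^{1,\infty}_{\sigma,\sigma'}}$ by $\|\Theta\|_{W^{1,\infty}(\R)}$ for well-chosen exponents. I would fix $\sigma \in (0,1)$ arbitrary and $\sigma' \in (0,1)$ strictly less than $1$: then on $(1,+\infty)$ we have $|x^{-\sigma} \tilde\Theta(x)| \leq \|\Theta\|_{L^\infty(\R)}$, and since $\tilde\Theta'(x) = \Theta'(\ln x)/x$ we obtain $|x^{\sigma'} \tilde\Theta'(x)| = |x^{\sigma'-1} \Theta'(\ln x)| \leq \|\Theta'\|_{L^\infty(\R)}$ on $(1,+\infty)$; finally $\sup_{x \in (0,1)} |\tilde\Theta(x)| = \sup_{y < 0} |\Theta(y)| \leq \|\Theta\|_{L^\infty(\R)}$. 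Hence $\|\tilde\Theta\|_{W^{1,\infty}_{\sigma,\sigma'}} \leq 3 \|\Theta\|_{W^{1,\infty}(\R)}$, and \eqref{eq:theta.sum.int} delivers \eqref{eq:sum.int.2} directly.

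There is no serious obstacle here: the content is entirely contained in the preceding lemma, and the only thing to verify is that the transformation $t \mapsto \ln t$ is compatible with the weighted norm. The one mild subtlety is the choice $\sigma' < 1$, which is needed to turn the factor $1/x$ produced by differentiating $\ln x$ into an acceptable polynomial decay; if one tried $\sigma' \geq 1$, the bound on the derivative part of the norm would fail. Since \cref{lm:theta.sum.int} is valid for every $\sigma' > 0$, this restriction is harmless, and one may take $\sigma = \sigma' = 1/2$ throughout for concreteness.
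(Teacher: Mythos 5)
Your proposal is correct and follows essentially the same route as the paper: both deduce the lemma directly from \cref{lm:theta.sum.int} by checking that the constant function and $x \mapsto \Theta(\ln x)$ lie in the weighted space with norm controlled by $\|\Theta\|_{W^{1,\infty}(\R)}$ (the paper takes $\sigma=0$, $\sigma'=1$; note that $\sigma'=1$ also works since $x^{\sigma'-1}=1$ exactly cancels the $1/x$ from the chain rule, so only $\sigma'>1$ would actually fail).
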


\begin{proof} 
 Estimates \eqref{eq:sum.int.1} and \eqref{eq:sum.int.2} are corollaries of estimate \eqref{eq:theta.sum.int} from \cref{lm:theta.sum.int}. Indeed the constant function $1$ belongs to $W^{1,\infty}_{0,1}$ and, recalling definitions \eqref{W1} and \eqref{W1.weight}, there holds $\|\Theta(\ln x)\|_{W^{1,\infty}_{0,1}} \leq 2 \| \Theta \|_{W^{1,\infty}(\R)}$ for every $\Theta \in W^{1,\infty}(\R)$. 
\end{proof}

We now turn to the main result of this paragraph.

\begin{proposition} \label{Prop:Hs_0<s<1/2}
 Let $s\in(0,1)$ and $\alpha > -1+4s$. There exist constants $\gamma = \gamma(s) > 0$ and $\beta = \beta(s,\alpha) > 0$ such that, for every sequence $(c_j)_{j \in \N^*} \in \R^{\N^*}$ satisfying
 \begin{equation} \label{Hyp_cj}
  c_j=\frac{1}{j^{-1+4s}}+\underset{j\rightarrow+\infty}O\left( \frac{1}{j^\alpha} \right),
 \end{equation}
 there exists a constant $C > 0$ such that, for every $T > 0$ and $u \in L^\infty(0,T)$, if  $K:\R \rightarrow \R$ denotes the kernel defined by (\ref{def:K}), one has
 \begin{equation}
  \left| \int_0^T u(t) \int_0^t u(\tau) K(t-\tau) \dd \tau \dd t - \gamma \|u\|_{H^{-s}(\R)}^2 \right| 
  \leq C \|u\|_{H^{-(s+\beta)}(\R)}^2.
 \end{equation}
\end{proposition}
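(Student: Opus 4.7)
The plan is to diagonalise the quadratic form via the Fourier transform and then compare its symbol to that of the $H^{-s}(\R)$ inner product. Extending $u$ by zero to $\R$ and using that $\sigma\mapsto K(|\sigma|)$ is even, one has
\begin{equation*}
 \int_0^T u(t) \int_0^t u(\tau) K(t-\tau) \dd\tau \dd t
 = \frac{1}{2}\int_\R\int_\R u(t)u(\tau) K(|t-\tau|)\dd\tau\dd t
 = \frac{1}{4\pi}\int_\R \widehat{K}(\xi)|\widehat{u}(\xi)|^2 \dd\xi,
\end{equation*}
by Parseval's identity, where term-by-term integration in \eqref{def:K} gives the explicit symbol
\begin{equation*}
 \widehat{K}(\xi)=2\sum_{j=1}^\infty \frac{c_j\, j^2}{j^4+\xi^2}.
\end{equation*}
Since $\|u\|_{H^{-r}(\R)}^2=\frac{1}{2\pi}\int_\R (1+\xi^2)^{-r}|\widehat{u}|^2\dd\xi$ for any $r>0$, the proposition would follow at once from the pointwise symbol estimate
\begin{equation} \label{sketch:symbol}
 \bigl|\widehat{K}(\xi) - 2\gamma(s)(1+\xi^2)^{-s}\bigr|
 \leq C (1+\xi^2)^{-(s+\beta)},\qquad \xi\in\R,
\end{equation}
for a suitable $\gamma(s)>0$ and exponent $\beta=\beta(s,\alpha)>0$.

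To establish \eqref{sketch:symbol} on $|\xi|\geq 1$ I would split $c_j = j^{1-4s}+r_j$ with $|r_j|\leq C_0 j^{-\alpha}$, and treat each contribution separately. For the leading part, \cref{lm:sum.int} (with the constant function) gives an exponent $\beta_0>0$ such that
\begin{equation*}
 \sum_{j=1}^\infty \frac{j^{3-4s}}{j^4+\xi^2}
 = \int_0^\infty \frac{t^{3-4s}}{t^4+\xi^2}\dd t + O(|\xi|^{-2s-2\beta_0}),
\end{equation*}
and the change of variable $t=|\xi|^{1/2}y$ evaluates the integral as $\gamma(s)|\xi|^{-2s}$ with the natural definition
\begin{equation*}
 \gamma(s) := \int_0^\infty \frac{y^{3-4s}}{y^4+1}\dd y \in (0,\infty),
\end{equation*}
finite and positive precisely because $s\in(0,1)$. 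For the remainder contribution I would estimate $\sum_{j\geq 1} \frac{j^{2-\alpha}}{j^4+\xi^2}$ by cutting the summation at $j\sim |\xi|^{1/2}$, which bounds it by $|\xi|^{-(1+\alpha)/2}$. The standing hypothesis $\alpha>4s-1$ is exactly what makes $(1+\alpha)/2>2s$, so this remainder is of order $|\xi|^{-2s-2\beta_1}$ with $\beta_1:=(\alpha-4s+1)/4>0$. Setting $\beta:=\min(\beta_0,\beta_1,1)$ and combining with the elementary estimate $|(1+\xi^2)^{-s}-|\xi|^{-2s}|\leq C|\xi|^{-2s-2}$ on $|\xi|\geq 1$ then yields \eqref{sketch:symbol} for $|\xi|\geq 1$.

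For $|\xi|\leq 1$, I would observe that the termwise bound $\frac{|c_j|\,j^2}{j^4+\xi^2}\leq \frac{|c_j|}{j^2}$ together with $|c_j|=O(j^{1-4s})$ and $s>0$ show that $\widehat{K}$ is uniformly bounded on $[-1,1]$, while $2\gamma(s)(1+\xi^2)^{-s}$ is likewise bounded there and $(1+\xi^2)^{-(s+\beta)}$ is bounded below by a positive constant; so \eqref{sketch:symbol} extends to $[-1,1]$ by enlarging $C$. Integrating \eqref{sketch:symbol} against $|\widehat{u}(\xi)|^2/(4\pi)$ on $\R$ then delivers the desired inequality with constants uniform in $T$ and $u$. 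The main obstacle I anticipate is not any individual step but the bookkeeping across them: arranging that the three distinct sources of lost decay, namely the series-to-integral error of \cref{lm:sum.int}, the perturbation $r_j$, and the replacement of $|\xi|^{-2s}$ by $(1+\xi^2)^{-s}$, can all be absorbed into a single positive exponent $\beta$, and that the hypothesis $\alpha>4s-1$ is used at exactly the right place to keep $\beta_1$ strictly positive.
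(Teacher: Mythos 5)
Your proposal is correct and follows essentially the same route as the paper: Plancherel diagonalization of the quadratic form, the explicit symbol $\widehat{K}(\xi)=2\sum_j c_j j^2/(j^4+\xi^2)$, the series-to-integral comparison of \cref{lm:sum.int} for the leading part $j^{1-4s}$ yielding $\gamma(s)|\xi|^{-2s}$, a separate estimate of the $O(j^{-\alpha})$ remainder using $\alpha>4s-1$ (the paper dominates it by $\sum j^{3-4s'}/(j^4+\xi^2)$ for some $s'\in(s,1)$ and reapplies the same lemma, which is equivalent to your cut at $j\sim|\xi|^{1/2}$), and a trivial treatment of $|\xi|\le 1$. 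One small bookkeeping point: when $\alpha>3$ the remainder sum saturates at decay $|\xi|^{-2}$, so the final exponent must also satisfy $\beta\le 1-s$; taking $\beta:=\min(\beta_0,\beta_1,1)$ is not quite enough, but replacing the cap $1$ by $1-s$ fixes it.
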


\begin{proof}

\textbf{Step 1: We prove that $K \in L^1(\R)$ and, for every $T>0$ and $u \in L^\infty(0,T)$},
\begin{equation} \label{eq:uuk.hat}
 \int_0^T u(t) \int_0^t u(\tau) K(t-\tau) \dd\tau \dd t 
= \frac{1}{4\pi} \int_{\R} |\widehat{u}(\xi)|^2 \widehat{K}(\xi) \dd\xi.
\end{equation}
Clearly, $K \in C^0(\R^*,\R)$. Moreover, for every $\kappa>0$ there exists $c(\kappa)>0$ such that, for every $y \in (0,\infty)$, $y^\kappa e^{-y} \leq c(\kappa)$. Thanks to the assumption \eqref{Hyp_cj}, there exists $M > 0$ such that $|c_j - j^{1-4s}| \leq M j^{-\alpha}$. Hence $|c_j| \leq (M+1) j^{1-4s}$. Then, for every $\sigma \in \R^*$, we have
\begin{equation}
 |K(\sigma)| \leq c(\kappa) (M+1) \left( \sum_{j=1}^\infty \frac{1}{j^{-1+4s+2\kappa}}\right) \frac{1}{|\sigma|^\kappa}.
\end{equation}
By considering $\kappa \in (1-2s,1)$ (resp. $\kappa>1$), this inequality proves that $K$ is integrable near $\sigma=0$ (resp. at infinity). Then, recalling our choice of normalization for the Fourier transform \eqref{eq:fourier}, Fubini and Plancherel's theorems prove that
\begin{equation}
 \begin{split}
  \int_0^T u(t) \int_0^t u(\tau) K(t-\tau)\dd\tau \dd t
&  = \frac{1}{2}\int_0^T u(t) \int_0^T u(\tau) K(t-\tau)\dd\tau \dd t \\
&  = \frac{1}{2}\int_0^T u(t) (u*K)(t) \dd t \\
&  = \frac{1}{4\pi} \int_{\R} |\widehat{u}(\xi)|^2 \widehat{K}(\xi) \dd\xi.
 \end{split}
\end{equation}
We introduce the constant $\gamma(s) > 0$ which is defined for $s \in (0,1)$, as
\begin{equation} \label{gamma}
 \gamma(s) := \int_0^{+\infty} \frac{y^{3-4s}}{1+ y^4} \dd y.
\end{equation}

\bigskip

\noindent \textbf{Step 2: We prove that there exists $\beta = \beta(s,\alpha) > 0$ such that}
\begin{equation} \label{eq:k2s2sb}
 \frac{1}{4\pi} \widehat{K}(\xi) = \frac{\gamma(s)}{2\pi|\xi|^{2s}} 
+ \underset{|\xi| \rightarrow \infty}{O} \left( \frac{1}{|\xi|^{2(s+\beta)}} \right).
\end{equation}
The function $x \mapsto e^{-|x|}$ has Fourier transform $\xi \mapsto \frac{2}{1+\xi^2}$ for our normalization \eqref{eq:fourier}. Thus, 
\begin{equation} \label{khat}
 \frac{1}{2} \widehat{K}(\xi)=\sum_{j=1}^\infty  \frac{j^2\, c_j }{j^4+\xi^2}.
\end{equation}
The change of variable $t=y|\xi|^{\frac12}$ proves that 
\begin{equation}
 \gamma(s) |\xi|^{-2s} = \int_0^\infty \frac{t^{3-4s}}{\xi^2 + t^4} \dd t.
\end{equation}
Thus
\begin{equation}
  \frac{1}{2} \widehat{K}(\xi)-\gamma(s)|\xi|^{-2s} = f_1(\xi) + f_2(\xi),
\end{equation}
where we define
\begin{align}
 f_1(\xi) & :=
\sum_{j=1}^\infty  \frac{j^{3-4s}}{j^4+\xi^2} 
- 
\int_0^\infty \frac{t^{3-4s}}{\xi^2 + t^4} \dd t,
 \\
 f_2(\xi) & := \sum_{j=1}^\infty \left( c_j - \frac{1}{j^{-1+4s}} \right) \frac{j^2}{j^4+\xi^2}.
\end{align}
By applying \eqref{eq:sum.int.1} from \cref{lm:sum.int}, we obtain 
$\beta_1=\beta_1(s)>0$ such that $f_1(\xi) = O ( |\xi|^{-2(s+\beta_1)} )$.
Moreover, thanks to the assumption \eqref{Hyp_cj} and $\alpha > -1+4s$, there exists $s' \in (s,1)$ and $M > 0$ such that
\begin{equation}
 |f_2(\xi)| \leq \sum_{j=1}^\infty \frac{j^{3-4s'}}{j^4+\xi^2}
 =  \gamma(s') |\xi|^{-2s'} 
 + \sum_{j=1}^\infty  \frac{j^{3-4s'}}{j^4+\xi^2} - 
 \int_0^\infty \frac{t^{3-4s'}}{\xi^2 + t^4} \dd t.
\end{equation}
By applying \eqref{eq:sum.int.1} from \cref{lm:sum.int}, we obtain $\beta_2 = \beta_2(s,\alpha) > 0$ such that $f_2(\xi) = O ( |\xi|^{-2s'}) + O(|\xi|^{-2s'-2\beta_2} )$, which concludes the proof of \eqref{eq:k2s2sb}.

\bigskip

\noindent
\textbf{Step 3: We recognize fractional Sobolev norms.} We deduce from \eqref{eq:k2s2sb} the existence of a constant $C>0$ such that, for every $\xi \in \R$,
\begin{equation} 
 \left| \frac{1}{4\pi} \widehat{K}(\xi) - \frac{\gamma(s)}{2\pi(1+\xi^2)^s} \right| \leq \frac{C}{(1+\xi^2)^{s+\beta}}
\end{equation}
Using the definition \eqref{def:normeH(-s)} of the fractional Sobolev norms, we obtain, for every $T>0$ and every $u \in L^\infty(0,T)$,
\begin{equation}
 \left| \frac{1}{4\pi} \int_{\R} \widehat{K}(\xi) |\widehat{u}(\xi)|^2 \dd\xi - \gamma \|u\|_{H^{-s}}^2 \right| \leq C \|u\|_{H^{-(s+\beta)}}^2,
\end{equation}
which, together with \eqref{eq:uuk.hat}, gives the conclusion of \cref{Prop:Hs_0<s<1/2}.
\end{proof}

\subsection{Uncertainty principle and comparison of fractional norms}
\label{sec4:up}

A non-null $L^2$ function with compact support in the time domain cannot have a compact support in the frequency domain. This idea is known as the \emph{uncertainty principle} for Fourier transform. We will use the following quantitative version of it. This inequality can be deduced from the seminal works \cite{MR0461025,MR780328}. See also \cite{MR2371612,MR1246419} for estimates of the best constant, or \cite{MR2264204,MR1448337,MR1303780} for thorough reviews.

\begin{proposition}[Uncertainty principle] \label{thm:uncertainty}
 There exists $C_{\mathrm{up}} > 0$ such that, for any $T > 0$ and for any $f \in L^2(\R)$ satisfying $|\mathrm{supp}~f| \leq T$, one has
 \begin{equation} \label{eq:uncertainty}
  \int_{\R} |\widehat{f}(\xi)|^2 \dd \xi 
  \leq C_{\mathrm{up}} \int_{|\xi| \geq 1/T} |\widehat{f}(\xi)|^2 \dd \xi.
 \end{equation}
\end{proposition}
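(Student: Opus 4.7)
The goal is to show that a uniform fraction of $\|\widehat f\|_{L^2}^2$ must live in $\{|\xi|\ge 1/T\}$, i.e. that $\int_{|\xi|<1/T} |\widehat f(\xi)|^2 \dd\xi \le \theta \int_\R |\widehat f(\xi)|^2 \dd\xi$ for some $\theta<1$ independent of $T$ and $f$. Rearranging will then yield \eqref{eq:uncertainty} with $C_{\mathrm{up}} = 1/(1-\theta)$. The plan is to recognize the left hand side as $2\pi \|P_T f\|_{L^2}^2$, where $P_T$ denotes the Fourier multiplier with symbol $\mathbf 1_{[-1/T,1/T]}$, realized as convolution with the sinc kernel $K_T(x) := \sin(x/T)/(\pi x)$.

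To exploit the support hypothesis I will use an operator-theoretic bound. Setting $E := \supp f$ one has $|E|\le T$ and $f = M_E f$, where $M_E$ denotes pointwise multiplication by $\mathbf 1_E$. Consequently $P_T f = P_T M_E f$, so
$$\|P_T f\|_{L^2} \le \|P_T M_E\|_{\mathcal L(L^2)} \|f\|_{L^2} = \|M_E P_T\|_{\mathcal L(L^2)} \|f\|_{L^2},$$
after taking adjoints. The operator norm will then be controlled by the Hilbert-Schmidt norm: since the integral kernel of $M_E P_T$ is $(x,y)\mapsto \mathbf 1_E(x) K_T(x-y)$, an application of Plancherel to $\mathbf 1_{[-1/T,1/T]}$ gives
$$\|M_E P_T\|_{\mathrm{HS}}^2 = \int_E \int_\R |K_T(x-y)|^2 \dd y\,\dd x = |E|\,\|K_T\|_{L^2}^2 = \frac{|E|}{\pi T} \le \frac{1}{\pi}.$$

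Combining these estimates with the normalization $\|\widehat f\|_{L^2}^2 = 2\pi\|f\|_{L^2}^2$ from \eqref{eq:fourier} yields $\int_{|\xi|<1/T}|\widehat f|^2 \dd\xi \le 2\|f\|_{L^2}^2 = \frac{1}{\pi}\int_\R |\widehat f|^2 \dd\xi$, so the proposition holds with the explicit constant $C_{\mathrm{up}} = \pi/(\pi-1)$. I do not expect any serious obstacle: this is a clean operator-theoretic packaging of a one-line Fourier-analytic computation, and the only point to watch is the bookkeeping of the factors of $2\pi$ arising from the chosen Fourier normalization. The references \cite{MR0461025,MR780328} cited in the statement in fact prove much stronger strong-uncertainty principles, but only this elementary $L^2$ version is needed here.
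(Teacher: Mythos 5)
Your proof is correct, and all the $2\pi$ bookkeeping checks out: with the paper's normalization \eqref{eq:fourier} one has $\widehat{K_T}=\mathbf 1_{[-1/T,1/T]}$ for $K_T(x)=\sin(x/T)/(\pi x)$, $\|K_T\|_{L^2}^2=\tfrac1{2\pi}\cdot\tfrac2T=\tfrac1{\pi T}$, hence $\|M_EP_T\|^2_{\mathrm{HS}}\le \tfrac1\pi<1$ and the low-frequency mass is at most $\tfrac1\pi$ of the total, giving $C_{\mathrm{up}}=\pi/(\pi-1)$. Note, however, that the paper does not prove this proposition at all: it simply invokes the literature (Amrein--Berthier, Havin--J\"oricke, Nazarov), which establishes the much stronger statement that $\|M_E P_\Sigma\|_{\mathcal L(L^2)}<1$ for \emph{arbitrary} pairs of finite-measure sets $E,\Sigma$. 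Your argument is the standard elementary shortcut that works precisely because here $|E|\cdot|[-1/T,1/T]|\le 2<2\pi$, so the Hilbert--Schmidt bound already forces the operator norm below $1$; the payoff is a two-line self-contained proof with an explicit constant, at the price of not covering the general annihilating-pairs phenomenon (which the paper does not need). The only points worth making explicit in a final write-up are that $f=M_Ef$ requires $E=\supp f$ to be measurable with $|E|\le T$ (which is the hypothesis), and that $\|P_TM_E\|=\|M_EP_T\|$ because both factors are self-adjoint.
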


From definition \eqref{def:normeH(-s)} of the negative fractional Sobolev norms, it was already clear that these norms were ordered, in the sense that $\|f\|_{H^{-s-\beta}(\R)} \leq \|f\|_{H^{-s}(\R)}$ for $s,\beta > 0$. Using the uncertainty principle, we prove in the following lemma that, for asymptotically small times, the weaker norms are negligible with respect to the stronger norms, up to some low-order term.

\begin{proposition} \label{Prop_Z}
 Let $\xi^\star \in i \R$, $s \in (0,1)$ and $\beta \in [0,1-s]$.
 There exists $C>0$ such that, for every $T\in(0,1]$ and every $f \in L^\infty(0,T)$,
 \begin{equation} \label{eq:embedding_KB}
  \|f\|^2_{H^{-(s+\beta)}(\R)} \leq C T^{2\beta} \|f\|^2_{H^{-s}(\R)} 
  + \frac{C}{T} |\widehat{f}(\xi^\star)|^2.
 \end{equation}
\end{proposition}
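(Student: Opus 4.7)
The plan is to combine an uncertainty-principle-based Poincaré-type inequality for compactly supported functions with a rank-one correction of $f$ that annihilates the value of $\widehat f$ at $\xi^\star$.

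First I would establish, as an auxiliary lemma, the following estimate: for any $F \in L^2(\R)$ with $\supp F \subset [0,T]$, any $T \in (0,1]$, and any exponents $0 \leq a \leq b \leq 1$, one has
\begin{equation*}
\|F\|_{H^a(\R)}^2 \leq C\, T^{2(b-a)}\, \|F\|_{H^b(\R)}^2.
\end{equation*}
The proof splits the integral defining $\|F\|_{H^a}^2$ at $|\xi| = 1/T$. On the range $|\xi|\geq 1/T$ one uses $(1+\xi^2)^{a-b} \leq T^{2(b-a)}$; on the range $|\xi|\leq 1/T$ one estimates $(1+\xi^2)^a \leq C\, T^{-2a}$ and then applies \cref{thm:uncertainty} to bound $\int_{|\xi|<1/T} |\widehat F|^2$ by a multiple of $\int_{|\xi|\geq 1/T} |\widehat F|^2$, which in turn is controlled by $\|F\|_{H^b}^2$ via $(1+\xi^2)^{-b} \leq T^{2b}$.

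Writing $\xi^\star = i\eta$, I would then introduce the test function $\psi(t) := e^{i\xi^\star t}\chi_{[0,T]}(t)$ (so that $\widehat\psi(\xi^\star) = T$ and $\|\psi\|_{L^2}^2 \leq C T$) and split $f = g + \lambda\psi$ with $\lambda := \widehat f(\xi^\star)/T$, ensuring $\widehat g(\xi^\star) = 0$. The triangle inequality combined with the $L^2$-bound on $\psi$ yields both $\|f\|_{H^{-(s+\beta)}}^2 \leq 2\|g\|_{H^{-(s+\beta)}}^2 + C |\widehat f(\xi^\star)|^2/T$ and $\|g\|_{H^{-s}}^2 \leq 2\|f\|_{H^{-s}}^2 + C|\widehat f(\xi^\star)|^2/T$, so that it only remains to show
\begin{equation*}
\|g\|_{H^{-(s+\beta)}}^2 \leq C\, T^{2\beta}\, \|g\|_{H^{-s}}^2
\end{equation*}
for every $g$ with $\supp g \subset [0,T]$ satisfying $\widehat g(\xi^\star)=0$.

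The constraint reads $\int_0^T e^{\eta t} g(t)\,dt = 0$, so the weighted primitive $H(t) := \int_0^t e^{\eta u} g(u)\,du$ vanishes at both endpoints; consequently $\tilde H(t) := e^{-\eta t} H(t)\chi_{[0,T]}(t)$ is compactly supported in $[0,T]$, and an integration by parts delivers $\widehat g(\xi) = (i\xi + \eta)\widehat{\tilde H}(\xi)$. The two-sided comparison $\xi^2 \leq \xi^2+\eta^2 \leq (1+\eta^2)(1+\xi^2)$ then gives $\|g\|_{H^{-(s+\beta)}}^2 \leq (1+\eta^2)\,\|\tilde H\|_{H^{1-(s+\beta)}}^2$ and $\|\tilde H\|_{H^{1-s}}^2 \leq \|g\|_{H^{-s}}^2 + \|\tilde H\|_{H^{-s}}^2$. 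Applying the auxiliary lemma with $(a,b)=(1-s-\beta, 1-s)$ supplies the gain factor $T^{2\beta}$, while applying it with $(a,b)=(0,1-s)$ gives $\|\tilde H\|_{H^{-s}}^2 \leq \|\tilde H\|_{L^2}^2 \leq C T^{2(1-s)}\|\tilde H\|_{H^{1-s}}^2$, which can be absorbed into the left-hand side for $T$ below a threshold $T_0 = T_0(s,\eta)$. The regime $T\in[T_0,1]$ is settled by the trivial monotonicity $\|g\|_{H^{-(s+\beta)}}^2 \leq \|g\|_{H^{-s}}^2 \leq T_0^{-2\beta} T^{2\beta}\|g\|_{H^{-s}}^2$.

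The main technical obstacle is the treatment of $\xi^\star \neq 0$: the usual primitive $\int_0^t g$ fails to have support in $[0,T]$, forcing the introduction of the exponentially weighted primitive, which produces the inconvenient factor $(i\xi+\eta)$ on the Fourier side and makes all the constants depend on $\eta = \operatorname{Im}\xi^\star$.
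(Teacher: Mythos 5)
Your proof is correct, and while it shares the paper's overall architecture (reduce to the case $\widehat{f}(\xi^\star)=0$ by a rank-one correction supported in $[0,T]$, then exploit the uncertainty principle of \cref{thm:uncertainty}), the core step is realized differently. The paper divides on the Fourier side, setting $\widehat{g}(\xi)=\widehat{f}(\xi)/(\xi-\xi^\star)$, and invokes the Paley--Wiener theorem to conclude that $g$ is an $L^2$ function supported in an interval of length $T$; it then proves the endpoint case $s+\beta=1$ and recovers intermediate $\beta$ by H\"older interpolation between $H^{-s}$ and $H^{-1}$. You instead exhibit that function explicitly as the exponentially weighted primitive $\tilde H$, whose compact support is immediate from the moment condition $\int_0^T e^{\eta t}g=0$; this avoids Paley--Wiener entirely and is more elementary. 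Your frequency-splitting auxiliary lemma then handles all exponents $0\le a\le b\le 1$ at once, replacing the H\"older interpolation step, at the cost of an extra self-absorption argument (needing $\|\tilde H\|_{H^{1-s}}<\infty$, which holds since $\tilde H$ is Lipschitz and compactly supported, and a threshold $T_0$ below which the absorption works, with the regime $T\ge T_0$ handled trivially). Both routes yield constants depending on $\xi^\star$ and $s$, as allowed by the statement; the identities $\widehat{g}=(i\xi+\eta)\widehat{\tilde H}$ and the two-sided comparison of $\xi^2+\eta^2$ with $1+\xi^2$ are exactly what makes your reduction to positive Sobolev norms of $\tilde H$ equivalent to the paper's manipulation of $\widehat{f}/(\xi-\xi^\star)$.
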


\begin{proof}

\textbf{Step 1: We start with the particular case when $s+\beta=1$ and $\widehat{f}(\xi^\star)=0$.}
First, there exists $c=c(\xi^\star)>0$ such that
\begin{equation} \label{hyp_Z}
 \forall \xi \in \R, \quad 
 \frac{1}{c} (1+\xi^2) \leq |\xi-\xi^\star|^2 \leq c (1+\xi^2).
\end{equation}
Then, using the definition \eqref{def:normeH(-s)} of the negative Sobolev norm,
\begin{equation} \label{borne_f_g}
\|f\|_{H^{-1}(\R)}^2 = \frac1{2\pi} \int_{\R} \frac{|\widehat{f}(\xi)|^2}{1+\xi^2} \dd\xi 
\leq c \int_{\R} \left| \frac{\widehat{f}(\xi)}{\xi-\xi^\star} \right|^2 \dd\xi 
= c \int_{\R} \left| \widehat{g}(\xi) \right|^2 \dd\xi,
\end{equation}
where we define
 \begin{equation} \label{eq:hatg-hatf}
  \widehat{g}(\xi) := \frac{\widehat{f}(\xi)}{\xi-\xi^\star}.
 \end{equation}
Since $f$ is supported in $[0,T]$, $\widehat{f}$ is an entire function of exponential type. There exists $C_f > 0$ such that
 \begin{equation} \label{eq:hatf-growth}
  \forall z \in \C, \quad
  |\widehat{f}(z)| \leq C_f e^{T|z|/2}.
 \end{equation}
 Moreover, since we assumed that $\widehat{f}(\xi^\star) = 0$, \eqref{eq:hatg-hatf} defines an entire function $\widehat{g}$ on $\C$. Thanks to \eqref{eq:hatf-growth}, there exists $C_g > 0$ such that
 \begin{equation} \label{eq:hatg-growth}
  \forall z \in \C, \quad
  |\widehat{g}(z)| \leq C_g e^{T|z|/2}.
 \end{equation}
 Thanks to the Paley-Wiener theorem (see e.g. \cite[Theorem 19.3, p.375]{MR924157}),
$\widehat{g}$ is the Fourier transform of a function $g \in L^2(\R)$ with a support of size at most $T$ thanks to \eqref{eq:hatg-growth}. Thus, we can apply \cref{thm:uncertainty}. From the uncertainty estimate \eqref{eq:uncertainty}, we obtain
 \begin{equation} \label{eq:gcupg1}
  \int_{\R} |\widehat{g}(\xi)|^2 \dd \xi 
  \leq C_{\mathrm{up}} \int_{|\xi| \geq 1/T} |\widehat{g}(\xi)|^2 \dd \xi\,.
 \end{equation}
 Then, (\ref{borne_f_g}) and (\ref{hyp_Z}) lead to
 \begin{equation}
  \|f\|_{H^{-1}(\R)}^2 \leq c^2 C_{\mathrm{up}} \int_{|\xi| \geq 1/T} 
 \frac{|\widehat{f}(\xi)|^2}{1+\xi^2} \dd\xi.
 \end{equation}
 Moreover, for $|\xi| \geq 1/T$, one has 
 \begin{equation} \label{eq:gcupg2}
  (1+\xi^2) \geq T^{-2(1-s)} (1+\xi^2)^s
 \end{equation}
thus
\begin{equation} \label{eq:uphomo}
 \|f\|_{H^{-1}(\R)}^2 \leq 2\pi c^2 C_{\mathrm{up}} T^{2(1-s)} \|f\|_{H^{-s}(\R)}^2.
\end{equation}

\bigskip

\noindent
\textbf{Step 2: We consider the case when $s+\beta \in (s,1)$ and $\widehat{f}(\xi^\star)=0$.}
We introduce $\theta := (1-s-\beta)/(1-s) \in (0,1)$. Using Hölder's inequality with exponents $1/\theta$ and $1/(1-\theta)$, we obtain
\begin{equation}
 \begin{split}
 \|f\|^2_{H^{-(s+\beta)}(\R)}
   & = \frac1{2\pi} \int_\R |\widehat{f}(\xi)|^{2\theta} (1+\xi^2)^{-s\theta}
   \cdot |\widehat{f}(\xi)|^{2(1-\theta)} (1+\xi^2)^{-\beta - s(1-\theta)} \dd \xi
   \\
   & \leq \|f\|_{H^{-s}(\R)}^{2\theta} \|f\|_{H^{-1}(\R)}^{2(1-\theta)}.
 \end{split}
\end{equation}
Thanks to the previous estimate \eqref{eq:uphomo}, we obtain with $C_{s,\beta} :=(2\pi c^2 C_{\mathrm{up}})^{\frac{1-\theta}{2}}$, 
\begin{equation} \label{eq:uphomo2}
 \|f\|_{H^{-(s+\beta)}(\R)}^2 \leq C_{s,\beta}^2 T^{2\beta} \|f\|_{H^{-s}(\R)}^2.
\end{equation}

\bigskip

\noindent
\textbf{Step 3: We move to the case when $\widehat{f}(\xi^\star)$ is arbitrary.} 
Let $\xi^*=ib \in i \R$ and $\chi := c_b \mathbf{1}_{[0,T]}$, where $c_b > 0$ is a
normalization constant defined as $c_b = (e^{bT}-1)^{-1}$ for $b \neq 0$ and $1/T$ for $b = 0$. Then 
\begin{equation}
 \widehat{\chi}(\xi^\star) = c_b \int_0^T e^{bt} \dd t  = 1
\end{equation}
and, for every $\sigma \in [0,1]$,
\begin{equation}
 \| \chi \|_{H^{-\sigma}(\R)} \leq \| \chi \|_{L^2(0,T)} = c_b \sqrt{T} \leq \frac{C}{\sqrt{T}}.
\end{equation}
Applying \eqref{eq:uphomo2} to the function $\widetilde{f}(t):=f(t)-\widehat{f}(\xi^\star)\chi(t)$, 
and the triangular inequality, we get
\begin{equation} \label{eq:uphomo3}
\begin{split}
\|f\|_{H^{-(s+\beta)}(\R)}
& \leq 
\| \widetilde{f} \|_{H^{-(s+\beta)}(\R)} + |\widehat{f}(\xi^\star)| \| \chi \|_{H^{-(s+\beta)}(\R)}
\\ & \leq 
C_{s,\beta} T^\beta \| \widetilde{f} \|_{H^{-s}(\R)} 
+ |\widehat{f}(\xi^\star)| \| \chi \|_{H^{-(s+\beta)}(\R)}
\\ & \leq
 C_{s,\beta} T^\beta \| f \|_{H^{-s}(\R)} 
+ |\widehat{f}(\xi^\star)| \Big( \| \chi \|_{H^{-(s+\beta)}(\R)} 
+ C_{s,\beta}T^\beta \| \chi \|_{H^{-s}(\R)} \Big)
\\
& \leq
 C_{s,\beta} T^\beta \| f \|_{H^{-s}(\R)} 
 + \frac{C}{\sqrt{T}} |\widehat{f}(\xi^\star)|\,.
\end{split}
\end{equation}
This concludes the proof of \eqref{eq:embedding_KB} in all cases.
\end{proof}

\begin{rk}
 At first sight, the lower order term involving $| \widehat{f}(\xi^\star) |$ might seem strange in \eqref{eq:embedding_KB}. When  $\xi^\star = 0$, this term is linked to the average of $f$, hence providing information on the low frequencies. Isolating low frequencies is necessary to obtain the power of $T$ we obtain. For example, the inequality $\|f\|_{L^2(0,T)} \leq C T \| f \|_{H^1(0,T)}$ is violated by constant functions. But, using the Poincaré-Wirtinger inequality, one always has
 \begin{equation} \label{eq:tuto}
  \| f \|_{L^2(0,T)} \leq T \| f \|_{H^1(0,T)} + \frac{1}{\sqrt{T}} \left| \int_0^T  f \right|.
 \end{equation}
 \cref{Prop_Z} is a transcription of \eqref{eq:tuto} to our fractional negative Sobolev spaces setting. Moreover, we need to consider the case $\xi^\star \neq 0$ because we will apply \cref{Prop_Z} to functions for which we only have information on $\widehat{f}(i)$ and not on $\widehat{f}(0)$. The proof is a little technical due to the fact that we use $H^{-a}(\R)$ norms which are not ``localized'' in $[0,T]$.
\end{rk}

\begin{lemma} \label{Prop:u(n+1)_u(n)_H(-s)}
 There exists $C>0$ such that, for $s \in (0,1)$, $T \in (0,1]$, $u \in L^\infty(0,T)$ and $n \in \N$,
 \begin{equation} \label{eq:Prop:u(n+1)_u(n)_H(-s)}
  \int_0^T |u_{n+1}(t)|^2 \dd t \leq C \left(  
  T^{2(1-s)} \|u_n\|_{H^{-s}(\R)}^2    +   T  |u_{n+1}(T)|^2   \right).
 \end{equation}
\end{lemma}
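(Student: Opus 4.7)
The plan is to work directly in Fourier space with $u_{n+1}$ extended by zero to $\R$. Since $u_{n+1}(0)=0$ but $u_{n+1}$ jumps by $-u_{n+1}(T)$ at $t=T$, the distributional derivative of the extension is $u_n \mathbf{1}_{[0,T]} - u_{n+1}(T)\delta_T$; equivalently, I will establish (by direct integration by parts) the key identity
\begin{equation}
i\xi\, \widehat{u_{n+1}}(\xi) = \widehat{u_n}(\xi) - u_{n+1}(T)\, e^{-i\xi T}, \qquad \xi \in \R.
\end{equation}
Since $u_{n+1}$ is supported in $[0,T]$, whose measure is $T$, I then apply the uncertainty principle (\cref{thm:uncertainty}) to $u_{n+1}$ to bound $\int_\R |\widehat{u_{n+1}}|^2\, \dd\xi$ by a constant times the same integral restricted to the high-frequency region $\{|\xi|\geq 1/T\}$. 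This is the crucial step: it removes the low-frequency regime, where the pointwise identity above would otherwise create a non-integrable singularity from the factor $1/\xi^2$.

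In the region $|\xi|\geq 1/T$, since $T \in (0,1]$ gives $|\xi|\geq 1$, the two elementary estimates $|\xi|^{-2(1-s)} \leq T^{2(1-s)}$ (because $1-s\geq 0$) and $(1+\xi^2)^s \leq 2^s|\xi|^{2s}$ combine to yield the weight comparison
\begin{equation}
\frac{1}{\xi^2} \leq \frac{2^s\, T^{2(1-s)}}{(1+\xi^2)^s}.
\end{equation}
Substituting the identity into $|\widehat{u_{n+1}}(\xi)|^2 = |\widehat{u_n}(\xi) - u_{n+1}(T) e^{-i\xi T}|^2/\xi^2$, then using $|a-b|^2 \leq 2|a|^2+2|b|^2$ to split into two pieces, the first piece is controlled by $T^{2(1-s)}\|u_n\|_{H^{-s}(\R)}^2$ (directly from the definition \eqref{def:normeH(-s)}) and the second by $|u_{n+1}(T)|^2\int_{|\xi|\geq 1/T} \xi^{-2}\,\dd\xi = 2T\,|u_{n+1}(T)|^2$.

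Putting these ingredients together via Plancherel gives exactly \eqref{eq:Prop:u(n+1)_u(n)_H(-s)} with an explicit constant. I do not expect any serious obstacle: the only mildly delicate point is recognising that the uncertainty principle is needed to handle the $\xi=0$ singularity, and that the scaling of the weight $\xi^{-2}$ against $(1+\xi^2)^{-s}$ on the high-frequency side produces precisely the power $T^{2(1-s)}$ demanded by the statement.
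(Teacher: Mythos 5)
Your proof is correct and follows essentially the same route as the paper's: extend $u_{n+1}$ by zero, apply Plancherel and the uncertainty principle (\cref{thm:uncertainty}) to restrict to $|\xi|\geq 1/T$, use the integration-by-parts identity $i\xi\,\widehat{u_{n+1}}(\xi)=\widehat{u_n}(\xi)-u_{n+1}(T)e^{-i\xi T}$, and compare the weights $\xi^{-2}$ and $(1+\xi^2)^{-s}$ on high frequencies to extract the factor $T^{2(1-s)}$. The only cosmetic difference is that the paper first reduces to $n=0$, whereas you carry the general $n$ through directly.
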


\begin{proof}
 It is sufficient to work with $n=0$. The function $u_1$ is extended by zero outside $(0,T)$. By Plancherel's equality and \cref{thm:uncertainty}, we have
 \begin{equation}
  \int_0^T |u_1(t)|^2 \dd t
  = \frac{1}{2\pi} \int_{\R} |\widehat{u_1}(\xi)|^2 \dd\xi
  \leq \frac{C_{\mathrm{up}}}{2\pi} \int_{|\xi|>1/T} |\widehat{u_1}(\xi)|^2 \dd\xi.
 \end{equation}
Moreover, using integration by parts, we obtain
\begin{equation}
 \widehat{u_1}(\xi)
 = \int_0^T u_1(t) e^{-it\xi} \dd t
 = \frac{\widehat{u}(\xi)}{i\xi} - \frac{u_1(T)e^{-iT\xi}}{i\xi}.
\end{equation}
Thus,
\begin{equation}
 \begin{split}
  \int_{|\xi|>1/T} |\widehat{u_1}(\xi)|^2 \dd\xi
& \leq 
2 \int_{|\xi|>1/T} \left|\frac{\widehat{u}(\xi)}{\xi}\right|^2 \dd\xi +
2 |u_1(T)|^2 \int_{|\xi|>1/T} \frac{\dd\xi}{\xi^2} 
\\ & \leq 
4 T^{2(1-s)} \int_{|\xi|>1/T} \frac{|\widehat{u}(\xi)|}{(1+\xi^2)^s} \dd\xi
+ 4T |u_1(T)|^2\,.
\end{split}
\end{equation}
Indeed, taking into account that $T\leq 1$, we have, for every $\xi \geq 1/T$,
\begin{equation}
 \xi^2 \geq \frac{1}{2}(1+\xi)^2 \geq \frac{T^{-2(1-s)}}{2} (1+\xi)^{2s}.
\end{equation}
Gathering these estimates concludes the proof of \eqref{eq:Prop:u(n+1)_u(n)_H(-s)}.
\end{proof}

\subsection{A fractional Gagliardo-Nirenberg inequality}
\label{sec4:gns}

In order to bound the cubic terms by the quadratic drift, we will use an interpolation inequality, similar to the one stated in \cref{thm:gn} for the integer-order case, but adapted to our fractional setting. We start with a weighted Young convolution inequality. To lighten the computations, we introduce the Japanese bracket notation by defining for $x \in \R$,
\begin{equation}
 \langle x \rangle := \left(1+x^2\right)^{\frac12}.
\end{equation}

\newcommand{\rxr}{\langle x \rangle}

\begin{lemma} \label{Lem:Young}
 Let $s \geq 0$. For every $f \in L^2(\R)$ and $g \in L^1(\rxr^s \dd x)$, there holds
 \begin{equation}
  \int_{\R} \frac{|(f*g)(x)|^2}{\rxr^{2s}} \dd x
  \leq 2^s
  \left(\int_{\R} \frac{|f(x)|^2}{\rxr^{2s}} \dd x\right)
  \left( \int_{\R} |g(x)| \rxr^{s} \dd x \right)^2.
 \end{equation}
\end{lemma}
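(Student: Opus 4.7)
The plan is to prove this weighted Young inequality via a weighted Cauchy--Schwarz estimate combined with the elementary Peetre-type bound
\begin{equation*}
 \rxr^{-2s} \leq 2^s \langle x-y \rangle^{-2s} \langle y \rangle^{2s},
\end{equation*}
which I would derive from $|x-y| \leq |x|+|y|$. Indeed, squaring this yields $1+|x-y|^2 \leq 2(1+|x|^2)(1+|y|^2)$, hence $\langle x-y \rangle^s \leq 2^{s/2} \rxr^s \langle y \rangle^s$ for $s \geq 0$, which rearranges into the desired inequality. This is exactly the tool needed to shift the weight $\rxr^{-2s}$ onto $\langle x-y \rangle^{-2s}$ at the cost of a $\langle y \rangle^{2s}$ factor.

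The first step is to write $|g(y)| = \bigl(|g(y)|^{1/2} \langle y \rangle^{-s/2}\bigr) \bigl(|g(y)|^{1/2} \langle y \rangle^{s/2}\bigr)$ and apply Cauchy--Schwarz to $(f*g)(x) = \int f(x-y) g(y) \dd y$, obtaining the pointwise bound
\begin{equation*}
 |(f*g)(x)|^2 \leq \left( \int_\R |f(x-y)|^2 |g(y)| \langle y \rangle^{-s} \dd y \right) \left( \int_\R |g(y)| \langle y \rangle^{s} \dd y \right).
\end{equation*}
The splitting is tuned so that one of the two factors is already the $L^1(\langle y \rangle^s \dd y)$ norm of $g$ that must appear in the statement.

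Next, I would divide by $\rxr^{2s}$, integrate in $x \in \R$, swap the order of integration by Fubini, and apply the Peetre inequality pointwise inside the remaining $x$-integral. Using the change of variable $u = x-y$, the factor $\rxr^{-2s}$ contributes $2^s \langle y \rangle^{2s} \langle u \rangle^{-2s}$, and the inner integral becomes $2^s \langle y \rangle^{2s} \int_\R |f(u)|^2 \langle u \rangle^{-2s} \dd u$. Collecting the resulting $\langle y \rangle^{-s} \cdot \langle y \rangle^{2s} = \langle y \rangle^s$ weight against the remaining $|g(y)|$, the $y$-integral reproduces a second copy of $\int_\R |g(y)| \langle y \rangle^s \dd y$, yielding exactly the claimed estimate with constant $2^s$.

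This is a short clean computation; the only point that requires care is the direction of Peetre's inequality (one must bound $\rxr^{-s}$ from above by $\langle x-y \rangle^{-s} \langle y \rangle^s$, not the reverse, as can be checked by testing $y=0$) together with the balancing of the $\langle y \rangle^{\pm s/2}$ weights in the Cauchy--Schwarz step so that, after the Peetre gain, each of the two $g$-factors carries exactly the weight $\langle y \rangle^s$.
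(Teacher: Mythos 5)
Your proof is correct and rests on the same key inequality as the paper's, namely $\langle x-y\rangle \le \sqrt{2}\,\langle x\rangle\langle y\rangle$ (equivalently your Peetre bound), combined with a weighted Cauchy--Schwarz argument; the only difference is that you square the convolution directly, splitting $|g(y)|$ into $|g(y)|^{1/2}\langle y\rangle^{\mp s/2}$ factors and invoking Fubini, whereas the paper argues by duality against a test function $\phi\in L^2(\langle x\rangle^{-2s}\dd x)$. Both routes produce the constant $2^s$, so this is essentially the paper's proof in a slightly different packaging.
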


\begin{proof}
 We use the duality caracterization of the $L^2\left( \rxr^{-2s} \dd x \right)$-norm. Let $\phi \in L^2\left( \rxr^{-2s} \dd x \right)$. Using the Fubini theorem and the relation
 \begin{equation}
  \langle x-y \rangle
  \leq \left( 1+2x^2+2y^2 \right)^{\frac12}
  \leq 2^{\frac12} \rxr \langle y \rangle, \quad \forall x,y \in \R,
 \end{equation}
 we obtain
 \begin{equation}
  \begin{split}
   \int_\R | (f*g)&(x) \phi(x) | \frac{\dd x}{\rxr^{2s}}
   =
  \int_\R \Big| \int_\R f(x-y) g(y) \dd y \Big| |\phi(x) | \frac{\dd x}{\rxr^{2s}}
\\  \leq &
\int_\R |g(y)| \int_\R
\frac{|f(x-y)|}{\langle x-y \rangle^{s}} \,  \frac{\phi(x)}{\rxr^s} \,
\frac{\langle x-y\rangle^{s}}{\rxr^s} \dd x \dd y
\\  \leq &
2^{\frac{s}2} \left( \int_\R \langle y \rangle^s |g(y)| \dd y \right)
\left( \int_\R \frac{|f(z)|^2}{\langle z \rangle^{2s}} \dd z \right)^{\frac12}
\left( \int_\R \frac{|\phi(x)|^2}{\rxr^{2s}} \dd x \right)^{\frac12},
  \end{split}
 \end{equation}
which gives the conclusion.
\end{proof}

Now we can prove the following fractional Gagliardo-Nirenberg interpolation inequality. For a recent reference tackling the fractional case with optimal norms and constants, we refer to \cite{MOROSI2017}. Although it is not optimal, we will use the following statement for which we provide a detailed proof, because it mixes Sobolev norms on $(0,T)$ and Sobolev norms on $\R$.

\begin{proposition} \label{thm:gns}
Let $n \in \N$ and $s \in (0,1)$.
There exists a constant $C>0$ such that, for $T\in(0,1]$ and 
$v \in H^{3n+2s+\frac32}(0,T)$, there holds
\begin{equation} \label{eq:gns}
\|v^{(n)}\|_{L^\infty(0,T)}^3 \leq 
\frac{C}{T^{3(n+1)}} \|v\|_{H^{-s}(\R)}^2 \|v\|_{H^{3n+2s+\frac32}(0,T)}.
\end{equation}
\end{proposition}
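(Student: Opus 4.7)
The plan is a duality-mollification argument interpolating between the control $\|v\|_{H^{-s}(\R)}$ on the zero extension of $v$ and the high regularity $\|v\|_{H^a(0,T)}$, where I write $a := 3n+2s+\tfrac{3}{2}$, calibrated by a scale parameter $\epsilon \in (0,T/2]$. Two preliminaries: a standard uniform-in-$T$ Hestenes-type extension operator produces $\tilde v := Ev \in H^a(\R)$, compactly supported in $[-T,2T]$, with $\tilde v|_{(0,T)} = v$ and $\|\tilde v\|_{H^b(\R)} \leq C\|v\|_{H^b(0,T)}$ for every $b \in [0,a]$; and a one-sided mollifier $\psi \in C_c^\infty(\R)$ with $\int\psi = 1$ and vanishing moments of orders $1,\ldots,N$ for some integer $N \geq 2n+2s+1$, setting $\psi_\epsilon(t) := \epsilon^{-1}\psi(t/\epsilon)$, oriented so that $\psi_\epsilon(\cdot-t_0)$ is supported inside $(0,T)$ for every $\epsilon \in (0,T/2]$ and every $t_0 \in (0,T)$.

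For $t_0 \in (0,T)$, I would decompose $v^{(n)}(t_0) = A_\epsilon + B_\epsilon$, with $A_\epsilon := \int_0^T v^{(n)}(t)\,\psi_\epsilon(t-t_0)\,dt$. The main term $A_\epsilon$ is handled by integrating by parts $n$ times (the boundary contributions vanish since $\psi_\epsilon(\cdot-t_0)$ has compact support inside $(0,T)$), followed by $H^{-s}$--$H^s$ duality on $\R$:
\[
|A_\epsilon| \,=\, \Bigl|(-1)^n \langle v,\,\psi_\epsilon^{(n)}(\cdot-t_0)\rangle_{H^{-s}(\R),\,H^s(\R)}\Bigr| \,\leq\, \|v\|_{H^{-s}(\R)}\,\|\psi_\epsilon^{(n)}\|_{H^s(\R)} \,\leq\, C\,\|v\|_{H^{-s}(\R)}\,\epsilon^{-n-s-\tfrac{1}{2}},
\]
the last inequality being a direct Fourier scaling from $\widehat{\psi_\epsilon^{(n)}}(\xi) = (i\xi)^n\hat\psi(\epsilon\xi)$ followed by a change of variables in the $H^s(\R)$ integral.

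The error $B_\epsilon = \tilde v^{(n)}(t_0) - (\psi_\epsilon * \tilde v^{(n)})(t_0)$ admits the Fourier representation $B_\epsilon = \tfrac{1}{2\pi}\int_\R (1-\hat\psi(\epsilon\xi))(i\xi)^n\hat{\tilde v}(\xi)e^{i\xi t_0}\,d\xi$. The vanishing moments of $\psi$ yield $|1-\hat\psi(\eta)| \leq C\min(1,|\eta|^{N+1})$, and a Cauchy--Schwarz with the weight $(1+\xi^2)^a$ gives
\[
|B_\epsilon| \,\leq\, C\,\|\tilde v\|_{H^a(\R)} \Bigl(\int_\R |1-\hat\psi(\epsilon\xi)|^2\,\xi^{2n}\,(1+\xi^2)^{-a}\,d\xi\Bigr)^{1/2} \,\leq\, C\,\|v\|_{H^a(0,T)}\,\epsilon^{2n+2s+1}.
\]
The last step is obtained by splitting the weighted integral at $|\xi| = 1/\epsilon$: the low-frequency piece is controlled by $|\epsilon\xi|^{2(N+1)}$ and the high-frequency piece by the algebraic decay $\xi^{2n-2a}$, both pieces scaling as $\epsilon^{4n+4s+2}$.

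Let $\epsilon_* := (\|v\|_{H^{-s}(\R)}/\|v\|_{H^a(0,T)})^{1/(3n+3s+\tfrac{3}{2})}$ be the balance point and take $\epsilon := \min(T/2,\epsilon_*)$. In the balanced regime $\epsilon_* \leq T/2$ the two estimates coincide up to constants and cubing yields the $T$-independent bound $\|v^{(n)}\|_{L^\infty(0,T)}^3 \leq C\,\|v\|_{H^{-s}(\R)}^2\,\|v\|_{H^a(0,T)}$, stronger than needed. In the capped regime $\epsilon_* > T/2$, the main term dominates at $\epsilon = T/2$, yielding $|v^{(n)}(t_0)|^3 \leq C\,\|v\|_{H^{-s}(\R)}^3\,T^{-3(n+s+\tfrac{1}{2})}$; a short case analysis, using the trivial bound $\|v\|_{H^{-s}(\R)} \leq \|v\|_{L^2(0,T)} \leq \|v\|_{H^a(0,T)}$ when $s \leq \tfrac{1}{2}$ and a complementary Sobolev embedding $\|v^{(n)}\|_{L^\infty(0,T)} \leq C\|v\|_{H^a(0,T)}$ (valid since $a > n+\tfrac{1}{2}$) when $s > \tfrac{1}{2}$, absorbs this into the liberal factor $T^{-3(n+1)}$ of the statement. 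The main technical obstacle lies in the error bound: pinning down the precise exponent $\epsilon^{2n+2s+1}$ requires both enough vanishing moments on $\psi$ and the exact matching of the Sobolev weight $(1+\xi^2)^a$ with $a-n = 2n+2s+\tfrac{3}{2}$; the uniform-in-$T$ Hestenes extension is a secondary but classical ingredient.
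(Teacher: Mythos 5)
Your route (one-sided mollification at scale $\epsilon$, $H^{-s}$--$H^{s}$ duality for the main term, vanishing moments for the error, then optimization in $\epsilon$) is genuinely different from the paper's, which proves the whole-line inequality first by a Fourier--dilation argument and then transfers it to $(0,T)$ via a Babi\v c-type extension operator that is simultaneously bounded on $H^{a}$ and on $H^{-s}$ (the latter via a weighted Young inequality), followed by a rescaling. Your main-term and error-term estimates are correct, and in the balanced regime $\epsilon_*\leq T/2$ your argument even yields the $T$-independent bound $\|v^{(n)}\|_{L^\infty}^3\leq C\|v\|_{H^{-s}(\R)}^2\|v\|_{H^a(0,T)}$, which is cleaner than what the extension-plus-scaling route produces.

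The gap is in the capped regime for $s>\frac12$. First, the ``complementary Sobolev embedding'' $\|v^{(n)}\|_{L^\infty(0,T)}\leq C\|v\|_{H^a(0,T)}$ with $C$ independent of $T$ is false: for $v\equiv 1$ and $n=0$ one has $\|v\|_{L^\infty(0,T)}=1$ while $\|v\|_{H^a(0,T)}=\sqrt{T}\to 0$; the correct constant is $O(T^{-1/2})$. Second, and more seriously, the intermediate bound you retain after cubing, namely $\|v^{(n)}\|_{L^\infty}^3\leq C\|v\|_{H^{-s}(\R)}^3\,T^{-3(n+s+\frac12)}$, is strictly weaker than the target when $s$ is close to $1$, so no subsequent manipulation can recover \eqref{eq:gns} from it. Concretely, take $v=\mathbf 1_{(0,T)}$, $n=0$ and $s\in(\tfrac23,1)$: then $\|v\|_{H^{-s}(\R)}\asymp T$, $\|v\|_{H^{a}(0,T)}=\sqrt T$, this $v$ lies in your capped regime ($\epsilon_*\asymp T^{1/(6s+3)}\gg T$), your intermediate bound equals $CT^{\frac32-3s}$, while the right-hand side of \eqref{eq:gns} is $CT^{-\frac12}$; the ratio is $T^{2-3s}\to+\infty$, so the implication you need fails. (For $s\in(\tfrac12,\tfrac23]$ the capped case can still be salvaged by the sharper observation $\|v\|_{H^{-s}(\R)}\leq C_sT^{1/2}\|v\|_{L^2(0,T)}$, valid because $(1+\xi^2)^{-s}$ is integrable and $|\widehat v(\xi)|\leq T^{1/2}\|v\|_{L^2}$, but this breaks exactly at $s=\tfrac23$.) The root of the problem is that requiring $\supp\psi_\epsilon(\cdot-t_0)\subset(0,T)$ — which is what lets you pair against the zero extension of $v$ and hence see the norm $\|v\|_{H^{-s}(\R)}$ appearing in the statement — caps $\epsilon$ at $T/2$ and forces you to evaluate the main term off its balance point. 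To repair this you would have to pair against an extension $\tilde v$ with unconstrained $\epsilon$ and then prove $\|\tilde v\|_{H^{-s}(\R)}\leq C\|v\|_{H^{-s}(\R)}$ with controlled $T$-dependence; that estimate is precisely the nontrivial Step 3 of the paper's proof (the weighted Young inequality for the reflection operator), so it cannot be waved away as a ``secondary classical ingredient.''
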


\begin{proof}

\textbf{Step 1:} We prove the existence of $C_1>0$ such that,
for every $f \in H^{3n+2s+\frac{3}{2}}(\R)$,
\begin{equation} \label{gn.step1}
\|f^{(n)}\|_{L^\infty(\R)}^3 \leq C_1 
\|f\|_{H^{-s}(\R)}^2 \|f\|_{H^{3n+2s+\frac{3}{2}}(\R)}.
\end{equation}
First, for every $f \in H^{3n+2s+\frac{3}{2}}(\R)$, using Fourier inversion and the Cauchy-Schwarz inequality, 
\begin{equation}
 \begin{split}
  \| f^{(n)} \|_{L^\infty}^2 
  & = \sup_{t \in \R} |f^{(n)}(t)|^2 
  \\
  & = \sup_{t \in \R} \left| \frac{1}{2\pi} \int_{\R} \widehat{f^{(n)}}(\xi) e^{it\xi} \dd\xi \right|^2 
  \\
  & \leq \frac{1}{(2\pi)^2} \int_{\R} |\widehat{f}(\xi)|^2 (\langle\xi\rangle^{-2s} + |\xi|^{6n+4s+3}) \dd\xi \int_{\R} \frac{|\xi|^{2n}}{\langle\xi\rangle^{-2s} + |\xi|^{6n+4s+3}} \dd\xi
  \\
  & \leq C \int_{\R} \langle\xi\rangle^{-2s} |\widehat{f}(\xi)|^2 \dd \xi 
  + C \int_{\R} |\xi|^{6n+4s+3} |\widehat{f}(\xi)|^2  \dd\xi.
\end{split}
\end{equation}
For $\lambda \geq 1$, applying this inequality to $f_\lambda(t) := f(t/\lambda)$, we obtain
\begin{equation}
 \| f_\lambda^{(n)} \|_{L^\infty}^2 \leq C \int_{\R} \langle\xi\rangle^{-2s} |\widehat{f}_\lambda(\xi)|^2 \dd \xi 
 + C \int_{\R} |\xi|^{6n+4s+3} |\widehat{f}_\lambda(\xi)|^2  \dd\xi.
\end{equation}
Moreover, for every $\xi \in \R$, $\widehat{f}_\lambda(\xi) = \lambda \widehat{f}(\lambda \xi)$ and $\langle \xi \rangle^{-2s} \leq \lambda^{2s} \langle \lambda \xi \rangle^{-2s}$ because $\lambda \geq 1$. Hence,
\begin{equation}
 \lambda^{-2n} \| f^{(n)} \|_{L^\infty}^2 \leq C \lambda^{1+2s} \int_{\R} \langle\xi\rangle^{-2s} |\widehat{f}(\xi)|^2 \dd \xi 
+ C \lambda^{-6n-4s-2} \int_{\R} |\xi|^{6n+4s+3} |\widehat{f}(\xi)|^2  \dd\xi.
\end{equation}
Thus, for every $\lambda \geq 1$,
\begin{equation} \label{lll}
 \| f^{(n)} \|_{L^\infty}^2 \leq C \lambda^{1+2n+2s} \| f \|_{H^{-s}(\R)}^2
 + C \lambda^{-4n-4s-2} \| f \|_{H^{3n+2s+\frac32}(\R)}^2
 .
\end{equation}
As a function of $\lambda \in (0,+\infty)$, the right-hand side of \eqref{lll} has exactly one critical point which corresponds to the argument of its minimum, achieved at 
\begin{equation}
 \lambda_\star := \left(\frac{4n+4s+2}{2n+2s+1}\frac{\| f \|^2_{H^{3n+2s+\frac32}(\R)}}{\| f \|^2_{H^{-s}(\R)}}\right)^{\frac{1}{6n+6s+3}}
 .
\end{equation}
Since $\lambda_\star \geq 1$, we can use inequality \eqref{lll} for $\lambda = \lambda_\star$, which concludes the proof of \eqref{gn.step1}.

\bigskip

\noindent
\textbf{Step 2:} We construct a continuous extension operator $P:L^2(0,1)\rightarrow L^2(\R)$ that maps continuously $H^{3n+4}(0,1)$ into $H^{3n+4}(\R)$:
\begin{equation}
\| P(v) \|_{H^{3n+4}(\R)} \leq C_2 \|v\|_{H^{3n+4}(0,1)}\,, 
\quad \forall v \in H^{3n+4}(0,1).
\end{equation}
The construction uses a generalized reflection argument, in the spirit of Babi\v c (see \cite{MR0056675}).
To simplify the notations, we denote by $m$ the integer $3n+4$ in this step.
We also identify a function $v \in L^2(0,1)$ with 
its extension by zero to the whole real line.
Let $(\alpha_1,...,\alpha_m) \in \R^m$ be the solution of the Vandermonde linear system
\begin{equation} \label{Vdm}
\sum_{\ell=1}^{m} \alpha_\ell \left(-\frac{\ell}{m} \right)^k = 1\,, 
\quad \forall k \in \{0,...,m-1\}\,.
\end{equation}
We define an extension operator $\widetilde{P}:L^2(0,1)\rightarrow L^2(-1,2)$ by
\begin{equation}
 \widetilde{P}(v)(t):=\left\lbrace \begin{aligned}
& v(t) && \text{ if } t \in [0,1]\,, \\
& \sum_{\ell=1}^{m} \alpha_\ell v\left( -\frac{\ell\, t}{m} \right)
&& \text{ if } t \in (-1,0)\,,\\
& \sum_{\ell=1}^{m} \alpha_\ell v\left(1-\frac{\ell(t-1)}{m} \right) 
&& \text{ if } t\in(1,2)\,.\\
\end{aligned}\right.
\end{equation}
Clearly, $\widetilde{P}$ maps continuously $L^2(0,1)$ into $L^2(-1,2)$.
The relation (\ref{Vdm}) ensures, for every $k \in \{0,...,m-1\}$, 
the continuity of $\widetilde{P}(v)^{(k)}$ at $t=0$ and $t=1$ when $v \in C^k([0,1],\R)$.
Thus $\widetilde{P}$ maps continuously $H^{m}(0,1)$ into $H^{m}(-1,2)$.
Let $\chi \in C^\infty_c(\R,\R)$ be such that $\chi=1$ on $[0,1]$ and $\text{supp}(\chi) \subset (-1,2)$. Then the operator $P$ defined by $P(v):=\chi\, \widetilde{P}(v)$ gives the conclusion of Step 2.

\bigskip

For every $a \in (0,3n+4)$, the Sobolev space $H^a(\R)$ defined by its norm with a Fourier multiplier is equal to an appropriate interpolate of $L^2(\R)$ and $H^{3n+4}(\R)$ (see e.g.\ \cite[Theorem 6.2.4]{MR0482275}). Moreover, $H^a(0,1)$ is also defined as an appropriate interpolate of $L^2(0,1)$ and $H^{3n+4}(0,1)$. Hence, by \cite[Theorem 4.4.1]{MR0482275}, the extension operator $P$ maps continuously $H^a(0,1)$ into $H^a(\mathbb{R})$ for every $a \in (0,3n+4)$:
\begin{equation}
\| P(v) \|_{H^{a}(\R)} \leq C_2 \|v\|_{H^{a}(0,1)}\,, 
\quad \forall v \in H^{a}(0,1)\,, \forall a \in [0,3n+4].
\end{equation}

\bigskip

\noindent \textbf{Step 3:} We prove the existence of $C_3>0$ such that, for every $v \in L^2(0,1)$,
\begin{equation}
\| P(v) \|_{H^{-s}(\R)} \leq C_3 \| v \|_{H^{-s}(\R)}.
\end{equation}
Identifying $v$ with its extension to $\R$ by zero, we obtain
\begin{equation}
 P(v) - v = \chi \sum_{\ell=1}^{m} \alpha_\ell \, v_{0,\ell }
+ \chi \sum_{\ell=1}^{m} \alpha_\ell \, v_{1,\ell},
\end{equation}
where, we set, for every $t \in \R$,
\begin{equation}
 v_{0,\ell}(t) :=v\left( -\frac{\ell\, t}{m} \right) 
\quad \text{ and } \quad 
v_{1,\ell}(t) := v\left(1-\frac{\ell(t-1)}{m} \right).
\end{equation}
Thus
\begin{equation}
\widehat{P(v)}(\xi) - \widehat{v}(\xi) =  
 \sum_{\ell=1}^{m} \alpha_\ell \, \widehat{\chi} *\widehat{v_{0,\ell}}
+  \sum_{\ell=1}^{m} \alpha_\ell \, \widehat{\chi}*\widehat{v_{1,\ell}},
\end{equation}
where
\begin{equation}
\widehat{v_{0,\ell}}(\xi)=\widehat{v}\left( -\frac{m \xi}{\ell} \right) 
\quad \text{ and } \quad 
\widehat{v_{1,\ell}}(\xi) = \widehat{v}\left( -\frac{m \xi}{\ell} \right) 
e^{i \left( 1+\frac{m}{\ell} \right) \xi}.
\end{equation}
Taking into account that
\begin{equation}\int_{\R} \langle\xi\rangle^{s} |\widehat{\chi}(\xi)|\dd\xi
 < \infty,\end{equation}
 because $\chi \in C^{\infty}_c(\R)$,
\cref{Lem:Young} proves the existence of a constant $C_3=C_3(\chi)>0$ such that
\begin{equation}
\int_{\R} \frac{|\widehat{P(v)}(\xi) - \widehat{v}(\xi)|^2}{\langle\xi\rangle^{2s}}\dd\xi \leq C_3 \int_{\R} \frac{| \widehat{v}(\xi)|^2}{\langle\xi\rangle^{2s}}\dd\xi,
\end{equation}
which gives the conclusion.

\bigskip

\noindent \textbf{Step 4:} We prove inequality (\ref{eq:gns}) when $T=1$. For $v \in H^{3n+2s+\frac32}(0,1)$, we have
\begin{equation}
\begin{split} 
 \|v^{(n)}\|_{L^\infty(0,1)}^3
& \leq \| P(v)^{(n)} \|_{L^\infty(\R)}^3 \\
& \leq C_1 \| P(v) \|_{H^{-s}(\R)} \|P(v)\|_{H^{3n+2s+\frac32}(\R)}
\text{ by Step 1} \\
& \leq C_1 C_2 C_3^2 \|v\|_{H^{-s}(\R)}^2 \|v\|_{H^{3n+2s+\frac32}(0,1)}
\text{ by Steps 2 and 3}.
\end{split}
\end{equation}

\noindent \textbf{Step 5:} We prove inequality (\ref{eq:gns}) with an arbitrary $T \in (0,1)$.
Let $v \in H^{3n+2s+\frac32}(0,T)$.
We consider the function $w:(0,1)\rightarrow \R$ defined by 
$w(\theta)=v(T\theta)$.
Then, for every $T \in (0,1]$,
\begin{align}
 \|w^{(n)}\|_{L^\infty(0,1)}^3 & = T^{3n} \|v^{(n)}\|_{L^\infty(0,T)}^3,
 \\
 \|w\|_{H^{3n+2s+\frac32}(0,1)} & \leq \|v\|_{H^{3n+2s+\frac32}(0,T)}
\end{align}
and, taking into account that $\widehat{w}(\xi)=\frac{1}{T} \widehat{v}\left(\frac{\xi}{T}\right)$,
\begin{equation}
 \|w\|_{H^{-s}(\R)}^2
 = \frac{1}{2\pi T} \int_{\R} \frac{|\widehat{v}(\eta)|^2}{(1+T^2 \eta^2)^s} \dd\eta 
\leq \frac{1}{2\pi T^{1+2s}} 
\int_{\R} \frac{|\widehat{v}(\eta)|}{(1+ \eta^2)^s} \dd\eta
= \frac{\|v\|_{H^{-s}(\R)}^2}{T^{1+2s}}.
\end{equation}
Finally, applying Step 4 to $w$ gives
\begin{equation}
 \|v^{(n)}\|_{L^\infty(0,T)}^3 \leq \frac{C_1 C_2 C_3^2}{T^{3n+2s+1}}
\|v\|_{H^{-s}(\R)}^2 \|v\|_{H^{3n+2s+\frac32}(0,T)}.
\end{equation}
This concludes the proof of \eqref{eq:gns} in the general case.
\end{proof}

\subsection{Proof of the fractional drift theorem}
\label{sec4:fin}

Let $n \in \N$, $s \in (0,1)$, $\Gamma$ be such that 
\eqref{def:Gamma} and \eqref{lost_direction} hold and the coefficients $c_j$ defined by \eqref{Def:cj} satisfy \eqref{cj_asympt} for some $a \in \R^*$ and $\alpha > -1+4s$. We also assume that \eqref{Ad_2l-1_frac} holds.

\bigskip

Working as in Section \ref{sec3:fin}, we obtain
\begin{equation} \label{inter0}
 \langle z(T),\varphi_0\rangle 
 =
 \delta 
 + \langle z_2(T),\varphi_0 \rangle 
 + \mathcal{O} \left( \|u\|_{L^\infty(0,T)}^3 + \delta \|u\|_{L^\infty(0,T)} \right).
\end{equation}
By \cref{Prop:fq_syst} and assumption (\ref{Ad_2l-1_frac}), we obtain
\begin{equation} \label{inter1}
\begin{split}
\langle z_2(T),\varphi_0\rangle =
 &  (-1)^n  \int_0^T u_n(t) \int_0^t u_n(\tau)K^{(2n)}(t-\tau)\dd\tau \dd t +
\mathcal{O} \left( 
\sum_{\ell=1}^{n} |\alpha_\ell|^2
+  |u_\ell(T)|^2 
\right).
\end{split}
\end{equation}
Moreover, one can check (see \cref{thm:alpha.petit} below) that there exists $C,\beta > 0$
such that
\begin{equation}
 \sum_{\ell=1}^n |\alpha_\ell|^2 \leq C \|u_n\|_{H^{-s-\beta}(\R)}^2.
\end{equation}
Up to choosing a smaller $\beta$, we deduce from \cref{Prop:Hs_0<s<1/2} that
\begin{equation} \label{inter2}
\langle z_2(T),\varphi_0\rangle =
 (-1)^n a \gamma(s) \|u_n\|_{H^{-s}(\R)}^2
+ \mathcal{O} \left(  \|u_n\|_{H^{-s-\beta}(\R)}^2 
+ \sum_{\ell=1}^{n} |u_\ell(T)|^2  \right).
\end{equation}
Applying \cref{Prop_Z} to $f=u_n$ and $\xi^*=i$, we obtain
\begin{equation} \label{inter3}
\langle z_2(T),\varphi_0\rangle =
 (-1)^n a \gamma(s) \|u_n\|_{H^{-s}(\R)}^2
+ \mathcal{O} \left(   
T^{2\beta}\|u_n\|_{H^{-s}(\R)}^2  
+ \frac{\left|\widehat{u_n}(i)\right|^2}{T}
+ \sum_{\ell=1}^{n}  |u_\ell(T)|^2 
\right).
\end{equation}
Moreover, an integration by parts and the Cauchy-Schwarz formula prove that
\begin{equation} \label{uni2}
 \frac{1}{T}\left|\widehat{u_n}(i)\right|^2 
 =
 \frac{1}{T}\left|u_{n+1}(T) e^T - \int_0^T u_{n+1}(t) e^{t} \dd t \right|^2 
 =
 \mathcal{O}\left( \frac{|u_{n+1}(T)|^2}{T} + \|u_{n+1}\|_{L^2(0,T)}^2 \right).
\end{equation}
%
%
By \cref{Prop:terme_bord} applied with $m=n+1$ (assumption \eqref{cj_asympt} implies that an infinite number of coefficients $\langle \mu, \varphi_j \rangle$ are non-zero),
\begin{equation} \label{inter5}
\frac{1}{T} \sum_{\ell=1}^{n+1} |u_\ell(T)|^2
= \mathcal{O} \left(  \|u_{n+1}\|_{L^2(0,T)}^2 + \frac{\|u\|_{L^\infty(0,T)}^4}{T} 
+ \frac{\delta^2 \|u\|_{L^\infty(0,T)}^2}{T} \right)
\end{equation}
and by \cref{Prop:u(n+1)_u(n)_H(-s)}, we have
\begin{equation}  \label{inter6}
 \|u_{n+1}\|_{L^2(0,T)}^2  = \mathcal{O} \left(  
  T^{2(1-s)} \|u_n\|_{H^{-s}(\R)}^2    +   
  T  |u_{n+1}(T)|^2   \right).
\end{equation}
Combining (\ref{inter5}) and (\ref{inter6}) with the properties \eqref{O1} and \eqref{O2} of the definition of the notation~$\mathcal{O}$, which includes smallness assumptions on $T$ and on $u$, we deduce
\begin{equation} \label{uni3}
 \|u_{n+1}\|_{L^2(0,T)}^2 
 + \frac{1}{T} \sum_{\ell=1}^{n+1} |u_\ell(T)|^2
 = 
 \mathcal{O} \left(  
 T^{2(1-s)} \|u_n\|_{H^{-s}(\R)}^2 
 + \|u\|_{L^\infty(0,T)}^3 
 + \delta \|u\|_{L^\infty(0,T)}
 \right).
\end{equation}
Finally, incorporating (\ref{uni2})  and (\ref{uni3}) into (\ref{inter3}), we get
\begin{equation} \label{inter9a}
\begin{split}
\langle z(T),\varphi_0\rangle =
\delta & + (-1)^n a \gamma(s) \|u_n\|_{H^{-s}(\R)}^2
\\
& 
+ \mathcal{O} \left(  
T^{\rho} \|u_n\|_{H^{-s}(\R)}^2  +
\|u\|_{L^\infty(0,T)}^3 + \delta \|u\|_{L^\infty(0,T)}
\right).
\end{split}
\end{equation}
where $\rho:=\min\{ 2\beta, 2(1-s) \}>0$. 
Applying the Gagliardo-Nirenberg inequality of \cref{thm:gns} to $v=u_n$ we get
\begin{equation}
 \|u\|_{L^\infty(0,T)}^3
= \| u_n^{(n)} \|_{L^\infty(0,T)}^3
= \mathcal{O} \left( \frac{1}{T^{3(n+1)}} \|u_n\|_{H^{-s}(\R)}^2 \|u_n\|_{H^{3n+2s+\frac32}(0,T)} \right).
\end{equation}
Moreover, for every $k \in \{0,...,n-1\}$,
\begin{equation}
\|u_n^{(k)}\|_{L^2(0,T)} = \|u_{n-k}\|_{L^2(0,T)} \leq T^{n-k} \|u\|_{L^2(0,T)}
\end{equation}
thus 
\begin{equation}
\|u_n\|_{H^{3n+2s+\frac32}(0,T)} = \mathcal{O} \left( \|u\|_{H^{2n+2s+\frac32}(0,T)} \right)
\end{equation}
and
\begin{equation}
\|u\|_{L^\infty(0,T)}^3 =  \mathcal{O} \left( \frac{1}{T^{3(n+1)}} \|u_n\|_{H^{-s}(\R)}^2 \|u\|_{H^{2n+2s+\frac32}(0,T)} \right).
\end{equation}
Incorporating the previous relation in (\ref{inter9a}), we get
\begin{equation} \label{inter10}
 \begin{split}
 \langle z(T),\varphi_0\rangle =
 \delta & + (-1)^n a \gamma(s) \|u_n\|_{H^{-s}(\R)}^2
 \\ &
+ \mathcal{O} \left(  
\Big( T^{\rho} +  \frac{\|u\|_{H^{2n+2s+\frac32}(0,T)}}{T^{3(n+1)}} \Big) \|u_n\|_{H^{-s}(\R)}^2   
 + \delta \|u\|_{L^\infty(0,T)}
\right).
 \end{split}
\end{equation}
We conclude as in the integer-order drift case, thanks to \cref{def:OO} of the $\mathcal{O}$ notation. Indeed, equality \eqref{inter10} means that there exists $C_1, T^\star_1 > 0$ such that, for every $T \in (0,T^\star_1]$, there exists $\eta_1 > 0$ such that, for every $\delta \in [-1,1]$ and $u \in L^\infty(0,T)$ with $\|u\|_{L^\infty} \leq \eta_1$, 
\begin{equation} \label{inter11}
 \begin{split}
  \Big| \langle z(T),\varphi_0\rangle -
 \delta & - (-1)^n a \gamma(s) \|u_n\|_{H^{-s}(\R)}^2 \Big| \\
 & \leq C_1 \left( \Big( T^{\rho} +  \frac{\|u\|_{H^{2n+2s+\frac32}(0,T)}}{T^{3(n+1)}} \Big) \|u_n\|_{H^{-s}(\R)}^2   
 + \delta \|u\|_{L^\infty(0,T)} \right).
 \end{split}
\end{equation}
Let $\varepsilon > 0$. We start by choosing $T^\star > 0$ with $T^\star \leq T^\star_1$ and $C_1 T^\star \leq \varepsilon / 2$. Then, let $T \in (0,T^\star]$. We choose $\eta > 0$ such that $\eta \leq \eta_1$ and $C_1 \eta T^{-3(n+1)} \leq \varepsilon / 2$ and $C_1 \eta \leq \varepsilon$. With these choices, \eqref{inter11} proves the main conclusion \eqref{eq:DRIFT_HS} of \cref{thm:obs-fractional}.

\begin{lemma} \label{thm:alpha.petit}
 Under the decay assumption \eqref{cj_asympt} on the coefficients $c_j$, there exists $C,\beta > 0$ such that, for any $\ell \in \{1,...n\}$, one has
 \begin{equation} \label{alpha.petit}
  \left| \int_0^T u_n(t) K^{(n+\ell-1)}(T-t) \dd t \right|
  \leq
   C \|u_n\|_{H^{-s-\beta}(\R)}.
 \end{equation}
\end{lemma}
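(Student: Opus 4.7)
My plan is to proceed by Plancherel duality in fractional Sobolev spaces, reducing the bound to a pointwise regularity analysis of $K^{(n+\ell-1)}$ on $[0,T]$. Since $u_n$ is supported in $[0,T]$, for any extension $\widetilde{W}_\ell$ to $\R$ of the function $t \mapsto K^{(n+\ell-1)}(T-t)$ from $[0,T]$ one has $\alpha_\ell = \int_\R u_n(t) \widetilde{W}_\ell(t) \dd t$, so Plancherel and Cauchy--Schwarz yield
\[
|\alpha_\ell| \leq \|u_n\|_{H^{-s-\beta}(\R)} \, \|\widetilde{W}_\ell\|_{H^{s+\beta}(\R)}.
\]
Composing with a Babi\v{c}-type extension operator, as in Step~2 of the proof of \cref{thm:gns}, it is therefore enough to prove a uniform bound $\|K^{(n+\ell-1)}\|_{H^{s+\beta}(0,T)} \leq C$ for some $\beta > 0$ independent of $\ell \in \{1,\ldots,n\}$.

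The core of the argument is the analysis of the regularity of $K^{(m)}$ on $(0,T)$, where $m := n+\ell-1 \in \{n,\ldots,2n-1\}$. Away from $\sigma=0$ the function is smooth thanks to the exponential factors, and the only issue is the behavior as $\sigma \to 0^+$. Writing $K^{(m)}(\sigma) = (-1)^m \sum_{j\geq 1} c_j j^{2m} e^{-j^2\sigma}$ and splitting $c_j = a/j^{4n-1+4s} + r_j$ with $|r_j| \leq C j^{-(4n+\alpha)}$ thanks to~\eqref{cj_asympt}, the problem reduces to studying $\Phi_{\nu_\ell}(\sigma) := \sum_{j\geq 1} j^{-\nu_\ell} e^{-j^2\sigma}$ for $\nu_\ell := 2n-2\ell+1+4s > 1$. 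A sum-to-integral comparison in the spirit of \cref{lm:theta.sum.int}, splitting at $j \sim \sigma^{-1/2}$ and Taylor-expanding the exponential on the low-frequency part, will give
\[
\Phi_{\nu_\ell}(\sigma) = \Phi_{\nu_\ell}(0) - c_{\nu_\ell} \sigma^{(\nu_\ell-1)/2} + O(\sigma)
\]
as $\sigma \to 0^+$, with $(\nu_\ell-1)/2 = n-\ell+2s$. The remainder $\sum_j r_j j^{2m} e^{-j^2\sigma}$ will carry a strictly milder singularity, of order at most $\sigma^{n-\ell+(\alpha+1)/2}$, because $\alpha > 4s-1$.

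The main technical obstacle I expect is the careful tracking of this $\sigma^{n-\ell+2s}$ asymptotic uniformly in $\ell$, most delicately for $\ell = n$, where $\nu_\ell = 1+4s$ is closest to the critical value $1$ and the leading analytic term $\Phi_{\nu_\ell}(0)$ barely converges. Once this is established, $K^{(m)}(\sigma) - K^{(m)}(0^+)$ has leading singular behavior $\sigma^{n-\ell+2s}$ near zero and is smooth on $(0,T]$. Since $\sigma \mapsto \sigma^\kappa$ belongs to $H^{r}(0,T)$ for any $r < \kappa+\tfrac12$, one deduces $K^{(m)} \in H^{s+\beta}(0,T)$ for every $\beta < n-\ell+s+\tfrac12$. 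The tightest constraint occurs at $\ell=n$ and reads $\beta < s+\tfrac12$; fixing, for instance, $\beta := \min\{s,1\}/4 \in \bigl(0, s+\tfrac12\bigr)$ independently of $\ell$ will conclude the argument.
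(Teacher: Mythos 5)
Your strategy is sound but it is a genuinely different (and substantially heavier) route than the paper's. The paper never analyses the singularity of $K^{(m)}$ in physical space: it writes the pairing term by term via the explicit Fourier transform $\widehat{e^{-j^2|\cdot|}}(\xi)=\tfrac{2j^2}{j^4+\xi^2}$, applies Cauchy--Schwarz in $\xi$ against the weight $(1+\xi^2)^{\pm(s+\beta)/2}$ for each fixed $j$, and then sums over $j$ using only the one-sided bound $|c_j|\le M j^{1-4n-4s}$ from \eqref{cj_asympt}; the resulting constant $C=2\sum_j |c_j| j^{4n}\bigl(\int_\R (1+\xi^2)^{s+\beta}(j^4+\xi^2)^{-2}\dd\xi\bigr)^{1/2}$ is finite precisely for $\beta<s+\tfrac12$, the same threshold you reach. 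What your approach buys is a genuine positive-regularity statement $K^{(m)}\in H^{s+\beta}$ near the diagonal, at the cost of the Mellin-type expansion of $\Phi_{\nu_\ell}$, the Sobolev regularity of $\sigma^\kappa$, and an extension step; what the paper's buys is an eight-line proof with no singularity analysis at all. Two points in your write-up deserve care. First, the constant must be uniform in $T$ small, since the lemma is consumed inside the $\mathcal{O}(\cdot)$ calculus of \cref{sec4:fin}; a Babi\v{c} extension from $(0,T)$ has operator norm blowing up as $T\to0$, so you should instead use the globally defined even extension $t\mapsto K^{(m)}(|T-t|)$, whose $H^{s+\beta}(\R)$ norm is $T$-independent by translation invariance. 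Second, your displayed asymptotic $\Phi_{\nu_\ell}(\sigma)=\Phi_{\nu_\ell}(0)-c_{\nu_\ell}\sigma^{(\nu_\ell-1)/2}+O(\sigma)$ is only the correct form for $\nu_\ell<3$; for $\ell$ small (so $\nu_\ell=2n-2\ell+1+4s\ge3$) the term $\sigma^{(\nu_\ell-1)/2}$ is hidden inside the $O(\sigma)$ and you must expand the Taylor polynomial of $e^{-j^2\sigma}$ to order $\lfloor(\nu_\ell-1)/2\rfloor$ before isolating the fractional power. Neither issue is fatal, but both need to be addressed for the argument to close.
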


\begin{proof}
 We extend $u_n$ by zero to $\R$. Seeing the integral as an $L^2(\R)$ scalar product, we obtain, thanks to Plancherel's theorem
 \begin{equation}
  \left| \int_0^T u_n(t) K^{(n+\ell-1)}(T-t) \dd t \right|
  \leq
  \frac{1}{2\pi}
  \sum_{j\in\N^*} |c_j| j^{2(n+\ell-1)} 
  \int_\R |\widehat{u_n}(\xi)| \frac{2j^2}{j^4 + \xi^2} \dd\xi.
 \end{equation}
 Let $\beta \in (0,1-s)$. Estimate \eqref{alpha.petit} holds with
 \begin{equation}
  C := 2 \sum_{j\in\N^*} |c_j| j^{4n} \left(
  \int_\R \frac{(1+\xi^2)^{s+\beta}}{(j^4+\xi^2)^2} \dd \xi \right)^{\frac12}.
 \end{equation}
 Thanks to assumption \eqref{cj_asympt}, there exists $M_1>0$ such that $|c_j| \leq M_1 j^{1-4n-4s}$. Moreover, there exists $M_2>0$ such that
 \begin{equation}
  \left( \int_\R \frac{(1+\xi^2)^{s+\beta}}{(j^4+\xi^2)^2} \dd \xi \right)^{\frac12}
  \leq
  M_2 j^{2s+2\beta-3}.
 \end{equation}
 Hence, the constant $C$ is finite when $\beta < s + \frac12$.
\end{proof}

\subsection{Drifts in essentially arbitrary norms}
\label{sec:weights}

As claimed in the abstract and in \cref{rk:weights}, the quadratic drifts can sometimes be quantified by other more general weighted fractional negative Sobolev norms. In this paragraph, we explain what modifications are needed in the proof and we give some examples. 

\bigskip

Let $s \in (0,1)$, $\sigma \in [0,1)$ and $\sigma' > 0$. Let $\Theta \in W^{1,\infty}_{\sigma,\sigma'}(\R_+)$ be a positive function. We assume that there exists $\alpha > -1+4s$ such that the sequence $c_j$ defined by \eqref{Def:cj} satisfies 
\begin{equation} 
 c_j = j^{1-4s} \Theta(j) + O(j^{-\alpha}).
\end{equation}
Proceeding as in the previous paragraphs proves that there exists $\beta > 0$ such that
\begin{equation}
 \widehat{K}_\Theta(\xi) = 2 |\xi|^{-2s} \int_0^\infty \frac{y^{3-4s}\Theta(y|\xi|^{\frac12})}{y^4+1}\dd y
 + O\left(|\xi|^{-2s-2\beta}\right).
\end{equation}
We assume moreover that the function $\Theta$ satisfies the assumption
\begin{equation} \label{eq:ax.x}
 \forall a > 0, \quad \lim_{x\to +\infty} \frac{\Theta(ax)}{\Theta(x)} = 1.  
\end{equation}
Then, since $y^{3-4s}/(1+y^4) \in L^1(\R_+)$, the dominated converge theorem implies that
\begin{equation}
 \widehat{K}_\Theta(\xi) \sim 2 \gamma(s) |\xi|^{-2s} \Theta(|\xi|^{\frac12}).
\end{equation}
Hence, following the same method as in the previous paragraphs, we can prove that the drift is quantified by the following norm of the control
\begin{equation} \label{norm.theta}
 \| u \|_{\Theta}^2 := \frac{\gamma(s)}{2\pi} \int_{\R} |\widehat{u}(\xi)|^2 (1+\xi^2)^{-s} \Theta(|\xi|^{\frac12}) \dd \xi.
\end{equation}
This quantity therefore corresponds to a kind of weighted fractional negative Sobolev norm. The assumptions of this paragraph (including \eqref{eq:ax.x}) are for example satisfied by functions of the following type for $b,c \geq 0$,
\begin{equation}
 \Theta(x) := (\ln (1+x^2))^b (\ln \ln (1+x^2))^c. 
\end{equation}
Since the norms \eqref{norm.theta} under assumption \eqref{eq:ax.x} behave essentially like the $H^{-s}$ norm, all the arguments used in the previous paragraph can also be applied in this setting.

\subsection{A methodological remark on more complex systems}
\label{sec:wsio}

Thanks to our particular choice of nonlinear system \eqref{eq:z}, the quadratic integral operators that we manipulated are associated with kernels of the form $K(t-\tau)$. Hence, we were able to interpret the quadratic operators in the Fourier time frequency domain and perform explicit computations. We chose this setting for the ease of presentation. However, the same kind of results could probably be obtained if the kernel is of the form $K(t,\tau)$. For such kernels, it is harder to perform the computations in the frequency domain. Instead, one can study the degeneracy of $K(t,\tau)$ near the diagonal $t=\tau$ to compute the associated coercivity. The residues can then be estimated using the theory of \emph{weakly singular integral operators} (see e.g.\ \cite{MR1048075,MR1397491}). Such a study was performed by the second author in \cite{2015arXiv151104995M} in the case of a nonlinear Burgers equation, establishing a drift in $H^{-5/4}$ norm.

\section{Smooth controllability stemming from the second order} 
\label{sec:sstlc-quad}

We prove \cref{thm:sstlc-quad}, which illustrates that, for scalar-input parabolic systems, smooth small-time null controllability can sometimes be recovered from the quadratic approximation.

\subsection{Construction of a magic system} \label{sec:magic}

We construct a nonlinearity $\Gamma$ satisfying \eqref{def:Gamma} with good properties. We perform this construction as a very first step, to stress that the choice of $\Gamma$ does not depend on the allotted control time $T$. From the previous sections concerning the fractional obstructions to controllability, we guess that we must build a quadratic kernel whose Fourier transform takes both positive and negative values up to infinity. We start with the following elementary lemma.
 
\begin{lemma} \label{jeconverge}
 Let $s \in (0,1)$ and $\epsilon > 0$. There exists a constant $L = L(\epsilon,s) \geq 1$ such that 
 \begin{equation}
  \label{L.epsilon}
  \int_{\R_+ \setminus [e^{-L},e^{L}]} \frac{y^{3-4s}}{1+y^4} \dd y
  \leq 
  \epsilon \gamma(s)
  .
 \end{equation}
\end{lemma}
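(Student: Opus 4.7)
The plan is essentially to observe that this is a routine tail estimate for the convergent integral defining $\gamma(s)$. The integrand $y\mapsto y^{3-4s}/(1+y^4)$ is nonnegative on $\R_+$, behaves like $y^{3-4s}$ near $0$ (integrable because $3-4s>-1 \Leftrightarrow s<1$) and like $y^{-1-4s}$ at infinity (integrable because $s>0$). So for every $s\in(0,1)$, $\gamma(s)\in(0,\infty)$ by definition \eqref{gamma}.

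The strategy is then to split the complement and estimate each tail separately. Writing
\begin{equation}
\int_{\R_+\setminus[e^{-L},e^L]} \frac{y^{3-4s}}{1+y^4}\dd y
= \int_0^{e^{-L}} \frac{y^{3-4s}}{1+y^4}\dd y
+ \int_{e^L}^{+\infty} \frac{y^{3-4s}}{1+y^4}\dd y,
\end{equation}
the first tail is bounded by $\int_0^{e^{-L}} y^{3-4s}\dd y = e^{-(4-4s)L}/(4-4s)$, and the second by $\int_{e^L}^\infty y^{-1-4s}\dd y = e^{-4sL}/(4s)$. Both go to $0$ as $L\to +\infty$, so it suffices to choose $L=L(\epsilon,s)\geq 1$ large enough that their sum is at most $\epsilon\gamma(s)$.

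Alternatively, and more conceptually, one may simply invoke the monotone convergence theorem: the functions $\mathbf{1}_{[e^{-L},e^L]}(y)\, y^{3-4s}/(1+y^4)$ increase pointwise to $y^{3-4s}/(1+y^4)$ as $L\to+\infty$, so their integrals converge to $\gamma(s)$. Equivalently, $\int_{\R_+\setminus[e^{-L},e^L]} y^{3-4s}/(1+y^4)\dd y \to 0$, and any $L$ large enough to make this quantity at most $\epsilon\gamma(s)$ works (and we take the maximum with $1$). There is no real obstacle here; the statement is just a quantitative tail-control lemma isolated for later convenient use in \cref{sec:magic}.
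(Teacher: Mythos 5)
Your proof is correct and takes essentially the same route as the paper, whose entire argument is the one-line observation that the integral over $\R_+$ is finite and equal to $\gamma(s)$; your explicit tail bounds $e^{-(4-4s)L}/(4-4s)$ and $e^{-4sL}/(4s)$ merely make that convergence quantitative. Nothing further is needed.
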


\begin{proof}
 For $s \in (0,1)$ the integral over $\R_+$ is finite and equal to $\gamma(s)$, defined in \eqref{gamma}.
\end{proof}

Let $s \in (0,\frac12)$ and $L \geq 1$ be given by \cref{jeconverge} for $\epsilon = \frac16$. These constants are fixed for the remainder of this section. We will neither recall these assumptions, nore keep track of the dependency of the constants in the estimates with respect to these quantities. We can now turn to the construction of the system.

\paragraph{Elementary block.} We consider an elementary building block $\chi \in W^{1,\infty}(\R)$, compactly supported in $[0,3]$, affine by parts, and with a plateau of value $1$. We set
\begin{equation} \label{chi.x}
\chi(x) :=
\left\{
\begin{aligned}
& x && \text{for } x \in [0,1],
\\
& 1 && \text{for } x \in [1,2],
\\
& 3-x && \text{for } x \in [2,3].
\end{aligned}
\right.
\end{equation}
Recalling the definition of the $W^{1,\infty}$ norm in \eqref{W1}, one has $\| \chi \|_{W^{1,\infty}} = 2$. We also consider the dilated elementary block $\chi(\cdot / 2L)$, which has a plateau of length $2L$.

\paragraph{Oscillating function.} We build a function $\Theta \in W^{1,\infty}(\R)$ using alternatively positive and negative elementary blocks. We use infinitely many positive and negative blocks in alternance. It is constructed by Alg.\ \ref{algo_nul} (this algorithmic approach will be most useful in \cref{sec:infini}). More simply, the function $\Theta$ can also be seen as a periodic function of period $12L$ (see \cref{fig:thetas0}), which is caracterized by its values for $x \in [0,12L]$ by the formula
\begin{equation} \label{theta.chi.x}
 \Theta(x) := \chi\left(\frac{x}{2L}\right) - \chi\left(\frac{x}{2L}-3\right).
\end{equation}

\begin{figure}[ht!]
 \centering
 \includegraphics{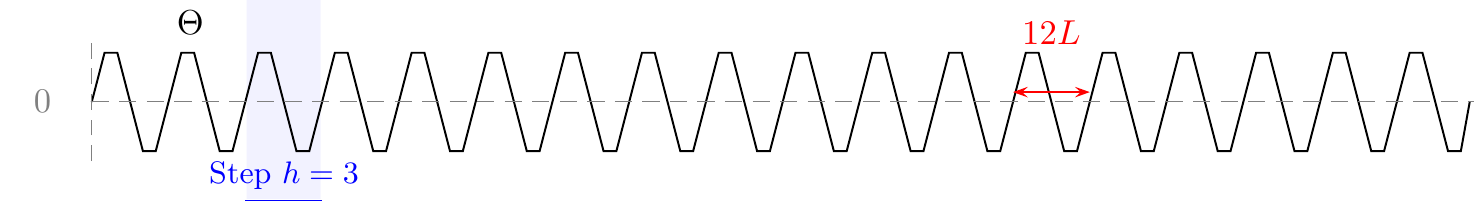}
 \caption{Single periodic oscillating function.}
 \label{fig:thetas0}
\end{figure}

\begin{algorithm}
 \KwData{$L \geq 1$}
 \KwResult{$\Theta \in W^{1,\infty}(\R)$}
 \BlankLine
 Initialize $\Theta \leftarrow 0$\;
 Initialize $X \leftarrow 0$\;
 \For{$h \leftarrow 1$ \KwTo $+\infty$}{
   $\Theta \leftarrow \Theta + \chi\left(\frac{\cdot}{2L}-X\right) -  \chi\left(\frac{\cdot}{2L}-X-3\right)$\;
   $X \leftarrow X + 6$\;
 }
 \caption{Construction of a single periodic function}
 \label{algo_nul}
\end{algorithm}

Thanks to this oscillating function, we define our nonlinearity $\Gamma_\Theta$. We set, for $z\in H^1_N(0,\pi)$,
\begin{equation} \label{magic}
 \Gamma_\Theta[z] := \sum_{k\in\N^*} k^{\frac12-3s} \varphi_k
 + \left(\sum_{j\in\N^*} \Theta(\ln j) j^{\frac12-s} \langle z, \varphi_j \rangle\right)
 \varphi_0
\end{equation}
Elementary estimates prove that definition \eqref{magic} ensures that $\Gamma_\Theta$ satisfies the regularity assumptions~\eqref{def:Gamma}. The first direction $\varphi_0$ is lost since $\langle \Gamma_\Theta[0],\varphi_0\rangle = 0$. Moreover, for $j \in \N^*$, the coefficients $c_j$ defined by \eqref{Def:cj} are given by
\begin{equation} \label{cjtheta}
 c_j = \frac{\Theta(\ln j)}{j^{-1+4s}}.
\end{equation}
In the following paragraphs, we prove that the nonlinear system \eqref{eq:z} is smoothly small-time locally null controllable with quadratic cost for the nonlinearity $\Gamma_\Theta$. 

\begin{rk}
 The shape $\Theta(\ln j)$ guarantees that the sequence $c_j$ oscillates between positive and negative values, up to infinity, at an increasingly slower pace. This property is, at least on an heuristic level, mandatory to obtain small-time controllability. 
\end{rk}

\begin{rk}
For the ease of presentation, we define by \eqref{magic} a system which is homogeneous in the sense that, if $z$ denotes the trajectory to \eqref{eq:z} with $z_0 = 0$, one has, for $j \in \N^*$,
\begin{equation} \label{z.phij}
 \langle z(T), \varphi_j \rangle = j^{\frac{1}{2}-3s} \int_0^T u(t) e^{-j^2(T-t)} \dd t 
\end{equation}
and
\begin{equation} \label{z.phi0}
 \langle z(T), \varphi_0 \rangle = \frac 12 \int_0^T\int_0^T u(t) u(t') K_\Theta (t-t') \dd t \dd t',
\end{equation}
where $K_\Theta$ is defined as in \eqref{def:K} for the coefficients \eqref{cjtheta}. The first component is purely quadratic whereas the others are purely linear. This choice simplifies the proof of \cref{thm:sstlc-quad}, but it would be possible to prove similar results for more intricate systems (e.g.\ involving cubic remainders).
\end{rk}

\subsection{Control strategy and reduction of the proof}
\label{sec:control.strategy.N=1}

We intend to prove \cref{thm:sstlc-quad} with the nonlinearity $\Gamma_\Theta$ defined in \eqref{magic}. We reduce its proof to the construction of two elementary controls.
Let $z_0 \in L^2(0,\pi)$ and $m \in \N^*$. The control strategy goes as follows:
\begin{itemize}
\item First, during the time interval $[0,T/2]$, we use a control strategy inspired by the linear dynamics. Thanks to \cref{thm:linear-cost}, there exists a control $u \in H^m_0(0,T/2)$ such that the linear system \eqref{eq:z-linear} with initial data $z_0$ and control $u$ satisfies $\langle z(T/2), \varphi_{k} \rangle = 0$ for every $k \in \N^*$. Moreover, there exists $C(T,m) > 0$ such that this control satisfies the cost estimate 
\begin{equation} \label{uhm.zp0}
 \|u\|_{H^m} \leq C \| z_0-\langle z_0,\varphi_0 \rangle \varphi_0\|_{L^2}.
\end{equation}
Applying the same control to the nonlinear system \eqref{eq:z} also ensures that $\langle z(T/2),\varphi_k \rangle = 0$ for every $k\in\N^*$, because the nonlinearity~$\Gamma_\Theta$ does not imply retroaction of the quadratic term inside the linear part of the state, as given by \eqref{z.phij}. Hence, there exists $\alpha_0 \in \R$ such that $z(T/2)=\alpha_0 \varphi_0$. Moreover, thanks to \eqref{z.phi0}, there exists $C(T) > 0$ such that
\begin{equation} \label{alpha0}
 |\alpha_0| \leq |\langle z_0,\varphi_0 \rangle| + C \|u\|_{H^m}^2.
\end{equation}
\item Second, during the time interval $[T/2, T]$, we use a control strategy dedicated to the quadratic term. We use a control
\begin{equation} \label{u.upm}
u(t) := |\alpha_0|^{\frac12} u^{\mathrm{sgn}(\alpha_0)}\left(t-\frac T2\right),
\end{equation}
where the base controls $u^\pm \in H^{m}_0(0,T/2)$ are given by the following proposition and don't depend on the initial data $z_0$. Hence, the inhomogeneous cost estimate \eqref{eq:quad.cost.N=1} from \cref{thm:sstlc-quad} follows from the estimates \eqref{uhm.zp0}, \eqref{alpha0} and the explicit formula \eqref{u.upm}.
\end{itemize}

\begin{proposition} \label{prop:elementary.control.N=1}
For every $T \in (0,1]$ and $m \in \N$, there exists $u^{\pm} \in H^{m}_0(0,T)$ such that the associated solution to system \eqref{eq:z} with $z_0=\pm \varphi_0$ satisfies $z(T)=0$.
\end{proposition}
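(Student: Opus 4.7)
Thanks to the homogeneity built into the magic nonlinearity $\Gamma_\Theta$, the formulas \eqref{z.phij}--\eqref{z.phi0} split the condition $z(T)=0$ for $z_0 = \pm\varphi_0$ into a purely linear part and a purely quadratic part: one must find $u \in H^m_0(0,T)$ satisfying the linear moment conditions
\begin{equation}
 \Lambda(u) := \Bigl(\int_0^T u(t)\, e^{-j^2(T-t)} \dd t\Bigr)_{j \in \N^*} = 0
\end{equation}
together with the scalar quadratic identity $Q(u) = \mp 1$, where $Q(u) := \frac12 \int_0^T\!\int_0^T u(t)u(t') K_\Theta(t-t') \dd t\dd t'$. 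The first set of conditions defines a closed subspace $V_m \subset H^m_0(0,T)$; the second is homogeneous of degree two. Therefore it is enough to exhibit, for each sign, some $u_0^\pm \in V_m$ with $Q(u_0^\pm)$ of the correct (opposite) sign, and then rescale by $\lambda^\pm = |Q(u_0^\pm)|^{-1/2}$.

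The sign-indefiniteness of $Q$ on $V_m$ comes from the design of $\Theta$. Following the Fourier-side computation in the proof of \cref{Prop:Hs_0<s<1/2} and applying \cref{lm:theta.sum.int} to the coefficients $c_j = \Theta(\ln j)\, j^{1-4s}$ from \eqref{cjtheta} gives the asymptotics
\begin{equation}
 \widehat{K_\Theta}(\xi) \;=\; 2|\xi|^{-2s} \int_0^{\infty} \frac{y^{3-4s}\,\Theta\bigl(\ln y + \tfrac12 \ln|\xi|\bigr)}{1+y^4}\, \dd y \;+\; o\bigl(|\xi|^{-2s}\bigr).
\end{equation}
When $\tfrac12 \ln|\xi|$ lies in a plateau of $\Theta$, \cref{jeconverge} with $\epsilon = 1/6$ guarantees that the inner integral is uniformly bounded away from zero with the sign of the plateau, since the ``leakage'' from $|\ln y| > L$ is at most $\epsilon\gamma(s)$. \cref{algo_nul} produces infinitely many $+1$ plateaus and infinitely many $-1$ plateaus at arbitrarily large arguments, so $\widehat{K_\Theta}$ has definite-sign frequency bands $B_k^{\pm}$ extending to infinity. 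Picking $\xi^{\pm}$ deep inside such a band and a smooth envelope $\chi \in C^\infty_c(0, T/4)$, I would set $\psi^{\pm}(t) := \chi(t)\cos(\xi^{\pm} t) \in H^m_0(0,T)$ so that Plancherel yields $Q(\psi^{\pm}) \approx \tfrac12 \widehat{K_\Theta}(\xi^{\pm})\, \|\chi\|_{L^2}^2$ with the desired sign. Because $\psi^{\pm}$ is supported in $[0,T/4]$, its moments $d_j(\psi^{\pm})$ decay like $e^{-3j^2 T/4}/j$, which is exactly what places $\Lambda(\psi^{\pm})$ inside the space $D_T$ of \cref{thm:moments} with $\eta_1 = 3/4$. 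Applying that proposition yields a correction $\varphi^{\pm} \in H^m_0(0,T)$ solving $\Lambda(\varphi^{\pm}) = \Lambda(\psi^{\pm})$, and setting $u_0^{\pm} := \psi^{\pm} - \varphi^{\pm}$ places $u_0^{\pm}$ in $V_m$.

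\textbf{The main obstacle} is to verify that
\begin{equation}
 Q(u_0^{\pm}) = Q(\psi^{\pm}) - 2B_{K_\Theta}(\psi^{\pm},\varphi^{\pm}) + Q(\varphi^{\pm})
\end{equation}
keeps the sign of $Q(\psi^{\pm})$: the control cost $M_1 e^{M_1/T}$ from \cref{thm:moments} enters quadratically into the bound for $Q(\varphi^{\pm})$ and competes against the intrinsic size $|\widehat{K_\Theta}(\xi^{\pm})| \sim |\xi^{\pm}|^{-2s}$ of the main term. Because infinitely many admissible bands are available and the profile $\chi$ and frequency $\xi^{\pm}$ can be tuned freely (including exploiting that higher-amplitude or wider-bandwidth $\chi$ enhances $Q(\psi^{\pm})$ more than the induced correction cost), the crux of the argument is a careful bookkeeping showing that the ratio $|Q(\varphi^{\pm})|/|Q(\psi^{\pm})|$ can be made strictly less than $1$ for a suitable choice of parameters, at which point the sign of $Q(u_0^{\pm})$ is secured and the rescaling $u^{\pm} = \lambda^{\pm} u_0^{\pm}$ delivers the claimed controls.
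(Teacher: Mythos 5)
Your proposal follows essentially the same route as the paper (\cref{sec:v.omega}--\cref{sec:N=1.regular}): an oscillating test control at a frequency whose half-logarithm sits in a plateau of $\Theta$, a Fourier-side evaluation of the quadratic form (the paper's \cref{danslemille_new} and \cref{danslemille5}), a correction through the moment map $\mathfrak{L}^T_1$ of \cref{thm:moments}, and a rescaling by homogeneity. The one step you leave open --- that the correction does not flip the sign of $Q$ --- is exactly where the paper's argument closes, and it closes more simply than your ``competition'' framing suggests: since $T$ is fixed in this proposition, $M_1e^{M_1/T}$ is merely a constant, and the only competition is between powers of the frequency. One integration by parts gives $|d_j(\psi^{\pm})|\le C_{T,\chi}\,e^{-3j^2T/4}/\xi^{\pm}$; note the gain is the factor $1/\xi^{\pm}$, uniform in $j$ (not the $1/j$ you wrote --- it is this uniform factor, i.e.\ the $D_T$-norm, that matters). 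Hence \cref{thm:moments} yields $\|\varphi^{\pm}\|_{H^m}\le C_T/\xi^{\pm}$, and since $K_\Theta\in L^1$ the cross term and $Q(\varphi^{\pm})$ are $O(C_T/\xi^{\pm})$, whereas the main term is bounded below by $c\,T\,(\xi^{\pm})^{-2s}$; because $2s<1$, taking the plateau (hence $\xi^{\pm}$) far enough out secures the sign --- this is the paper's ``$\omega^{-2s}\gg\omega^{-1}$'' step. Two smaller remarks: tuning the amplitude of $\chi$ cannot improve the ratio $|Q(\varphi^{\pm})|/|Q(\psi^{\pm})|$, since both sides scale quadratically in the amplitude, so the frequency is the only effective knob; and your choice of a smooth envelope $\chi\in C^\infty_c(0,T/4)$ is a genuine (minor) simplification over the paper, which uses the sharp cutoff $\sin(\omega t)\mathbf{1}_{[0,T/4]}$ and therefore needs the separate density-and-continuity argument of \cref{sec:N=1.regular} to upgrade the controls to $H^m_0(0,T)$.
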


The proof of \cref{thm:sstlc-quad} with the nonlinearity $\Gamma_\Theta$ defined in \eqref{magic} is now reduced to the proof of \cref{prop:elementary.control.N=1}. The fact that the proposition is only stated for $T \leq 1$ is not restrictive since we can always achieve null controllability in time $1$ then use null controls on the remaining time interval if $T > 1$.

We divide the proof of \cref{prop:elementary.control.N=1} in the following steps. First, we prove in \cref{sec:v.omega} that, for any $T>0$, we can find controls $v^\pm \in L^2(0,T)$ such that the associated solution to system~\eqref{eq:z} (for the nonlinearity $\Gamma_\Theta$) with $z_0 = \pm \varphi_0$ satisfies $\langle z(T),\varphi_0 \rangle = 0$, without requiring that $\langle z(T),\varphi_j\rangle=0$ for $j > 0$. Then, we prove in \cref{sec:retranche} that we can adapt the controls $v^\pm$ to also ensure these conditions. Last, we prove in \cref{sec:N=1.regular} that we can regularize these controls while preserving the result.

\subsection{Construction of rough elementary controls}
\label{sec:v.omega}

We prove that, for any $T > 0$, we can find controls $v^\pm \in L^2(0,T)$ such that the associated solution to system~\eqref{eq:z} with $z_0 = \pm \varphi_0$ satisfies $\langle z(T),\varphi_0 \rangle = 0$, without requiring that $\langle z(T),\varphi_j\rangle=0$ for $j > 0$. The heuristic behind this paragraph is that the Fourier transform of the kernel behaves (in some appropriate weak sense) as
\begin{equation} \label{khat.flou}
 \widehat{K}_\Theta(\xi) \propto \frac{\Theta(\ln\xi^{\frac12})}{|\xi|^{2s}}
\end{equation}
and thus, choosing controls whose Fourier transform are roughly supported near some $\omega_\pm$ such that $\Theta(\frac12 \ln \omega_\pm) = \pm 1$ will allow to achieve both signs in the quadratic form \eqref{z.phi0}.

\bigskip

For $\omega, \lambda \in \R$ and $\tau \geq 0$, we therefore define elementary oscillating controls $v_{\omega,\tau,\lambda} \in L^\infty(\R)$ as
\begin{equation} \label{def:v.omega.lambda}
 v_{\omega,\tau,\lambda}(t) := \sin (\omega t) \mathbf{1}_{[0,\tau]}(t) e^{-\lambda t}
 .
\end{equation}
The last tuning parameter $\lambda$ will only be necessary in \cref{sec:infini}. Each elementary oscillating control $v_{\omega,\tau,\lambda}$ oscillates at the frequency $\omega$. Thus, their Fourier transforms are mostly supported near $\xi = \pm \omega$. More precisely, they satisfy the following properties.

\begin{lemma}
 \label{lemma:v.TF}
 For every $\omega, \lambda \in \R$, $\tau \geq 0$ and $\xi \in \R$,
 \begin{align}
  \label{v.hat}
  \widehat{v}_{\omega,\tau,\lambda}(\xi) 
  & =
  e^{-\frac\tau2(\lambda+i\xi)} \frac{\tau}{2i} \left( 
  e^{i\frac\tau2\omega} \sinc \left(\frac\tau2(
  \xi-\omega-i\lambda)\right)
  -
  e^{-i\frac\tau2\omega} \sinc \left(\frac\tau2(
\xi+\omega-i\lambda)\right)
  \right),
  \\
  \label{v.loc}
  | \widehat{v}_{\omega,\tau,\lambda}(\xi) | 
  & \leq 4 \tau(1+e^{-\lambda \tau}) \left( \langle \tau(\xi - \omega) \rangle^{-1} + \langle \tau(\xi + \omega) \rangle^{-1} \right)
  .
 \end{align}
\end{lemma}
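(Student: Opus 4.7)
Both identities reduce to straightforward computation, since $v_{\omega,\tau,\lambda}$ is an explicit integrable function. The plan is to compute the Fourier integral in closed form to obtain \eqref{v.hat}, then to combine two elementary bounds on the resulting expression to obtain \eqref{v.loc}.

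For \eqref{v.hat}, I would write $\sin(\omega t) = (e^{i\omega t} - e^{-i\omega t})/(2i)$ in the defining integral
\begin{equation*}
\widehat{v}_{\omega,\tau,\lambda}(\xi) = \int_0^\tau \sin(\omega t)\, e^{-\lambda t} e^{-i\xi t}\,\mathrm{d}t,
\end{equation*}
which reduces the computation to the two exponential integrals $J_\pm := \int_0^\tau e^{-(\lambda+i(\xi\pm\omega))t}\,\mathrm{d}t = (1-e^{-a_\pm\tau})/a_\pm$, with $a_\pm := \lambda + i(\xi\pm\omega)$. Factorizing $(1-e^{-a\tau})/a = e^{-a\tau/2}\cdot 2\sinh(a\tau/2)/a$, and using the identity $\sinh(w) = -i\sin(iw)$ (valid for complex $w$), one recognizes $J_\pm = \tau\, e^{-(\lambda+i\xi)\tau/2}\, e^{\mp i\omega\tau/2}\sinc\!\bigl(\tfrac{\tau}{2}(\xi\pm\omega-i\lambda)\bigr)$. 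Plugging this into the identity $\widehat{v}_{\omega,\tau,\lambda} = (J_- - J_+)/(2i)$ yields the claimed formula \eqref{v.hat}. The degenerate cases (where $a_\pm = 0$ or $\tau = 0$) cause no issue since they can be handled as limits and both sides of \eqref{v.hat} remain continuous in the parameters.

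For \eqref{v.loc}, I would bound each $|J_\pm|$ in two complementary ways. Passing absolute values inside the integral gives $|J_\pm| \leq \int_0^\tau e^{-\lambda t}\,\mathrm{d}t$, which is at most $\tau$ when $\lambda\geq 0$ and at most $\tau e^{-\lambda\tau}$ when $\lambda<0$; in either case $|J_\pm|\leq \tau(1+e^{-\lambda\tau})$. On the other hand, the explicit evaluation yields $|J_\pm|\leq (1+e^{-\lambda\tau})/|\xi\pm\omega|$, using $|a_\pm|\geq |\xi\pm\omega|$ and $|1-e^{-a_\pm\tau}|\leq 1+e^{-\lambda\tau}$. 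Combining these two estimates through the elementary inequality $\min(1,1/|y|)\leq 2/\langle y\rangle$ applied to $y = \tau(\xi\pm\omega)$ gives $|J_\pm|\leq 2\tau(1+e^{-\lambda\tau})\langle \tau(\xi\pm\omega)\rangle^{-1}$, and the triangle inequality $|\widehat{v}|\leq (|J_-|+|J_+|)/2$ closes the proof well within the stated constant $4$.

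There is no genuine obstacle in this lemma: it is purely computational. The only care needed is bookkeeping of the signs and of the distinction between $\lambda\geq 0$ and $\lambda<0$ in the crude bound. One should also note that the bound \eqref{v.loc} involves only the real-part shift $\langle\tau(\xi\pm\omega)\rangle$, which is natural because the prefactor $e^{-(\lambda+i\xi)\tau/2}$ appearing in \eqref{v.hat} already absorbs the exponential growth of $\sinc$ coming from the imaginary shift $-i\lambda$ in its argument; this explains why the estimate holds uniformly in $\lambda \in \R$ (not only for $\lambda\geq 0$).
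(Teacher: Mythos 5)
Your proposal is correct and follows essentially the same route as the paper: an explicit evaluation of the two exponential integrals for \eqref{v.hat}, and for \eqref{v.loc} the combination of a crude size-$\tau$ bound with a $|\xi\pm\omega|^{-1}$ decay bound via $\min(1,|y|^{-1})\leq 2\langle y\rangle^{-1}$. The only (cosmetic) difference is that you bound the raw integrals $J_\pm$ directly, whereas the paper bounds the $\sinc$ expression from \eqref{v.hat} by expanding $\sin$ of the complex argument with the addition formula; your bookkeeping is slightly cleaner and lands comfortably within the stated constant.
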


\begin{proof}
 Formula \eqref{v.hat} is a direct consequence of an explicit computation of the Fourier transform using the normalization \eqref{eq:fourier} and the definition \eqref{def:v.omega.lambda} of $v_{\tau,\omega,\lambda}$. By symmetry, we only prove that the first term of \eqref{v.hat} can be dominated by the first term of \eqref{v.loc}. Expanding the cardinal sine,
 \begin{equation}
  \begin{split}
   \Big| e^{-\frac\tau2(\lambda+i\xi)} \frac\tau2 e^{i\frac\tau2\omega} & \sinc \left(\frac\tau2(\lambda+i(\xi-\omega))\right) \Big| 
   \\ & = e^{-\frac{\lambda\tau}2} \frac{\left| \sin (-i\frac{\lambda\tau}2) \cos ((\xi-\omega)\frac\tau2) + \sin((\xi-\omega)\frac\tau2)\cos(-i\frac{\lambda\tau}2) \right|}{(\lambda^2+|\xi-\omega|^2)^{\frac12}} 
   \\ & \leq e^{-\frac{\lambda\tau}2} \frac{\left|\sin (-i\frac{\lambda\tau}2)\right|}{(\lambda^2+|\xi-\omega|^2)^{\frac12}} 
   + e^{-\frac{\lambda\tau}2} \frac{\left| \sin((\xi-\omega)\frac\tau2)\cos(-i\frac{\lambda\tau}2) \right|}{|\xi-\omega|} 
   \\ & = \frac{\left|1-e^{-\lambda\tau}\right|}{2(\lambda^2+|\xi-\omega|^2)^{\frac12}} 
   + (1+e^{-\lambda\tau}) \frac{|\sin((\xi-\omega)\frac\tau 2)|}{2|\xi-\omega|} 
   .
  \end{split}
 \end{equation}
 The first term can be bounded both by $\frac\tau2(1+e^{-\lambda\tau}) |\tau(\xi-\omega)|^{-1}$ and by $\frac\tau2(1+e^{-\lambda\tau})$. Moreover, for $x \in \R$, $| \min (1, |x|^{-1}) | \leq 2 \langle x \rangle^{-1}$ and $|\sinc (x/2)| \leq 3 \langle x \rangle^{-1}$. Thus,
 \begin{equation}
  \Big| e^{-\frac\tau2(\lambda+i\xi)} \frac\tau2 e^{i\frac\tau2\omega} \sinc \left(\frac\tau2(\lambda+i(\xi-\omega))\right) \Big| 
  \leq 
  \frac\tau2(1+e^{-\lambda\tau}) \left( 2 
  + \frac32 \right) \langle \tau(\xi-\omega) \rangle^{-1},
 \end{equation}
 which concludes the proof of \eqref{v.loc}.
\end{proof}

\begin{lemma} \label{v.frac}
 Let $\theta \in \left[0,\frac12\right)$. There exists $C_\theta > 0$ such that, for $\omega, \lambda\in \R$ and $\tau \in[0,1]$,
 \begin{equation} \label{v.frac.lambda}
  \left| 
  \| v_{\omega,\tau,\lambda} \|^2_{H^{-\theta}(\R)} - 
  \frac{1-e^{-2\lambda\tau}}{4\lambda} \frac{\omega^2}{\omega^2+\lambda^2}
  \langle\omega\rangle^{-2\theta} \right|
  \leq C_\theta (1+e^{-\lambda\tau})^2 \langle \omega \rangle^{-1},
 \end{equation}
 which, in the particular case of $\lambda=0$, should be interpreted as
 \begin{equation} \label{v.frac.0}
  \left| 
  \| v_{\omega,\tau,0} \|^2_{H^{-\theta}(\R)} - \frac\tau2
  \langle\omega\rangle^{-2\theta} \right|
  \leq 4 C_\theta \langle \omega \rangle^{-1}.
 \end{equation}
\end{lemma}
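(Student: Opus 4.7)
The plan is to use Plancherel, exploit the localization of $\widehat v_{\omega,\tau,\lambda}$ around $\xi=\pm\omega$ supplied by Lemma~\ref{lemma:v.TF}, and compute $\|v_{\omega,\tau,\lambda}\|_{L^2(\R)}^2$ separately in closed form. Concretely, I decompose
\begin{equation*}
 \|v_{\omega,\tau,\lambda}\|_{H^{-\theta}(\R)}^2 = \langle\omega\rangle^{-2\theta}\|v_{\omega,\tau,\lambda}\|_{L^2(\R)}^2 + E,
\end{equation*}
where $E := \frac{1}{2\pi}\int_\R |\widehat v_{\omega,\tau,\lambda}(\xi)|^2 \bigl(\langle\xi\rangle^{-2\theta}-\langle\omega\rangle^{-2\theta}\bigr)\, d\xi$. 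The first term carries the leading behavior, and $E$ is a remainder to be absorbed.

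For the first term I compute $\|v_{\omega,\tau,\lambda}\|_{L^2(\R)}^2=\int_0^\tau \sin^2(\omega t)e^{-2\lambda t}\,dt$ explicitly. Writing $\sin^2(\omega t)=(1-\cos(2\omega t))/2$, evaluating $\int_0^\tau \cos(2\omega t)e^{-2\lambda t}\,dt$ as the real part of $(e^{(2i\omega-2\lambda)\tau}-1)/(2i\omega-2\lambda)$, and collecting the $\lambda$-piece into the mean part via $1-\lambda^2/(\omega^2+\lambda^2)=\omega^2/(\omega^2+\lambda^2)$, I obtain
\begin{equation*}
 \|v_{\omega,\tau,\lambda}\|_{L^2(\R)}^2 = \frac{1-e^{-2\lambda\tau}}{4\lambda}\cdot\frac{\omega^2}{\omega^2+\lambda^2} + R_1,
\end{equation*}
where $R_1$ is a linear combination of $\omega e^{-2\lambda\tau}\sin(2\omega\tau)/(\omega^2+\lambda^2)$ and $\lambda e^{-2\lambda\tau}(1-\cos(2\omega\tau))/(\omega^2+\lambda^2)$. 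A dichotomy $|\omega|\gtreqless|\lambda|$ gives $|R_1|\le C(1+e^{-\lambda\tau})^2\langle\omega\rangle^{-1}$; the prefactor $(1+e^{-\lambda\tau})^2$ absorbs both the regime $\lambda<0$ and crude bounds when $|\omega|\le 1$.

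The main step is the bound $|E|\le C_\theta(1+e^{-\lambda\tau})^2\langle\omega\rangle^{-1}$. By the pointwise estimate \eqref{v.loc}, $|\widehat v_{\omega,\tau,\lambda}(\xi)|^2 \le C\tau^2(1+e^{-\lambda\tau})^2\bigl(\langle\tau(\xi-\omega)\rangle^{-2}+\langle\tau(\xi+\omega)\rangle^{-2}\bigr)$, and by symmetry I only treat the first bracket. After the change of variable $\eta=\tau(\xi-\omega)$, I split $\R$ into a near zone $|\eta|\le\tau|\omega|/2$ and a far zone. In the near zone, the mean value theorem applied to $x\mapsto\langle x\rangle^{-2\theta}$ yields $|\langle\xi\rangle^{-2\theta}-\langle\omega\rangle^{-2\theta}|\le C_\theta|\xi-\omega|\langle\omega\rangle^{-2\theta-1}$, producing a contribution of order $\langle\omega\rangle^{-2\theta-1}\log(1+\tau|\omega|)=O(\langle\omega\rangle^{-1})$, where the constraint $\theta<1/2$ is used decisively to absorb the logarithm for $|\omega|\ge 2$. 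In the far zone I crudely estimate the weight discrepancy by $2$ and use the tail bound $\int_{|\eta|\ge\tau|\omega|/2}\langle\eta\rangle^{-2}\,d\eta\le C/(1+\tau|\omega|)$, so that the remaining factor $\tau/(1+\tau|\omega|)$ is controlled by $\langle\omega\rangle^{-1}$ (distinguishing $\tau|\omega|\gtreqless 1$ and using $\tau\le 1$); the range $|\omega|\le 2$ is handled by crude bounds on every term. Finally, \eqref{v.frac.0} follows from \eqref{v.frac.lambda} as $\lambda\to 0$, since the leading factor tends to $\tau/2$ and $(1+e^{-\lambda\tau})^2\to 4$, yielding $4C_\theta\langle\omega\rangle^{-1}$. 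The most delicate point is this $E$-bound, where the assumption $\theta<1/2$ enters critically to absorb the near-zone logarithm.
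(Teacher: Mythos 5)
Your proof follows essentially the same route as the paper's: the same decomposition of $\|v_{\omega,\tau,\lambda}\|_{H^{-\theta}}^2$ into $\langle\omega\rangle^{-2\theta}\|v_{\omega,\tau,\lambda}\|_{L^2}^2$ plus a weight-discrepancy remainder $E$, the same explicit computation of the $L^2$ norm, and the same use of the localization estimate \eqref{v.loc} to control $E$. The one substantive deviation is in the near zone of $E$: you apply the plain mean value theorem and then absorb the resulting factor $\log(1+\tau|\omega|)$, whereas the paper interpolates the two bounds on $|\langle\xi\rangle^{-2\theta}-\langle\omega\rangle^{-2\theta}|$ into $C^\sigma_1(\theta)\,|\xi-\omega|^\sigma\langle\omega\rangle^{-\sigma(2\theta+1)}$ and chooses $\sigma=1/(2\theta+1)$, which yields exactly the power $\langle\omega\rangle^{-1}$ with no logarithm. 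Your variant works, but your justification is misstated: the absorption $\langle\omega\rangle^{-2\theta-1}\log(1+\tau|\omega|)=O(\langle\omega\rangle^{-1})$ requires $\theta>0$, not $\theta<\tfrac12$ (the hypothesis $\theta<\tfrac12$ plays no role anywhere in this lemma). At the endpoint $\theta=0$ your near-zone bound degenerates to $\langle\omega\rangle^{-1}\log(1+\tau|\omega|)$, which is not $O(\langle\omega\rangle^{-1})$; you must instead observe that $E\equiv 0$ there since the weight difference vanishes identically. With that one-line repair the argument is complete.
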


\begin{proof}
 The proof relies on estimating the difference between $\|v_{\omega,\tau,\lambda}\|^2_{H^{-\theta}(\R)}$ and $\langle \omega \rangle^{-2\theta} \|v_{\omega,\tau,\lambda}\|^2_{L^2}$. \textbf{First}, we compute explicitly the $L^2$ norm
 \begin{equation}
  \begin{split}
  \| v_{\omega,\tau,\lambda} \|_{L^2}^2 
  & = \int_0^{\tau} e^{-2\lambda t} \sin^2 (\omega t) \dd t
  \\ & = \frac{1-e^{-2\lambda\tau}}{4\lambda} \frac{\omega^2}{\omega^2+\lambda^2}
  + e^{-2\lambda\tau} \frac{\lambda(\cos(2\omega\tau)-1) - \omega\sin(2\omega\tau)}{4(\lambda^2+\omega^2)}
  .
  \end{split}
 \end{equation}
 Moreover, for $\tau \in [0,1]$ and $\omega,\lambda\in\R$,
 \begin{equation}
  \frac{|\lambda(\cos(2\omega\tau)-1)|}{4(\lambda^2+\omega^2)} +
  \frac{|\omega\sin(2\omega\tau)|}{4(\lambda^2+\omega^2)}
  \leq 
  \frac{|\cos(2\omega\tau)-1|}{4\omega} + 
  \frac{|\sin(2\omega\tau)|}{4\omega}
  \leq \langle\omega\rangle^{-1}
  .
 \end{equation}
 Hence,
 \begin{equation}
  \left| \| v_{\omega,\tau,\lambda} \|_{L^2}^2 - \frac{1-e^{-2\lambda\tau}}{4\lambda} \frac{\omega^2}{\omega^2+\lambda^2} \right|
  \leq (1+e^{-\lambda\tau})^2 \langle\omega\rangle^{-1}.
 \end{equation}
 \textbf{Second}, for $\theta>0$, we prove the existence of a constant $C_\theta > 0$ such that
 \begin{equation}
  \left| \| v_{\omega,\tau,\lambda} \|^2_{H^{-\theta}(\R)} - \langle\omega\rangle^{-2\theta} \| v_{\omega,\tau,\lambda} \|^2_{L^2} \right|
  \leq C_\theta (1+e^{-\lambda\tau})^2 \langle \omega \rangle^{-1}.
 \end{equation}
 On the one hand, there exists $C_1(\theta)>0$ such that, for every $\xi \in \left[ \frac{\omega}{2} , \frac{3\omega}{2} \right]$,
 \begin{equation}
  \left| \langle\xi\rangle^{-2\theta} - \langle\omega\rangle^{-2\theta} \right|
  \leq |\xi-\omega| \max\left\{ {2 \theta \eta }\langle\eta\rangle^{-2\theta-2} ; \eta \in [\xi,\omega] \right\}
  \leq {C_1(\theta)|\omega-\xi|} \langle\omega\rangle^{-2\theta-1}.
 \end{equation}
 On the other hand, this difference is also bounded by $1$. Hence, for any $\sigma \in (0,1)$,
 \begin{equation}
  \left| \langle\xi\rangle^{-2\theta} - \langle\omega\rangle^{-2\theta} \right|
  \leq 
  {C_1^\sigma(\theta)|\omega-\xi|^\sigma}\langle\omega\rangle^{-2\theta\sigma-\sigma}.
 \end{equation}
 Thanks to estimate \eqref{v.loc} from \cref{lemma:v.TF} and by symmetry, we only need to estimate the integral
 \begin{equation}
  \begin{split}
  \int_\R \tau^2 \langle \tau(\xi-\omega) \rangle^{-2} 
  & \left| \langle\xi\rangle^{-2\theta} - \langle\omega\rangle^{-2\theta} \right| \dd \xi 
  \\ & \leq  
 \int_{\omega/2}^{3\omega/2} 
\frac{\tau^2  |\omega-\xi|^\sigma }{\langle (\omega-\xi)\tau \rangle^{2}} 
\frac{C_1^\sigma(\theta)}{\langle\omega\rangle^{2\theta\sigma+\sigma}}\dd\xi 
+  \int_{|\xi-\omega|>\frac{\omega}{2}} 
\frac{\tau^2}{\langle (\omega-\xi)\tau \rangle^{2}} \dd\xi
\\
& \leq {2^\sigma \tau^{1-\sigma} C_1^\sigma(\theta)}{\langle\omega\rangle^{-2\theta\sigma-\sigma}}
\int_{\R_+} |\eta|^\sigma \langle \eta \rangle^{-2} \dd\eta 
+ 2  \int_{|x|>\frac{\omega}{2}} \frac{\dd x}{x^2}
.
  \end{split}
 \end{equation}
 This proves the claimed estimates by using $\sigma = 1/(2\theta+1) < 1$, which gives both 
 the correct power for $\langle \omega \rangle$ and a positive power $1-\sigma$ of $\tau \in [0,1]$, which is thus bounded.
\end{proof}

We can now justify the announced behavior \eqref{khat.flou} rigorously. 
In the following proposition, we use as a shorthand the notation, for $\omega, \tau \geq 0$,
\begin{equation} \label{v.omega.tau}
 v_{\omega,\tau} := v_{\omega,\tau,0}.
\end{equation}

\begin{proposition} \label{danslemille_new}
 There exist $C,\beta> 0$ such that, for any $\Theta \in W^{1,\infty}(\R)$, $0\leq\tau\leq T\leq 1$ and $\omega \geq 1$, if $K_\Theta$ denotes the kernel associated to $\Theta$ by \eqref{def:K} and \eqref{cjtheta}, then,
 \begin{equation}
  \left| \int_0^T \int_0^T v_{\omega,\tau}(t) v_{\omega,\tau}(t') K_\Theta(t-t') \dd t \dd t'
- 
\frac{\tau}{\omega^{2s}} G_{s,\Theta,\omega} \right|
\leq \frac{C}{\omega^{2s+2\beta}}\|\Theta\|_{W^{1,\infty}},
 \end{equation}
 where $v_{\omega,\tau}$ is defined in \eqref{v.omega.tau} and we set
 \begin{equation} \label{gsto}
  G_{s,\Theta,\omega} := \int_0^{+\infty} \frac{y^{3-4s}}{1+y^4} \Theta \left(\ln \left(y \omega^{\frac 12}\right) \right) \dd y.
 \end{equation}
\end{proposition}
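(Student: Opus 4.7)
The plan is to work on the Fourier side, using the near-concentration of $\widehat{v}_{\omega,\tau}$ at $\xi = \pm\omega$ together with the slow variation of $\widehat{K}_\Theta$ at scale $\omega$. As in Step 1 of the proof of \cref{Prop:Hs_0<s<1/2}, Plancherel's theorem combined with the evenness of $K_\Theta$ yields
\begin{equation}
\int_0^T\int_0^T v_{\omega,\tau}(t) v_{\omega,\tau}(t') K_\Theta(t-t') \dd t \dd t' = \frac{1}{2\pi} \int_\R |\widehat{v}_{\omega,\tau}(\xi)|^2 \widehat{K}_\Theta(\xi) \dd\xi.
\end{equation}
From \eqref{khat} and \eqref{cjtheta}, $\tfrac{1}{2} \widehat{K}_\Theta(\xi) = \sum_{j\ge 1} j^{3-4s}\Theta(\ln j)/(j^4+\xi^2)$. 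Applying \cref{lm:sum.int}, estimate \eqref{eq:sum.int.2}, followed by the change of variable $t = y|\xi|^{1/2}$, produces $\beta_0 > 0$ and $C > 0$ such that, for all $|\xi| \ge 1$,
\begin{equation} \label{eq:plan-K-expand}
\widehat{K}_\Theta(\xi) = 2 F(\xi) + R(\xi), \qquad F(\xi) := \frac{G_{s,\Theta,|\xi|}}{|\xi|^{2s}}, \qquad |R(\xi)| \leq C \|\Theta\|_{W^{1,\infty}} |\xi|^{-2s-2\beta_0}.
\end{equation}
The contribution of $R$ to the above Plancherel integral, together with the contribution of the region $|\xi|\leq 1$ (where \eqref{v.loc} gives $|\widehat{v}_{\omega,\tau}(\xi)|^2 \leq C/\omega^2$ while $|\widehat{K}_\Theta|\leq C\|\Theta\|_\infty$ by absolute convergence of the defining series), is bounded by $C\omega^{-2s-2\beta}\|\Theta\|_{W^{1,\infty}}$ as soon as $\beta \leq \min(\beta_0,1-s)$, and hence fits into the announced error.

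The remaining task reduces to establishing
\begin{equation} \label{eq:plan-target}
\frac{1}{\pi}\int_\R |\widehat{v}_{\omega,\tau}(\xi)|^2 F(\xi) \dd\xi = \tau F(\omega) + O\bigl(\omega^{-2s-2\beta}\|\Theta\|_{W^{1,\infty}}\bigr).
\end{equation}
Using formula \eqref{v.hat} with $\lambda = 0$, I would expand
\begin{equation}
|\widehat{v}_{\omega,\tau}(\xi)|^2 = \frac{\tau^2}{4}\bigl[ \sinc^2(\tfrac{\tau}{2}(\xi-\omega)) + \sinc^2(\tfrac{\tau}{2}(\xi+\omega)) - 2\cos(\omega\tau)\,\sinc(\tfrac{\tau}{2}(\xi-\omega))\,\sinc(\tfrac{\tau}{2}(\xi+\omega)) \bigr].
\end{equation}
Since $F$ is even, the two $\sinc^2$-contributions are equal. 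In the one centered at $\omega$, the substitution $\eta = \tfrac{\tau}{2}(\xi-\omega)$ together with $\int_\R \sinc^2 = \pi$ extracts the leading term $\tau F(\omega)$, leaving the residual $\frac{\tau}{\pi}\int_\R \sinc^2(\eta)[F(\omega+2\eta/\tau)-F(\omega)]\dd\eta$. A direct computation from the definitions of $F$ and $G_{s,\Theta,\cdot}$ yields the Lipschitz-type bound
\begin{equation} \label{eq:plan-F-lip}
|F(\omega + h) - F(\omega)| \leq C \|\Theta\|_{W^{1,\infty}}\, \omega^{-2s-1} |h|, \qquad |h| \leq \omega/2,
\end{equation}
obtained by Taylor-expanding $|\omega+h|^{-2s}$ and controlling $|G_{s,\Theta,|\omega+h|}-G_{s,\Theta,\omega}|$ via $|\Theta(a+\tfrac{1}{2}\ln(1+h/\omega))-\Theta(a)| \leq \|\Theta'\|_\infty |h|/\omega$ inside the defining integral; outside this regime I would use the uniform bound $|F(\xi)| \leq C\|\Theta\|_\infty|\xi|^{-2s}$. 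Splitting the $\eta$-integral at $|\eta| = \tau\omega/4$ and using $\int_{|\eta|\leq R}|\eta|\sinc^2(\eta)\dd\eta = O(1+\ln R)$ and $\int_{|\eta|\geq R}\sinc^2(\eta)\dd\eta = O(1/R)$ yields a residual of order $O(\omega^{-2s-1}\log(2+\omega)\|\Theta\|_{W^{1,\infty}})$.

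The cross term requires a bit more care: the key observation is that when $\xi$ lies in a neighborhood of $\omega$, the argument $\tfrac{\tau}{2}(\xi+\omega)$ is close to $\tau\omega$, so $|\sinc(\tfrac{\tau}{2}(\xi+\omega))| \leq C/(\tau\omega)$ there, providing an extra factor of $1/(\tau\omega)$ relative to the diagonal terms. Together with $|F(\xi)| \leq C\|\Theta\|_\infty \omega^{-2s}$ in this neighborhood and $\int|\sinc(\tfrac{\tau}{2}(\xi-\omega))|\dd\xi = O(\log(\tau\omega)/\tau)$, this contributes $O(\omega^{-2s-1}\log\omega\,\|\Theta\|_\infty)$; the symmetric neighborhood of $-\omega$ is handled identically, and in the complementary region $\min(|\xi\pm\omega|)\geq\omega/2$ one has $|\sinc\cdot\sinc|\leq C/(\tau\omega)^2$ and the contribution is $O(\omega^{-\min(2,2s+1)}\|\Theta\|_\infty)$. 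Choosing any $\beta \in (0,\min(\beta_0,1-s,1/2))$ absorbs the logarithmic factors into a polynomial decay $\omega^{-2s-2\beta}$ and proves \eqref{eq:plan-target}. The main obstacle will be to preserve a strictly positive polynomial rate $\beta > 0$ through the replacement of $F(\xi)$ by the constant $F(\omega)$ on the main lobes; this is precisely where the Lipschitz estimate \eqref{eq:plan-F-lip}, quantified in terms of the full $W^{1,\infty}$-norm of $\Theta$ rather than merely its $L^\infty$-norm, becomes indispensable.
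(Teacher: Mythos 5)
Your argument is correct, and it reaches the same conclusion by a genuinely different organization of the key step. Both proofs begin identically: Plancherel converts the bilinear form into $\frac{1}{2\pi}\int|\widehat{v}_{\omega,\tau}|^2\widehat{K}_\Theta$, and \cref{lm:sum.int} replaces $\widehat{K}_\Theta(\xi)$ by $2|\xi|^{-2s}G_{s,\Theta,|\xi|}$ up to an $O(|\xi|^{-2s-2\beta_0})\|\Theta\|_{W^{1,\infty}}$ remainder. The paper then splits the remaining $\xi$-integral into $I_1+I_2+I_3$: in $I_1$ only the argument of $\Theta$ is frozen at $\omega$, so the $\langle\xi\rangle^{-2s}$ weight survives and the term is recognized as $2\|v_{\omega,\tau}\|_{H^{-s}}^2G_{s,\Theta,\omega}$, evaluated by \cref{v.frac}; $I_2$ is handled by the mean value inequality on $\Theta\circ\ln$ in the window $|\xi-\omega|\le\omega^\sigma$, and $I_3$ by the off-resonance decay of $\widehat{v}_{\omega,\tau}$. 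You instead expand $|\widehat{v}_{\omega,\tau}|^2$ explicitly into two $\sinc^2$ lobes plus a cross term, pull out $\tau F(\omega)$ from $\int\sinc^2=\pi$, and absorb the deviation through a Lipschitz bound on the full symbol $F(\xi)=|\xi|^{-2s}G_{s,\Theta,|\xi|}$, which packages in one stroke the variation of both the weight and the $\Theta$-factor (and, as you correctly emphasize, genuinely requires $\|\Theta'\|_{L^\infty}$ and not only $\|\Theta\|_{L^\infty}$). Your route bypasses \cref{v.frac} for the main term at the price of a hands-on treatment of the cross term, which you dispose of correctly via the extra factor $1/(\tau\omega)$ gained from $\sinc(\frac{\tau}{2}(\xi+\omega))$ near $\xi=\omega$. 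Two points to tighten in a written version: the bound on the contribution of the remainder $R$ should be justified by the localization of $\widehat{v}_{\omega,\tau}$ (for instance \cref{v.frac} with $\theta=s+\beta_0<\frac12$, achievable after shrinking $\beta_0$ since $s<\frac12$), and the passage from the region $|\xi|\ge1$ back to the whole line in your target identity uses that the singularity $|\xi|^{-2s}$ of $F$ at the origin is integrable ($2s<1$) together with $|\widehat{v}_{\omega,\tau}(\xi)|\lesssim\omega^{-1}$ for $|\xi|\le1$; both are routine.
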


\begin{proof} 
 Working as in the proof of \cref{Prop:Hs_0<s<1/2} (Step 1), we see that $K_{\Theta} \in L^1(\R)$ and, as in~\eqref{eq:uuk.hat} there holds,
 \begin{equation}
  \int_0^T \int_0^T v_{\omega,\tau}(t) v_{\omega,\tau}(t') K_\Theta(t-t') \dd t \dd t'
  = \frac{1}{2\pi} \int_\R |\widehat{v}_{\omega,\tau}(\xi)|^2 \widehat{K}_{\Theta}(\xi)\dd\xi.
 \end{equation}
 The Fourier transform $\widehat{K}_\Theta$ was already computed in \eqref{khat}.
 On the one hand, by \eqref{eq:sum.int.2} from \cref{lm:sum.int}, there exists $C_1,\beta>0$ independent of $\Theta$ such that, for $|\xi| \geq 1$,
 \begin{equation}
  \left| \sum_{j=1}^\infty \frac{2 j^{3-4s}\, \Theta(\ln(j))}{j^4+\xi^2} -
  \frac{2}{|\xi|^{2s}} \int_0^\infty \frac{y^{3-4s}}{1+y^4}\, 
  \Theta\left(\ln(y|\xi|^{\frac12})\right) \dd y \right|
  \leq \frac{C_1 \|\Theta\|_{W^{1,\infty}}}{|\xi|^{2s+2\beta}}.
 \end{equation}
 On the other hand, for every $\xi \in \R$,
 \begin{equation}
  | \widehat{K}_\Theta(\xi) |
  =
  \left| \sum_{j\in\N^*} \frac{2j^{3-4s}\Theta(\ln(j))}{j^4+\xi^2} \right|
  \leq 
  2  \|\Theta\|_{L^\infty} \sum_{j\in\N^*} j^{-1-4s}.
 \end{equation}
 Thus, there exists $C_2>0$ independent of $\Theta$, such that, for every $\xi \in (0,\infty)$
 \begin{equation}
  \left| \widehat{K}_{\Theta}(\xi) - 2 \langle\xi\rangle^{-2s} \int_0^{+\infty} \frac{y^{3-4s}}{1+y^4}\, 
\Theta\left(\ln(y|\xi|^{\frac12})\right) \dd y \right| \leq C_2\|\Theta\|_{W^{1,\infty}} \langle\xi\rangle^{-2s-2\beta}.
 \end{equation}
 Then, by \eqref{v.frac.0} from \cref{v.frac} with $\theta=s+\beta$, there exists $C_3>0$ independent of $\Theta$, such that
 \begin{equation}
   \Bigg| \int_\R |\widehat{v}_{\omega,\tau}(\xi)|^2 \widehat{K}_{\Theta}(\xi)\dd\xi
    - \int_\R
   \frac{2 |\widehat{v}_{\omega,\tau}(\xi)|^2 }{\langle\xi\rangle^{2s}} \int_0^{+\infty} \frac{y^{3-4s}}{1+y^4}
   \Theta\left(\ln(y|\xi|^{\frac12})\right) \dd y\dd\xi \Bigg| 
   \leq \frac{C_3 \|\Theta\|_{W^{1,\infty}}}{\langle\omega\rangle^{2s+2\beta}}.
 \end{equation}
 Let $\sigma \in (0,1)$.
 Let us focus on the main term, which we decompose, the integrand being even, as
 \begin{equation}
   \int_\R 
   \frac{|\widehat{v}_{\omega,\tau}(\xi)|^2 }{\langle\xi\rangle^{2s}} \int_0^{+\infty} \frac{y^{3-4s}}{1+y^4}
   \Theta\left(\ln(y|\xi|^{\frac12})\right) \dd y\dd\xi 
   = 2(I_1 + I_2 + I_3),
 \end{equation}
 where 
 \begin{align}
   \label{i1}
   I_1 & := \int_{\R_+} 
   \frac{|\widehat{v}_{\omega,\tau}(\xi)|^2 }{\langle\xi\rangle^{2s}} 
   \int_0^{+\infty} \frac{y^{3-4s}}{1+y^4}\, 
\Theta\left(\ln(y|\omega|^{\frac12})\right) \dd y\dd\xi,
  \\
  \label{i2}
  I_2 & := \int_{|\xi-\omega|\leq\omega^\sigma} 
  \frac{|\widehat{v}_{\omega,\tau}(\xi)|^2 }{\langle\xi\rangle^{2s}} 
  \int_{0}^{+\infty} \frac{y^{3-4s}}{1+y^4} \left( 
\Theta\left(\ln(y|\xi|^{\frac12})\right) 
- \Theta\left(\ln(y|\omega|^{\frac12})\right)\right) \dd y\dd\xi,
  \\
  \label{i3}
  I_3 & := \int_{\R_+ \cap |\xi-\omega|\geq \omega^{\sigma}} \frac{|\widehat{v}_{\omega,\tau}(\xi)|^2 }{\langle\xi\rangle^{2s}} 
  \int_0^{+\infty} \frac{y^{3-4s}}{1+y^4}\, 
\left( 
\Theta\left(\ln(y|\xi|^{\frac12})\right) 
- \Theta\left(\ln(y|\omega|^{\frac12})\right)\right) \dd y \dd \xi.
 \end{align}
 The first term $I_1$ gives the claimed asymptotic, while the others are bounded with high enough decaying powers of $\omega$.
 
 \bigskip \noindent \textbf{Estimate of $I_1$.} From \eqref{i1}, \eqref{def:normeH(-s)} and \eqref{gsto}, one infers that
 \begin{equation}
  \frac{4 I_1}{2\pi} = 2 \| v_{\omega,\tau} \|_{H^{-s}}^2 G_{s,\Theta,\omega}.
 \end{equation}
 Then, thanks to estimate \eqref{v.frac.0} from \cref{v.frac}, we conclude
 \begin{equation}
  \left| \frac{4 I_1}{2\pi} - \frac{\tau}{\omega^{2s}} G_{s,\Theta,\omega} \right| \leq C \omega^{-1}. 
 \end{equation}

 \bigskip
 \noindent \textbf{Estimate of $I_2$.} 
 From \eqref{i2} and the mean value inequality, 
 \begin{equation}
  \left| I_2 \right| 
  \leq C \gamma(s) \omega^{-2s} \| \widehat{v}_{\omega,\tau} \|_{L^2}^2 
  \| \Theta' \|_{L^\infty} \sup_{|\xi-\omega|\leq\omega^\sigma} |\ln (\xi/\omega)|
  \leq C \omega^{-2s-1+\sigma} \| \Theta \|_{W^{1,\infty}}.
 \end{equation}

 \bigskip 
 \noindent \textbf{Estimate of $I_3$.} First, from \eqref{i3}, one has
 \begin{equation}
     I_3 \leq 2 \gamma(s) \|\Theta\|_{L^\infty} \int_{\R_+ \cap |\xi-\omega|\geq \omega^{\sigma}} \frac{|\widehat{v}_{\omega,\tau}(\xi)|^2 }{\langle\xi\rangle^{2s}} \dd \xi.
 \end{equation}
 Thanks to estimate \eqref{v.loc} of \cref{lemma:v.TF} (for $\lambda = 0$) and using the inequality $\tau \langle \tau x \rangle^{-1} \leq C \langle x \rangle^{-1}$ uniformly for $\tau \in [0,T^\star]$,
 \begin{equation}
    \int_{\R_+\cap|\xi-\omega|\geq \omega^{\sigma}} \frac{|\widehat{v}_{\omega,\tau}(\xi)|^2 }{\langle\xi\rangle^{2s}} \dd \xi
    \leq 
    C \int_{\R_+\cap|\xi-\omega|\geq \omega^{\sigma}}
    \langle\xi\rangle^{-2s} \left( 
    \langle\xi-\omega\rangle^{-2} + 
    \langle\xi+\omega\rangle^{-2} \right) \dd \xi.
 \end{equation}
 For the second term, we use the crude estimate
 \begin{equation}
    \int_{\xi \geq 0}
    \langle\xi\rangle^{-2s} 
    \langle\xi+\omega\rangle^{-2} \dd \xi
    \leq C \omega^{-1} 
    \int_{\xi \geq 0}
    \langle\xi\rangle^{-1-2s} \dd \xi
    \leq C \omega^{-1}.
 \end{equation}
 For the first term, we split the into three parts
 \begin{align}
    \int_{0 \leq \xi \leq \frac \omega 2}
    \langle\xi\rangle^{-2s}
    \langle\xi-\omega\rangle^{-2} \dd \xi 
    & \leq C \int_{0}^{\frac \omega 2} \omega^{-2} \dd \xi
    \leq C \omega^{-1}, \\
    \int_{\frac \omega 2 \leq \xi \leq \omega - \omega^\sigma}
    \langle\xi\rangle^{-2s}
    \langle\xi-\omega\rangle^{-2} \dd \xi 
    & \leq C \omega^{-2s} \int_{x \geq \omega^{\sigma}} \frac{\dd x}{x^2}
    \leq C \omega^{-2s-\sigma}, \\
    \int_{\xi \geq \omega + \omega^\sigma}
    \langle\xi\rangle^{-2s}
    \langle\xi-\omega\rangle^{-2} \dd \xi 
    & \leq C \omega^{-2s} \int_{x \geq \omega^{\sigma}} \frac{\dd x}{x^2}
    \leq C \omega^{-2s-\sigma}.
 \end{align}
 These estimates conclude the proof of the proposition.
\end{proof}

\begin{rk}
 In this section, we only apply \cref{danslemille_new} for the function $\Theta$ constructed by Alg.~\ref{algo_nul}. Nevertheless, we state it as a result valid for any $\Theta \in W^{1,\infty}(\R)$ because we will use it in \cref{sec:danslemille} for other base functions.
\end{rk}

\begin{corollary} \label{danslemille5}
 There exist $C,\beta> 0$ such that, for the function $\Theta \in W^{1,\infty}(\R)$, defined by Algo.~\ref{algo_nul}, for every $0\leq\tau\leq T\leq 1$ and $\omega \geq 1$ such that $\ln(\sqrt{\omega})$ is in the middle of a plateau of length $2L$ on which $\Theta$ is constant, i.e.,
 \begin{equation} \label{xLomega}
  \forall x \in [\ln\sqrt{\omega}-L,\ln\sqrt{\omega}+L], \quad \Theta(x)=\Theta(\ln\sqrt{\omega}),
 \end{equation}
 if $K_\Theta$ denotes the kernel associated to $\Theta$ by \eqref{def:K} and \eqref{cjtheta}, then,
 \begin{equation}
  \left| \int_0^T \int_0^T v_{\omega,\tau}(t) v_{\omega,\tau}(t') K_\Theta(t-t') \dd t \dd t'
-
\frac{\gamma(s) \tau}{\omega^{2s}}  \Theta(\ln\sqrt{\omega}) \right|
\leq \left( \frac{2\epsilon\gamma(s) \tau}{\omega^{2s}} + \frac{C}{\omega^{2s+2\beta}} \right),
 \end{equation}
 where the constant $\gamma(s)$ is defined in \eqref{gamma} and $v_{\omega,\tau}$ is defined in \eqref{v.omega.tau}.
\end{corollary}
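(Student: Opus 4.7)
The plan is to deduce this corollary directly from \cref{danslemille_new} by computing the asymptotic constant $G_{s,\Theta,\omega}$ in \eqref{gsto} explicitly under the plateau assumption \eqref{xLomega}. Indeed, once $G_{s,\Theta,\omega}$ is known to be close to $\gamma(s)\Theta(\ln\sqrt{\omega})$, the claim is just a triangle inequality away.

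First, I will split the integral defining $G_{s,\Theta,\omega}$ at the boundaries of the plateau. The condition $\ln(y\sqrt{\omega}) \in [\ln\sqrt{\omega}-L,\ln\sqrt{\omega}+L]$ is equivalent to $y \in [e^{-L}, e^L]$, and on this range assumption \eqref{xLomega} gives $\Theta(\ln(y\sqrt{\omega}))=\Theta(\ln\sqrt{\omega})$. Hence
\begin{equation*}
G_{s,\Theta,\omega} = \Theta(\ln\sqrt{\omega}) \int_{e^{-L}}^{e^L} \frac{y^{3-4s}}{1+y^4}\dd y + \int_{\R_+\setminus[e^{-L},e^L]} \frac{y^{3-4s}}{1+y^4}\,\Theta(\ln(y\sqrt{\omega}))\dd y.
\end{equation*}

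Second, I will estimate both pieces using the specific choice of $L$ made in the construction. \cref{jeconverge}, applied with $\epsilon=\tfrac16$ as specified before Alg.~\ref{algo_nul}, shows that the mass of $y^{3-4s}/(1+y^4)$ outside $[e^{-L},e^L]$ is at most $\epsilon\gamma(s)$. Combined with $\|\Theta\|_{L^\infty}\leq 1$ (immediate from Alg.~\ref{algo_nul} and \eqref{chi.x}), the first term above differs from $\gamma(s)\Theta(\ln\sqrt{\omega})$ by at most $\epsilon\gamma(s)$, and the tail integral is itself bounded by $\epsilon\gamma(s)$. Summing these bounds yields $|G_{s,\Theta,\omega}-\gamma(s)\Theta(\ln\sqrt{\omega})|\leq 2\epsilon\gamma(s)$.

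Finally, I will multiply this estimate by $\tau/\omega^{2s}$ and combine it with the conclusion of \cref{danslemille_new} via the triangle inequality. Since $\Theta$ constructed by Alg.~\ref{algo_nul} satisfies $\|\Theta\|_{W^{1,\infty}}\leq 2$ (thanks to \eqref{theta.chi.x} and $\|\chi\|_{W^{1,\infty}}=2$), the factor $\|\Theta\|_{W^{1,\infty}}$ in the error term of \cref{danslemille_new} can be absorbed into the universal constant $C$, giving the claimed bound. There is no substantive obstacle here: the corollary is a bookkeeping translation of the abstract asymptotic \cref{danslemille_new} into a concrete statement that exploits the plateau structure of $\Theta$ encoded in Alg.~\ref{algo_nul}.
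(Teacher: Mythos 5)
Your proposal is correct and follows essentially the same route as the paper's own proof: split $G_{s,\Theta,\omega}$ at the plateau boundaries $y\in[e^{-L},e^{L}]$, bound the tail by $\epsilon\gamma(s)$ via \cref{jeconverge}, and conclude by the triangle inequality from \cref{danslemille_new}, absorbing $\|\Theta\|_{W^{1,\infty}}$ into the constant. No gaps.
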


\begin{proof}
 Thanks to the assumption \eqref{xLomega}, for $y \in (e^{-L}, e^{+L})$, $\ln \left(y\omega^{\frac 12}\right)$ is within the plateau of $\Theta$. Hence, recalling \eqref{gsto}, 
 \begin{equation}
  G_{s,\Theta,\omega} = \Theta\left(\ln \sqrt{\omega}\right) \int_{e^{-L}}^{e^{L}} \frac{y^{3-4s}}{1+y^4} \dd y
  + \int_{\R_+\setminus [e^{-L},e^{L}]} \frac{y^{3-4s}}{1+y^4} \Theta\left(\ln \left(y\omega^{\frac 12}\right)\right) \dd y.
 \end{equation}
 Using estimate \eqref{L.epsilon} from \cref{jeconverge} yields
 \begin{equation}
  \left| G_{s,\Theta,\omega} - \gamma(s) \Theta\left(\ln \sqrt{\omega}\right) \right| \leq 2 \epsilon \gamma(s) \| \Theta \|_{L^\infty}.
 \end{equation}
 Hence \cref{danslemille5} follows from \eqref{danslemille_new}.
\end{proof}

We set $\omega_+ := e^{2(12k_+ + 3)L}$ and $\omega_- := e^{2(12k_-+9)L}$, for $k_\pm \in \N$ large enough. Thanks to \eqref{chi.x} and \eqref{theta.chi.x}, $\ln \sqrt{\omega_\pm}$ are in the middle of negative and positive plateaus of length $2L$ of $\Theta$ (with an opposite sign convention). Thus, \cref{danslemille5} and \eqref{z.phi0} imply that, for controls of the form $c_\pm v_{\omega_\pm, \frac T4}$, where $c_\pm \in \R$ are to be chosen later, one obtains
\begin{equation} \label{z0zTphi0c}
 \langle z(T), \varphi_0 \rangle - \langle z_0, \varphi_0 \rangle
 = 
 \mp c_\pm^2 \frac{\gamma(s)T}{4 \omega_\pm^{2s}} \left(\frac 12 + \tilde{\epsilon}\right),
\end{equation}
where there exists $C > 0$ independent of $T > 0$ such that
\begin{equation}
|\tilde{\epsilon}| \leq \frac 13 + \frac{C}{T \omega_\pm^{2\beta}},
\end{equation}
by the choice of $\epsilon$ in \cref{sec:magic}.
Assuming that $\langle z_0, \varphi_0 \rangle = +1$ for example and fixing $T > 0$, we first choose $k_+$ large enough ($\omega_+$ large enough) in order to obtain $|\tilde{\epsilon}| < \frac 1 2$. Then one can choose $c_+$ using \eqref{z0zTphi0c} in order to guarantee $\langle z(T), \varphi_0 \rangle = 0$, because the term multiplying $c_+$ in the right-hand side of \eqref{z0zTphi0c} is non null.

\subsection{Construction of $L^2$ controls leaving the linear order invariant}
\label{sec:retranche}

To obtain small-time local null controllability for the full system, it is necessary to build controls realizing the elementary motions in the directions $\pm \varphi_0$ without moving the other components. The goal of this section is to construct the controls $u^{\pm}$ of \cref{prop:elementary.control.N=1} for $m=0$.

\bigskip

Let $T>0$ and $\omega \geq 1$. The sequence $(d_k(v_{\omega,\frac{T}4}))_{k \in \N}$ defined by
\begin{equation}
 d_k(v_{\omega,\frac{T}4}) := \int_0^T v_{\omega,\frac{T}4}(t) e^{-k^2(T-t)} \dd t
\end{equation}
belongs to $D_T$ (defined in \eqref{eq:DT-def}) because, for every $k \in \N$,
\begin{equation}
 \begin{split}
  |d_k(v_{\omega,\frac{T}4}) | 
  & = \left| e^{-k^2(T-\frac{T}4)} \int_0^{\frac{T}4} v_{\omega,\frac{T}4}(t) e^{-k^2(\frac{T}4-t)} \dd t \right|
  \\
  & = 
 e^{-\frac34 k^2 T} \left| \int_0^{\frac{T}4} \sin (\omega t) 
 e^{-k^2(\frac{T}4-t)} \dd t \right|
 \\
 & \leq
 e^{-\frac34 k^2 T} \frac{2}{\omega}.
 \end{split}
\end{equation}
Thus, by \cref{thm:moments}, we can consider a corrected control that leaves the linear order invariant
\begin{equation} \label{def:vtilde}
\widetilde{v}_{\omega,\frac T4} := v_{\omega,\frac T4} - \mathfrak{L}^T_1(d(v_{\omega,\frac T4})) 
\end{equation}
and there exists $C_T> 0$ such that
\begin{equation}
 \| \mathfrak{L}^T_1(d(v_{\omega,\frac T4})) \|_{L^2(0,T)} \leq 
 \frac{C_T}{\omega}.
\end{equation}
Moreover, since $K_\Theta \in L^1$, it defines a continuous bilinear form on $L^2(0,T)$. Hence, thanks to \cref{danslemille5} and defining $\omega_{\pm}$ as in the previous paragraph, there exists $k_\pm$ large enough, such that
\begin{equation}
 \pm \int_0^T \int_0^T \widetilde{v}_{\omega_\pm,\frac T4}(t)
 \widetilde{v}_{\omega_\pm,\frac T4}(t') K_\Theta(t-t') \dd t \dd t' > 0,
\end{equation}
where we used $\omega^{-2s} \gg \omega^{-1}$ for large values of $\omega$. Up to a rescaling by some constant $c_\pm(T) \in \R$, this allows to construct $u^\pm \in L^2(0,T)$ (with $u_\pm = c_\pm \widetilde{v}_{\omega_{\pm}\pm, \frac T4}$ as in the previous paragraph) such that the associated trajectories to~\eqref{eq:z} starting from $z_0 = \pm \varphi_0$ satisfy $z(T)=0$, which concludes the proof of \cref{prop:elementary.control.N=1} for $m=0$ (with $L^2$ controls).

\subsection{Construction of regular controls}
\label{sec:N=1.regular}

We construct, for $m \in \N^*$, the controls 
$u^{\pm} \in H^{m}_0(0,T)$ of \cref{prop:elementary.control.N=1},
by smoothing the controls built in the previous paragraph. This construction relies on:
\begin{itemize}
\item the density of $H^m_0(0,\frac T4)$ in $L^2(0,\frac T4)$,
to approximate $v_{\omega_\pm,\frac T4}$ in $L^2(0,\frac T4)$,
\item the continuity of the map
$u\in L^2(0,\frac T4) \mapsto \mathfrak{L}^T_1(d(u)) \in H^m_0(0,T)$,
\item which implies the continuity of the following map on $L^2(0,\frac T4)$
\begin{equation}
 u \mapsto \int_0^T \int_0^T \Big(u-\mathfrak{L}^T_1(d(u))\Big)(t)
 \Big(u-\mathfrak{L}^T_1(d(u))\Big)(t') K_\Theta(t-t') \dd t \dd t'
 .
\end{equation}
\end{itemize}
For an appropriate approximation $\overline{v}_{\omega_\pm,\frac T4}\in H^m_0(0,\frac T4)$ of 
$v_{\omega_\pm,\frac T4}$, we obtain
\begin{equation}
 \pm \int_0^T \int_0^T \Big(\overline{v}_{\omega_\pm,\frac T4}-\mathfrak{L}^T_1(d(\overline{v}_{\omega_\pm,\frac T4}))\Big)(t)
 \Big(\overline{v}_{\omega_\pm,\frac T4}-\mathfrak{L}^T_1(d(\overline{v}_{\omega_\pm,\frac T4}))\Big)(t') K_\Theta(t-t') \dd t \dd t' > 0
 .
\end{equation}
Once more, up to the choice of rescaling parameters $c_\pm(T) > 0$, this allows to construct 
\begin{equation}
 u_m^\pm := c_\pm \Big( \overline{v}_{\omega_\pm,\frac T4}-\mathfrak{L}^T_1(d(\overline{v}_{\omega_\pm,\frac T4})) \Big) \quad \in H^m_0(0,T)
\end{equation}
such that the associated trajectories to~\eqref{eq:z} starting from $z_0 = \pm \varphi_0$ satisfy $z(T)=0$.

\subsection{Controllability with quadratic cost}
\label{sec:quad.cost}

As stated in \cref{rk:quad.cost}, controllability with a quadratic cost estimate like \eqref{eq:quad.cost.N=1} implies that the quadratic approximation of the system is controllable. More precisely, if $\Gamma$ is a nonlinearity satisfying the regularity assumptions \eqref{def:Gamma}, with $\langle \Gamma[0],\varphi_0 \rangle = 0$, for which the nonlinear system~\eqref{eq:z} is $H^1$-STLNC with the cost estimate \eqref{eq:quad.cost.N=1}, then the modes $\varphi_k$ for $k \geq 1$ are controllable on the linear approximation (this can be seen as a consequence of the method explained in \cref{sec:sstlc-linear-equiv}) and the first mode $\varphi_0$ is controllable on the quadratic approximation, with controls leaving the linear order invariant.

More precisely, let us prove that, for every $T > 0$, there exist $u^\pm \in L^\infty(0,T)$ such that the associated solutions to the linear system \eqref{eq:z1} satisfy $z_1^\pm(T) = 0$ and the solutions to the quadratic system~\eqref{eq:z2} with initial data $\pm \varphi_0$ satisfy $\langle z_2^\pm(T), \varphi_0 \rangle = 0$.

Let $T > 0$. Since the nonlinear system is $H^1$-STLNC with cost estimate \eqref{eq:quad.cost.N=1}, there exists $C > 0$ such that, for every $\varepsilon > 0$, there exists a pair of controls $u^\pm_\varepsilon \in L^\infty(0,T)$ with $\|u^\pm_\varepsilon \|_{L^\infty} \leq C \varepsilon^{\frac12}$, such that the associated solutions to the nonlinear system \eqref{eq:z} with initial data $\pm \varepsilon \varphi_0$ satisfy $z^\pm_\varepsilon(T) = 0$. Since the sequences $u^\pm_\varepsilon / \varepsilon^{\frac12}$ are bounded in $H^1(0,T)$, they strongly converge (up to an extraction) towards some controls $u^\pm \in L^\infty(0,T)$. First, denoting by $z^\pm_1$ the solutions to the linear system \eqref{eq:z1} with controls $u^\pm$, since
\begin{equation} 
 0 = \frac{z_\varepsilon^\pm(T)}{\varepsilon^{\frac12}} = z_1^\pm(T)
 + \mathcal{O}(\|u^\pm - u^\pm_\varepsilon/\varepsilon^{\frac12}\|_{L^\infty}) 
 + \mathcal{O}(\varepsilon^{\frac12}),
\end{equation}
we obtain that $z^\pm_1(T) = 0$. Second, denoting by $z^\pm_2$ the solutions to the quadratic system \eqref{eq:z2} with controls $u^\pm$ and initial data $\pm \varphi_0$, since $\langle \Gamma[0], \varphi_0 \rangle = 0$, we obtain
\begin{equation} 
 0 = \langle \frac{z_\varepsilon^\pm(T)}{\varepsilon}, \varphi_0 \rangle = \langle z^\pm_2(T),\varphi_0 \rangle 
 + \mathcal{O}(\|u^\pm - u^\pm_\varepsilon/\varepsilon^{\frac12}\|_{L^\infty}) 
 + \mathcal{O}(\varepsilon^{\frac12}),
\end{equation}
which implies that $\langle z^\pm_2(T) , \varphi_0 \rangle = 0$, which concludes the proof.

\section{Recovering an infinite number of lost directions} 
\label{sec:infini}

This section is dedicated to the proof of \cref{thm:infini}. From now on, we fix a constant $s \in (0,\frac12)$ and a constant $L \geq 1$ given by \cref{jeconverge} for $\epsilon := \frac 16$. We will not recall these assumptions as they are valid through this whole section. We will not track the dependency of the constants in the estimates with respect to these (fixed) parameters.

\subsection{Construction of a magic system}

We use the same elementary building block $\chi$ defined in \eqref{chi.x}.

\paragraph{Family of oscillating functions.} We build a sequence of functions $(\Theta_k)_{k \in \N}$ in $W^{1,\infty}(\R)$ by combining elementary blocks in a diagonal-like construction. The first steps of the construction are represented in \cref{fig:thetas}. The blocks are inserted such that their supports are disjoint. For each $x \in \R_+$, there is at most one $k \in \N$ such that $\Theta_k(x) \neq 0$. For each $k \in \N$, there are infinitely many positive and negative blocks on the line of $\Theta_k$.  The sequence of functions can be seen as being constructed by Alg.\ \ref{algo}.

\begin{figure}[ht!]
 \centering
 \includegraphics{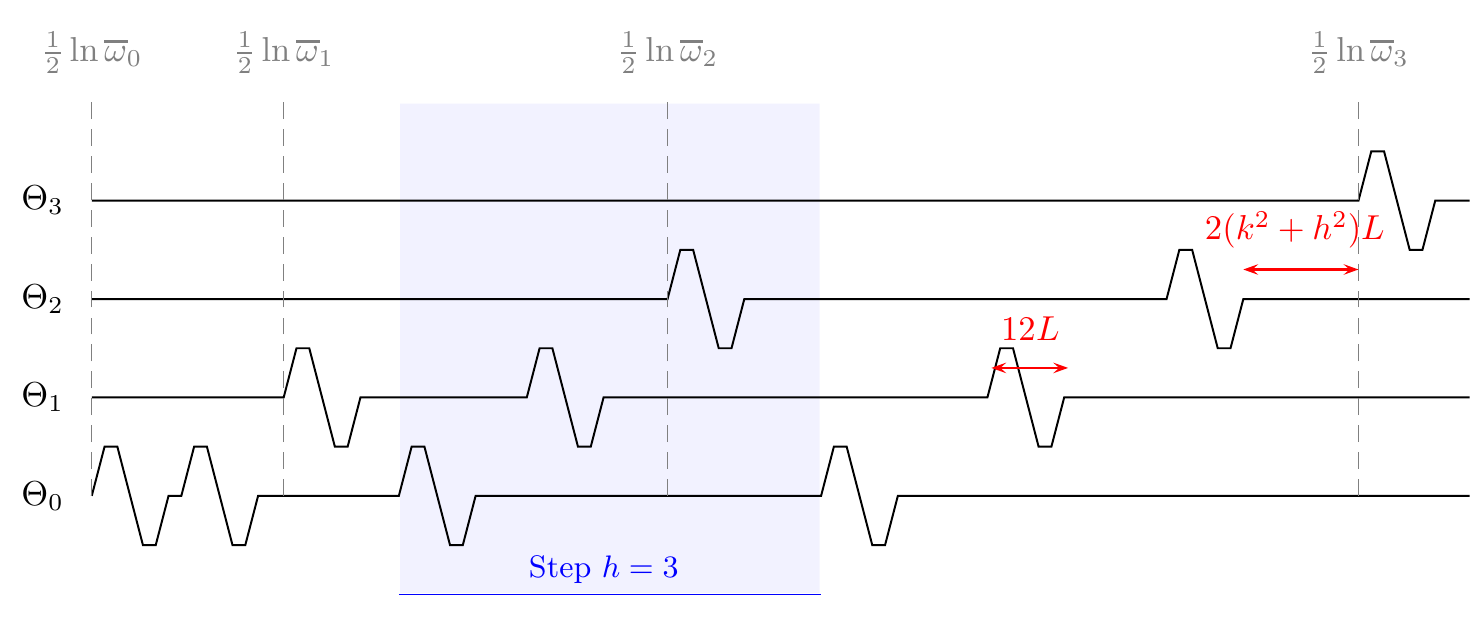}
 \caption{First steps of the construction of sparse oscillating functions from elementary blocks.}
 \label{fig:thetas}
\end{figure}

\begin{algorithm}
 \KwData{$L \geq 1$}
 \KwResult{$(\Theta_k)_{k \in \N} \in \ell^\infty(W^{1,\infty}(\R))$}
 \BlankLine
 For $k \in \N$, initialize $\Theta_k \leftarrow 0$\;
 Initialize $X \leftarrow 0$\;
 \For{$h \leftarrow 1$ \KwTo $+\infty$}{
  \For{$k \leftarrow 0$ \KwTo $h-1$}{
   $\Theta_{k,h} \leftarrow \chi\left(\frac{\cdot}{2L}-X\right) -  \chi\left(\frac{\cdot}{2L}-X-3\right)$\;
   $\Theta_k \leftarrow \Theta_k + \Theta_{k,h}$\;
   $X \leftarrow X + 6 + k^2+h^2$\;
  }
 }
 \caption{Construction of a sequence of oscillating functions}
 \label{algo}
\end{algorithm}

\bigskip

\begin{lemma}
 \label{lemma:algo}
 Let $(\Theta_k)_{k \in \N}$ be constructed as in Alg.\ \ref{algo}. There exists positive constants $\rho_1,\rho_2 > 0$ such that, for every $k\in\N$,
 \begin{equation}
  \label{estimate.algo1}
   \rho_1 L k^4
  \leq 
  \inf \supp \Theta_k 
  \leq 
  \rho_2 L k^4
 \end{equation}
 and moreover, for every $k \in \N$, $h\in\N^*$ and $l \in \N$ with $l \neq k$,
 \begin{equation}
  \label{estimate.algo2}
  \mathrm{dist} \left( \supp \Theta_{l,h} , \supp \Theta_k \right) 
  \geq 
  \rho_1 L (k^2+l^2+h^2).
 \end{equation}
\end{lemma}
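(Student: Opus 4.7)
The idea is to analyze Alg.~\ref{algo} directly by indexing its successive inner iterations by a single integer $j \geq 1$. I write $(k_j, h_j)$ for the pair of parameters used at iteration $j$, ordered lexicographically (first by $h$ and then by $k$), and $X_j$ for the value of the variable $X$ at the start of iteration $j$. The recursion $X_1 = 0$ and $X_{j+1} = X_j + 6 + k_j^2 + h_j^2$ is the backbone of every estimate. The key geometric observation is that the block inserted at iteration $j$ has support contained in the interval $[2LX_j, 2L(X_j+6)]$ of length $12L$, and these intervals are pairwise disjoint since the gap between two consecutive supports is exactly $2L(k_j^2 + h_j^2) \geq 0$.

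For estimate \eqref{estimate.algo1}, the leftmost block of $\Theta_k$ is $\Theta_{k,k+1}$, placed at the iteration $j_k$ reached just after completing the outer iterations $h' = 1, \ldots, k$ and then the first $k$ inner iterations at $h = k+1$. Summing the corresponding increments yields
\begin{equation*}
 X_{j_k} = \sum_{h'=1}^{k} \sum_{k'=0}^{h'-1}\bigl(6 + k'^2 + h'^2\bigr) + \sum_{k'=0}^{k-1} \bigl(6 + k'^2 + (k+1)^2\bigr),
\end{equation*}
whose dominant contribution is $\sum_{h'=1}^{k} h'^3 \sim k^4/4$. A routine two-sided polynomial estimate gives $\rho_1 k^4 \leq X_{j_k} \leq \rho_2 k^4$ for suitable positive constants, hence $\inf\supp \Theta_k = 2L X_{j_k}$ satisfies \eqref{estimate.algo1}.

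For estimate \eqref{estimate.algo2}, I consider blocks $\Theta_{l,h}$ and $\Theta_{k,h'}$ with $l \neq k$, placed at iterations $i$ and $j$ respectively, and assume without loss of generality $i < j$ (the opposite case is symmetric). The gap between their supports equals
\begin{equation*}
 2L\bigl(X_j - X_i - 6\bigr) = 2L\sum_{m=i}^{j-1}(k_m^2 + h_m^2) + 12L(j - i - 1).
\end{equation*}
Keeping only the first ($m=i$) and last ($m=j-1$) terms of the sum gives the lower bound $2L(l^2 + h^2) + 2L(k_{j-1}^2 + h_{j-1}^2)$. A brief case analysis of the lexicographic predecessor of $(k, h')$ in the ordering --- which is either $(k-1, h')$ when $k \geq 1$ or $(h'-2, h'-1)$ when $k = 0$ and $h' \geq 2$ --- shows $k_{j-1}^2 + h_{j-1}^2 \geq c\, k^2$ for an absolute constant $c > 0$. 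Combining the two terms and taking the infimum over $h' \geq k+1$ yields \eqref{estimate.algo2} with $\rho_1 := 2\min(1, c)$.

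The only mild technical obstacle is tracking the constants through the edge cases (in particular $k = 0$ and the asymmetric situation $i > j$), but these are settled by elementary polynomial bookkeeping, since the algorithmic rule for $X_j$ is fully explicit.
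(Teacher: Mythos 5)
Your proof follows essentially the same route as the paper's: part \eqref{estimate.algo1} by summing the increments of $X$ over the outer steps $h'=1,\dots,k$ (the paper bounds each step's increment between $h'^3$ and $8h'^3$ and sums), and part \eqref{estimate.algo2} by lower-bounding the gap between two blocks via the margins $2L(k_m^2+h_m^2)$ inserted after each block, together with an identification of the lexicographic predecessor of the block of $\Theta_k$. The one bookkeeping slip is in the adjacent case $j=i+1$: there the sum $\sum_{m=i}^{j-1}(k_m^2+h_m^2)$ has a single term, so "keeping the first and the last terms" double-counts it and the claimed lower bound $2L(l^2+h^2)+2L(k_{j-1}^2+h_{j-1}^2)$ overestimates the gap by a factor of $2$; the argument still closes because that single term is simultaneously equal to $2L(l^2+h^2)$ and, by your predecessor analysis, at least $2cLk^2$, hence at least $L(l^2+h^2)+cLk^2$, but the resulting constant is $\min(1,c)$ rather than $2\min(1,c)$.
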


\begin{proof}
 We start with the first assertion. During step $h \in \N^*$, the increment of $X$ is given by
 \begin{equation}
  X_{end}(h) - X_{begin}(h) = \sum_{k=0}^{h-1} \left(6 + k^2 + h^2\right) = 6 h + h^3 + \sum_{k=0}^{h-1} k^2.
 \end{equation}
 Thus, one has the bounds
 \begin{equation}
  h^3
  \leq 
  X_{end}(h) - X_{begin}(h)
  \leq 
  8 h^3
  .
 \end{equation}
 Hence, summing these bounds, using $X_{begin}(h)=X_{end}(h-1)$ and dropping lower order terms,
 \begin{equation}
  \frac{1}{4} h^4
  \leq 
  X_{end}(h) 
  \leq 
  8 h^4.
 \end{equation}
 Moreover, the function $\Theta_k$ gains its first positive-negative blocks pair $\Theta_{k,k}$ during the step indexed by $h_k = k+1$. Thus, for $k \geq 1$,
 \begin{align}
  \inf \supp \Theta_k 
  & 
  \leq X_{end}(h_k)
  \leq 8 h_k^4
  \leq \rho_2 k^4,
  \\
  \inf \supp \Theta_k
  &
  \geq X_{end}(h_k-1)
  \geq \frac{1}{4} k^4
 \end{align}
 and these estimates also hold for $k=0$ because $\inf \supp \Theta_0 = 0$. This proves~\eqref{estimate.algo1}. 
 
 \medskip
 
 Moreover, \eqref{estimate.algo2} is a direct consequence of the construction. Indeed, when $x_l \in \supp \Theta_{l,h}$, then $x_l$ is part of an elementary block which is prefixed by a margin of size at least $L(l-1)^2 + L(h-1)^2$ (which was inserted after a block on line $l-1$) and postfixed by a margin of size at least $Ll^2+Lh^2$. Moreover, if $x_k \in \supp \Theta_k$, it is also prefixed by a margin of size at least $L(k-1)^2$ and postfixed by a margin of size at least $Lk^2$. This yields the existence of $\rho_1 > 0$ such that \eqref{estimate.algo2} holds.
\end{proof}

In the sequel, the following frequency will play an important role
\begin{equation} \label{omk}
 \omk := e^{2 \inf \supp \Theta_k}.
\end{equation}

\begin{rk} \label{rk:omk.large}
 Up to choosing $L$ larger than our initial choice given by \cref{jeconverge}, one can assume that $\rho_1 L \geq 1$. Hence, using \eqref{omk} and \eqref{estimate.algo1}, one has $\omk \geq \exp(k^4)$ for every $k \in \N$.
\end{rk}

\paragraph{Nonlinearity.}

We consider a nonlinear heat equation for which the linearized system misses all the odd indexed directions, i.e.\ we assume that $\langle \mu, \varphi_{2j} \rangle \neq 0$ for every $j \in \N$ but $\langle \mu, \varphi_{2k+1} \rangle = 0$ for every $k \in \N$. Let $C_\Theta := 32 \gamma(s)^{-1}$. We define the nonlinearity $\Gamma_s$ as
\begin{equation} \label{Gamma_s}
 \Gamma_s[z] := \varphi_0 + \sum_{j=1}^{+\infty} j^{\frac{1}{2}-3s} \varphi_{2j}
 + C_\Theta \sum_{k=0}^{+\infty} \left( \sum_{j=1}^{+\infty} \Theta_k(\ln j) j^{\frac{1}{2}-s} \langle z, \varphi_{2j} \rangle \right) \varphi_{2k+1}.
\end{equation}
Since the family $(\Theta_k)_{k\in\N}$ is bounded in $L^\infty(\R)$ with disjoint supports, $\Gamma_s$ is well defined as a map from $H^{1}_N(0,\pi)$ to $H^{-1}_N(0,\pi)$ and satisfies the regularity assumptions from the introduction \eqref{def:Gamma}. Indeed, one has for example for $z\in H^1_N(0,\pi)$, the estimate
\begin{equation}
 \begin{split}
 \| \Gamma_s[z] - \Gamma_s[0] \|_{H^{-1}_N(0,\pi)}
 & \leq 
 C_\Theta 
 \sum_{k=0}^{+\infty}
 \left(
 \sum_{j=1}^{+\infty}
 | \Theta_k(\ln j) | j^{\frac12-s} | \langle z, \varphi_{2j} \rangle| \right) 
 \| \varphi_{2k+1} \|_{H^{-1}_N(0,\pi)}
 \\
 & \leq C_\Theta 
 \sum_{j=1}^{+\infty} 
  j^{\frac12-s} | \langle z, \varphi_{2j} \rangle|
  \sum_{k=0}^{+\infty} \frac{| \Theta_k(\ln j) |}{2k+2}
 \\
 & \leq C_\Theta 
 \left(\sum_{j=1}^{+\infty} j^2 |\langle z, \varphi_{2j} \rangle |^2 \right)^{\frac12}
 \left(\sum_{j=1}^{+\infty} j^{-1-2s} \right)^{\frac12}
 ,
 \end{split}
\end{equation}
because, for each $j\geq 1$, there is at most one $k\in \N$ such that $\Theta_k(\ln j) \neq 0$. Similar computations prove that $\Gamma_s$ satisfies all the regularity assumptions of \eqref{def:Gamma}.

\subsection{Control strategy and reduction of the proof} 

As explained in \cref{sec:control.strategy.N=1} for the case of a single lost direction, the control strategy proceeds in two steps: first, control to zero the linearly controllable modes, then, use a purely quadratic strategy, while making sure to bring back the linear state to zero. The proof of \cref{thm:infini} therefore reduces to the proof of the two following propositions.

\begin{proposition}
 For every $T > 0$, there exists $C_T > 0$ such that, for every $z_0 \in H^1_N(0,\pi)$, there exists $u \in H^1_0(0,T)$ with 
 \begin{equation} \label{cost.uj}
  \|u\|_{H^1} \leq C_T \| (\langle z_0, \varphi_{2j}\rangle)_{j\in\N} \|_{\ell^2}
 \end{equation}
 such that the associated solution $z \in Z$ to \eqref{eq:z} satisfies $\langle z(T), \varphi_{2j} \rangle = 0$ for every $j \in \N$ and
 \begin{equation} \label{zT.l1}
  \| (\langle z(T), \varphi_{2k+1} \rangle)_{k\in\N} \|_{\ell^1}
  \leq 
  \| (\langle z_0, \varphi_{2k+1} \rangle)_{k\in\N} \|_{\ell^1}
  + C_T \| (\langle z_0, \varphi_{2j}\rangle)_{j\in\N} \|_{\ell^2}^2.
 \end{equation}
\end{proposition}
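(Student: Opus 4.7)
The proof exploits the special block structure of the nonlinearity $\Gamma_s$ built in~\eqref{Gamma_s}, which makes the dynamics triangular: the even-indexed components of $\Gamma_s[z]$ are the $z$-independent constants $1$ or $j^{1/2-3s}$, while the odd-indexed components of $\Gamma_s[z]$ are linear functionals of the even-indexed components of $z$ alone. Consequently, writing $z = z^e + z^o$ with $z^e := \sum_{j\geq 0}\langle z,\varphi_{2j}\rangle\varphi_{2j}$, the even modes obey the purely linear equation
\begin{equation}
 \partial_t z^e - \partial_{xx} z^e = u(t)\,\mu^e, \qquad \mu^e := \varphi_0 + \sum_{j\geq 1} j^{\frac12-3s}\varphi_{2j},
\end{equation}
independently of the odd ones, whereas the odd modes are slaved to the even ones through the bounded operator $\Gamma_s'[0]$.

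\textbf{Step 1: linear control of the even modes.} The even subsystem is a controlled heat equation whose spectrum $(4j^2)_{j\geq 0}$ has the same parabolic gap and growth as that of the Neumann Laplacian~\eqref{eq:neumann-laplacian}, and whose source coefficients $\langle\mu^e,\varphi_{2j}\rangle$ decay only polynomially in $j$, so the analogue of the decay assumption~\eqref{eq:mu-b} holds. Running the biorthogonal/moment-method construction underlying \cref{thm:linear-cost} with $m=1$ on this subsystem produces $u\in H^1_0(0,T)$ satisfying the cost estimate~\eqref{cost.uj} and driving $z^e(T)=0$. Since the even dynamics does not see the odd modes, the same $u$ steers the even-indexed components of the full nonlinear system~\eqref{eq:z} to zero at time~$T$.

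\textbf{Step 2: Duhamel representation for the odd modes.} Projecting~\eqref{eq:z} onto $\varphi_{2k+1}$ and applying Duhamel gives
\begin{equation}
 \langle z(T),\varphi_{2k+1}\rangle = e^{-(2k+1)^2 T}\langle z_0,\varphi_{2k+1}\rangle + C_\Theta \int_0^T u(t)\,e^{-(2k+1)^2(T-t)}\,g_k(t)\,\dd t,
\end{equation}
with $g_k(t) := \sum_{j\geq 1}\Theta_k(\ln j)\, j^{\frac12-s}\,\langle z^e(t),\varphi_{2j}\rangle$ bilinear in $(u,z_0^e)$ through the explicit resolution
\begin{equation}
 \langle z^e(t),\varphi_{2j}\rangle = e^{-4j^2 t}\langle z_0,\varphi_{2j}\rangle + j^{\frac12-3s}\int_0^t u(\tau)\,e^{-4j^2(t-\tau)}\,\dd\tau.
\end{equation}
The free-evolution term contributes at most $\|(\langle z_0,\varphi_{2k+1}\rangle)\|_{\ell^1}$ to~\eqref{zT.l1} (via $|e^{-(2k+1)^2 T}|\leq 1$), so the task reduces to bounding $\sum_k |I_k|$, where $I_k$ denotes the integral contribution, by $C_T(\|u\|_{L^2}\|z_0^e\|_{L^2} + \|u\|_{L^2}^2)$, which is dominated by $C_T\|(\langle z_0,\varphi_{2j}\rangle)\|_{\ell^2}^2$ thanks to Step~1.

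\textbf{Step 3 and main obstacle.} By \cref{lemma:algo} the supports of the $\Theta_k$'s are pairwise disjoint and, for $k\geq 1$, contained in $[\ln\omk,+\infty)$ with $\omk\geq\exp(k^4)$. Taking absolute values, exchanging summation in $k$ with integration in $t$, and using that every frequency $j$ is attached to a \emph{unique} index $k(j)$, the double sum in $(k,j)$ collapses to a single sum indexed by $j$ in which $j$ is astronomically larger than $k(j)$ whenever $k(j)\geq 1$. The combined decay of the parabolic factors $e^{-(2k(j)+1)^2(T-t)}$ and $e^{-4j^2 t}$, the polynomial weight $j^{\frac12-3s}$, and the summability of $\sum_k(2k+1)^{-2}$, are then sufficient for a Cauchy-Schwarz argument in $t$ and $j$ to absorb both the bilinear $u\times z_0^e$ and the purely quadratic $u\times u$ parts of $g_k$. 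The main technical difficulty is precisely that~\eqref{zT.l1} demands an $\ell^1$-in-$k$ bound, strictly sharper than the $\ell^2$ bound a direct Parseval-type argument would yield; recovering this sharper bound relies crucially on the lacunarity of the blocks engineered in Alg.~\ref{algo}, rather than just their pairwise orthogonality.
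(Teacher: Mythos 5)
Your proposal is correct and follows essentially the same route as the paper: control the even (linearly controllable) modes by the moment method with $H^1_0$ cost, write the odd modes by Duhamel as initial data plus a term bilinear in $(u,z^e)$, and obtain the $\ell^1$-in-$k$ bound by collapsing the double sum over $(k,j)$ using that each frequency $j$ lies in the support of at most one $\Theta_k$, followed by Cauchy--Schwarz in $j$ with $\sum_j j^{-1-2s}<\infty$ and the $L^2(H^1_N)$ energy estimate for the even part. One inaccuracy in your closing remark: the $\ell^1$ bound does \emph{not} rely on the lacunarity of the blocks (the gaps $\omk\geq e^{k^4}$), nor on the summability of $\sum_k(2k+1)^{-2}$; the paper uses only that the supports of the $\Theta_k$ are pairwise disjoint with $\|\Theta_k\|_{L^\infty}\leq 1$, so that $\sum_k|\Theta_k(\ln j)|\leq 1$ for every $j$ --- the large gaps are needed for the quadratic controllability of the odd modes, not for this proposition.
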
 

\begin{proof}
 The existence and the cost estimate \eqref{cost.uj} for $u$ are given by \cref{thm:linear-cost}. We only prove the size estimate for the odd modes.
 Due to the structure of \eqref{Gamma_s}, one has $z = z_1 + z_2$ where $z_1$ is the solution to the linear system \eqref{eq:z-linear} on the even modes and $z_2$ is the solution to the second order expansion on the odd modes, given for $k\in \N$ by
 \begin{equation}
  \langle z_2(T), \varphi_{2k+1} \rangle 
  = \langle z_0, \varphi_{2k+1} \rangle 
  + C_\Theta \int_0^T e^{-(2k+1)^2(T-t)} u(t) \left(\sum_{j=1}^{+\infty} \Theta_k(\ln j) j^{\frac12-s} \langle z_1(t),\varphi_{2j}\rangle\right) \dd t.
 \end{equation}
 Hence, one has
 \begin{equation}
  \begin{split}
   \| (\langle z(T), \varphi_{2k+1} \rangle)_{k\in\N} \|_{\ell^1}
  & - \| (\langle z_0, \varphi_{2k+1} \rangle)_{k\in\N} \|_{\ell^1}
   \\ & \leq 
   C_\Theta \|u\|_{L^\infty} \sum_{j=1}^{+\infty} j^{\frac12-s} \int_0^T |\langle z_1(t),\varphi_{2j} \rangle| \dd t
    \sum_{k=0}^{+\infty} | \Theta_k(\ln j)|
   \\
    & \leq 
    C_\Theta \|u\|_{L^\infty} \left(\sum_{j=1}^{+\infty} j^2 T \int_0^T |\langle z_1(t),\varphi_{2j} \rangle|^2 \dd t\right)^{\frac12} 
    \left(\sum_{j=1}^{+\infty} j^{-1-2s}\right)^{\frac12}
   \\
   & \leq 
   C T^{\frac12} \| u \|_{L^\infty} \| z_1 \|_{L^2(H^1)}.
  \end{split}
 \end{equation}
 Moreover, thanks to the linear well-posedness stated in \cref{thm:wp-linear}, one has from \eqref{eq:wp-linear} that $\|z_1\|_{L^2(H^1)} \leq C_T \| (\langle z_0, \varphi_{2j}\rangle)_{j\in\N} \|_{\ell^2}$. This concludes the proof of \eqref{zT.l1}.
\end{proof}

\begin{proposition} \label{prop:quad.hs}
 Let $T > 0$ and $z_0 \in L^2(0,\pi)$ such that $(\langle z_0, \varphi_{2k+1} \rangle)_{k\in\N} \in {\ell^1}$ and $\langle z_0,\varphi_{2j}\rangle = 0$ for every $j\in \N$. 
 There exists $u \in H^{-s}(0,T)$ and $z \in Z$ a solution to \eqref{eq:z} such that $z(T) = 0$ and 
 \begin{equation} \label{u.hs.z0}
  \|u\|_{H^{-s}} \leq \| (\langle z_0, \varphi_{2k+1} \rangle)_{k\in\N} \|_{\ell^1}^{\frac12}.
 \end{equation}
\end{proposition}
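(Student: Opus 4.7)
}

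The plan is to exploit the fact that $\Gamma_s$ is affine in $z$ together with the parity structure of the problem to reduce $z(T)=0$ to a decoupled set of linear and quadratic conditions, and then build an oscillating control that realises them. Writing $\Gamma_s[z] = \mu + \Lambda z$ with $\mu \in \overline{\mathrm{Span}}\{\varphi_{2j}\}_{j\in\N}$ and $\Lambda$ mapping even modes to odd modes while annihilating odd modes, and using that $z_0$ is purely odd by hypothesis, the Duhamel expansion $z = \tilde{z}_0 + \tilde z_1 + \tilde z_2 + \cdots$ in powers of $u$ truncates \emph{exactly} at second order: $\tilde z_0(t)=S(t)z_0$ is purely odd, $\tilde z_1$ solves $\partial_t \tilde z_1 - \Delta \tilde z_1 = u\mu$ and is purely even, $\tilde z_2$ solves $\partial_t \tilde z_2 - \Delta \tilde z_2 = u \Lambda \tilde z_1$ and is purely odd, and $\Lambda \tilde z_2 = 0$ so $\tilde z_n \equiv 0$ for $n\ge 3$. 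Thus $z(T)=0$ is equivalent to $\langle \tilde z_1(T),\varphi_{2j}\rangle = 0$ for every $j\in\N$ and $\langle \tilde z_2(T),\varphi_{2k+1}\rangle = -e^{-(2k+1)^2 T} a_k$ for every $k\in\N$, where $a_k:=\langle z_0,\varphi_{2k+1}\rangle$.

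The control is built as $u = u^{(0)} - u^{(1)}$, where $u^{(0)} = \sum_{k\in\N} c_k v_{\omega_k,\tau_k,0}(\cdot - t_k)$ is a superposition of oscillating blocks and $u^{(1)}$ is a small correction enforcing the linear moment conditions. For each $k$, I pick $y_k$ in the middle of a length-$2L$ plateau of $\Theta_k$ on which $\Theta_k = -\mathrm{sgn}(a_k)$ (such plateaus exist by Alg.~\ref{algo}), set $\omega_k := e^{2y_k}$, and choose $\tau_k$ small enough that $\sum_k \tau_k \leq T$ and $(2k+1)^2 \tau_k \to 0$; the intervals $[t_k,t_k+\tau_k]$ are placed disjointly in $[0,T]$. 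On each such interval the decay factor $e^{-(2k+1)^2(T-t)}$ is essentially constant (equal to some $d_k \in (0,1]$), and the same integration-by-parts scheme used for \eqref{z.phi0} yields
\begin{equation*}
 \langle \tilde z_2(T),\varphi_{2k+1}\rangle
 \;=\; \frac{C_\Theta}{2} \sum_{l\in\N} c_l^2 d_l \iint_{\R^2} v_{\omega_l,\tau_l,0}(t)v_{\omega_l,\tau_l,0}(s) \, \tilde K_k(t-s) \dd s\dd t \;+\; \mathcal{R}_k,
\end{equation*}
with $\tilde K_k(\sigma) := \sum_{j\geq 1} \Theta_k(\ln j) j^{1-4s} e^{-(2j)^2|\sigma|}$ and $\mathcal R_k$ collecting cross-terms $l\neq k$ together with decay-factor and moment-correction errors. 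A direct analogue of \cref{danslemille_new}--\cref{danslemille5} applied to $\tilde K_k$ (after rescaling $\xi \mapsto \xi/4$) shows that only $l=k$ contributes significantly, producing the asymptotic
\begin{equation*}
 -\mathrm{sgn}(a_k) \cdot c_k^2 d_k C_\Theta \gamma(s) 4^{2s} \tau_k \omega_k^{-2s}/2 \;+\; \mathcal O(c_k^2 \tau_k \omega_k^{-2s-2\beta}),
\end{equation*}
from which the $c_k$ are uniquely determined (and satisfy $c_k^2 \tau_k \omega_k^{-2s} \lesssim |a_k|$). The correction $u^{(1)}$ is produced by applying \cref{thm:moments} to the sequence of small Laplace moments $d_j(u^{(0)}) = \mathcal O(\sum_k |c_k|/\omega_k)$; since $\omega_k \geq e^{k^4}$ grows super-exponentially, these moments can be made arbitrarily small and hence $\|u^{(1)}\|_{H^{-s}}$ is negligible compared to $\|u^{(0)}\|_{H^{-s}}$.

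For the cost estimate, the super-exponential separation of the frequencies $\omega_k$ (ultimately a consequence of the diagonal sparse construction in Alg.~\ref{algo} and \cref{lemma:algo}) makes the Fourier supports of the blocks $\hat v_{\omega_k,\tau_k,0}$ asymptotically disjoint, so that
\begin{equation*}
 \|u^{(0)}\|_{H^{-s}(\R)}^2 \;=\; (1+o(1)) \sum_{k\in\N} c_k^2 \|v_{\omega_k,\tau_k,0}\|_{H^{-s}(\R)}^2 \;\leq\; (1+o(1)) \sum_{k\in\N} \tfrac{|a_k|}{c_s}
\end{equation*}
with a constant $c_s$ whose precise value comes from the choice $C_\Theta = 32\gamma(s)^{-1}$ and from the ratio $\gamma(s)\cdot 4^{2s}\cdot \tau_k \omega_k^{-2s}/(\tau_k\omega_k^{-2s}/2)$. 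Provided $c_s\geq 1$, which is ensured by $C_\Theta$ being taken large enough and the error $\epsilon=1/6$ in \cref{jeconverge} leaving enough slack, this gives the desired bound $\|u\|_{H^{-s}} \leq \|(a_k)\|_{\ell^1}^{1/2}$. The main obstacle is controlling the error term $\mathcal R_k$ \emph{uniformly in $k$}, because an infinite superposition of oscillating blocks has to be handled: the key ingredients are (a) that $\supp\Theta_k$ and the Fourier content of $v_{\omega_l,\tau_l,0}$ do not overlap for $l\neq k$ (estimate \eqref{estimate.algo2}), making cross-quadratic contributions summable, (b) that $(2k+1)^2\tau_k$ is summable, controlling the decay-factor approximation, and (c) that $\omega_k$ is large enough that the linear correction $u^{(1)}$ does not spoil the quadratic balance. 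All three are simultaneously achievable because $\omega_k$ may be chosen as large as we please within the $k$-th plateau.
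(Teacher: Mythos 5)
Your structural reduction is correct and matches the paper's: because $\Gamma_s$ is affine, $\mu$ is purely even, $\Lambda$ maps even modes to odd modes and kills odd modes, and $z_0$ is purely odd, the expansion terminates exactly at second order, so $z(T)=0$ splits into linear moment conditions on the even modes and quadratic conditions on the odd modes. Your control ansatz (superposition of oscillating blocks tuned to plateaus of $\Theta_k$, plus a small moment correction) is also the same in spirit, though you place the blocks on \emph{disjoint} time intervals $[t_k,t_k+\tau_k]$ and freeze the heat factor $e^{-(2k+1)^2(T-t)}$ there, whereas the paper supports all blocks on the same interval $[0,T/4]$ and compensates the heat factor exactly by the damping $e^{-\lambda_k t}$ built into $v_{\omega,\tau,\lambda}$.

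The genuine gap is the final step: you assert that the amplitudes $c_k$ are ``uniquely determined'' from the leading-order asymptotic of the diagonal quadratic contribution, but that asymptotic carries error terms ($\mathcal O(c_k^2\tau_k\omega_k^{-2s-2\beta})$, the cross-terms $l\neq k$, the decay-factor approximation, and the feedback of the moment correction $u^{(1)}$ into every $Q_k$) which are small but \emph{nonzero} and which couple all the $c_l$ together. Solving the leading-order equation for each $c_k$ therefore only yields $\langle z_2(T),\varphi_{2k+1}\rangle = -e^{-(2k+1)^2T}a_k + \mathcal R_k$ with $\sum_k|\mathcal R_k|$ small, i.e.\ approximate, not exact, null controllability of the odd modes. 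Some device is needed to absorb these residuals: the paper runs a Schauder fixed point in $\ell^1$ on the target data ($\mathcal F(y)=\bar y+y-Q_{\cdot,T}(\widetilde V_n(y),\widetilde V_n(y))$, compact thanks to the weight $\langle k\rangle$ in the error estimate), and the factor $\|y\|_{\ell^1}\le 2\|\bar y\|_{\ell^1}$ coming out of that fixed point is precisely what the careful constants ($\gamma_\pm(s)\ge(1-\epsilon)\gamma(s)$, $C_\Theta=32\gamma(s)^{-1}$, $\varepsilon_n\le\sqrt2(1-\epsilon)-1$) are budgeted against in order to land on the sharp cost $\|u\|_{H^{-s}}\le\|(a_k)\|_{\ell^1}^{1/2}$ with constant exactly $1$. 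Your proposal neither closes the exactness loop nor accounts for the resulting loss in the constant (you defer to ``$C_\Theta$ large enough,'' but enlarging $C_\Theta$ changes the nonlinearity and hence the target $y_k=-32\langle z_0,\varphi_{2k+1}\rangle/C_\Theta$ proportionally, so it cannot buy slack by itself). Everything else in your outline is sound and recoverable, but without an exact-solvability argument the proof is incomplete.
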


\begin{proof}
 It is sufficient to prove this result for $T > 0$ small enough, as one can always use a null control on the remaining time interval after reaching the null state. Thus, we can assume that $T \leq s\rho_1L$. We wish to reach the null state at time $T/4$ (this trick will lighten the computations). We look for $u \in H^{-s}(0,T/4)$ under the form $t \mapsto v(4t)$ where $v\in H^{-s}(0,T)$. By homogeneity, we decompose the state as $z = z_1 + z_2$ (the linear and the quadratic part). Since the even modes vanish at the initial time, for every $\tau \in [0,T]$, the linear state satisfies
 \begin{equation} \label{z1.T}
  \left\langle z_1\left(\frac{\tau}{4}\right), \varphi_{2j} \right\rangle = \frac{1}{4} j^{\frac{1}{2}-3s} \int_0^\tau e^{-j^2(\tau-t)} v(t)  \dd t .
 \end{equation}
 Moreover, for every $\tau \in [0,T]$, each component of the quadratic state can be rewritten as
 \begin{equation} \label{z2.T.Qk}
 \left\langle z_2\left(\frac \tau4\right), \varphi_{2k+1} \right\rangle 
 = e^{-2\lambda_k \tau} \left( \langle z_0, \varphi_{2k+1} \rangle + \frac{C_\Theta}{32} Q_{k,\tau}(v,v)\right)
 ,
 \end{equation}
 where, for $a,b \in L^2(0,T)$, $\tau \geq 0$, $\sigma \in \R$ and $k\in\N$, we define,
 \begin{align} 
 \label{def:Qk}
 Q_{k,\tau}(a,b) 
 & = \int_0^\tau \int_0^\tau a(t) e^{\lambda_k t} \cdot b(t') e^{\lambda_k t'} \cdot 
 e^{\lambda_k |t-t'|} K_{\Theta_k}(t-t') \dd t \dd t',
 \\
 K_{\Theta_k}(\sigma) 
 & := \sum_{j=0}^{+\infty} e^{-j^2|\sigma|} \langle \mu, \varphi_{2j} \rangle
 \langle \Gamma_s'[0] \varphi_{2j}, \varphi_{2k+1} \rangle
 = \sum_{j=1}^{+\infty} e^{-j^2|\sigma|} \frac{\Theta_k(\ln j)}{j^{-1+4s}},
 \\
 \label{lambdak}
 \lambda_k 
 & := \frac{(2k+1)^2}{8}.
 \end{align}
 Thus, the conclusion of \cref{prop:quad.hs} follows from the following \cref{prop:surjectif} applied to the $\ell^1$ sequence $y_k := - 32 \langle z_0, \varphi_{2k+1} \rangle / C_\Theta$. We conclude that $z(T/4) = 0$ and we extend the control and the solution by zero. Therefore, using \eqref{v.hs.y}
 \begin{equation}
   \| u \|_{H^{-s}}
   =
   \| v(4\cdot) \|_{H^{-s}}
   \leq \frac 12 \| v \|_{H^{-s}}
   \leq \frac 12 2 \gamma(s)^{-\frac 12} \left(\frac{32}{C_\Theta} \| (\langle z_0, \varphi_{2k+1}\rangle)_{k\in\N} \|_{\ell^1} \right)^{\frac12}
   .
 \end{equation}
 The choice $C_\Theta = 32 \gamma(s)^{-1}$ gives $\gamma(s)^{-\frac12} (32/C_\Theta)^{\frac12} = 1$, which enables to obtain \eqref{u.hs.z0}.
 
 \bigskip
 
 Although the nonlinear system \eqref{eq:z} may be ill-posed for controls $u \in H^{-s}(0,T)$, we prove in \cref{sec:solution} that we can indeed build a regular solution $z \in Z$ for the controls we construct.
\end{proof}

In order to ensure the convergence of some infinite sums, we work in the sequel with control times $T$ small enough, i.e.\ $T \in (0,T^\star]$, where we define
\begin{equation} \label{Tstar}
    T^\star := s \rho_1 L.
\end{equation}
In particular, recalling \eqref{estimate.algo1}, \eqref{omk} and \eqref{lambdak}, this guarantees that there exists $C > 0$ such that
\begin{align} 
    \label{tstar.omk.sum}
    \sum_{k\in\N} \langle k \rangle \omk^{-2s} e^{2\lambda_k T} & \leq C, \\
    \label{tstar.k2}
    \sum_{k\in \N} \langle k \rangle e^{-2s\rho_1 L k^2} e^{2\lambda_k T^\star} & \leq C,
\end{align}
where the second sum converges because $2\lambda_k \sim k^2$.

\begin{proposition} \label{prop:surjectif}
 For every $T \in (0,T^\star]$ and $(y_k)_{k\in\N} \in \ell^1$, there exists $v \in H^{-s}(0,T)$ satisfying $y_k = Q_{k,T}(v,v)$ for every $k\in\N$, with a size
 \begin{equation} \label{v.hs.y}
  \|v\|_{H^{-s}} \leq 2 \gamma(s)^{-\frac12} \|y\|_{\ell^1}^{\frac12}
 \end{equation}
 and which moreover drives the linear state from zero back to zero, i.e.,
 \begin{equation} \label{vj.moments}
  \forall j \in \N, \quad \int_0^T e^{-j^2 (T-t)} v(t) \dd t = 0.
 \end{equation}
\end{proposition}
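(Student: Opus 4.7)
We look for $v$ in the form $v = \sum_{k \in \N} c_k w_k + r$, where $w_k := v_{\omega_k, \tau, \lambda_k}$ is the elementary oscillating control from \eqref{def:v.omega.lambda}, the coefficients $c_k \in \R$ are to be chosen, and $r$ is a small linear correction enforcing the moment conditions \eqref{vj.moments}. For each $k \in \N$, we pick $\omega_k$ so that $\ln\sqrt{\omega_k}$ lies in the middle of an elementary block $\Theta_{k,h}$ produced by Alg.~\ref{algo} whose sign agrees with $\mathrm{sgn}(y_k)$; since each line of the algorithm contains infinitely many positive and negative blocks, such a choice exists with $\omega_k$ arbitrarily large. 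The duration $\tau \in (0,T]$ will be chosen small at the end.

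\textbf{Diagonal asymptotics.} After the weight change $\widetilde{w}_k(t) := e^{\lambda_k t} w_k(t) = \sin(\omega_k t)\mathbf{1}_{[0,\tau]}(t)$, the diagonal quadratic form \eqref{def:Qk} becomes an integral against the modified kernel $\widetilde{K}_k(\sigma) := e^{\lambda_k |\sigma|} K_{\Theta_k}(\sigma)$. Since $\inf\supp\Theta_k \geq \rho_1 L k^4$ by \eqref{estimate.algo1} whereas $\lambda_k \sim k^2$, the exponent $j^2 - \lambda_k$ remains comparable to $j^2$ for every $j \in \supp\Theta_k$; hence $\widetilde{K}_k$ shares the plateau asymptotics of $K_{\Theta_k}$ and the analogue of \cref{danslemille5} applied to $\widetilde{K}_k$ yields
\begin{equation*}
 Q_{k,T}(w_k, w_k) = \mathrm{sgn}(y_k)\, \gamma(s)\, \tau\, \omega_k^{-2s}\,(1 + \varepsilon_k), \qquad |\varepsilon_k| \leq \tfrac{1}{2},
\end{equation*}
provided the $\omega_k$ are chosen far enough along their line so that the remainder $C \omega_k^{-2\beta}/T$ is dominated by the plateau constant. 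Combined with \cref{v.frac}, this also gives $\|w_k\|_{H^{-s}}^2 = \tfrac{\tau}{2}\omega_k^{-2s}(1+o(1))$ as $\tau \to 0$, and the matching $c_k^2 = |y_k|\omega_k^{2s}/(\gamma(s)\tau)$ then yields $c_k^2 \|w_k\|_{H^{-s}}^2 \approx |y_k|/(2\gamma(s))$, a $\tau$-independent budget.

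\textbf{Off-diagonal estimates and fixed point.} The crucial step, and the main obstacle, is to control the off-diagonal terms $Q_{k,T}(w_l, w_{l'})$ for $(l,l') \neq (k,k)$ and the $H^{-s}$ cross terms between the $w_l$'s. By \eqref{v.loc}, the Fourier transform of $\widetilde{w}_l$ is polynomially localized near $\pm \omega_l$; on the other hand, the Fourier multiplier of $\widetilde{K}_k$ is essentially supported at frequencies $\xi$ with $\ln\sqrt{|\xi|} \in \supp\Theta_k$. Since $\ln\sqrt{\omega_l} \in \supp\Theta_l$ and the supports of distinct $\Theta_k$'s are separated by gaps $\gtrsim L(k^2 + l^2)$ by \eqref{estimate.algo2}, the resulting multiplicative gap in the $\omega$-variable between $\omega_l$ and the relevant frequencies of $\widetilde{K}_k$ is \emph{exponentially large}. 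Repeating the $I_3$-type splitting that controls the far-field contribution in the proof of \cref{danslemille_new} then yields bounds on $Q_{k,T}(w_l, w_{l'})$ and on $\langle w_l, w_{l'} \rangle_{H^{-s}}$ with a sparsity factor $\eta_{k,l,l'}$ summable in all three indices, uniformly in $\tau \in (0, T^\star]$. Setting $x_k := c_k^2 \omega_k^{-2s}$, the system $(Q_{k,T}(v,v))_k = (y_k)_k$ thus becomes a small perturbation, in a weighted $\ell^1$-space, of the diagonal bijection $x_k \mapsto \mathrm{sgn}(y_k)\, \gamma(s)\, \tau\, x_k$, and a Banach contraction produces a solution with $\sum_k x_k \leq 2 (\gamma(s)\tau)^{-1} \|y\|_{\ell^1}$. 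By the cross-term $H^{-s}$ estimates,
\begin{equation*}
 \Bigl\| \sum_{k \in \N} c_k w_k \Bigr\|_{H^{-s}}^2 \leq (1+o(1))\, \tfrac{\tau}{2} \sum_k x_k \leq (1+o(1))\, \gamma(s)^{-1} \|y\|_{\ell^1},
\end{equation*}
which lies comfortably below the target $4 \gamma(s)^{-1} \|y\|_{\ell^1}$.

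\textbf{Moment correction and conclusion.} An integration by parts on each $w_k$ shows that the residual moments $d_j := \int_0^T e^{-j^2(T-t)} \sum_k c_k w_k(t)\,\dd t$ satisfy $|d_j| \lesssim e^{-j^2(T-\tau)} \sum_k |c_k| \omega_k^{-1}$, so $(d_j)_{j \in \N} \in D_T$ with arbitrarily small norm as $\tau \to 0$ and the $\omega_k$ are taken large enough. Setting $r := \mathfrak{L}_1^T(d)$ from \cref{thm:moments} then enforces \eqref{vj.moments}, and the $H^{-s}$-norm of $r$, bounded by its $L^2$-norm, is negligible compared to $\| \sum_k c_k w_k \|_{H^{-s}}$; the contribution of $r$ to each $Q_{k,T}(v,v)$ is similarly negligible and is absorbed into the previous contraction argument, which we rerun with this extra perturbation to produce the final $v$ satisfying $Q_{k,T}(v,v) = y_k$ exactly, \eqref{vj.moments}, and the size bound $\|v\|_{H^{-s}} \leq 2 \gamma(s)^{-1/2} \|y\|_{\ell^1}^{1/2}$. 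The hardest step is the off-diagonal analysis, which is precisely what the diagonal-sparse construction of Alg.~\ref{algo} and the separation estimate \eqref{estimate.algo2} of \cref{lemma:algo} are designed to enable.
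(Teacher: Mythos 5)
Your construction (elementary oscillating controls $v_{\omega_k,\tau,\lambda_k}$ at frequencies selected on plateaus of the matching sign, diagonal plateau asymptotics, off-diagonal smallness via the separation estimate \eqref{estimate.algo2}, and a moment correction via \cref{thm:moments}) is exactly the paper's construction. The gap is in the final inversion step, where you claim ``a Banach contraction produces a solution'' for the nonnegative amplitudes $x_k = c_k^2\omega_k^{-2s}$ after \emph{fixing the signs according to} $\mathrm{sgn}(y_k)$. This does not work. The off-diagonal and error terms are only small in a \emph{summed} sense, $\sum_k \langle k\rangle\,|\text{error}_k| \leq \varepsilon_n\|y\|_{\ell^1}$ (this is all that \cref{prop:epsilon_n} and your sparsity factors give); they are not small componentwise relative to $|y_k|$. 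For an index $k$ with $|y_k|$ very small or zero, the $k$-th equation reads $\mathrm{sgn}(y_k)\gamma(s)\tau x_k(1+\varepsilon_k) = y_k - \text{error}_k$ with a right-hand side whose sign you cannot control, so the ``solution'' $x_k$ may be forced to be negative, i.e.\ your ansatz has placed the $k$-th oscillation on the wrong plateau. You need the sign of the $k$-th correction to remain free during the iteration, which means the frequency $\omega_k$ must be allowed to switch plateaus as the iterate evolves. Once you do that, the map from the iterate to the control is no longer Lipschitz: it involves $|y_k|^{1/2}$ (not Lipschitz at $0$) and a frequency that jumps when $y_k$ changes sign. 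So a Banach fixed point is unavailable in either formulation.

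The paper resolves this by running the fixed point in the \emph{target} variable: with $\mathcal{F}(y) := \bigl(\bar y_k + y_k - Q_{k,T}(\widetilde V_{n_0}(y),\widetilde V_{n_0}(y))\bigr)_k$, estimate \eqref{epsilon_n} shows $\mathcal{F}$ maps a ball of $\ell^1$ into itself, it is continuous (though not Lipschitz), and it is \emph{compact} precisely because of the weight $\langle k\rangle$ in \eqref{epsilon_n}; the Schauder theorem then gives a fixed point $y$, whose components may differ in sign from $\bar y$, and $Q_{k,T}(\widetilde V_{n_0}(y),\widetilde V_{n_0}(y)) = \bar y_k$. So the weight $\langle k\rangle$ that you might regard as a technical decoration is actually load-bearing: it supplies the compactness that substitutes for the contraction property you do not have. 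Everything else in your outline (diagonal asymptotics, off-diagonal sparsity, the moment corrector $\mathfrak{L}_1^T(d)$ and its negligibility) matches the paper's argument; replace the Banach step by this Schauder argument and the proof closes.
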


The fact that we need to ensure \eqref{vj.moments} for all $j \in \N$ (and not only even values of $j$) comes from the fact that the final control $u(t)$ is computed as $v(4t)$, which thus ensures that only the linearly controllable even modes go back to zero at the final time (see e.g.\ \eqref{z1.T} in the proof of \cref{prop:quad.hs}).

The remainder of this section is devoted to the proof of \cref{prop:surjectif}. The existence of $v$ is proved in the next paragraph thanks to a Schauder fixed point argument. Heuristically, the core of the proof relies on an almost explicit construction of $v$ as an infinite sum of elementary controls oscillating at frequencies corresponding to appropriate plateaus of $\Theta_k$ (either positive or negative plateaus depending on the sign of $y_k$). The fact that we only obtain controllability for small times is not restrictive, as one can always choose a null control in the remaining time interval.

\subsection{Construction of approximate controls}
\label{sec:construction}

Let $y \in \ell^1(\R)$. We construct a sequence of controls $\widetilde{V}_n(y) \in H^{-s}(0,T)$, for which we are going to prove that $(Q_{k,T}(\widetilde{V}_n(y),\widetilde{V}_n(y)))_{k\in\N} \approx y$ in $\ell^1$ for $n$  large enough. Let $T > 0$ and $n \in \N$. We define
\begin{itemize}
 \item a sequence of scalar multipliers $(\beta_l)_{l\in\N}$ as 
 \begin{equation} \label{beta_T}
  \beta_l := \left(\frac{4}{T \gamma_{\mathrm{sign}(y_l)}(s)}\right)^{\frac12},
 \end{equation}
 where $\gamma_\pm(s) \approx \gamma(s)$ are defined below in \eqref{gamma+} and \eqref{gamma-},
 \item an increasing sequence of frequencies $(\omega_l)_{l\in\N}$ where $\omega_l \geq 1$ is chosen such that
 \begin{itemize}
 \item $\ln \sqrt{\omega_l}$ belongs to the support of $\Theta_{l,\max(l,n+1)}$,
 \item $\ln \sqrt{\omega_l}$ is in the middle of a plateau of length $2L$ of $\Theta_l$,
 \item $\Theta_l(\ln \sqrt{\omega_l}) = \mathrm{sgn}~y_l$, 
 \end{itemize}
 \item a sequence of oscillating controls for $l \in \N$ (see \eqref{def:v.omega.lambda}),
 with $\lambda_l$ defined in \eqref{lambdak}, 
 \begin{equation} \label{vl}
  v_l := v_{\omega_l,\frac T4,\lambda_l},
 \end{equation}
 \item a purely oscillating quadratic candidate control
 \begin{equation} \label{V_n}
  V_n(y) := \sum_{l \in \N} {\beta_l} |y_l|^{\frac12} \omega_l^s v_l
  ,
 \end{equation}
 \item a slightly corrected control, by some linear map $L$ defined in \cref{lemma:LV},
 \begin{equation} \label{Vn.LVn}
  \widetilde{V}_n(y) := V_n(y) - L(V_n(y))
 \end{equation}
 which drives the linear state from zero back to zero and where $L(V_n(y)) \in H^1_0(0,T)$.
\end{itemize}

\begin{rk} \label{rk.omega.n}
 The sequence of controls $\widetilde{V}_n(y)$ depends on $n$ in a subtle way. Indeed, the only place where $n$ comes into play in the definition of this sequence is that we require that $\ln \sqrt{\omega_l}$ belongs to the support of $\Theta_{l,\max(l,n+1)}$. It corresponds to the fact that, if $T$ is small, we will choose $n$ large enough such that all the frequencies $\omega_l$ (including the smallest one $\omega_1$) are large enough. Heuristically, the frequencies should at least satisfy $\omega_l \geq 1/T$ (so that the elementary controls oscillate sufficiently). Rigorously we should therefore write $\omega_l(n)$ to stress this dependency, but we will omit this indexing because the computations are already quite heavy. Instead, we will use the following properties:
 \begin{gather}
  \label{n.omcmp}
  \forall n \in \N, \forall l \in \N^*, \quad \omega_l(n) \geq \omega_1(n), \\
  \label{n.om1}
  \lim_{n \to +\infty} \omega_1(n) = + \infty, \\
  \label{n.omsum}
  \forall a > 0, \quad \lim_{n \to +\infty} \sum_{l \geq 1} \omega_l^{-a}(n) = 0.
 \end{gather}
 The last property is obtained from Alg.~\ref{algo} because $\inf \supp \Theta_{l,n+1} \geq L(l-1)^2 + Ln^2$ from which we deduce $\omega_l(n) \leq C e^{-Ll} e^{-Ln^2}$ (which implies that the sums are finite and that they tend to zero).
\end{rk}

The following proposition is the cornerstone of \cref{sec:infini} and is a consequence of the careful definition of the sequence $\widetilde{V}_n(y)$.

\begin{proposition} \label{prop:epsilon_n}
 Let $T \in (0,T^\star]$. There exists a sequence $\varepsilon_n(T) \to 0$ such that, for every $y \in \ell^1$,
 \begin{equation} 
  \label{epsilon_n}
  \sum_{k\in\N} \sup_{\tau\in[0,T]} {\langle k \rangle} \left| Q_{k,\tau}\left(\widetilde{V}_n(y),\widetilde{V}_n(y)\right) - \min\left(1,\frac{4\tau}{T}\right) y_k \right|
  \leq \varepsilon_n \| y \|_{\ell^1}
 \end{equation}
 and moreover
 \begin{align}
  \label{Vn.Hs.main}
  \| \widetilde{V}_n(y) \|_{H^{-s}} \leq (2(1-\epsilon)\gamma(s))^{-\frac12} (1 + \varepsilon_n) \| y \|_{\ell^1}^{\frac12}.
 \end{align}
\end{proposition}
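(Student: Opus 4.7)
The plan is to expand the bilinear quadratic form $Q_{k,\tau}(V_n(y),V_n(y))$ and the squared norm $\|V_n(y)\|_{H^{-s}}^2$ via the decomposition $V_n(y)=\sum_l c_l v_l$ with $c_l=\beta_l|y_l|^{1/2}\omega_l^s$, then to show that the diagonal terms $(l,l')=(k,k)$ yield the announced main contribution, while all other terms contribute only to the error $\varepsilon_n$, thanks to spectral separation (when $l\neq l'$) and to the disjointness of supports of the functions $(\Theta_k)_k$ (when $l=l'\neq k$). The correction $L(V_n(y))$ from \eqref{Vn.LVn} is shown to be negligible as a perturbation.

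\textbf{Step 1 (diagonal main term).} For the index $(k,k)$, the weights $e^{\lambda_k t}$ inside $Q_{k,\tau}$ exactly cancel the decay $e^{-\lambda_k t}$ built into $v_k=v_{\omega_k,T/4,\lambda_k}$, so that the integrand reduces to $v_{\omega_k,T/4,0}(t)v_{\omega_k,T/4,0}(t')e^{\lambda_k|t-t'|}K_{\Theta_k}(t-t')$. I would then adapt Proposition~\ref{danslemille_new} and Corollary~\ref{danslemille5} to the \emph{modified} kernel $\widetilde K_k(\sigma):=e^{\lambda_k|\sigma|}K_{\Theta_k}(\sigma)$. Thanks to \eqref{estimate.algo1} and \eqref{lambdak} one has $\lambda_k\ll j^2$ for every $j$ with $\Theta_k(\ln j)\neq 0$, so the Fourier transform of $\widetilde K_k$ still satisfies the asymptotics $\widehat{\widetilde K_k}(\xi)\sim 2\gamma(s)|\xi|^{-2s}\Theta_k(\ln\sqrt{|\xi|})$ up to a power-decay remainder. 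Since $\ln\sqrt{\omega_k}$ lies in the middle of a plateau of $\Theta_k$ of value $\mathrm{sgn}(y_k)$, the analogue of \cref{danslemille5} gives
\begin{equation*}
Q_{k,\tau}(v_k,v_k)=\gamma_{\mathrm{sgn}(y_k)}(s)\,\min(\tau,T/4)\,\omega_k^{-2s}\mathrm{sgn}(y_k)+O(\omega_k^{-2s-2\beta}),
\end{equation*}
uniformly in $\tau\in[0,T]$. Multiplying by $c_k^2=\beta_k^2|y_k|\omega_k^{2s}=(4/T\gamma_{\mathrm{sgn}(y_k)}(s))|y_k|\omega_k^{2s}$ produces exactly the target $\min(1,4\tau/T)y_k$ up to $O(\omega_k^{-2\beta})|y_k|$.

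\textbf{Step 2 (off-diagonal terms).} I would split them into two regimes. When $l=l'\neq k$, the Plancherel identity $Q_{k,\tau}(v_l,v_l)=\frac{1}{2\pi}\int|\widehat{v_l e^{\lambda_k\cdot}\mathbf 1_{[0,\tau]}}(\xi)|^2\widehat{\widetilde K_k}(\xi)\,d\xi$, combined with the Fourier localization of $\widehat{v_l}$ near $\pm\omega_l$ given by \eqref{v.loc} and with the fact that $\Theta_k$ vanishes in a ball of radius $\rho_1L(k^2+l^2+(n+1)^2)$ around $\ln\sqrt{\omega_l}$ (by \eqref{estimate.algo2}), shows that $Q_{k,\tau}(v_l,v_l)$ is controlled by $\omega_l^{-2s-2\beta}$ times powers of the separation, uniformly in $\tau$. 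When $l\neq l'$, the supports of $\widehat{v_l}$ and $\widehat{v_{l'}}$ are separated by $|\omega_l-\omega_{l'}|$, and \eqref{v.loc} together with the boundedness of $\widehat{\widetilde K_k}(\xi)\langle\xi\rangle^{2s}$ yields a quantitative decay $|Q_{k,\tau}(v_l,v_{l'})|\lesssim \omega_{\min(l,l')}^{-2s}\langle T(\omega_l-\omega_{l'})\rangle^{-1}$. Multiplying by $c_lc_{l'}$ and summing with the weight $\langle k\rangle$: the bound \eqref{tstar.omk.sum} absorbs the factors $e^{2\lambda_k T^\star}$ arising from the modified kernel, while the exponential growth of $\omega_l$ in $l$ (see Rmk.~\ref{rk.omega.n}) together with \eqref{n.omsum} yields a total error of order $\varepsilon_n\|y\|_{\ell^1}$ with $\varepsilon_n\to 0$ by \eqref{n.om1}.

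\textbf{Step 3 (correction and $H^{-s}$ norm).} Each moment $d_j(v_l)=\int_0^{T/4}\sin(\omega_l t)e^{-\lambda_l t}e^{-j^2(T-t)}dt$ is of order $1/\omega_l$, so by Proposition~\ref{thm:moments} the correction $L(V_n(y))=\mathfrak L_1^T(d(V_n(y)))$ satisfies $\|L(V_n(y))\|_{H^1_0}\lesssim\varepsilon_n\|y\|_{\ell^1}^{1/2}$; its contribution to both \eqref{epsilon_n} and \eqref{Vn.Hs.main} is therefore absorbed into $\varepsilon_n$. For the norm itself I would expand $\|V_n(y)\|_{H^{-s}}^2=\sum_{l,l'}c_lc_{l'}\langle v_l,v_{l'}\rangle_{H^{-s}}$. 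The diagonal part is estimated via Lemma~\ref{v.frac} with $\theta=s$ and $\lambda=\lambda_l$, using $(1-e^{-\lambda_l T/2})/(4\lambda_l)\leq T/8$, which gives $c_l^2\|v_l\|_{H^{-s}}^2\leq |y_l|/(2\gamma_{\mathrm{sgn}(y_l)}(s))+O(\omega_l^{-1})|y_l|$; summing and using $\gamma_{\pm}(s)\geq (1-\epsilon)\gamma(s)$ (which follows from the definitions of $\gamma_\pm$ via a variant of \cref{jeconverge}) yields the main bound. Off-diagonal terms are handled by the same Fourier-separation argument as in Step~2 applied to the kernel $\langle\xi\rangle^{-2s}$ instead of $\widehat{\widetilde K_k}$, and contribute $O(\varepsilon_n)\|y\|_{\ell^1}$.

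\textbf{Main obstacle.} The most delicate point is to control the interaction between the potentially large factor $e^{\lambda_k T^\star}$ introduced by the exponential weight inside $Q_k$ (which grows like $e^{ck^2}$) and the exponential smallness of $\omega_k^{-2s}\sim e^{-2s k^4}$, while summing over $k$ with the additional weight $\langle k\rangle$. The summability \eqref{tstar.omk.sum} and the careful choice $T^\star=s\rho_1 L$ are what make this balance work, but every estimate in Steps~1--3 must be stated with explicit dependence on $k$, $l$, $l'$ so that the final triple sum converges to an $\varepsilon_n$ going to zero. A secondary difficulty is the uniformity in $\tau\in[0,T]$ of the main asymptotic, which requires re-proving Corollary~\ref{danslemille5} with integration domain $[0,\min(\tau,T/4)]^2$ rather than $[0,T]^2$.
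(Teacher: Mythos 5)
Your plan follows essentially the same route as the paper: the same expansion of $Q_{k,\tau}(\widetilde V_n(y),\widetilde V_n(y))$ into diagonal, almost-diagonal ($l=l'\neq k$), off-diagonal ($l\neq l'$) and correction terms (this is exactly the decomposition \eqref{couleurs}), the same use of \cref{danslemille_new}--\cref{danslemille5} for the main term, of the frequency localization \eqref{v.loc} and the support separation \eqref{estimate.algo2} for the almost-diagonal terms, of quasi-orthogonality for $l\neq l'$, and of the smallness of the moments for $L(V_n(y))$. The only methodological difference in Steps~1--2 is cosmetic: you propose to establish the asymptotics of $\widehat{\widetilde K_k}$ for the weighted kernel $e^{\lambda_k|\sigma|}K_{\Theta_k}(\sigma)$ directly, whereas the paper writes $K_{\Theta_k,\lambda_k}=K_{\Theta_k}+(K_{\Theta_k,\lambda_k}-K_{\Theta_k})$ and bounds the difference via \eqref{khat3}; both work because $j^2\geq 2\lambda_k$ on $\supp\Theta_k(\ln\cdot)$.

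There is one step in your Step~3 that would fail if executed naively: the cross term $Q_{k,\tau}\bigl(L(V_n(y)),V_n(y)\bigr)$. Knowing $\|L(V_n(y))\|_{H^1}\leq\varepsilon_n\|y\|_{\ell^1}^{1/2}$ is not enough to conclude by a Cauchy--Schwarz bound in $L^2$, because $V_n(y)$ is \emph{not} uniformly bounded in $L^2$: one has $\|V_n(y)\|_{L^2}^2\approx\sum_l|y_l|\,\omega_l^{2s}$, which blows up as $n\to\infty$. The quantity $V_n(y)$ is only controlled in $H^{-s}$, so the cross term must be estimated by an $H^{s}\times H^{-s}$ duality applied to the localized functions $L(V_n(y))\psi_k$ and $V_n(y)\psi_k$ with $\psi_k=\mathbf 1_{[0,\tau]}e^{\lambda_k t}$; bounding $\|L(V_n(y))\psi_k\|_{H^s}$ then requires a fractional Leibniz (Kato--Ponce) inequality, since the sharp cut-off $\mathbf 1_{[0,\tau]}$ lies in $H^s$ only because $s<\tfrac12$. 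This is precisely what \cref{prop:LV.LV} does, and it is the one idea missing from your outline; the rest of the argument, including the balance between $e^{2\lambda_kT^\star}$ and $\omk^{-2s}$ via \eqref{tstar.omk.sum}, is correctly identified.
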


\begin{proof}[Proof of \cref{prop:surjectif} from \cref{prop:epsilon_n}]
 Let $T \in (0,T^\star]$ and $n_0 \in \N$ large enough such that $\varepsilon_{n_0}(T) \leq \sqrt{2}(1-\epsilon) - 1$. Let $\bar y \in \ell^1$. We define the map
 \begin{equation}
 \mathcal{F} : 
 \left\{
 \begin{aligned}
 \ell^1(\R) & \to \ell^1(\R),
 \\
 y & \mapsto (\bar y_k + y_k - Q_{k,T}(\widetilde{V}_{n_0}(y),\widetilde{V}_{n_0}(y)))_{k\in\N}.
 \end{aligned}
 \right.
 \end{equation}
 Then, by \eqref{epsilon_n}, $\mathcal{F}$ maps the ball of radius $2 \| \bar y \|_{\ell^1}$ of $\ell^1$ into itself as long as $\varepsilon_n \leq \frac 12$. Moreover, thanks to the weight $\langle k \rangle$ in the left-hand side of \eqref{epsilon_n}, the map $\mathcal{F}$ is compact. By the Schauder fixed point theorem, there therefore exists $y \in \ell^1$ with $\|y\|_{\ell^1} \leq 2 \|\bar y\|_{\ell^1}$ such that $\mathcal{F}(y) = y$. Letting $v := \widetilde{V}_{n_0}(y)$ one has $Q_{k,T}(v,v) = \bar{y}_k$ for every $k\in\N$ and, by \eqref{Vn.Hs.main}, 
 \begin{equation}
  \|v\|_{H^{-s}} 
  \leq (2(1-\epsilon)\gamma(s))^{-\frac12} (1+\sqrt{2}(1-\epsilon)-1) \| y\|_{\ell^1}^{\frac12} 
  \leq \gamma(s)^{-\frac12} (2 \| \bar y \|_{\ell^1})^{\frac12}
 \end{equation}
 which yields the estimate \eqref{v.hs.y}. This concludes the proof of \cref{prop:surjectif}.
\end{proof}

In the sequel, we prove \cref{prop:epsilon_n}. First, the sum~\eqref{V_n} defining $V_n$ converges in $H^{-s}(0,T)$ for $y \in \ell^1$ (see \cref{lemma:Vn.Hs_new} with $\tau = T$ and $\lambda = 0$, in \cref{sec:Vn.Hs}). The correction $L(V_n(y))$ is asymptotically small in $H^1_0(0,T)$ (see \cref{lemma:LV} in \cref{sec:LVn}). Combining these facts yields the estimate \eqref{Vn.Hs.main}. To prove the main estimate~\eqref{epsilon_n}, we decompose the sum thanks to definitions \eqref{V_n} and \eqref{Vn.LVn} as
\begin{equation} \label{couleurs}
 \begin{split}
  \sum_{k\in\N} \sup_{\tau \in [0,T]} {\langle k \rangle} \Big| Q_{k,\tau} & (\widetilde{V}_n(y),\widetilde{V}_n(y)) - \min\left(1,\frac{4\tau}{T}\right) y_k \Big|
  \\
  & \leq \sum_{k\in\N} \sup_{\tau \in [0,T]} {\langle k \rangle} \Big| {\beta_k^2} \omega_k^{2s} |y_k| Q_{k,\tau}(v_k,v_k) - \min\left(1,\frac{4\tau}{T}\right) y_k \Big|
  \\
  & \quad + \sum_{k\in\N} {\langle k \rangle} \sup_{\tau \in [0,T]} \sum_{l \neq k} {\beta_l^2} \omega_l^{2s} |y_l| \left| Q_{k,\tau}(v_l,v_l) \right|
  \\
  & \quad + \sum_{k\in\N} {\langle k \rangle} \sup_{\tau \in [0,T]} \sum_{l \neq l'} {\beta_l \beta_{l'}} \omega_l^s \omega_{l'}^s |y_l|^{\frac12} |y_{l'}|^{\frac12} \left| Q_{k,\tau}(v_l,v_{l'}) \right|
  \\
  & \quad + \sum_{k\in\N} {\langle k \rangle} \sup_{\tau \in [0,T]} \big( \left| Q_{k,\tau}(L(V_n(y)),L(V_n(y))) \right| 
  + 2 \left| Q_{k,\tau}(L(V_n(y)),V_n(y)) \right| \big).
 \end{split}
\end{equation}
The first term is estimated in \cref{sec:danslemille}. The second term is estimated in \cref{sec:almost.diagonal}, the third term in \cref{sec:ortho}, while the last two terms due to the correction to bring the linear part of the state back to zero are estimated in \cref{sec:ortho}.

\subsection{Regularity and estimate of the constructed control}
\label{sec:Vn.Hs}

The convergence of the sum~\eqref{V_n} defining $V_n$ is obtained thanks to the quasi-orthogonality of the elementary oscillating controls $v_l$. We start with technical lemmas yielding a precise statement.

\begin{lemma}
 \label{lemma:orthogonal}
 There exists $C > 0$ such that, for every $\tau > 0$ and $\omega,\omega' \in \R$,
 \begin{equation} 
  \label{eq:orthogonal}
  \int_\R \langle \tau(\omega - \xi) \rangle^{-1} \langle \tau(\omega'-\xi) \rangle^{-1} \dd \xi
  \leq
  C \frac{1 + \ln \langle \tau(\omega-\omega') \rangle }{\tau \langle \tau(\omega-\omega')\rangle}
  .
 \end{equation}
\end{lemma}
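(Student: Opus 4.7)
The plan is to reduce the inequality to a scalar convolution estimate by a change of variables, then split the integration domain into three well-chosen regions to obtain the logarithmic factor.

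First, I would set $u := \tau(\xi - \omega)$ and $a := \tau(\omega - \omega')$. Then $\tau(\omega - \xi) = -u$ and $\tau(\omega' - \xi) = -(u + a)$, so after the change of variables $\dd \xi = \dd u / \tau$ the left-hand side becomes
\begin{equation*}
 \int_\R \langle \tau(\omega - \xi) \rangle^{-1} \langle \tau(\omega'-\xi) \rangle^{-1} \dd \xi
 \;=\; \frac{1}{\tau} \int_\R \frac{\dd u}{\langle u \rangle \, \langle u + a \rangle}.
\end{equation*}
Since $\langle \cdot \rangle$ is even and $\langle a \rangle = \langle -a \rangle$, it suffices to prove that there exists $C > 0$ such that, for every $a \in \R$,
\begin{equation*}
 I(a) := \int_\R \frac{\dd u}{\langle u \rangle \, \langle u - a \rangle}
 \;\leq\; C\, \frac{1 + \ln \langle a \rangle}{\langle a \rangle}.
\end{equation*}

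Second, I would handle the easy case $|a| \leq 2$: here $\langle a \rangle$ and $(1+\ln\langle a\rangle)/\langle a\rangle$ are both bounded above and below by positive constants, and the Cauchy--Schwarz inequality gives $I(a) \leq \|\langle \cdot \rangle^{-1}\|_{L^2}^2 < \infty$, so the desired estimate holds.

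Third, for $|a| > 2$, the main work consists in splitting $\R$ into three regions:
\begin{itemize}
 \item the region $R_1 := \{ |u| \leq |a|/3 \}$, where $\langle u - a \rangle \geq |a|/2$ and thus
 \begin{equation*}
  \int_{R_1} \frac{\dd u}{\langle u \rangle \langle u-a \rangle} \leq \frac{2}{|a|} \int_{|u|\leq |a|/3} \frac{\dd u}{\langle u \rangle} \leq \frac{C \ln \langle a \rangle}{\langle a \rangle};
 \end{equation*}
 \item the symmetric region $R_2 := \{ |u - a| \leq |a|/3 \}$, treated by the substitution $u \mapsto a-u$, producing the same bound;
 \item the complement $R_3 := \R \setminus (R_1 \cup R_2)$, on which both $\langle u \rangle$ and $\langle u-a \rangle$ are comparable to $|a|$ as long as $|u| \leq 3|a|$, giving a contribution bounded by $C |a| \cdot |a|^{-2} = C/\langle a \rangle$, while the tail $\{ |u| \geq 3|a| \}$ contributes at most $C \int_{|u|\geq 3|a|} u^{-2} \dd u \leq C/\langle a \rangle$.
\end{itemize}
Summing these three contributions yields $I(a) \leq C(1+\ln\langle a \rangle)/\langle a \rangle$, and reinserting the factor $1/\tau$ proves \eqref{eq:orthogonal}.

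The only mildly delicate point is the bookkeeping in region $R_3$: one must separate the bounded annulus $|a|/3 \leq |u| \leq 3|a|$ (whose length $\sim |a|$ combines with the $|a|^{-2}$ pointwise bound) from the tail $|u| \geq 3|a|$ (where integrability of $u^{-2}$ produces the decay); nothing deeper than careful case analysis is required.
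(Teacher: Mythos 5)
Your proof is correct and follows essentially the same route as the paper: a translation/scaling reduction to a one-parameter integral $\int_\R \langle u\rangle^{-1}\langle u-a\rangle^{-1}\dd u$, followed by a splitting of the line into a neighbourhood of $0$, a neighbourhood of $a$, and the remainder, with the logarithm coming from the two neighbourhoods. The paper uses four regions where you use three (subdividing the complement into a bounded annulus and a tail), but the estimates are the same in substance.
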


\begin{proof}
 By translation and scaling, proving inequality \eqref{eq:orthogonal} reduces to proving that, for any $a > 0$,
 \begin{equation} \label{eq:axixi}
  \int_\R \langle \xi \rangle^{-1} \langle a - \xi \rangle^{-1} \dd \xi 
  \leq C \frac{1+\ln \langle a \rangle}{\langle a \rangle}.
 \end{equation}
 This inequality is non-trivial only for large values of $a$. We split the integral in four parts
 \begin{align}
  & \int_{\xi < -\frac{a}{2}} \langle \xi \rangle^{-1} \langle a - \xi \rangle^{-1} \dd \xi 
  \leq 
  \int_{\xi < -\frac{a}{2}} \langle \xi \rangle^{-2} \dd \xi
  \leq C 
  \langle a \rangle^{-1},
  \\ &
  \int_{|\xi|\leq\frac{a}{2}} \langle \xi \rangle^{-1} \langle a - \xi \rangle^{-1} \dd \xi
  \leq C 
  \langle a \rangle^{-1} \int_{|\xi|\leq \frac{a}{2}} \langle \xi \rangle^{-1} \dd \xi 
  \leq C
  \langle a \rangle^{-1} \ln \langle a \rangle,
  \\ &
  \int_{|\xi-a|<\frac{a}{2}} \langle \xi \rangle^{-1} \langle a - \xi \rangle^{-1} \dd \xi
  \leq C
  \langle a \rangle^{-1} \int_{|\xi-a|<\frac{a}{2}} \langle a - \xi \rangle^{-1} \dd \xi
  \leq C
  \langle a \rangle^{-1} \ln \langle a \rangle,
  \\ &
  \int_{\xi \geq \frac32 a} \langle \xi \rangle^{-1} \langle a - \xi \rangle^{-1} \dd \xi
  \leq 
  \int_{\xi' \geq \frac{a}{2}} \langle \xi' \rangle^{-2} \dd \xi
  \leq C
  \langle a \rangle^{-1}.
 \end{align}
 Summing these four estimates proves \eqref{eq:axixi} and concludes the proof of the result.
\end{proof}

\begin{lemma} \label{lemma:ortho}
 There exists $C_s > 0$ such that, for $\lambda,\lambda' \in \R$, $\omega,\omega' \geq 1$ with $\omega \geq 2 \omega'$,  $\tau \in [0,1]$,
 \begin{equation} \label{ortho}
  \int_\R | \widehat{v}_{\omega,\tau,\lambda}(\xi) \widehat{v}_{\omega',\tau,\lambda'}(\xi) | \dd \xi 
  \leq C_s (1+e^{-\lambda \tau})(1+ e^{-\lambda' \tau}) \omega^{-s-\frac12(\frac12-s)} \omega'^{-s-\frac12(\frac12-s)}
  .
 \end{equation}
\end{lemma}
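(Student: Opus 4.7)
The plan is to combine the pointwise bound~\eqref{v.loc} from \cref{lemma:v.TF} with the quasi-orthogonality estimate of \cref{lemma:orthogonal}, then to split the resulting single scalar decay symmetrically between the two frequencies. Concretely, \eqref{v.loc} gives
\begin{equation*}
 |\widehat{v}_{\omega,\tau,\lambda}(\xi)\,\widehat{v}_{\omega',\tau,\lambda'}(\xi)| \leq 16\,\tau^2 (1+e^{-\lambda\tau})(1+e^{-\lambda'\tau}) \sum_{\epsilon,\epsilon'\in\{\pm 1\}} \langle \tau(\xi-\epsilon\omega) \rangle^{-1} \langle \tau(\xi-\epsilon'\omega') \rangle^{-1}.
\end{equation*}
The key geometric observation is that the assumption $\omega \geq 2\omega'$ forces $|\epsilon\omega - \epsilon'\omega'| \geq \omega - \omega' \geq \omega/2$ for each of the four sign combinations. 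Applying \cref{lemma:orthogonal} to each of the four integrals (after the translation $\xi \mapsto \xi - \epsilon\omega$ to match the form of~\eqref{eq:orthogonal}) and using the elementary inequality $(1+\ln\langle x\rangle)/\langle x\rangle \leq C_\eta \langle x\rangle^{-1+\eta}$ for arbitrarily small $\eta > 0$, I obtain
\begin{equation*}
 \int_\R |\widehat{v}_{\omega,\tau,\lambda}(\xi)\,\widehat{v}_{\omega',\tau,\lambda'}(\xi)|\,\dd\xi \leq C_\eta \,\tau\, (1+e^{-\lambda\tau})(1+e^{-\lambda'\tau})\,\langle \tau\omega/2\rangle^{-1+\eta}.
\end{equation*}

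It then remains to verify that, for $\omega \geq 2\omega' \geq 2$ and $\tau \in [0,1]$,
\begin{equation*}
 \tau\,\langle \tau\omega/2\rangle^{-1+\eta} \leq C_s\,\omega^{-s/2-1/4}\,\omega'^{-s/2-1/4}.
\end{equation*}
I would split into two regimes. When $\tau\omega \leq 2$, the left-hand side is $\lesssim \tau$; since $1 \leq \omega' \leq \omega \leq 2/\tau$, one has $\omega^{-s/2-1/4}\omega'^{-s/2-1/4} \gtrsim \tau^{s+1/2}$, so the inequality reduces to $\tau^{1/2-s} \leq C$, which holds since $s < 1/2$ and $\tau \leq 1$. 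When $\tau\omega \geq 2$, the left-hand side is $\lesssim \tau^\eta\omega^{-1+\eta}$; using $\omega' \leq \omega/2$ to bound $\omega'^{s/2+1/4} \leq \omega^{s/2+1/4}$, the inequality reduces to $\omega^{s-1/2+\eta}\tau^\eta \leq C$, which holds for any $\eta \in (0, 1/2-s)$ since $\tau \leq 1$ and $\omega \geq 1$.

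The main subtle point is that the direct orthogonality estimate produces decay only in the larger frequency $\omega$, whereas the statement asks for matched decay at the symmetric rate $-s/2-1/4$ in \emph{both} $\omega$ and $\omega'$. The crucial input making this symmetrization possible is the global constraint $s < 1/2$ fixed at the start of \cref{sec:infini}: it is precisely what leaves enough slack in the exponents to redistribute the one-sided decay evenly between the two frequencies, while also absorbing the logarithmic correction coming from \eqref{eq:orthogonal}.
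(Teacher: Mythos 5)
Your proof is correct and follows essentially the same route as the paper: bound the product via \eqref{v.loc}, integrate the four cross terms with \cref{lemma:orthogonal}, absorb the logarithm into a slightly weaker power of $\langle \tau(\omega\pm\omega')\rangle$ (the paper uses $\langle x\rangle^{-\frac12-s}$ where you use $\langle x\rangle^{-1+\eta}$, both relying on $s<\tfrac12$), and finally use $\omega\geq 2\omega'$ to split the one-sided decay symmetrically between the two frequencies. Your two-regime case analysis in the last step simply makes explicit what the paper compresses into ``we conclude using $\omega\geq 2\omega'$.''
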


\begin{proof}
 Thanks to estimate \eqref{v.loc} from \cref{lemma:v.TF} and \eqref{eq:orthogonal} from \cref{lemma:orthogonal},
 \begin{equation}
  \begin{split}
   &  \int_\R | \widehat{v}_{\omega,\lambda}(\xi) \widehat{v}_{\omega',\lambda'}(\xi) | \dd \xi
  \\
  & \leq 16 \tau^2 (1+e^{-\lambda \tau})(1+ e^{-\lambda'\tau})
  \int_\R \langle \tau(\omega\pm\xi)\rangle^{-1} \langle \tau(\omega'\pm\xi)\rangle^{-1} \dd \xi
  \\
  & \leq C \tau (1+e^{-\lambda \tau})(1+ e^{-\lambda'\tau})
  (\langle \tau(\omega+\omega') \rangle^{-\frac12-s}
  + \langle \tau(\omega-\omega') \rangle^{-\frac12-s}
  )
  .
  \end{split}
 \end{equation}
 We used the assumption $s < \frac12$ so that $s+\frac12 < 1$ and we conclude using $\omega \geq 2 \omega'$.
\end{proof}

\begin{proposition} \label{lemma:Vn.Hs_new}
 Let $T > 0$. There exists a sequence $\varepsilon_n(T) \to 0$ such that, for every $y \in \ell^1$, every $\lambda \geq 0$ and every $\tau \in [0,T]$,
 \begin{equation} \label{Vn.Hs_new}
  \| t \mapsto \mathbf{1}_{[0,\tau]}(t) e^{\lambda t} V_n(y)(t) \|_{H^{-s}} \leq (2 {(1-\epsilon)}\gamma(s))^{-\frac12} (1+\varepsilon_n) e^{\lambda T} \|y\|_{\ell^1}^{\frac12}
  .
 \end{equation}
\end{proposition}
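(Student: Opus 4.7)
Set $\tau' := \min(\tau, T/4)$. From the definitions \eqref{def:v.omega.lambda} and \eqref{vl}, one sees that for every $l \in \N$ the function $t \mapsto \mathbf{1}_{[0,\tau]}(t) e^{\lambda t} v_l(t)$ coincides with $v_{\omega_l, \tau', \lambda_l - \lambda}$. Plancherel's identity then yields
\begin{equation*}
\big\| t \mapsto \mathbf{1}_{[0,\tau]}(t) e^{\lambda t} V_n(y)(t) \big\|_{H^{-s}(\R)}^2 = \sum_{l, l' \in \N} \beta_l \beta_{l'} |y_l|^{\frac12} |y_{l'}|^{\frac12} \omega_l^s \omega_{l'}^s \big\langle v_{\omega_l, \tau', \lambda_l - \lambda},\, v_{\omega_{l'}, \tau', \lambda_{l'} - \lambda} \big\rangle_{H^{-s}(\R)}.
\end{equation*}
The strategy is to isolate the diagonal ($l = l'$) part, which delivers the announced constant, and to control the off-diagonal ($l \neq l'$) part by the $\varepsilon_n$ correction.

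\emph{Diagonal terms.} The plan is to apply \cref{v.frac} with $\theta = s$ and decay parameter $\lambda_l - \lambda \in \R$. Since $\lambda, \lambda_l \geq 0$ and $\tau' \leq T/4$, the main factor $\frac{1-e^{-2(\lambda_l-\lambda)\tau'}}{4(\lambda_l-\lambda)} \cdot \frac{\omega_l^2}{\omega_l^2+(\lambda_l-\lambda)^2}$ is dominated uniformly by $\frac{\tau'}{2} e^{2\lambda T}$ (via the elementary estimate $|1 - e^{-x}| \leq |x| e^{|x|}$), while the remainder in \eqref{v.frac.lambda} is $O(e^{2\lambda T} \langle \omega_l \rangle^{-1})$. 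Combined with $\omega_l^{2s}\langle\omega_l\rangle^{-2s} = 1 + O(\omega_l^{-2})$ and the tuning \eqref{beta_T} of $\beta_l$, the diagonal sum is bounded by
\begin{equation*}
\sum_l \beta_l^2 |y_l|\, \frac{\tau'}{2} e^{2\lambda T} (1 + O(\omega_l^{-1})) \leq \frac{e^{2\lambda T}}{2(1-\epsilon)\gamma(s)} (1 + o_n(1)) \|y\|_{\ell^1},
\end{equation*}
provided $\gamma_\pm(s) \geq (1-\epsilon)\gamma(s)$, which should follow from the as-yet-undisclosed definitions \eqref{gamma+}--\eqref{gamma-} (the $\gamma_\pm(s)$ being meant to approximate $\gamma(s)$).

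\emph{Off-diagonal terms.} For $l \neq l'$, reorder so that $\omega_l \geq 2\omega_{l'}$ and bound the $H^{-s}$ pairing by $\tfrac{1}{2\pi}\int_\R |\hat v_{\omega_l,\tau',\lambda_l-\lambda}\,\hat v_{\omega_{l'},\tau',\lambda_{l'}-\lambda}|\,d\xi$. \cref{lemma:ortho} (applicable because $s < 1/2$) then yields the bound $C e^{2\lambda T} \omega_l^{-s-\sigma} \omega_{l'}^{-s-\sigma}$ with $\sigma := \tfrac{1}{2}(\tfrac{1}{2}-s) > 0$. Multiplying by $\beta_l \beta_{l'} |y_l|^{1/2} |y_{l'}|^{1/2} \omega_l^s \omega_{l'}^s$ and summing via Cauchy--Schwarz, the off-diagonal contribution is at most $C e^{2\lambda T} \|y\|_{\ell^1} \sum_{l\in\N} \omega_l(n)^{-2\sigma}$, a quantity vanishing as $n \to \infty$ by \eqref{n.omsum}. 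Combining both estimates and taking square roots produces \eqref{Vn.Hs_new} with
\begin{equation*}
\varepsilon_n(T) := C_T \Bigl( \omega_1(n)^{-1/2} + \bigl( \sum_{l\in\N} \omega_l(n)^{-2\sigma} \bigr)^{1/2} \Bigr) \xrightarrow[n \to \infty]{} 0.
\end{equation*}

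\emph{Main obstacle.} The trickiest step is the uniform-in-$\lambda \geq 0$ treatment of the diagonal prefactor $\tfrac{1-e^{-2(\lambda_l-\lambda)\tau'}}{4(\lambda_l-\lambda)}$: this quantity is continuous in $\lambda_l - \lambda \in \R$ but must be bounded cleanly by $\tfrac{\tau'}{2} e^{2\lambda T}$ across both regimes $\lambda < \lambda_l$ (where it is $\leq \tau'/2$) and $\lambda \geq \lambda_l$ (where the exponential growth is absorbed into $e^{2\lambda T}$). A secondary subtlety is recovering the sharp numerical constant $(2(1-\epsilon)\gamma(s))^{-1}$, which relies on the compatibility of $\gamma_\pm(s)$ with the choice \eqref{beta_T} of $\beta_l$.
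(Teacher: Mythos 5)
Your proposal is correct and follows essentially the same route as the paper: the same identification $\mathbf{1}_{[0,\tau]}e^{\lambda\cdot}v_l = v_{\omega_l,\min(\tau,T/4),\lambda_l-\lambda}$, the same Plancherel split into diagonal terms handled by \cref{v.frac} (with the same uniform-in-$\lambda$ bound $\frac{|1-e^{-2(\lambda_l-\lambda)\tau'}|}{4|\lambda_l-\lambda|}\leq\frac{\tau'}{2}e^{2\lambda T}$ and the tuning of $\beta_l$ against $\gamma_\pm(s)\geq(1-\epsilon)\gamma(s)$) and off-diagonal terms handled by \cref{lemma:ortho} together with \eqref{n.omsum}. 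The compatibility $\gamma_\pm(s)\geq(1-\epsilon)\gamma(s)$ that you flag as pending does hold by \eqref{L.epsilon}, so there is no gap.
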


\begin{proof}
 Let $0 \leq \tau \leq T$. We define a shorthand notation
 \begin{equation}
  \label{tau4}
  \tau_4 := \min \left( \tau, \frac T 4 \right).
 \end{equation}
 In particular, for $l\in\N$, $\mathbf{1}_{[0,\tau]}(t) e^{\lambda t} v_l(t) = v_{\omega_l,\tau_4,\lambda_l-\lambda}(t)$. From definition \eqref{def:normeH(-s)} of the $H^{-s}$ norm,
 \begin{equation} \label{Vn.Hs.1}
  \begin{split}
   \| \mathbf{1}_{[0,\tau]}(t) e^{\lambda t} V_n(y)(t) \|_{H^{-s}}^2 
   & = \frac{1}{2\pi} \int_\R \langle\xi\rangle^{-2s} \left| \sum_{l\in\N} {\beta_l} |y_l|^{\frac12}\omega_l^s \widehat{v}_{\omega_l,\tau_4,\lambda_l-\lambda}(\xi) \right|^2 \dd \xi
   \\
   & \leq \sum_{l\in\N} |y_l| {\beta_l^2} \omega_l^{2s} \| v_{\omega_l,\tau_4,\lambda_l-\lambda} \|_{H^{-s}(\R)}^2
   \\
   & \quad
   +  \sum_{l\neq l'} |y_l|^{\frac12} |y_{l'}|^{\frac12} {\beta_l \beta_{l'}} \omega_l^s \omega_{l'}^s 
   \int_\R \left| \widehat{v}_{\omega_l,\tau_4,\lambda_l-\lambda}(\xi) \widehat{v}_{\omega_{l'},\tau_4,\lambda_{l'}-\lambda}(\xi) \right| \dd \xi 
   .
  \end{split}
 \end{equation}
Thanks to estimate \eqref{v.frac.lambda} from \cref{v.frac}, there exists $C_1 > 0$ such that the first term in the right-hand side of \eqref{Vn.Hs.1} is bounded as
 \begin{equation} \label{vnhs1}
  \begin{split}
   \sum_{l\in\N} {\beta_l^2} |y_l| \omega_l^{2s} & \| v_{\omega_l,\tau_4,\lambda_l-\lambda} \|_{H^{-s}(\R)}^2 
   \\ & \leq 
  \sum_{l\in\N} |y_l| \omega_l^{2s} \left(  \frac{|1-e^{-2(\lambda_l-\lambda) \tau_4}|}{T|\lambda_l-\lambda|{\gamma_{\mathrm{sign}(y_l)}}(s)} \frac{\omega_l^2}{\omega_l^2+(\lambda_l-\lambda)^2} \langle\omega_l\rangle^{-2s} + \frac{C_1}{T} e^{2\lambda T} \omega_l^{-1} \right)
  \\
  & \leq 
  \left( \frac{1}{2{(1-\epsilon)}\gamma(s)} + \frac{C_1 }{\omega_1^{1-2s} T} \right) e^{2\lambda T} 
  \| y \|_{\ell^1}
  ,
  \end{split}
 \end{equation}
 where we used \eqref{n.omcmp}. Thanks to estimate \eqref{ortho} from \cref{lemma:ortho}, there exists $C_s > 0$ such that
 \begin{equation}
  \int_\R \left| \widehat{v}_{\omega_l,\tau_4,\lambda_l-\lambda}(\xi) \widehat{v}_{\omega_{l'},\tau_4,\lambda_{l'}-\lambda}(\xi) \right| \dd \xi 
  \leq 4 C_s e^{2\lambda T} \omega_l^{-s-\frac12(\frac12-s)} \omega_{l'}^{-s-\frac12(\frac12-s)}
  .
 \end{equation}
 Hence, the second term in the right-hand side of \eqref{Vn.Hs.1} is bounded as
 \begin{equation} \label{vnhs2}
  \begin{split}
	\sum_{l\neq l'} {\beta_l\beta_{l'}} |y_l|^{\frac12} |y_{l'}|^{\frac12} \omega_l^s \omega_{l'}^s  &
\int_\R \left| \widehat{v}_{\omega_l,\tau_4,\lambda_l-\lambda}(\xi) \widehat{v}_{\omega_{l'},\tau_4,\lambda_{l'}-\lambda}(\xi) \right| \dd \xi 
\\ & \leq \frac {C_s}{T}
 e^{2\lambda T} \| y \|_{\ell^1} \sum_{l\in\N} \omega_l^{-(\frac12-s)}.
  \end{split}
 \end{equation}
  Combining \eqref{vnhs1} and \eqref{vnhs2} concludes the proof of \eqref{Vn.Hs_new}
  thanks to \eqref{n.om1} and \eqref{n.omsum} respectively.
\end{proof}

\subsection{Diagonal action of elementary controls}
\label{sec:danslemille}

We estimate the first sum in \eqref{couleurs}. For $\Theta \in W^{1,\infty}(\R_+,\R)$ and $\lambda \in \R$, we define a modified kernel
\begin{equation} \label{K.Theta.lambda}
 K_{\Theta,\lambda}(\sigma) :=
 e^{\lambda|\sigma|} K_{\Theta}(\sigma)
 .
\end{equation}
We start with a technical lemma, which notably helps to compare $K_{\Theta,\lambda}$ with $K_\Theta$.

\begin{lemma} \label{lemma:khat}
 There exists $C > 0$ such that, for every $k \in \N$ and $\xi \in \R$,
 \begin{align}
  \label{khat1}
  |\widehat{K}_{\Theta_k}(\xi)| & \leq C \omk^{-2s},
  \\
  \label{khat2}
  |\widehat{K}_{\Theta_k,\lambda_k}(\xi)| & \leq C \omk^{-2s},
  \\
  \label{khat3}
  |\widehat{K}_{\Theta_k,\lambda_k}(\xi) - \widehat{K}_{\Theta_k}(\xi)| & \leq C \lambda_k \omk^{-2s}|\xi|^{-1}
  .
 \end{align}
\end{lemma}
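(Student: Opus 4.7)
The plan is to pass to the Fourier side. Recalling that the Fourier transform of $\sigma \mapsto e^{-a|\sigma|}$ is $\xi \mapsto 2a/(a^2+\xi^2)$ for $a>0$, I will write
\[
\widehat{K}_{\Theta_k}(\xi) = 2 \sum_{j \geq 1} \frac{j^{3-4s}\, \Theta_k(\ln j)}{j^4+\xi^2},
\qquad
\widehat{K}_{\Theta_k,\lambda_k}(\xi) = 2 \sum_{j \geq 1} \frac{j^{1-4s}(j^2-\lambda_k)\, \Theta_k(\ln j)}{(j^2-\lambda_k)^2+\xi^2},
\]
the second formula being valid provided $j^2 > \lambda_k$. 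The decisive point, which is exactly what the whole construction of \cref{sec:infini} has been set up to provide, is that $\Theta_k(\ln j)\neq 0$ forces $\ln j \geq \inf \supp \Theta_k = \tfrac{1}{2}\ln\omk$, so both sums reduce to $j \geq \omk^{1/2}$.

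Before estimating, I will check that $\omk \geq 2\lambda_k$ for every $k \in \N$. This follows from \cref{rk:omk.large} (which gives $\omk \geq e^{k^4}$) versus $\lambda_k = (2k+1)^2/8$: the inequality is comfortable for $k\geq 1$, and for $k=0$ one has $\omega_0 \geq 1$ while $\lambda_0 = 1/8$. Consequently each $j$ appearing satisfies $j^2 \geq \omk \geq 2\lambda_k$, and in particular $j^2-\lambda_k \geq j^2/2 > 0$, so the second formula above is justified. Then \eqref{khat1} and \eqref{khat2} follow at once from the crude majorizations $j^{3-4s}/(j^4+\xi^2) \leq j^{-1-4s}$ and $2(j^2-\lambda_k)/((j^2-\lambda_k)^2+\xi^2) \leq 2/(j^2-\lambda_k) \leq 4/j^2$, combined with the tail bound $\sum_{j \geq \omk^{1/2}} j^{-1-4s} \leq C_s\, \omk^{-2s}$, which is uniform in $k$.

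For the third estimate \eqref{khat3}, I will write each summand as $f_\xi(j^2-\lambda_k) - f_\xi(j^2)$, where $f_\xi(a) := 2a/(a^2+\xi^2)$. The mean value theorem gives $|f_\xi(j^2-\lambda_k) - f_\xi(j^2)| \leq \lambda_k \sup_{a \in [j^2/2,\, j^2]} |f_\xi'(a)|$, and the explicit expression $f_\xi'(a) = 2(\xi^2 - a^2)/(a^2+\xi^2)^2$ together with the AM-GM inequality $a^2 + \xi^2 \geq 2|a||\xi|$ yields $|f_\xi'(a)| \leq 1/(|a||\xi|) \leq 2/(j^2|\xi|)$. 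Summing $j^{1-4s} \cdot 2\lambda_k/(j^2|\xi|) = 2\lambda_k j^{-1-4s}/|\xi|$ over $j \geq \omk^{1/2}$ produces exactly $\lambda_k \omk^{-2s}/|\xi|$, up to a harmless constant. Once the Fourier formulas and the localization $j \geq \omk^{1/2}$ are in place, the proof is essentially bookkeeping; I do not anticipate any substantial obstacle. The only point requiring a little care is the $k$-uniformity of the constants, which is handled by the tail estimate above and by $\omk \geq 2\lambda_k$; the third bound is marginally more delicate than the first two, but the cancellation $f_\xi'(a) = 2(\xi^2-a^2)/(a^2+\xi^2)^2$ is favorable enough that no further trick is needed.
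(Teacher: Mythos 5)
Your proof is correct and follows essentially the same route as the paper's: same Fourier-series formula for $\widehat{K}_{\Theta_k,\lambda}$, same localization $j^2\geq\omk$ from the support of $\Theta_k$, same verification that $\omk\geq 2\lambda_k$, the same tail bound $\sum_{j^2\geq\omk} j^{-1-4s}\leq C_s\,\omk^{-2s}$ for \eqref{khat1}--\eqref{khat2}, and the mean value theorem plus the AM--GM inequality $a^2+\xi^2\geq 2|a||\xi|$ for \eqref{khat3}. The only (immaterial) difference is that you fold the AM--GM step into the bound on $f_\xi'$ rather than first bounding the increment by $4\lambda_k/(j^4+\xi^2)$ as the paper does.
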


\begin{proof}
 Thanks to the support properties of $\Theta_k$ (see the definition \eqref{omk} of $\omk$), $\Theta_k(\ln j) \neq 0$ implies that $j^2 \geq \omk$. Moreover, thanks to the definition \eqref{lambdak} of $\lambda_k$ and \cref{rk:omk.large} concerning $\omk$, $\omk \geq 2 \lambda_k$ for every $k\in \N$. Thus, $j^2 \geq 2 \lambda_k$ for the indexes $j$ that we need to consider. For $\lambda \in \{0,\lambda_k\}$,
 \begin{equation} \label{khat.sum}
  \widehat{K}_{\Theta_k,\lambda}(\xi) = 2 \sum_{j^2\geq\omk}
  j^{1-4s} \Theta_k(\ln(j)) \frac{j^2-\lambda}{(j^2-\lambda)^2+\xi^2}.
 \end{equation}
 Since $j^2 \geq 2\lambda_k \geq 2 \lambda$, $(j^2-\lambda)/((j^2-\lambda)^2+\xi^2) \leq 2 j^{-2}$ and this yields, using a straightforward monotonic series-integral comparison,
 \begin{equation}
  \left| 
   \widehat{K}_{\Theta_k,\lambda}(\xi)
  \right|
  \leq 
  2 \| \Theta_k \|_{L^\infty} \sum_{j^2 \geq \omk} j^{1-4s} \frac{2}{j^2}
  \leq 
  C_s \omk^{-2s}
  .
 \end{equation}
 This proves the inequalities \eqref{khat1} and \eqref{khat2}. We move on to the last inequality. Since we use indexes for which $j^2 \geq 2\lambda_k$, the mean value theorem applied to $x \mapsto x / (x^2+\xi^2)$, yields, for $\xi \in \R$,
 \begin{equation}
  \left| \frac{j^2-\lambda_k}{(j^2-\lambda_k)^2+\xi^2} - \frac{j^2}{j^4+\xi^2} \right|
  \leq 
  \frac{4\lambda_k}{j^4+\xi^2}
  .
 \end{equation}
 Proceeding as for the previous inequalities concludes the proof of \eqref{khat3}.
\end{proof}

\begin{lemma} \label{danslemille_BIS_new}
 There exists $C,\beta > 0$ such that, for every $0 \leq \tau \leq T \leq T^\star$ (defined in \eqref{Tstar}), every $k\in\N$ and $\omega_k$ such that $\ln \sqrt{\omega_k}$ is in the middle of a plateau of $\Theta_k$ of length $2L$ of value $\pm 1$, there holds
 \begin{equation} \label{eq:mille_BIS_new}
  \left| Q_{k,\tau}(v_k,v_k) \mp \min\left(\tau,\frac{T}{4}\right) \gamma_{\pm}(s) \omega_k^{-2s} \right|
  \leq 
  C e^{-\beta n^2 - \beta k^2} \omega_k^{-2s} + C \omega_k^{-2s-2\beta}
  ,
 \end{equation}
 where the constants $\gamma_\pm(s) > 0$ are defined below.
\end{lemma}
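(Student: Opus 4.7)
The plan is to pass to the Fourier side via Plancherel, replace the weighted kernel $K_{\Theta_k,\lambda_k}$ by the unweighted $K_{\Theta_k}$ using the third estimate of \cref{lemma:khat}, apply \cref{danslemille_new} with $\Theta = \Theta_k$, and then exploit the sparsity of $\supp \Theta_k$ (from \cref{lemma:algo}) to extract the exponential decay in $k$ and $n$ that appears in the right-hand side of \eqref{eq:mille_BIS_new}.

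The first step is a direct computation. Since $v_k(t) = \sin(\omega_k t)\mathbf{1}_{[0,T/4]}(t) e^{-\lambda_k t}$ by \eqref{def:v.omega.lambda} and \eqref{vl}, the weight $e^{\lambda_k t}$ in \eqref{def:Qk} cancels the exponential decay, so with $\tau_4 := \min(\tau,T/4)$ one has
\begin{align*}
Q_{k,\tau}(v_k,v_k)
& = \iint v_{\omega_k,\tau_4,0}(t)\, v_{\omega_k,\tau_4,0}(t')\, K_{\Theta_k,\lambda_k}(t-t')\dd t\dd t' \\
& = \frac{1}{2\pi}\int_\R |\widehat{v}_{\omega_k,\tau_4,0}(\xi)|^2\, \widehat{K}_{\Theta_k,\lambda_k}(\xi)\dd\xi
\end{align*}
by Plancherel, exactly as in Step~1 of the proof of \cref{Prop:Hs_0<s<1/2}. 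Splitting $\widehat{K}_{\Theta_k,\lambda_k} = \widehat{K}_{\Theta_k} + R$, estimate \eqref{khat3} gives $|R(\xi)| \leq C\lambda_k\omk^{-2s}|\xi|^{-1}$ away from the origin (with $|R|\leq C\omk^{-2s}$ globally by \eqref{khat2}), and combining this with the localization of $\widehat{v}_{\omega_k,\tau_4,0}$ around $\pm\omega_k$ from \eqref{v.loc} and $\|v_{\omega_k,\tau_4,0}\|_{L^2}^2\lesssim\tau_4$ bounds the $R$-contribution by $C\lambda_k\omk^{-2s}\omega_k^{-1}$. Using $\omk\geq 1$ to write $\omk^{-2s}\omega_k^{-1}\leq\omega_k^{-2s}\cdot\omega_k^{-(1-2s)}$ (recall $s<1/2$) and the growth property $\omega_k\geq e^{\beta_0(k^2+n^2)}$ — which follows from the choice $\ln\sqrt{\omega_k}\in\supp\Theta_{k,\max(k,n+1)}$ together with the lower bounds on $X_{\mathrm{end}}$ in the proof of \cref{lemma:algo} — this is absorbed into an error $Ce^{-\beta(k^2+n^2)}\omega_k^{-2s}$. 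Applying \cref{danslemille_new} to the remaining $\widehat{K}_{\Theta_k}$-integral then contributes a polynomial error $O(\omega_k^{-2s-2\beta})$ and the main term $\frac{\tau_4}{\omega_k^{2s}}\,G_{s,\Theta_k,\omega_k}$ with $G_{s,\Theta_k,\omega_k}$ defined by \eqref{gsto}.

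The remaining analysis is that of $G_{s,\Theta_k,\omega_k}$. On the block of $\Theta_k$ containing $\ln\sqrt{\omega_k}$, the plateau assumption identifies the integrand with $\pm\chi(\tfrac{\ln y}{2L}+\tfrac{3}{2})\,y^{3-4s}/(1+y^4)$, supported on $y\in[e^{-3L},e^{3L}]$, which defines
\begin{equation*}
\gamma_\pm(s) := \int_{e^{-3L}}^{e^{3L}} \chi\!\left(\tfrac{\ln y}{2L}+\tfrac{3}{2}\right) \frac{y^{3-4s}}{1+y^4}\dd y > 0
\end{equation*}
and produces the announced leading term $\pm\tau_4\gamma_\pm(s)\omega_k^{-2s}$. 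For $y$ outside $[e^{-3L},e^{3L}]$ the integrand vanishes unless $\ln(y\sqrt{\omega_k})$ lies in a distinct block of $\Theta_k$; by the separation estimate \eqref{estimate.algo2} (applied to $\Theta_{l,h}$ with $l=k$ and $h\neq\max(k,n+1)$) such blocks are at distance at least $\rho_1 L(k^2+n^2)$ from the active one in the $x$-variable, forcing $|\ln y|\gtrsim L(k^2+n^2)$, and the decay $y^{3-4s}/(1+y^4)\lesssim y^{-1-4s}$ at infinity (resp.\ $\lesssim y^{3-4s}$ at the origin) then contributes $O(e^{-\beta(k^2+n^2)})$, again fitting the first error of \eqref{eq:mille_BIS_new}. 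The main obstacle is to manage these two independent sources of exponential decay simultaneously: one needs both the growth of $\omega_k$ in $(k,n)$ to tame the $\lambda_k$-correction arising from the passage $\widehat{K}_{\Theta_k,\lambda_k} \to \widehat{K}_{\Theta_k}$, and the sparsity of $\supp\Theta_k$ to tame the tails of $G_{s,\Theta_k,\omega_k}$; without either ingredient only a polynomial loss in $\omega_k$ is available, which would be insufficient for the Schauder argument in \cref{prop:epsilon_n}.
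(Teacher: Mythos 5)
Your overall route — split $Q_{k,\tau}(v_k,v_k)$ into the $K_{\Theta_k}$ part plus the correction $K_{\Theta_k,\lambda_k}-K_{\Theta_k}$, control the correction via \cref{lemma:khat} and the frequency localization of $\widehat{v}_{\omega_k,\tau_4}$, apply \cref{danslemille_new} to the main part, and then analyze $G_{s,\Theta_k,\omega_k}$ using the sparsity of $\supp\Theta_k$ — is exactly the paper's. Your treatment of the correction term differs only cosmetically (you absorb it into the $e^{-\beta(k^2+n^2)}\omega_k^{-2s}$ error using the growth of $\omega_k$ in $k,n$, whereas the paper uses $\lambda_k\,\omk^{-2s}\leq C$ and lands in the $\omega_k^{-2s-2\beta}$ error); both are fine.

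However, your identification of $\gamma_\pm(s)$ is wrong, and the error it creates is not absorbed by the right-hand side of \eqref{eq:mille_BIS_new}. Each elementary piece $\Theta_{k,h}$ produced by Alg.~\ref{algo} is a \emph{pair} of contiguous blocks of opposite signs (cf.\ \eqref{theta.chi.x}): if $\ln\sqrt{\omega_k}$ sits at the center of, say, the positive plateau, the negative block of the \emph{same} pair occupies $\ln y\in[3L,9L]$, i.e.\ $y\in[e^{3L},e^{9L}]$, at distance zero from the active block. Its contribution to $G_{s,\Theta_k,\omega_k}$ is $-\int_{e^{3L}}^{e^{9L}}\frac{y^{3-4s}}{1+y^4}\chi\bigl(\frac{\ln y-3L}{2L}\bigr)\dd y$, a fixed nonzero number depending only on $L$ and $s$ (bounded by $\epsilon\gamma(s)$ thanks to \eqref{L.epsilon}, but not decaying in $k$ or $n$). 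Your separation argument cannot dispose of it: \eqref{estimate.algo2} only separates blocks of \emph{different} lines ($l\neq k$), and even the same-line separation (which does hold by construction for distinct rounds $h$) does not apply to the twin block inside the pair $\Theta_{k,\max(k,n+1)}$. Consequently, with your $\gamma_\pm(s):=\int_{e^{-3L}}^{e^{3L}}\chi(\tfrac{\ln y}{2L}+\tfrac32)\frac{y^{3-4s}}{1+y^4}\dd y$ the left-hand side of \eqref{eq:mille_BIS_new} stays bounded below by $c\,\tau_4\,\omega_k^{-2s}$ for a fixed $c>0$, while the right-hand side is $o(\omega_k^{-2s})$ as $n\to\infty$, so the claimed estimate fails. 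The fix is to define $\gamma_\pm(s)$ as the signed integral over the whole block pair, i.e.\ over $y\in[e^{-3L},e^{9L}]$ (resp.\ $[e^{-9L},e^{3L}]$) as in \eqref{gamma+}--\eqref{gamma-}; only the block pairs \emph{other} than the active one are exponentially far away and feed the $e^{-\beta n^2-\beta k^2}$ error. This matters downstream, since the normalization $\beta_l^2=4/(T\gamma_{\mathrm{sign}(y_l)}(s))$ must use the correct constant for the diagonal terms to converge to $y_k$ in \cref{prop:epsilon_n}.
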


\begin{proof}
\emph{Step 1: We transform the expression of $Q_{k,\tau}(v_k,v_k)$.}
Using the definitions \eqref{def:Qk} of~$Q_k$, \eqref{vl} of $v_{k}$, \eqref{K.Theta.lambda} of $K_{\Theta,\lambda}$ and \eqref{tau4} of $\tau_4$, one has
\begin{equation} \label{QkT(v)_decomp}
 \begin{split}
   Q_{k,\tau}(v_k,v_k)
   & = \int_0^{\tau_4} \int_0^{\tau_4} v_{\omega_k,\tau_4}(t) v_{\omega_k,\tau_4}(t') K_{\Theta_k,\lambda_k}(t-t') \dd t \dd t'
   \\
   & = \int_0^{\tau_4} \int_0^{\tau_4} v_{\omega_k,\tau_4}(t) v_{\omega_k,\tau_4}(t') K_{\Theta_k}(t-t') \dd t \dd t'
   \\
   & \quad + \int_0^{\tau_4} \int_0^{\tau_4} v_{\omega_k,{\tau_4}}(t) v_{\omega_k,{\tau_4}}(t') (K_{\Theta_k,\lambda_k}-K_{\Theta_k})(t-t') \dd t \dd t'.
 \end{split}
\end{equation}
\emph{Step 2: We study the first term of the right hand side of (\ref{QkT(v)_decomp}).}
By \cref{danslemille_new}, there exists $C_1,\beta_1>0$ such that
\begin{equation}
 \Big| 
 \int_0^{\tau_4}\int_0^{\tau_4} v_{\omega_k,\tau_4}(t) v_{\omega_k,\tau_4}(t') K_{\Theta_k}(t-t') \dd t \dd t' 
 -  \tau_4 \omega_k^{-2s} G_{s,\Theta_k,\omega_k}
 \Big|
 \leq C_1 \omega_k^{-2s-2\beta_1}.
\end{equation}
From Algo.~\ref{algo} and the choice of $\omega_k$, which is in the middle of a plateau of length $2L$ of $\Theta_{k,\max(k,n+1)}$, the closest non-zero pattern of $\Theta_k$ is at distance at least $\rho_1 L (k^2+n^2)$ of $\omega_k$ (see \eqref{estimate.algo2}). Hence, recalling \eqref{gsto},
\begin{equation}
 G_{s,\Theta_k,\omega_k} = \int_{e^{-9L}}^{e^{9L}} \frac{y^{3-4s}}{1+y^4} \Theta_k\left(\ln y\sqrt{\omega_k}\right) \dd y
 + \int_{\R_+\setminus[\Omega^{-1},\Omega]} \frac{y^{3-4s}}{1+y^4} \Theta_k\left(\ln y \sqrt{\omega_k}\right) \dd y,
\end{equation}
where $\Omega := \exp (\rho_1 L (k^2+n^2))$. From the convergence of the integral in $y$ and using $\|\Theta_k\|_{L^\infty} \leq 1$, the second term is bounded by $C \Omega^{-\sigma}$ for some $C,\sigma > 0$. The first term is $\pm \gamma_\pm(s)$ (depending on wether $\omega_k$ is in the middle of a plateau of value $\pm 1$), where we set
\begin{align} 
 \label{gamma+}
 \gamma_+(s) := & \int_{e^{-3L}}^{e^{9L}} \frac{y^{3-4s}}{1+y^4}
 \left( \chi\left(\frac{\ln y + 3L}{2L}\right) - \chi\left(\frac{\ln y - 3L}{2L}\right) \right) \dd y, \\
 \label{gamma-}
 \gamma_-(s) := - & \int_{e^{-9L}}^{e^{3L}} \frac{y^{3-4s}}{1+y^4}
 \left( \chi\left(\frac{\ln y + 9L}{2L}\right) - \chi\left(\frac{\ln y + 3L}{2L}\right) \right) \dd y.
\end{align}
Moreover, thanks to \eqref{L.epsilon}, $(1-\epsilon) \gamma(s) \leq \gamma_\pm(s) \leq \gamma(s)$. Letting $\beta_2 := \sigma \rho_1 L$, this steps proves that
\begin{equation} \label{ccl.step2}
 \begin{split}
 \Big| 
 \int_0^{\tau_4}\int_0^{\tau_4} v_{\omega_k,\tau_4}(t) v_{\omega_k,\tau_4}(t') K_{\Theta_k}(t-t') \dd t \dd t' 
 & \mp \tau_4 \omega_k^{-2s} \gamma_{\pm}(s)
 \Big|
 \\
 & \leq C e^{-\beta_2 n^2 -\beta_2 k^2} \omega_k^{-2s} + C_1 \omega_k^{-2s-2\beta_1}.
 \end{split}
\end{equation}
\emph{Step 3: We study the second term of the right hand side of (\ref{QkT(v)_decomp}).} Working as in the proof of \cref{danslemille_new} (to use Plancherel's formula), we obtain
\begin{equation} \label{step3.1}
 \int_0^{\tau_4}\int_0^{\tau_4} v_{\omega_k,{\tau_4}}(t) v_{\omega_k,{\tau_4}}(t')  
\Big( K_{\Theta_k,\lambda_k} - K_{\Theta_k} \Big)(t-t')  \dd t \dd t'
= \frac{1}{\pi} \int_0^{\infty} |\widehat{v}_{\omega_k,{\tau_4}}(\xi)|^2 
\Big( \widehat{K}_{\Theta_k,\lambda_k} - \widehat{K}_{\Theta_k} \Big)(\xi) \dd \xi.
\end{equation}
We split the integral between low and high frequencies. On the one hand, by \eqref{v.hat}, there exists $C>0$ such that, for every $k \in \N$ and $0 < \xi < \frac12\omega_k$,
\begin{equation}
 |\widehat{v}_{\omega_k,{\tau_4}}(\xi)|^2 \leq C \omega_k^{-2}.
\end{equation}
Thus, thanks to estimates \eqref{khat1} and \eqref{khat2} from \cref{lemma:khat}, one has
\begin{equation} \label{vk.low}
 \int_0^{\frac12\omega_k} |\widehat{v}_{\omega_k,{\tau_4}}(\xi)|^2 
\left| \widehat{K}_{\Theta_k,\lambda_k} - \widehat{K}_{\Theta_k} \right|(\xi) \dd \xi
 \leq 
 \frac12\omega_k \cdot C \omega_k^{-2} \cdot 2 C \omk^{-2s}
 \leq 
 C^2
 \omega_k^{-1}
 .
\end{equation}
On the other hand, for $\xi \geq \frac12 \omega_k$, thanks to estimate \eqref{khat3} from \cref{lemma:khat}, one has
\begin{equation} 
 \int_{\frac12\omega_k}^{+\infty} |\widehat{v}_{\omega_k,{\tau_4}}(\xi)|^2 
\left| \widehat{K}_{\Theta_k,\lambda_k} - \widehat{K}_{\Theta_k} \right|(\xi) \dd \xi
 \leq 
 C_1 \lambda_k \omk^{-2s} \frac{2}{\omega_k} \int_\R |\widehat{v}_{\omega_k,{\tau_4}}(\xi)|^2  \dd \xi
 .
\end{equation}
By definition \eqref{omk} of $\omk$ and the lower bound \eqref{estimate.algo1} for $\ln \omk$, there exists $C_2$ such that $\lambda_k \omk^{-2s} \leq C_2$. Moreover, by \eqref{v.frac.0} from \cref{v.frac}, $\| v_{\omega_k,\tau_4} \|_{L^2}^2 \leq \tau_4 / 2 + 4 C_0 \omega_k^{-1}$. Hence
\begin{equation} \label{vk.high}
 \int_{\frac12\omega_k}^{+\infty} |\widehat{v}_{\omega_k,{\tau_4}}(\xi)|^2 
\left| \widehat{K}_{\Theta_k,\lambda_k} - \widehat{K}_{\Theta_k} \right|(\xi) \dd \xi
\leq C_1 C_2 \omega_k^{-1} (\tau_4 + 8 C_0 \omega_k^{-1}).
\end{equation}
Thanks to \eqref{step3.1}, \eqref{vk.low} and \eqref{vk.high}, we have $C_3,\beta_3 >0$ such that
\begin{equation} \label{ccl.step3}
 \left| \int_0^{\tau_4}\int_0^{\tau_4} v_{\omega_k,{\tau_4}}(t) v_{\omega_k,{\tau_4}}(t')  
\Big( K_{\Theta_k,\lambda_k} - K_{\Theta_k} \Big)(t-t')  \dd t \dd t' \right|
\leq C_3 \omega_k^{-2s-2\beta_3}.
\end{equation}
\emph{Conclusions}. Combining equality \eqref{QkT(v)_decomp} of Step 1, estimate \eqref{ccl.step2} of Step 2 and \eqref{ccl.step3} of Step 3 gives the estimate \eqref{eq:mille_BIS_new} which concludes the proof.
\end{proof}

\begin{proposition}
 Let $T \in (0,T^\star]$. There exists a sequence $\varepsilon_n(T) \to 0$ such that
 \begin{equation}
  \sum_{k\in\N} \sup_{\tau \in [0,T]} \langle k \rangle  \left| \beta_k^2 \omega_k^{2s} |y_k| Q_{k,\tau}(v_k,v_k) - \min\left(1,\frac{4\tau}{T}\right) y_k \right|
  \leq \varepsilon_n \|y\|_{\ell^1}
  .
 \end{equation}
\end{proposition}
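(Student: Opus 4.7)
The plan is to reduce the statement to a termwise application of \cref{danslemille_BIS_new}, exploiting the explicit choice of the normalizing constants $\beta_k$ in \eqref{beta_T} to achieve the exact cancellation of the main term, and then to use the super-exponential decay of $\omega_k = \omega_k(n)$ both in $k$ and in $n$ to control the sum against the $\ell^1$ norm of $y$.

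First, I would observe that the main term of \cref{danslemille_BIS_new} is tailored to match the right-hand side. Indeed, since $\beta_l^2 = 4/(T \gamma_{\mathrm{sgn}(y_l)}(s))$ and $v_k$ was built with $\omega_k$ in the middle of a plateau of $\Theta_k$ of value $\mathrm{sgn}(y_k)$, I compute
\begin{equation*}
\beta_k^2 \omega_k^{2s} |y_k| \cdot \mathrm{sgn}(y_k) \min\!\left(\tau,\tfrac{T}{4}\right) \gamma_{\mathrm{sgn}(y_k)}(s)\, \omega_k^{-2s}
= \min\!\left(1,\tfrac{4\tau}{T}\right) y_k\,,
\end{equation*}
so the leading contributions cancel exactly and the $\tau$-dependence on the left-hand side reduces to the error bound of \cref{danslemille_BIS_new}, which is $\tau$-independent. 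Hence, uniformly in $\tau\in[0,T]$,
\begin{equation*}
\left|\beta_k^2\omega_k^{2s}|y_k|\,Q_{k,\tau}(v_k,v_k)-\min\!\left(1,\tfrac{4\tau}{T}\right)y_k\right|
\leq \frac{C}{T\gamma_{\mathrm{sgn}(y_k)}(s)} |y_k|\left(e^{-\beta n^2 - \beta k^2}+\omega_k^{-2\beta}\right),
\end{equation*}
and $\gamma_{\pm}(s)\geq (1-\epsilon)\gamma(s)>0$ makes the prefactor uniformly bounded in $k$.

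Next, I would sum over $k$ with the weight $\langle k\rangle$. For the first error term, $\langle k\rangle e^{-\beta k^2}$ is uniformly bounded, so
\begin{equation*}
\sum_{k\in\N}\langle k\rangle |y_k| e^{-\beta n^2-\beta k^2}
\leq C e^{-\beta n^2}\|y\|_{\ell^1}\,.
\end{equation*}
For the second error term, the key input is that, by construction of $\omega_k$ in \cref{sec:construction} (it belongs to the support of $\Theta_{k,\max(k,n+1)}$) combined with the margin estimate \eqref{estimate.algo2} of \cref{lemma:algo}, one has $\ln\sqrt{\omega_k}\geq \rho_1 L(k^2+n^2)$, hence $\omega_k^{-2\beta}\leq e^{-2\beta\rho_1 L(k^2+n^2)}$. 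Consequently
\begin{equation*}
\sum_{k\in\N}\langle k\rangle|y_k|\,\omega_k^{-2\beta}
\leq C e^{-2\beta\rho_1 L n^2}\|y\|_{\ell^1}\,.
\end{equation*}
Setting $\varepsilon_n(T):=C T^{-1}(e^{-\beta n^2}+e^{-2\beta\rho_1 L n^2})$ gives $\varepsilon_n(T)\to 0$ and concludes the proof.

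No step is truly hard in this reduction: the only subtle point is bookkeeping that the frequencies $\omega_k(n)$ produced by Algo.~\ref{algo} (through the auxiliary block indexed by $h=\max(k,n+1)$) do carry decay both in $k$ and in $n$, so that the $\omega_k^{-2\beta}$ residual absorbs the $\langle k\rangle$ weight and produces a genuinely vanishing $\varepsilon_n$. This is precisely the point of the diagonal-like construction, and once it is invoked, the rest follows from the cancellation of the principal term and a direct sum.
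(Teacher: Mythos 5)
Your proposal is correct and follows essentially the same route as the paper: termwise application of \cref{danslemille_BIS_new}, exact cancellation of the main term via the choice of $\beta_k$ and of $\omega_k$ on a plateau of value $\mathrm{sgn}(y_k)$, and summation of the residuals using the growth of $\omega_k(n)$ in both $k$ and $n$. The only cosmetic difference is that the paper sums the $\omega_k^{-2\beta}$ residual by invoking $\omega_k \geq \omk$ together with \eqref{tstar.omk.sum} and \eqref{n.omsum}, whereas you extract the lower bound on $\ln\sqrt{\omega_k}$ directly from the algorithmic construction (the relevant fact is the one recorded in \cref{rk.omega.n} rather than \eqref{estimate.algo2}, which concerns $l\neq k$); both yield the same conclusion.
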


\begin{proof}
 First, for $k\in\N$, $\Theta_k(\ln\sqrt{\omega_k}) = \mathrm{sign}(y_k)$ by the choice of $\omega_k$ in \cref{sec:construction}. Hence, recalling definition \eqref{beta_T} of $\beta_k$ and applying estimate \eqref{eq:mille_BIS_new} from \cref{danslemille_BIS_new} yields, for every $\tau \in [0,T]$,
 \begin{equation}
  \left| \beta_k^2 \omega_k^{2s} |y_k| Q_{k,\tau}(v_k,v_k) - \min\left(1,\frac{4\tau}{T}\right) y_k \right|
  \leq \left( \frac{Ck}{T} e^{-\beta n^2 - \beta k^2} + \frac{Ck}{T}\omega_k^{-2\beta} \right) |y_k|.
 \end{equation}
 Since $\omega_k \geq \omk$, recalling \eqref{tstar.omk.sum} and \eqref{n.omsum}, summing these inequalities concludes the proof of the estimate.
\end{proof}

\subsection{Estimate of almost diagonal terms}
\label{sec:almost.diagonal}

We estimate the second term in \eqref{couleurs}. We start with the following lemma.

\begin{lemma} \label{Khat_L}
 There exists $C>0$ such that, for every $k,l \in \mathbb{N}$ such that $l \neq k$ and $\xi \in \left(\frac{\omega_l}{2},\frac{3\omega_l}{2} \right)$,
 \begin{equation} \label{eq:Khat_L}
  | \widehat{K}_{\Theta_k,\lambda_k}(\xi) | \leq C \omega_l^{-2s} e^{-4s\rho_1L(k^2+n^2)}
  .
 \end{equation}
\end{lemma}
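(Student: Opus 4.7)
The plan is to start from the explicit series representation~\eqref{khat.sum}, which gives
\begin{equation*}
 |\widehat{K}_{\Theta_k,\lambda_k}(\xi)| \leq 2 \sum_{j : \Theta_k(\ln j) \neq 0} j^{1-4s} \, m(j,\xi), \qquad m(j,\xi):=\frac{j^2-\lambda_k}{(j^2-\lambda_k)^2+\xi^2},
\end{equation*}
and to exploit the sparsity of the supports of the $\Theta_k$'s in order to show that only indices $j$ very far from $\sqrt{\omega_l}$ contribute, hence providing the exponential factor $e^{-4s\rho_1L(k^2+n^2)}$.

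By the choice of $\omega_l$ in \cref{sec:construction}, $\ln\sqrt{\omega_l} \in \supp \Theta_{l,\max(l,n+1)}$. Applying~\eqref{estimate.algo2} with $h=\max(l,n+1)\geq n+1$ and using $l\neq k$, any $j$ contributing to the sum above satisfies $\ln j \in \supp \Theta_k$ and hence
\begin{equation*}
 \bigl|\ln j - \ln\sqrt{\omega_l}\bigr| \;\geq\; \rho_1 L \bigl(k^2+l^2+(n+1)^2\bigr) \;\geq\; \rho_1 L \bigl(k^2+n^2\bigr).
\end{equation*}
Thus the indices in the sum lie either in the \emph{high} regime $j^2 \geq \omega_l e^{2\rho_1 L(k^2+n^2)}$ or in the \emph{low} regime $j^2 \leq \omega_l e^{-2\rho_1 L(k^2+n^2)}$, with no indices in between.

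I will split the sum accordingly and use the two complementary elementary bounds
\begin{equation*}
 m(j,\xi) \leq \frac{2}{j^2} \quad\text{and}\quad m(j,\xi) \leq \frac{j^2}{\xi^2},
\end{equation*}
the first obtained by dropping $\xi^2$ and using $j^2-\lambda_k\geq j^2/2$ (valid since $\omk\geq 2\lambda_k$ by \cref{rk:omk.large}), the second by dropping $(j^2-\lambda_k)^2$ and using $\lambda_k \leq j^2$. In the high regime, the first bound yields a tail of a convergent series,
\begin{equation*}
 \sum_{j^2 \geq \omega_l e^{2\rho_1L(k^2+n^2)}} j^{-1-4s} \;\leq\; C\, \omega_l^{-2s} e^{-4s\rho_1 L(k^2+n^2)}.
\end{equation*}
In the low regime, using $\xi\geq\omega_l/2$ and the partial-sum estimate $\sum_{j\leq N} j^{3-4s}\leq C N^{4-4s}$ (valid since $s<1$ makes $3-4s>-1$) with $N=\sqrt{\omega_l}\,e^{-\rho_1 L(k^2+n^2)}$, the second bound yields
\begin{equation*}
 \frac{1}{\xi^2}\sum_{j \leq N} j^{3-4s} \;\leq\; C\, \omega_l^{-2s} e^{-(4-4s)\rho_1 L(k^2+n^2)}.
\end{equation*}

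The only delicate point is matching the two exponential rates: we need $4-4s \geq 4s$, i.e.\ $s\leq 1/2$. This is exactly the standing assumption of \cref{sec:infini} (where $s\in(0,\tfrac12)$), and it is what dictates the exponent $4s$ in \eqref{eq:Khat_L}. Combining the two regimes then gives the claimed estimate. Apart from this threshold observation, the proof is entirely routine series-integral comparison; no cancellation between positive and negative values of $\Theta_k$ is needed, so the bound $|\Theta_k|\leq 1$ suffices throughout.
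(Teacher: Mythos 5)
Your argument is correct and follows essentially the same route as the paper's proof: the same sparsity bound $|\ln j-\ln\sqrt{\omega_l}|\geq\rho_1L(k^2+n^2)$ from \eqref{estimate.algo2}, the same split of \eqref{khat.sum} into a low regime controlled by $\xi^{-2}\sum_{j\leq N}j^{3-4s}$ and a high-regime tail $\sum_{j\geq M}j^{-1-4s}$, and the same final observation that $s<\tfrac12$ makes $e^{-(4-4s)\kappa}\leq e^{-4s\kappa}$. Nothing further is needed.
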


\begin{proof}
 Let $k,l \in \mathbb{N}$ be such that $l \neq k$ and $j \in \N$ such that $\Theta_k(\ln(j)) \neq 0$. Then, thanks to \eqref{estimate.algo2},
 \begin{equation}
   \big| \ln j - \ln \sqrt{\omega_l} \big| \geq \rho_1 L (k^2+n^2) =: \kappa.
 \end{equation}
 Thus, either $j \geq \sqrt{\omega_l} e^{+\kappa}$ or $j \leq \sqrt{\omega_l} e^{-\kappa}$. Therefore, for every $\xi \in \left(\frac{\omega_l}{2},\frac{3\omega_l}{2} \right)$, recalling the formula~\eqref{khat.sum}, and using once more that $j^2 \geq 2\lambda_k$ when $\ln j \in \supp \Theta_k$,  
 \begin{equation}
  \begin{split}
   | \widehat{K}_{\Theta_k,\lambda_k}(\xi) | 
   & \leq 
   8 \sum_{j\leq \sqrt{\omega_l}e^{-\kappa}} \frac{j^{3-4s}}{j^4+\xi^2} + 8 \sum_{j \geq \sqrt{\omega_l} e^{+\kappa}} \frac{j^{3-4s}}{j^4+\xi^2}
   \\ & \leq
   32 \sum_{j\leq \sqrt{\omega_l}e^{-\kappa}} \frac{j^{3-4s}}{\omega_l^2}
   + 8 \sum_{j \geq \sqrt{\omega_l} e^{+\kappa}} j^{-1-4s}
   \\ & \leq
   C \omega_l^{-2s} e^{-(4-4s)\kappa} + C \omega_l^{-2s} e^{-4s \kappa}
   \\ & \leq C \omega_l^{-2s} e^{-4s \kappa}
  \end{split}
 \end{equation}
 because $s\in(0,\frac12)$, which concludes the proof of \eqref{eq:Khat_L}.
\end{proof}

\begin{proposition}
 Let $T \in (0,T^\star]$. There exists a sequence $\varepsilon_n(T) \to 0$ such that, for every $y \in \ell^1$,
 \begin{equation}
  \sum_{k\in\N} \sup_{\tau \in [0,T]} {\langle k \rangle} \sum_{l\neq k} {\beta_l^2} |y_l| \omega_l^{2s} |Q_{k,\tau}(v_l,v_l)|
  \leq \varepsilon_n \|y\|_{\ell^1}
  .
 \end{equation}
\end{proposition}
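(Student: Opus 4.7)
The strategy is to apply Plancherel to express $Q_{k,\tau}(v_l,v_l)$ as a Fourier integral in which the symbol $\widehat{K}_{\Theta_k,\lambda_k}$ is ``almost orthogonal'' to the mass of $|\widehat{v}_l|^{2}$, and then to split the integral into a near-frequency part (where \cref{Khat_L} gives strong decay in $k$ and $n$) and a far-frequency part (where the tail bound for $\widehat{v}_{\omega,\tau,\lambda}$ from \cref{lemma:v.TF} takes over). First, I would observe that for every $\tau\in[0,T]$ and $\ell\in\{k,l\}$ the function $t\mapsto v_\ell(t)e^{\lambda_k t}\mathbf{1}_{[0,\tau]}(t)$ coincides with $v_{\omega_\ell,\tau_4,\lambda_\ell-\lambda_k}$ where $\tau_4:=\min(\tau,T/4)$. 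Then, following the Plancherel computation already used in the proofs of \cref{danslemille_new} and \cref{danslemille_BIS_new}, one has
\begin{equation*}
Q_{k,\tau}(v_l,v_l) \;=\; \frac{1}{2\pi}\int_\R \bigl|\widehat{v}_{\omega_l,\tau_4,\lambda_l-\lambda_k}(\xi)\bigr|^{2}\, \widehat{K}_{\Theta_k,\lambda_k}(\xi)\,\mathrm{d}\xi .
\end{equation*}

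Next, I would split the integral at $|\xi|=\omega_l/2$ and $|\xi|=3\omega_l/2$. On the near region \cref{Khat_L} yields $|\widehat{K}_{\Theta_k,\lambda_k}(\xi)|\leq C\omega_l^{-2s}e^{-4s\rho_1 L(k^2+n^2)}$, while the integral of $|\widehat v|^{2}$ over this region is controlled by $2\pi\|v_{\omega_l,\tau_4,\lambda_l-\lambda_k}\|_{L^2}^{2}\leq (\pi T/2)\,e^{2(\lambda_k-\lambda_l)^+\tau_4}$, obtained by computing the weighted $L^2$ norm directly. On the far region I would use the uniform estimate \eqref{khat2} from \cref{lemma:khat}, namely $|\widehat{K}_{\Theta_k,\lambda_k}|\leq C\overline{\omega}_k^{-2s}$, together with the tail bound \eqref{v.loc} of \cref{lemma:v.TF}, giving $\int_{\mathrm{far}}|\widehat v|^{2}\,\mathrm{d}\xi\leq C(1+e^{(\lambda_k-\lambda_l)\tau_4})^{2}/\omega_l$. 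Combining both regions and using $\beta_l^{2}\omega_l^{2s}\leq C/T$ from \eqref{beta_T} leads to
\begin{equation*}
\beta_l^{2}\omega_l^{2s}|y_l|\,|Q_{k,\tau}(v_l,v_l)|
\;\leq\; C|y_l|\,e^{2\lambda_k T^\star}\Bigl( e^{-4s\rho_1 L(k^2+n^2)} + \tfrac{1}{T}\,\omega_l^{2s-1}\,\overline{\omega}_k^{-2s}\Bigr),
\end{equation*}
uniformly in $\tau\in[0,T]$ and $l\neq k$, where I used $T\leq T^\star=s\rho_1 L$ to absorb the factor $e^{2(\lambda_k-\lambda_l)^+\tau_4}\leq e^{2\lambda_kT^\star}$.

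Finally, I would sum over $l\neq k$ (which extracts a factor $\|y\|_{\ell^1}$ in the first term, and $\|y\|_{\ell^1}\sum_l\omega_l^{2s-1}$ in the second) and then over $k$ with the weight $\langle k\rangle$. The first piece is bounded by $e^{-4s\rho_1 L n^{2}}\sum_k\langle k\rangle e^{-4s\rho_1 L k^{2}}e^{2\lambda_k T^\star}\|y\|_{\ell^1}$, which is finite by \eqref{tstar.k2} and vanishes as $n\to\infty$ because of the prefactor $e^{-4s\rho_1 L n^{2}}$. The second piece is bounded by $\bigl(\sum_k\langle k\rangle\overline{\omega}_k^{-2s}e^{2\lambda_k T^\star}\bigr)\bigl(\sum_l\omega_l^{2s-1}\bigr)\|y\|_{\ell^1}/T$; the $k$-sum is finite by \eqref{tstar.omk.sum} and the $l$-sum tends to $0$ as $n\to\infty$ by \eqref{n.omsum} (since $2s-1<0$). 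This yields the desired $\varepsilon_n(T)\to 0$ bound.

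The main difficulty is propagating the exponential weight $e^{(\lambda_k-\lambda_l)\tau_4}$ through the near/far decomposition without destroying the summability in $k$: when $l<k$ this weight is large and competes with the decay $e^{-4s\rho_1 L k^{2}}$ coming from \cref{Khat_L} and with $\overline{\omega}_k^{-2s}$. The restriction $T\leq T^\star=s\rho_1 L$ (which is exactly what enters \eqref{tstar.omk.sum} and \eqref{tstar.k2}) is precisely what guarantees that these growth and decay rates balance favorably.
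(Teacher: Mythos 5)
Your proof is correct and follows essentially the same route as the paper: Plancherel's formula, a near/far frequency split around $\omega_l$, \cref{Khat_L} on the near region and \eqref{khat2} combined with \eqref{v.loc} on the far region, then summation via \eqref{tstar.k2}, \eqref{tstar.omk.sum} and \eqref{n.omsum}. The only structural difference is that the paper isolates the pairs with $\omk > \omega_l$ into a separate term treated globally via $\omega_l^{2s}\omk^{-2s}\le e^{-2s\rho_1L(k^2+n^2)}$, whereas you apply the near/far split uniformly to all $l\neq k$; this is legitimate since \cref{Khat_L} holds for every $k\neq l$.
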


\begin{proof}
 First, for $k,l \in \N$, Plancherel's formula and definitions \eqref{def:v.omega.lambda} and \eqref{tau4} yield the equality
\begin{equation}
 Q_{k,\tau}(v_l,v_l)
 = \frac{1}{2\pi} \int_\R  \widehat{K}_{\Theta_k,\lambda_k}(\xi) | \widehat{v}_{\omega_l,\tau_4,\lambda_l-\lambda_k}(\xi)|^2 \dd \xi
 .
\end{equation}
Using Fubini's theorem, to prove the result, it suffices to bound $\beta_T^2 (I_1 + I_2 + I_3)$ where we set
\begin{align}
 I_1 & := \sum_{l} {\beta_l^2} |y_l| \omega_l^{2s}  
\sum_{k \neq l, \omk > \omega_l}  
\sup_{\tau \in [0,T]} {\langle k \rangle}
\int_{\mathbb{R}} |\widehat{K}_{\Theta_k,\lambda_k}(\xi)| | \widehat{v}_{\omega_l,{\tau_4},\lambda_l-\lambda_k}(\xi)|^2 \dd \xi,
 \\
 I_2 & :=\sum_{l} {\beta_l^2} |y_l| \omega_l^{2s}  
\sum_{k \neq l, \omk < \omega_l} 
\sup_{\tau \in [0,T]} {\langle k \rangle}
\int_{|\xi-\omega_l|>\omega_l/2} 
|\widehat{K}_{\Theta_k,\lambda_k}(\xi)| | \widehat{v}_{\omega_l,{\tau_4},\lambda_l-\lambda_k}(\xi)|^2 \dd \xi,
\\
I_3 & :=\sum_{l} {\beta_l^2} |y_l| \omega_l^{2s}  
\sum_{k \neq l, \omk < \omega_l} 
\sup_{\tau \in [0,T]} {\langle k \rangle}
\int_{|\xi-\omega_l|<\omega_l/2} 
|\widehat{K}_{\Theta_k,\lambda_k}(\xi)| | \widehat{v}_{\omega_l,{\tau_4},\lambda_l-\lambda_k}(\xi)|^2 \dd \xi.
\end{align}

\paragraph{Bound on $I_1$.} Using estimate \eqref{khat2} from \cref{lemma:khat} and \eqref{v.loc} from \cref{lemma:v.TF}, there exists $C_1 > 0$ such that
\begin{equation}
 \begin{split}
  I_1 
  & \leq 
  \sum_{l} {\beta_l^2} |y_l| \omega_l^{2s}  
\sum_{k \neq l, \omk > \omega_l} {\langle k \rangle} \| \widehat{K}_{\Theta_k,\lambda_k} \|_{L^\infty} \sup_{\tau \in [0,T]} \| \widehat{v}_{\omega_l,{\tau_4},\lambda_l-\lambda_k} \|_{L^2}^2
  \\ & \leq 
  {\frac{C_1}{T}} \sum_{l}  |y_l| \omega_l^{2s}  
\sum_{k \neq l, \omk > \omega_l} {\langle k \rangle} \omk^{-2s} e^{2\lambda_kT^\star}.
 \end{split}
\end{equation}
Moreover, by construction (see \eqref{estimate.algo2}), when $\omega_l < \omk$,
\begin{equation}
 \omega_l \leq \omk e^{-\rho_1 L (k^2+n^2)}.
\end{equation}
Hence, for $l \in \N$, thanks to \eqref{tstar.k2}, one has
\begin{equation}
 \sum_{k \neq l, \omk > \omega_l} {\langle k \rangle} \omega_l^{2s} \omk^{-2s} e^{2\lambda_k T^\star}
 \leq 
 e^{-2s\rho_1Ln^2} \sum_{k\in\N} {\langle k \rangle} e^{- 2 s \rho_1 L k^2} e^{2\lambda_k T^\star} \leq C_s e^{-2s\rho_1Ln^2}
\end{equation}
from which we conclude that $I_1 / \|y\|_{\ell^1} \to 0$ as $n\to \infty$ for fixed $T > 0$.

\paragraph{Bound on $I_2$.} Using \eqref{v.loc} from \cref{lemma:v.TF}, there exists $C_2 > 0$ such that
\begin{equation}
 \int_{|\xi-\omega_l|>\omega_l/2} |\widehat{v}_{\omega_l,\tau_4,\lambda_l - \lambda_k}(\xi)|^2 \dd\xi \leq C_2 \omega_l^{-1} e^{2\lambda_k T^\star}.
\end{equation}
Combined with estimate \eqref{khat2} from \cref{lemma:khat},
\begin{equation}
 I_2 \leq C_2 \sum_{l} {\beta_l^2} |y_l| \omega_l^{2s-1} \sum_{k\neq l} {\langle k \rangle} \omk^{-2s} e^{2\lambda_kT^\star}
 \leq {\frac{C_2}{T}} \|y\|_{\ell^1} (\inf_l \omega_l)^{2s-1},
\end{equation}
thanks to \eqref{tstar.omk.sum}. From \eqref{n.omcmp} and \eqref{n.om1}, we conclude that $I_2 / \|y\|_{\ell^1} \to 0$ as $n \to \infty$.

\paragraph{Bound on $I_3$.} Using the $L^\infty$ estimate \eqref{eq:Khat_L} from \cref{Khat_L} on $\widehat{K}_{\Theta_k,\lambda_k}$ and the same $L^2$-estimate of the functions $v_{\omega_l,\tau_4,\lambda_l - \lambda_k}$ as for $I_1$,
we obtain
\begin{equation}
 I_3 \leq C_3 \sum_{l} {\beta_l^2} |y_l| \omega_l^{2s}  
\sum_{k \neq l, \omk < \omega_l} {\langle k \rangle}
\omega_l^{-2s} e^{-4s\rho_1L(k^2+n^2)} e^{2\lambda_k T^\star}
\leq {\frac{C_3}{T}} e^{-4s\rho_1 Ln^2} \|y\|_{l^1},
\end{equation}
because the sum in $k$ converges thanks to \eqref{tstar.k2},
so that $I_3/\|y\|_{\ell^1} \to 0$ as $n\to \infty$.
\end{proof}

\subsection{Estimate of non-diagonal terms}
\label{sec:ortho}

We can now turn to the estimate of the third term in the right-hand side of \eqref{couleurs}. 

\begin{proposition}
 Let $T \in (0,T^\star]$. There exists a sequence $\varepsilon_n(T) \to 0$ such that, for every $y \in \ell^1$,
 \begin{equation}
  \sum_{k\in\N} \sup_{\tau \in [0,T]} {\langle k \rangle} \sum_{l\neq l'} {\beta_l\beta_{l'}} |y_l|^{\frac12} |y_{l'}|^{\frac12} \omega_l^s \omega_{l'}^s   |Q_{k,\tau}(v_l,v_{l'})| 
  \leq \varepsilon_n \| y \|_{\ell^1}
  .
 \end{equation}
\end{proposition}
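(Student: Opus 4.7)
The plan is to estimate each off-diagonal term $Q_{k,\tau}(v_l,v_{l'})$ by combining the uniform $L^\infty$ bound on $\widehat{K}_{\Theta_k,\lambda_k}$ from \cref{lemma:khat} with the quasi-orthogonality of the elementary oscillating controls obtained in \cref{lemma:ortho}, and then to sum over $l,l',k$ using Cauchy--Schwarz plus the decay properties \eqref{n.omsum} and \eqref{tstar.omk.sum}.

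First, by Plancherel's formula and the definition \eqref{def:Qk} of $Q_{k,\tau}$, one writes
\begin{equation}
 Q_{k,\tau}(v_l,v_{l'}) = \frac{1}{2\pi} \int_\R \widehat{K}_{\Theta_k,\lambda_k}(\xi) \, \widehat{v}_{\omega_l,\tau_4,\lambda_l-\lambda_k}(\xi) \, \overline{\widehat{v}_{\omega_{l'},\tau_4,\lambda_{l'}-\lambda_k}(\xi)} \dd \xi,
\end{equation}
where $\tau_4=\min(\tau,T/4)$. Applying the bound $\|\widehat{K}_{\Theta_k,\lambda_k}\|_{L^\infty}\leq C\omk^{-2s}$ from \eqref{khat2} reduces the problem to estimating the $L^1$-product of the Fourier transforms of $v_l$ and $v_{l'}$. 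For this, I would observe that by construction the supports of the $\Theta_l$'s are pairwise separated by distance at least $\rho_1 L$ on the logarithmic scale, so (taking $L$ large enough so that $e^{2\rho_1 L}\geq 2$, which is consistent with the choice of $L$) one has either $\omega_l \geq 2\omega_{l'}$ or $\omega_{l'}\geq 2\omega_l$. In either case \cref{lemma:ortho} applies and yields, using $|\lambda_l-\lambda_k|,|\lambda_{l'}-\lambda_k|\leq \lambda_k$ to absorb the exponential factors into $e^{2\lambda_k T^\star}$,
\begin{equation}
 \int_\R |\widehat{v}_{\omega_l,\tau_4,\lambda_l-\lambda_k}(\xi)\,\widehat{v}_{\omega_{l'},\tau_4,\lambda_{l'}-\lambda_k}(\xi)| \dd \xi
 \leq C\, e^{2\lambda_k T^\star} \omega_l^{-s-\frac12(\frac12-s)} \omega_{l'}^{-s-\frac12(\frac12-s)}.
\end{equation}

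Setting $a := \tfrac12(\tfrac12-s)>0$ and combining the two estimates with $\beta_l,\beta_{l'}\leq C/\sqrt{T}$ yields
\begin{equation}
 \beta_l \beta_{l'} |y_l|^{\frac12} |y_{l'}|^{\frac12} \omega_l^s \omega_{l'}^s |Q_{k,\tau}(v_l,v_{l'})|
 \leq \frac{C}{T}\, \omk^{-2s}\, e^{2\lambda_k T^\star}\, |y_l|^{\frac12} |y_{l'}|^{\frac12} \omega_l^{-a} \omega_{l'}^{-a}.
\end{equation}
Summing over $l\neq l'$ by Cauchy--Schwarz,
\begin{equation}
 \sum_{l\neq l'} |y_l|^{\frac12}|y_{l'}|^{\frac12} \omega_l^{-a}\omega_{l'}^{-a}
 \leq \Big(\sum_{l} |y_l|^{\frac12}\omega_l^{-a}\Big)^2
 \leq \|y\|_{\ell^1} \sum_{l} \omega_l^{-2a}.
\end{equation}

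Finally, I would sum over $k$ using \eqref{tstar.omk.sum}, which gives $\sum_{k} \langle k\rangle \omk^{-2s} e^{2\lambda_k T^\star} \leq C$, so that the full sum is controlled by $(C/T)\,\|y\|_{\ell^1}\sum_l \omega_l^{-2a}(n)$. Property \eqref{n.omsum} then ensures that $\sum_l \omega_l^{-2a}(n)\to 0$ as $n\to\infty$, which provides the desired sequence $\varepsilon_n(T)\to 0$. The only delicate point is verifying that the separation $\omega_l/\omega_{l'}\geq 2$ (or its inverse) truly holds for every $l\neq l'$ so that \cref{lemma:ortho} can be invoked uniformly; this follows directly from the construction of Alg.~\ref{algo} combined with \eqref{estimate.algo2}, which already guarantees a gap of at least $\rho_1 L$ on the logarithmic frequency scale, and this is the only place where the diagonal-like sparse structure of the supports of the $\Theta_l$'s is genuinely used.
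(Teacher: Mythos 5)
Your proposal is correct and follows essentially the same route as the paper: Plancherel plus the uniform bound \eqref{khat2} on $\widehat{K}_{\Theta_k,\lambda_k}$, the quasi-orthogonality estimate \eqref{ortho} from \cref{lemma:ortho}, Cauchy--Schwarz over $l\neq l'$, and the summability facts \eqref{tstar.omk.sum} and \eqref{n.omsum}. Your explicit verification that the logarithmic separation of the supports forces $\omega_l/\omega_{l'}\geq 2$ (so that \cref{lemma:ortho} applies) is a detail the paper leaves implicit, and is welcome.
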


\begin{proof}
First, for every $k,l,l' \in \N$ with $l\neq l'$, thanks to estimate \eqref{khat2} from \cref{lemma:khat} and estimate \eqref{ortho} from \cref{lemma:ortho},
\begin{equation}
 \begin{split}
  |Q_{k,\tau}(v_l,v_{l'})|
  & 
  \leq \frac{1}{2\pi} \| \widehat{K}_{\Theta_k,\lambda_k} \|_{L^\infty} \int_\R | \widehat{v}_{\omega_l,\tau_4,\lambda_l-\lambda_k}(\xi) \widehat{v}_{\omega_{l'},\tau_4,\lambda_{l'}-\lambda_k}(\xi) | \dd \xi
  \\
  & \leq C \omk^{-2s} e^{2\lambda_k T^\star}\omega_l^{-s-\frac12(\frac12-s)} \omega_{l'}^{-s-\frac12(\frac12-s)}.
 \end{split}
\end{equation}
Moreover, $k \omk^{-2s} e^{2\lambda_k T^\star} \leq C$ thanks to \eqref{tstar.omk.sum}. Thus, 
\begin{equation}
 \sum_{k\in\N} \sup_{\tau \in [0,T]} {\langle k \rangle} \sum_{l\neq l'} {\beta_l\beta_{l'}} |y_l|^{\frac12} |y_{l'}|^{\frac12} \omega_l^s \omega_{l'}^s   |Q_{k,\tau}(v_l,v_{l'})| 
 \leq \frac{C}{T} \|y\|_{\ell^1} \sum_{l\in\N} \omega_l^{-(\frac12-s)}
 ,
\end{equation}
where the last sum over $l$ is finite and tends to zero as $n \to +\infty$ by \eqref{n.omsum}.
\end{proof}

\subsection{Estimates of the linear correction}
\label{sec:LVn}

To obtain a null controllability result, we need to bring the odd modes (which have a quadratic dependency on the control) to zero, but we also need to guarantee that the even modes (which correspond to the linearly controllable part) also vanish at the final time. Therefore, we must subtract from our candidate control $V_n(y)$ a correction $L(V_n(y))$ which lifts its moments so as to satisfy the condition \eqref{vj.moments}. The key point is that this correction is small (and will thus not modify too much the dynamics on the odd modes), because the moments of our oscillating controls are already small, as we prove in the following lemma.

\begin{lemma}
 \label{lemma:moments.v.omega.lambda}
 For every $T,\omega > 0$, $\lambda \geq 0$ and $j \in \N$,
 \begin{equation} \label{eq:moments.v.omega.lambda}
  \left| \int_0^T v_{\omega,\frac T4,\lambda}(t) e^{-j^2(T-t)} \dd t \right| 
  \leq 2 \omega^{-1} e^{-\frac{3}{4}j^2T}.
 \end{equation}
\end{lemma}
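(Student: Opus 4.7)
The plan is to compute the integral almost explicitly by writing $\sin(\omega t)$ as the imaginary part of a complex exponential, so that every factor collapses into a single elementary integral whose modulus is easy to bound.

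First, recalling the definition \eqref{def:v.omega.lambda} with $\tau = T/4$, the integral of interest is
\begin{equation}
 I := \int_0^T v_{\omega,T/4,\lambda}(t) e^{-j^2(T-t)} \dd t
 = \mathrm{Im} \int_0^{T/4} e^{(i\omega + j^2 - \lambda)t - j^2 T} \dd t.
\end{equation}
The reason for packaging things this way is twofold: it yields a \emph{single} denominator $i\omega + j^2 - \lambda$ (whose modulus is at least $\omega$) rather than two separate boundary contributions, and it avoids a case analysis depending on the sign of $j^2 - \lambda$.

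Second, the integral is computed as a geometric antiderivative:
\begin{equation}
 I = \mathrm{Im} \left( \frac{e^{-j^2 T}}{i\omega + j^2 - \lambda}\Big(e^{(i\omega + j^2 - \lambda)T/4} - 1\Big) \right).
\end{equation}
Taking moduli, one has $|i\omega + j^2 - \lambda|^{-1} \leq \omega^{-1}$, and the remaining factor equals
\begin{equation}
 \left| e^{-3j^2 T/4 - \lambda T/4} e^{i\omega T/4} - e^{-j^2 T} \right|
 \leq e^{-3j^2 T/4 - \lambda T/4} + e^{-j^2 T}
 \leq 2\, e^{-3j^2 T/4},
\end{equation}
where we used $\lambda \geq 0$ for the first term and $j^2 T \geq 3 j^2 T/4$ for the second. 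Combining these estimates yields exactly \eqref{eq:moments.v.omega.lambda}.

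There is essentially no obstacle here: the only mildly delicate point is obtaining the constant $2$ rather than a larger one. The temptation to integrate by parts (treating the sine separately from the real exponential) produces boundary terms \emph{and} a remainder integral, each of order $\omega^{-1}e^{-3j^2T/4}$, and typically sums to a constant~$3$. Folding everything into a single complex exponential avoids this loss and gives the sharp constant directly from the triangle inequality.
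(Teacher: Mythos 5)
Your proof is correct and follows essentially the same route as the paper's: both compute the integral explicitly via complex exponentials, bound the denominator below by $\omega$, and apply the triangle inequality to the two boundary values. Writing $\sin(\omega t)=\mathrm{Im}\,e^{i\omega t}$ instead of $(e^{i\omega t}-e^{-i\omega t})/(2i)$ merely packages the two terms of the paper's computation into one, and yields the same constant $2$.
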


\begin{proof}
 We compute the integral using the definition \eqref{def:v.omega.lambda} of $v_{\omega,\frac T4,\lambda}$
 \begin{equation}
 \begin{split}
  \int_0^T v_{\omega,\frac T4,\lambda}(t) e^{-j^2(T-t)} \dd t
  & = e^{-j^2 T} \int_0^{\frac T4} \sin(\omega t) e^{(j^2-\lambda) t} \dd t
  \\
  & = \frac{e^{-j^2T}}{2i} \left[ \frac{e^{(i\omega-\lambda+j^2) t}}{i\omega-\lambda+j^2}
  \right]_0^{\frac T4} 
  - \frac{e^{-j^2T}}{2i} \left[ \frac{e^{(-i\omega-\lambda+j^2) t}}{-i\omega-\lambda+j^2}
  \right]_0^{\frac T4} 
 \end{split}
 \end{equation}
 and we conclude that \eqref{eq:moments.v.omega.lambda} holds.
\end{proof}

From the smallness of each individual moment, we deduce the existence of a corrector $L(V_n(y))$ and its smallness in $H^1_0(0,T)$, thanks to the results on the solvability of moment problems.

\begin{lemma}
 \label{lemma:LV}
 Let $T > 0$. There exists a sequence $\varepsilon_n(T) \to 0$ such that, for every $y \in \ell^1$, there exists a control $L(V_n(y)) \in H^1_0(0,T)$, such that $V_n(y)-L(V_n(y))$ is such that the linear order returns to zero and one has
 \begin{equation}
  \label{eq:LV.petit}
  \| L(V_n(y)) \|_{H^1} \leq \varepsilon_n \| y \|_{\ell^1}^{\frac12}.
 \end{equation}
\end{lemma}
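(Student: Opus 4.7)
The plan is to construct $L(V_n(y))$ by solving a moment problem via \cref{thm:moments}. First I would compute, for $j \in \N$, the moments $d_j := \int_0^T V_n(y)(t) e^{-j^2(T-t)} \dd t$ which the corrector must reproduce, so that $V_n(y) - L(V_n(y))$ satisfies \eqref{vj.moments}. Expanding $V_n(y)$ via \eqref{V_n} and using \cref{lemma:moments.v.omega.lambda} with $\tau = T/4$, one obtains
\begin{equation} \label{eq:dj.bound}
 |d_j| \leq \sum_{l \in \N} \beta_l |y_l|^{\frac 12} \omega_l^s \cdot 2 \omega_l^{-1} e^{-\frac 34 j^2 T}
 = 2 e^{-\frac 34 j^2 T} \sum_{l \in \N} \beta_l |y_l|^{\frac 12} \omega_l^{s-1}.
\end{equation}
Since $\beta_l \leq C T^{-\frac 12}$ by \eqref{beta_T} and $\|\Theta_\pm\|_\infty \leq 1$, the Cauchy--Schwarz inequality gives
\begin{equation} \label{eq:dj.final}
 |d_j| \leq C T^{-\frac 12} e^{-\frac 34 j^2 T} \|y\|_{\ell^1}^{\frac 12} \Big(\sum_{l \in \N} \omega_l^{2s-2}\Big)^{\frac 12}.
\end{equation}

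Next I would check that the sequence $d = (d_j)_{j \in \N}$ lies in the space $D_T$ from \eqref{eq:DT-def} for an exponent $\eta_1 > \frac 12$: choosing for instance $\eta_1 = \frac 58$, the exponential factor $e^{\eta_1 T j^2 - \frac 34 j^2 T} = e^{-\frac 18 T j^2}$ is bounded uniformly in $j$, so \eqref{eq:dj.final} yields
\begin{equation}
 \|d\|_{D_T} \leq C T^{-\frac 12} \|y\|_{\ell^1}^{\frac 12} \Big(\sum_{l \in \N} \omega_l^{2s-2}\Big)^{\frac 12}.
\end{equation}
Crucially, because $s < \frac 12$ we have $2s-2 < -1$, so the series $\sum_l \omega_l^{2s-2}$ converges, and by property \eqref{n.omsum} its value (which depends on $n$ through the frequencies $\omega_l(n)$) tends to $0$ as $n \to \infty$.

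I would then set $L(V_n(y)) := \mathfrak{L}_1^T(d) \in H^1_0(0,T)$, where $\mathfrak{L}_1^T$ is the moment-solving map provided by \cref{thm:moments} applied with $m = 1$. By construction, $\int_0^T L(V_n(y))(t) e^{-j^2(T-t)} \dd t = d_j$ for every $j \in \N$, so $V_n(y) - L(V_n(y))$ drives the linear state from zero back to zero. Moreover, the size estimate \eqref{eq:u-moments-size} from \cref{thm:moments} gives
\begin{equation}
 \|L(V_n(y))\|_{H^1(0,T)} \leq M_1 e^{M_1/T} \|d\|_{D_T} \leq \varepsilon_n(T) \|y\|_{\ell^1}^{\frac 12},
\end{equation}
where $\varepsilon_n(T) := C M_1 T^{-\frac 12} e^{M_1/T} \left(\sum_l \omega_l^{2s-2}(n)\right)^{\frac 12}$. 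Finally, $\varepsilon_n(T) \to 0$ as $n \to \infty$ at fixed $T$ by \eqref{n.omsum}, which concludes the proof of \eqref{eq:LV.petit}.

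The only mildly delicate point is checking the $D_T$ membership uniformly in $n$; this is straightforward once one notices that the bound \eqref{eq:dj.bound} carries the decisive factor $e^{-\frac 34 j^2 T}$ coming from the heat semigroup applied over the inactive interval $[T/4, T]$ (recall $\supp v_l \subset [0,T/4]$), which provides the margin needed for the exponent $\eta_1 > \frac 12$ required by \cref{thm:moments}.
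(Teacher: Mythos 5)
Your proposal is correct and follows essentially the same route as the paper: bound the moments of $V_n(y)$ via \cref{lemma:moments.v.omega.lambda} and Cauchy--Schwarz, observe that the factor $e^{-\frac34 j^2 T}$ places the sequence in $D_T$, and define $L(V_n(y))$ through the moment-solving map of \cref{thm:moments} with $m=1$, concluding with \eqref{n.omsum}. The only cosmetic remark is that the convergence of $\sum_l \omega_l^{2s-2}$ needs only $2-2s>0$ together with \eqref{n.omsum} (the $\omega_l$ grow superpolynomially), not the condition $2s-2<-1$, and the bound $\beta_l \leq C T^{-1/2}$ comes from $\gamma_\pm(s)\geq(1-\epsilon)\gamma(s)$ rather than from $\|\Theta\|_\infty\leq 1$.
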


\begin{proof}
 Obtaining $z_1(T) = 0$ is equivalent to solving the problem
 \begin{equation}
  \int_0^T V_n(y)(t) e^{-j^2(T-t)} \dd t
  = \int_0^T L(V_n(y))(t) e^{-j^2(T-t)} \dd t 
  .
 \end{equation}
 We apply \cref{thm:moments} (with $m=1$ and $\eta_1 = 3/4$).
 Thanks to estimate \eqref{eq:u-moments-size}, there exists $L(V_n(y)) \in H^1_0(0,T)$ such that $z_1(T) = 0$ and which satisfies
 \begin{equation}
  \| L(V_n(y)) \|_{H^1} 
  \leq 
  M_1 e^{2M_1/T} \sup_{j \in \N} e^{\frac{3}{4}j^2T} 
  \left| \int_0^T V_n(y)(t) e^{-j^2(T-t)} \dd t \right|
  .
 \end{equation}
 Moreover, thanks to estimate \eqref{eq:moments.v.omega.lambda} from \cref{lemma:moments.v.omega.lambda} and definition \eqref{V_n} of $V_n$, there holds
 \begin{equation}
  e^{\frac{3}{4}j^2T} \left| \int_0^T V_n(y)(t) e^{-j^2(T-t)} \dd t \right|
  \leq
  2 \beta_T \sum_{l\in\N} |y_l|^{\frac12} \omega_l^{s-1}
  \leq 
  2 \beta_T \|y\|_{\ell^1}^{\frac 12} \left( \sum_{l\in\N} \omega_l^{2s-2} \right)^{\frac12}.
 \end{equation}
 This concludes the proof, thanks to \eqref{n.omsum}.
\end{proof}

Eventually, we can now estimate the last two terms from the right-hand side of \eqref{couleurs}. 

\begin{proposition}
 \label{prop:LV.LV}
 Let $T \in (0,T^\star]$. There exists a sequence $\varepsilon_n(T) \to 0$ such that, for every $y \in \ell^1$,
 \begin{equation}
  \label{eq:LV.LV}
   \sum_{k\in\N} \sup_{\tau \in [0,T]} {\langle k \rangle} \Big( \left| Q_{k,\tau}\big(L(V_n(y)), L(V_n(y))\big) \right|+ \left| Q_{k,\tau}\big(L(V_n(y)), V_n(y)\big) \right| \Big)
   \leq \varepsilon_n \| y \|_{\ell^1}
  .
 \end{equation}
\end{proposition}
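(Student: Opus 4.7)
The plan is to bound both terms by expressing $Q_{k,\tau}$ via Plancherel, invoking the uniform Fourier bound $\|\widehat{K}_{\Theta_k,\lambda_k}\|_\infty \leq C\omk^{-2s}$ from~\eqref{khat2}, and then exploiting the strong regularity of the corrector $L(V_n(y))$ provided by \cref{lemma:LV}. For $a,b\in L^2(0,T)$ and $\tau\in[0,T]$, introducing $A_\tau := \mathbf{1}_{[0,\tau]}e^{\lambda_k\cdot}a$ and $B_\tau := \mathbf{1}_{[0,\tau]}e^{\lambda_k\cdot}b$, definitions~\eqref{def:Qk} and~\eqref{K.Theta.lambda} combined with Plancherel's theorem yield
\begin{equation}
Q_{k,\tau}(a,b) = \int_\R\int_\R A_\tau(t)B_\tau(t')K_{\Theta_k,\lambda_k}(t-t')\dd t\dd t' = \frac{1}{2\pi}\int_\R \widehat{A_\tau}(\xi)\,\overline{\widehat{B_\tau}(\xi)}\,\widehat{K}_{\Theta_k,\lambda_k}(\xi)\dd\xi.
\end{equation}

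For the diagonal contribution, a Cauchy-Schwarz in Fourier together with the $L^\infty$ kernel bound yields
\begin{equation}
|Q_{k,\tau}(L(V_n(y)),L(V_n(y)))| \leq C\omk^{-2s}\|A_\tau\|_{L^2(\R)}^2 \leq C\omk^{-2s}e^{2\lambda_k T}\varepsilon_n^2\|y\|_{\ell^1},
\end{equation}
where \cref{lemma:LV} was used to estimate $\|L(V_n(y))\|_{L^2}\leq \|L(V_n(y))\|_{H^1}\leq \varepsilon_n\|y\|_{\ell^1}^{1/2}$.

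For the cross term, a naive $L^2$--$L^2$ estimate fails because the defining sum~\eqref{V_n} contains the amplifying weights $\omega_l^s$ with $\omega_l\geq\omk\geq\exp(l^4)$, so $\|V_n(y)\|_{L^2}$ cannot be controlled by $\|y\|_{\ell^1}^{1/2}$. Instead, I split $1=\langle\xi\rangle^s\langle\xi\rangle^{-s}$ in the Plancherel integrand and apply Cauchy-Schwarz to obtain
\begin{equation}
|Q_{k,\tau}(L(V_n(y)),V_n(y))| \leq C\omk^{-2s}\|A_\tau\|_{H^s(\R)}\|B_\tau\|_{H^{-s}(\R)}.
\end{equation}
The second factor is directly bounded by \cref{lemma:Vn.Hs_new}, giving $\|B_\tau\|_{H^{-s}(\R)}\leq C e^{\lambda_k T}\|y\|_{\ell^1}^{1/2}$. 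For the first factor, since $L(V_n(y))\in H^1_0(0,T)$ extends by zero to an $H^1(\R)$ function, $e^{\lambda_k\cdot}L(V_n(y))\in H^1(\R)$ with norm $\leq C(1+\lambda_k)e^{\lambda_k T}\varepsilon_n\|y\|_{\ell^1}^{1/2}$; since $s\in(0,\frac12)$, multiplication by $\mathbf{1}_{[0,\tau]}$ is a bounded operator on $H^s(\R)$ with norm independent of $\tau$, so $\|A_\tau\|_{H^s(\R)}\leq C(1+\lambda_k)e^{\lambda_k T}\varepsilon_n\|y\|_{\ell^1}^{1/2}$.

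Summing these bounds with weight $\langle k\rangle$ and taking the supremum over $\tau\in[0,T]$, the resulting series involves $\langle k\rangle(1+\lambda_k)^2\omk^{-2s}e^{2\lambda_k T}$, which is summable thanks to \cref{rk:omk.large} ($\omk\geq\exp(k^4)$) and $\lambda_k=O(k^2)$ from~\eqref{lambdak}, in the spirit of~\eqref{tstar.omk.sum}. This produces the bound $\varepsilon_n\|y\|_{\ell^1}$ for a new null sequence $\varepsilon_n(T)$, establishing~\eqref{eq:LV.LV}. The principal obstacle is the cross term: the $H^{-s}$-control of $V_n(y)$ supplied by \cref{lemma:Vn.Hs_new} is the best available (no $L^2$ improvement holds because of the $\omega_l^s$ amplification), which forces the asymmetric $H^s/H^{-s}$ pairing; matching this with the $H^1$-smallness of $L(V_n(y))$ crucially relies on the boundedness of the characteristic function $\mathbf{1}_{[0,\tau]}$ as a multiplier on $H^s(\R)$, a feature specific to $s<\frac12$.
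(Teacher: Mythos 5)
Your proof is correct and follows essentially the same strategy as the paper's: Plancherel plus the uniform kernel bound \eqref{khat2}, an $L^2$--$L^2$ pairing for the diagonal term, the asymmetric $H^s$/$H^{-s}$ pairing for the cross term with $\|B_\tau\|_{H^{-s}}$ controlled by \cref{lemma:Vn.Hs_new}, the $H^1$-smallness of the corrector from \cref{lemma:LV}, and summability in $k$ via $\omk\geq e^{k^4}$ and $\lambda_k=O(k^2)$. The one genuine technical divergence is in bounding $\|A_\tau\|_{H^s}$: the paper applies a Kato--Ponce fractional Leibniz inequality directly to the product of $L(V_n(y))$ with $\psi_k=\mathbf{1}_{[0,\tau]}e^{\lambda_k\cdot}$, estimating $\|\psi_k\|_{H^s}$ via the explicit Fourier formula \eqref{v.hat}--\eqref{v.loc}; you instead first multiply by the smooth exponential (product rule in $H^1$, costing the harmless extra factor $1+\lambda_k$) and then truncate, invoking the boundedness of $\mathbf{1}_{[0,\tau]}$ as a pointwise multiplier on $H^s(\R)$ for $s<\tfrac12$, uniformly in $\tau$. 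Both routes hinge on $s<\tfrac12$ in the same place, and your factorization arguably sidesteps a small awkwardness in the paper's identification $\psi_k=v_{0,\tau,-\lambda_k}$ (which is literally zero since $\sin(0\cdot t)=0$); the extra polynomial factor $(1+\lambda_k)^2$ in your final series is absorbed by the superexponential decay of $\omk^{-2s}$, as you note.
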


\begin{proof}
 Let $\tau \in [0,T]$. We introduce the notation $\psi_k(t) := \mathbf{1}_{[0,\tau]}(t) e^{\lambda_k t}$. By Parseval's formula and definition \eqref{def:Qk} of $Q_{k,\tau}$, one has, for every $a,b \in L^2(0,T)$,
 \begin{equation}
  \big| Q_{k,\tau}(a,b) \big| \leq 
  \frac{1}{2\pi} \| \widehat{K}_{\Theta_k,\lambda_k} \|_{L^\infty} \int_\R |\widehat{a\psi_k}(\xi)\widehat{b\psi_k}(\xi)| \dd \xi
  .
 \end{equation}
 Hence, thanks to estimate \eqref{khat2} from \cref{lemma:khat}, there exists $C > 0$ such that
 \begin{align}
  \big| Q_{k,\tau}(a,a) \big| 
  & \leq C \omk^{-2s} \| a \psi_k \|_{L^2}^2 
  \leq C \omk^{-2s} e^{2\lambda_k T^\star} \| a \|_{L^2}^2,
  \\
  \big| Q_{k,\tau}(a,b) \big| 
  & \leq C \omk^{-2s} \| a \psi_k \|_{H^s} \| b \psi_k \|_{H^{-s}} 
  .
 \end{align}
 Noting that $\psi_k = v_{0,\tau,-\lambda_k}$ (see \eqref{def:v.omega.lambda}) and applying estimate \eqref{v.loc} from \cref{lemma:v.TF} proves that there exists $C > 0$ such that
 \begin{equation}
  \| \psi_k \|_{H^s} \leq C e^{\lambda_k T^\star}.
 \end{equation}
 Thus, on the one hand, using a Kato-Ponce type or \emph{fractional Leibniz} inequality (for a recent presentation, see e.g.\ \cite[Theorem~1]{MR3200091}), there exists $C > 0$ such that
 \begin{equation}
 \| a \psi_k \|_{H^s} \leq C (\| a \|_{H^s} \| \psi_k \|_{L^\infty} + \| \psi_k \|_{H^s} \| a \|_{L^\infty})
 \leq C e^{\lambda_k T^\star} \| a\|_{H^1(0,T)}
 .
 \end{equation}
 On the other hand, thanks to \cref{lemma:Vn.Hs_new}, we obtain
 \begin{equation}
  \| V_n(y) \psi_k \|_{H^{-s}}
  \leq C e^{\lambda_k T^\star} \| y \|_{\ell^1}^{\frac12}.
 \end{equation}
 We conclude that
 \begin{align}
  \big| Q_{k,\tau}(L(V_n(y)),L(V_n(y))) \big| 
  & \leq C \omk^{-2s} e^{2\lambda_k T^*} \| L(V_n(y)) \|_{H^1(0,T)}^2, \\
  \big| Q_{k,\tau}(L(V_n(y)),V_n(y)) \big| 
  & \leq C \omk^{-2s} e^{2\lambda_k T^*} \| L(V_n(y)) \|_{H^1(0,T)} \| y \|_{\ell^1}^{\frac12}.
 \end{align}
 Combining these bounds with the estimate \eqref{eq:LV.petit} of $L(V_n(y))$ in $H^1(0,T)$ and the convergence of $\sum_k \langle k \rangle \omk^{-2s}e^{2\lambda_kT^\star}$ (see \eqref{tstar.omk.sum}) concludes the proof of \eqref{eq:LV.LV}.
\end{proof}

\subsection{Well-posedness and regularity of the solutions}
\label{sec:solution}

In the previous paragraphs, we constructed a control $v \in H^{-s}(0,T)$ to prove null controllability. For a general nonlinearity $\Gamma$ satisfying the assumption \eqref{def:Gamma} and a general control $v \in H^{-s}(0,T)$ it is \emph{a priori} not clear that the system \eqref{eq:z} is well posed. In \cref{Prop:WP+source}, we proved well-posedness for small controls in $L^\infty(0,T)$, with solutions $z \in Z$ (see \eqref{Z}).

In this paragraph, we prove that, for our particular nonlinearity $\Gamma_s$ (see \eqref{Gamma_s}) and for our particular control $v = \widetilde{V}_n(y)$, we can indeed construct a solution $z \in Z$ to \eqref{eq:z}.

\paragraph{Estimate for the linear part (even modes).} For the linear part, we can decompose $v = V_n(y) - L(V_n(y))$. Since $L(V_n(y)) \in H^1_0(0,T)$, well-posedness for this control is already known. Therefore, we only need to check that the part corresponding to the action of $V_n(y)$ makes sense for any $y \in \ell^1$. First, working as in the proof of \cref{lemma:moments.v.omega.lambda} and recalling that $\omega_l \geq 2\lambda_l$, we get that, for any $\tau\in(0,T)$,
\begin{equation}
 \left| \int_0^\tau e^{-j^2(\tau-t)} v_{\omega_l,\frac T4,\lambda_l}(t) \dd t \right|
 \leq \frac{C}{\omega_l+j^2}.
\end{equation}
Let $\kappa > 0$. Thanks to the definition \eqref{V_n} of $V_n(y)$,
\begin{equation}
 \begin{split}
 \left| \int_0^\tau e^{-j^2(\tau-t)} V_n(y)(t) \dd t \right|
 & \leq C \sum_{l\in\N} |y_l|^{\frac12} \frac{\omega_l^{s}}{\omega_l+j^2} 
 \\
 & \leq C \|y\|_{\ell^1}^{\frac12} \left(\sum_{l \in \N} \frac{\omega_l^{2s}}{(\omega_l+j^2)^2}\right)^{\frac12}
 \\ 
 & \leq C \|y\|_{\ell^1}^{\frac12} j^{-2+2s+\kappa},
 \end{split}
\end{equation}
because $\sum_{l \in \N} \omega_l^{-\kappa} < + \infty$ (see \eqref{n.omsum}). Thus, recalling \eqref{z1.T},
\begin{equation}
 \left| \left\langle z_1\left(\frac{\tau}{4}\right), \varphi_{2j} \right\rangle \right|
 \leq C \|y\|_{\ell^1}^{\frac12} j^{-\frac32-s+\kappa}
 .
\end{equation}
Setting $\kappa = \frac s2$ proves that the linear state $z_1$ actually belongs to $C^0([0,T];H^1_N(0,\pi))$ for our control.

\paragraph{Estimate for the quadratic part (odd modes).} For the quadratic part, \cref{prop:epsilon_n} and the explicit formula formula \eqref{z2.T.Qk} prove that there exists $M \in (0,+\infty)$ (which of course depends on the choice of $n$ and $y$) such that
\begin{equation}
 \sum_{k\in\N} \sup_{\tau\in[0,T]} e^{2\lambda_k\tau} | \langle z_2(\tau/4),\varphi_{2k+1} \rangle |
 \leq M
\end{equation}
for our particular choice of control $v$. In particular, the dominated convergence theorem can be used to prove that $z_2 \in C^0([0,T];L^2(0,\pi))$. Moreover,
\begin{equation}
 \begin{split}
  \int_0^{T} \| z_2(\tau/4) \|_{H^1}^2 \dd \tau
  & \leq \sum_{k\in\N} (2k+1)^2\int_0^T e^{-4 \lambda_k \tau} e^{4 \lambda_k \tau} |\langle z_2(\tau/4),\varphi_{2k+1} \rangle|^2 \dd \tau
  \\
  & \leq \sum_{k\in\N} (2k+1)^2 \left(\int_0^T e^{-4\lambda_k \tau} \dd \tau\right) \left(\sup_{\tau\in[0,T]} e^{2\lambda_k\tau} | \langle z_2(\tau/4),\varphi_{2k+1} \rangle |\right)^2
  \\
  & \leq  \sum_{k\in\N} \frac{(2k+1)^2}{4\lambda_k} \left(\sup_{\tau\in[0,T]} e^{2\lambda_k\tau} | \langle z_2(\tau/4),\varphi_{2k+1} \rangle |\right)^2
  \leq 8 M^2.
 \end{split}
\end{equation}
Hence $z_2 \in L^2((0,T);H^1_N(0,\pi))$. Therefore, we constructed a solution $z \in Z$ to \eqref{eq:z}.

\subsection{On the uniform cost estimate for small times}
\label{sec:cost1}

One of the surprising features of the nonlinear system we construct is that the cost estimate to control the odd modes in small time does not blow up as the time tends to zero. This is linked to the fact that the nonlinear system is ill-posed for controls with $H^{-s}$ regularity. We give below two  examples of uniform cost estimates in linear settings.

\paragraph{Linear systems in finite dimension.} Let $n \in \N^*$, $A \in \mathcal{M}_n(\R)$ and $b \in \R^n$. Assume that the pair $(A,b)$ satisfies the Kalman rank condition. Then, the scalar-input linear system $\dot{x} = Ax + bu$ is (in particular) null controllable in small-time. In \cite{MR923278}, it is shown that the control cost, measured in $L^2$ norm on $u$ blows up like $T^{-n+\frac12}$ as $T \to 0$. In \cite{MR1450356}, this analysis is extended to the case of $W^{m,p}$ norms with $m \in \N$ and $p \in [1,+\infty]$ and it is shown that the control cost blows up like $T^{-n-m+\frac{1}p}$. If one considers negative Sobolev norms, one can obtain non blowing-up costs.

\begin{proposition}
 There exists $C > 0$ such that, for every $T > 0$ and every $x^* \in \R^n$, there exists $u \in C^\infty([0,T])$ with $\|u\|_{W^{-n,\infty}} \leq C | x^* |$ such that the associated trajectory of $\dot{x} = Ax + bu$ for the initial state $x(0) = x^*$ and the control $u$ satisfies $x(T) = 0$. This property does not hold when the size of the control is measured in $W^{-n+1,\infty}$ norm.
\end{proposition}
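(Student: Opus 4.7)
My plan is to prove the two assertions separately. For the positive part I would split according to a threshold $T_0 = T_0(A,b) \in (0,1]$ to be chosen: when $T > T_0$, I construct a control supported in the subinterval $(0,T_0)$ driving $x^\ast$ to $0$ at time $T_0$, then extend it by zero on $[T_0,T]$; the extension is still $C^\infty([0,T])$ because the piece on $(0,T_0)$ is compactly supported, and its $W^{-n,\infty}(0,T)$ norm is bounded by its $W^{-n,\infty}(0,T_0)$ norm by restriction of test functions. This reduces the whole argument to the regime $T \le T_0$.

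For $T \le T_0$, I would fix once and for all a bump $\psi \in C^\infty_c(0,1)$ with $\int_0^1 \psi = 1$ and set $\phi := T^{-1}\psi(\cdot/T) \in C^\infty_c(0,T)$, then look for the control in the form
\begin{equation*}
u := \sum_{k=0}^{n-1} c_k \phi^{(k)}, \qquad c = (c_0,\ldots,c_{n-1}) \in \R^n.
\end{equation*}
Since $\phi$ is compactly supported, $k$ integrations by parts give $\int_0^T e^{(T-t)A} b\, \phi^{(k)}(t)\,dt = A^k v_T$ with $v_T := \int_0^T e^{(T-t)A} b\, \phi(t)\,dt$, so the null-control equation rewrites as $K(v_T)\,c = -e^{TA} x^\ast$, where $K(v) := [v\ Av\ \cdots\ A^{n-1} v]$. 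The substitution $t = sT$ and dominated convergence give $v_T \to b$ as $T \to 0$, hence $K(v_T) \to K(b)$, which is invertible by the Kalman assumption; for $T_0$ small enough depending only on $(A,b)$, $K(v_T)$ remains invertible with a uniform bound on its inverse, so $c := -K(v_T)^{-1} e^{TA} x^\ast$ satisfies $|c| \le C|x^\ast|$.

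The main obstacle is then to bound $\|u\|_{W^{-n,\infty}(0,T)}$ independently of $T$, since the naive inequality $\|u\|_{W^{-n,\infty}} \le CT^n \|u\|_{L^\infty}$ would only give the desired bound for a control attaining the sharp $L^\infty$ cost $T^{-n}|x^\ast|$ from \cite{MR1450356}, not for our explicit ansatz. The key observation is the exact scaling of the rescaled bump: its $j$-th iterated primitive is $\phi_j = T^{j-1}\Psi_j(\cdot/T)$ with $\Psi_j$ the $j$-th primitive of $\psi$, so that $\phi_{n-k}$ is an explicit $n$-th primitive of $\phi^{(k)}$ with $\|\phi_{n-k}\|_{L^\infty(0,T)} \le T^{n-k-1}\|\Psi_{n-k}\|_{L^\infty(0,1)}$. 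Testing $\phi^{(k)}$ against any $\varphi \in C^\infty_c(0,T)$ with $\|\varphi\|_{W^{n,1}} \le 1$ and integrating by parts $n$ times gives $\|\phi^{(k)}\|_{W^{-n,\infty}(0,T)} \le C T^{n-k-1}$; since $0 \le k \le n-1$ and $T \le T_0 \le 1$, summing yields $\|u\|_{W^{-n,\infty}} \le C|c| \le C|x^\ast|$, uniformly in $T$.

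For the impossibility statement, let $\ell \in (\R^n)^\ast$ be the unique covector with $\ell^\top A^j b = \delta_{j,n-1}$, provided by Kalman. Applying $\ell^\top e^{-TA}$ to the null-control equation yields
\begin{equation*}
-\ell^\top x^\ast = \int_0^T \ell^\top e^{-tA} b\, u(t)\, dt = \int_0^T \left( \frac{(-t)^{n-1}}{(n-1)!} + r(t) \right) u(t)\,dt,
\end{equation*}
where the smooth remainder $r(t) = O(t^n)$ gathers the contribution of the indices $k \ge n$ in the Taylor expansion of $e^{-tA}b$. Elementary derivative counts show $\|t \mapsto t^{n-1}\|_{W^{n-1,1}(0,T)} = O(T)$ and $\|r\|_{W^{n-1,1}(0,T)} = O(T^2)$ as $T \to 0$, so duality produces
\begin{equation*}
|\ell^\top x^\ast| \le C\, T\, \|u\|_{W^{-n+1,\infty}(0,T)}.
\end{equation*}
Choosing $x^\ast := A^{n-1} b$, which is fixed and satisfies $\ell^\top x^\ast = 1$, any admissible control must satisfy $\|u\|_{W^{-n+1,\infty}(0,T)} \ge c/T$, which diverges as $T \to 0$ and thus precludes a uniform cost estimate.
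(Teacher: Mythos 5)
Your construction for the positive part is correct and takes a genuinely different route from the paper's. The paper passes to the control normal form and uses flatness (prescribe the last component $x_n=\theta$, read off $u=\theta^{(n)}$, so that $\|u\|_{W^{-n,\infty}}$ is directly $\|\theta\|_{L^\infty}$), whereas you stay in the original coordinates and solve the moment problem with the ansatz $\sum_k c_k\phi^{(k)}$, inverting the perturbed Kalman matrix $K(v_T)$. Your version is more explicit about where the uniformity in $T$ comes from (the scaling $\|\phi_{n-k}\|_{L^\infty}\lesssim T^{n-k-1}$ of the iterated primitives of the rescaled bump), and it avoids the paper's unjustified claim that the interpolating $\theta$ can be taken with $\|\theta\|_{L^\infty}\le|x^*_n|$; both arguments are sound.

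The impossibility part has a gap for $n\ge 3$. The norm the paper implicitly uses is $\|u\|_{W^{-m,\infty}(0,T)}=\|u_m\|_{L^\infty(0,T)}$, with $u_m$ the iterated primitive vanishing at $t=0$. For such a norm, the duality bound $|\int_0^T g\,u\,\dd t|\le \|g\|_{W^{m,1}}\|u\|_{W^{-m,\infty}}$ requires $g$ to vanish to order $m-1$ at $t=T$: the $m$-fold integration by parts produces boundary terms $g^{(j)}(T)\,u_{j+1}(T)$ for $0\le j\le m-1$, and the endpoint values $u_1(T),\dots,u_{m-1}(T)$ of the \emph{low-order} primitives are not controlled by $\|u_m\|_{L^\infty}$ (think of $u_{m}(t)=\varepsilon\sin(Nt)$-type profiles, small in sup norm with huge derivatives at $t=T$). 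Your test function $g(t)=\ell^\top e^{-tA}b=\tfrac{(-t)^{n-1}}{(n-1)!}+O(t^n)$ vanishes to order $n-1$ at the \emph{wrong} endpoint $t=0$ and has $g(T)\neq 0$, so "duality produces" does not produce the claimed inequality once $n\ge 3$ (for $n=1,2$ there is no problematic boundary term). Interpreting $W^{-n+1,\infty}$ instead as the dual of the full $W^{n-1,1}(0,T)$ does make your duality step legal, but then the positive half of the proposition becomes false (test any null control against the constant $1/T$, which has unit $W^{n,1}$ norm, to see a cost of order $1/T$), so that reading is not available either. The fix is immediate: do not conjugate by $e^{-TA}$. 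Pairing $\ell^\top$ directly with $x(T)=0$ gives $-\ell^\top e^{TA}x^*=\int_0^T \ell^\top e^{(T-t)A}b\,u(t)\,\dd t$, where now $\ell^\top e^{(T-t)A}b=\tfrac{(T-t)^{n-1}}{(n-1)!}+O((T-t)^n)$ vanishes to order $n-1$ at $t=T$; all boundary terms drop, $\|\partial_t^{n-1}(\ell^\top e^{(T-t)A}b)\|_{L^1(0,T)}=O(T)$, and you recover $|\ell^\top e^{TA}x^*|\le CT\|u\|_{W^{-n+1,\infty}}$, which is exactly the paper's estimate $|x_n^u(T)|\le T\|u\|_{W^{-n+1,\infty}}$ written in invariant form.
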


\begin{proof}
 Since the pair $(A,b)$ is controllable, one can assume that it is in the control normal form for which, if $x = (x_1, \ldots x_n)$, one has $\dot{x}_1 = u$ and $\dot{x}_{j+1} = x_j$ for $j \in \{1,\ldots n-1\}$. Then, by definition of the $W^{-n,\infty}$ norm, $\|u\|_{W^{-n,\infty}} = \| x_n \|_{L^\infty}$. Let $x^* \in \R^n$. Using the flatness approach, we set $x_n(t) := \theta(t)$, where $\theta \in C^\infty([0,T])$ is such that $\theta^{(j)}(0) = x^*_{n-j}$ and $\theta^{(j)}(T) = 0$ for $1 \leq j \leq n$. It is clear that this can be done with $\|\theta\|_{L^\infty} \leq |x^*_n|$. 
 
 On the contrary, if we decompose the solution $x = \bar{x} + x^u$ where $\bar{x}$ is associated with the initial data and $x^u$ with the control. Then, $|x^u_n| \leq \int_0^T |x_{n-1}| 
 \leq T \|u\|_{W^{-n+1,\infty}}$, which prevents a uniform control cost in $W^{-n+1,\infty}$ norm for the controls.
\end{proof}

In this example, obtaining a uniform cost estimate in small time for $n$ directions only happens in $W^{-n,\infty}$ norm. In \cref{thm:infini}, we recover an infinite number of directions with a uniform cost estimate in small time in $H^{-s}$ norm for some fixed $s \in (0,\frac 12)$.

\paragraph{Linear system in infinite dimension.} We consider an abstract system governed by an infinite dimensional, time-dependent, ordinary differential equation $\dot{x}_k = f_k(t) u(t)$ for $k \in \N$, where
\begin{equation}
f_k(t) := \mathbf{1}_{\supp\Theta_k}\left(e^{\frac 1 t}\right),
\end{equation}
and $\Theta_k$ was constructed by Alg.\ \ref{algo}. Since the $\Theta_k$ are uniformly bounded in $L^\infty(\R)$ with disjoint supports, the system is well-posed in the following sense. For every $T > 0$, and every $u \in L^1(0,T)$, the state $x$ belongs to $C^0([0,T];\ell^1)$. Indeed, for every $0  \leq t_0 < t_1 \leq T$,
\begin{equation}
 \| x(t_1) - x(t_0) \|_{\ell^1}
 = \sum_{k \in \N} \left| \int_{t_0}^{t_1} f_k(t) u(t) \dd t \right|
 \leq 
 \int_{t_0}^{t_1} \left( \sum_{k \in \N} | f_k(t) | \right) |u(t)| \dd t
 \leq \| u \|_{L^1(t_0,t_1)}
 .
\end{equation}
This system is uniformly small-time null controllable.

\begin{proposition}
 For every $T > 0$ and every $x^* \in \ell^1$, there exists $u \in L^1(0,T)$ satisfying $\|u\|_{L^1} \leq \|x^*\|_{\ell^1}$ such that the associated solution with initial data $x^*$ satisfies $x(T)=0$.
\end{proposition}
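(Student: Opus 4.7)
The plan is to exploit two structural features of the system: the functions $f_k$ have pairwise disjoint supports in time (inherited from the disjoint supports of the $\Theta_k$ in $x$-space), and for each fixed $k \in \N$, $\supp f_k$ accumulates at $t = 0$. The second property holds because, by Algo.~\ref{algo}, $\supp \Theta_k$ contains an infinite sequence of elementary blocks whose $x$-coordinates tend to $+\infty$, and the change of variables $x = e^{1/t}$ pulls arbitrarily large $x$ back to arbitrarily small $t > 0$. Consequently, for every $T > 0$ and every $k \in \N$, the set $(0,T) \cap \supp f_k$ contains a non-empty interval $E_k$ of positive Lebesgue measure $|E_k|$, and by construction the sets $(E_k)_{k \in \N}$ are mutually disjoint.

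Given $x^* \in \ell^1$, the proposed control is the explicit piecewise-constant function
\begin{equation}
 u(t) := -\sum_{k \in \N} \frac{x^*_k}{|E_k|}\, \mathbf{1}_{E_k}(t).
\end{equation}
Since the $E_k$ are disjoint, this defines $u$ pointwise, and one has
\begin{equation}
 \|u\|_{L^1(0,T)} = \sum_{k \in \N} |x^*_k| = \|x^*\|_{\ell^1},
\end{equation}
so that $u \in L^1(0,T)$ and the required bound holds (with equality). Because the system is well-posed in $C^0([0,T];\ell^1)$ for $u \in L^1$ as shown above, one can integrate each component-wise equation $\dot{x}_k = f_k(t)u(t)$ to obtain $x_k(T) = x^*_k + \int_0^T f_k(t) u(t) \dd t$. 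Using $f_k \equiv 1$ on $E_k$ and $f_k \equiv 0$ on $E_\ell$ for $\ell \neq k$ (by disjointness), the integral collapses to $\int_{E_k} (-x^*_k/|E_k|) \dd t = -x^*_k$, whence $x_k(T) = 0$ for every $k$.

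There is essentially no analytic obstacle here: the construction is purely combinatorial once one observes that the accumulation at $t=0$ of $\supp f_k$ guarantees the existence of the control windows $E_k$ for arbitrarily small $T$. The only point requiring some care is to use that each $\Theta_k$ contains \emph{infinitely many} non-zero blocks rather than finitely many, so that the change of variables $t \mapsto e^{1/t}$ produces supports of $f_k$ accumulating at the left endpoint of every time interval $(0,T)$. This proposition then serves as a minimal illustration, in a linear infinite-dimensional setting, of the uniform-in-$T$ cost phenomenon observed nonlinearly in \cref{thm:infini}.
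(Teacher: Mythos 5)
Your proposal is correct and follows essentially the same route as the paper: in both cases one selects, for each $k$, a positive-measure time window inside $\{t \in (0,T) : f_k(t) = 1\}$ (the paper takes the pullback of a single sufficiently remote block $\Theta_{k,\max(k,n)}$, you take any interval $E_k$ in the pullback of a far-out block of $\Theta_k$), places on it the constant $-x^*_k$ divided by the window's measure, and uses the disjointness of the supports of the $\Theta_k$ to decouple the modes and sum the $L^1$ norms. The only cosmetic difference is that the paper fixes one uniform block index $n$ for all $k$ at once, whereas you choose the windows $k$ by $k$; both are valid.
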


\begin{proof}
 Let $T > 0$. We fix $n \geq 1$ large enough such that, for every $k\in\N$, 
 \begin{equation} 
  \supp \Theta_{k,\max(k,n)} \subset (e^{\frac1T},+\infty).
 \end{equation}
 For every $k\in\N$, we then define the scalar quantities
 \begin{equation}
  \nu_k := \int_0^T \mathbf{1}_{\supp\Theta_{k,\max(k,n)}}\left(e^{\frac 1 t}\right) \dd t > 0.
 \end{equation}
 Let $x^* \in \ell^1$. We set
 \begin{equation}
  u(t) := - \sum_{k \in\N} \frac{x^*_k}{\nu_k} \mathbf{1}_{\supp\Theta_{k,\max(k,n)}}\left(e^{\frac 1 t}\right).
 \end{equation}
 Then $\|u\|_{L^1} \leq \|x^*\|_{\ell^1}$ and, since the supports of the $\Theta_k$ are disjoint, $x(T)= 0$.
\end{proof}

\section{Perspectives}

In view of the results proved in this work, the following open questions seem interesting.

\begin{itemize}
 
 \item The integer-order drifts are related to Lie brackets of the vector fields defining the dynamics. Is it possible to find a geometric interpretation of the fractional drifts?
 
 \item We proved that one can recover the smooth small-time local null controllability with quadratic cost when one direction is lost at the linear order. We also proved that an infinite number of directions can be recovered with $H^{-s}$ controls. Is it possible to recover an infinite number of lost directions with smooth controls?
 
 \item This work concerns scalar-input control systems governed by parabolic equations. Can analogous results be obtained for time-reversible systems, like Schrödinger or Korteweg-de-Vries control systems? A difficulty might be that the quadratic kernels could be less regular.

\end{itemize}

\section*{Acknowledgements}

The authors wish to thank gratefully two anonymous referees for their careful reading of this work and their suggestions, which improved both the proofs and their exposition.

\bibliographystyle{plain}
\bibliography{bibliography}

\end{document}